\numberwithin{equation}{section}
\newcommand{\prref}{\prettyref}
\newcommand\refPedro{Chapter 1\xspace} 
\newtheorem{theorem}{Theorem}[section]
\newtheorem{lemma}[theorem]{Lemma}
\newtheorem{proposition}[theorem]{Proposition}
\newtheorem{corollary}[theorem]{Corollary}
\newtheorem{definition}[theorem]{Definition}
\theoremstyle{break}
\theoremstyle{plain}
\newtheorem{example}[theorem]{Example}
\newtheorem{remark}[theorem]{Remark}
\newenvironment{proof}[1][Proof.]{
\begin{trivlist}
\item[\hskip \labelsep {\bfseries #1}]}{\hspace*{\fill}$\Box$\end{trivlist}
}
\newcommand{\ov}[1]{\overline{#1}}
\newcommand\BST{Bass-Serre tree\xspace}
\newcommand\gog{graph of groups\xspace}
\newcommand\tX{\widetilde{X}}
\newcommand\links{\lam}
\newcommand\rechts{\rho}
\newcommand{\abs}[1]{\left|\mathinner{#1}\right|}
\newcommand{\gen}[1]{\left< \mathinner{#1} \right>}
\newcommand{\ggen}[1]{\left<\!\left< \mathinner{#1} \right>\!\right>}
\newcommand{\set}[2]{\left\{\, \mathinner{#1}\vphantom{#2}\: \left|\: \vphantom{#1}\mathinner{#2} \right.\,\right\}}
\newcommand{\oneset}[1]{\left\{\, \mathinner{#1} \,\right\}}
\newcommand{\os}[1]{\left\{\mathinner{#1}\right\}}
\newcommand{\smallset}[1]{\left\{\mathinner{#1}\right\}}
\newcommand{\oi}[1]{{#1}^{-1}}
\newcommand\lds{,\ldots ,} 
\newcommand{\sse}{\subseteq}
\newcommand{\es}{\emptyset}
\newcommand{\VDmatrix}[9]{\begin{pmatrix}{#1}&{#2}&{#3}\\
{#4}&{#5}&{#6}\\{#7}&{#8}&{#9}\end{pmatrix}}
\newcommand{\A}{\mathbb{A}}
\newcommand{\E}{\mathbb{E}}
\newcommand{\F}{\mathbb{F}}
\newcommand{\G}{\mathbb{G}}
\newcommand{\N}{\mathbb{N}}
\newcommand{\Z}{\mathbb{Z}}
\newcommand{\C}{\mathbb{C}}
\newcommand{\B}{\mathbb{B}}
\newcommand{\cG}{\mathcal{G}}
\newcommand{\cA}{\mathcal{A}}
\newcommand{\cM}{\mathcal{M}}
\newcommand\Ip{In particular,\xspace}
\newcommand\ie{i.e.\xspace}
\newcommand\IRR{\mathop\mathrm{IRR}}
\newcommand{\STS}{semi-Thue system\xspace}
\newcommand{\IFF}{if and only if\xspace}
\newcommand{\homo}{homo\-mor\-phism\xspace}
\newcommand{\homos}{homo\-mor\-phisms\xspace}
\newcommand{\iso}{iso\-momor\-phism\xspace}
\newcommand{\cfGR}{context-free group\xspace}
\newcommand{\cfg}{context-free grammar\xspace}
\newcommand{\cfl}{context-free language\xspace}
\newcommand{\pda}{push-down automaton\xspace}
\renewcommand{\phi}{\varphi}
\renewcommand{\wp}{word problem\xspace}
\newcommand{\WP}[1]{\mathop{\mathrm{WP}}({#1})}
\newcommand{\WPP}[2]{\mathop{\mathrm{WP}_{#2}}({#1})}
\newcommand{\eps}{\varepsilon}
\newcommand{\Sig}{\Sigma}
\newcommand{\SSig}{\Sigma^* \times \Sigma^*} 
\newcommand{\Gam}{\Gamma}
\newcommand{\Del}{\Delta}
\newcommand{\GG}{\Gamma}
\newcommand{\alp}{\alpha}
\newcommand{\bet}{\beta}
\newcommand{\gam}{\gamma}
\newcommand{\del}{\delta}
\newcommand{\lam}{\lambda}
\newcommand{\sig}{\sigma}
\newcommand{\bs}{\backslash}
\newcommand{\bags}{\mathop{bs}}
\newcommand{\Rat}{\mathrm{RAT}}
\newcommand{\RAT}{\mathrm{RAT}}
\newcommand\Copt{\cC_{\mathrm{opt}}}
\newcommand\RAS[2]{\overset{#1}{\underset{#2}{\Longrightarrow}}}
\newcommand\ra[1]{\overset{#1}{\longrightarrow}}
\newcommand\DAS[2]{\overset{#1}{\underset{#2}{\Longleftrightarrow}}}
\newcommand\OUTS[5]{#1
\overset{#2}{\underset{#3}{\Longleftarrow}} #4
\overset{#2}{\underset{#3}{\Longrightarrow}} #5}
\newcommand\INS[5]{#1
 \overset{#2}{\underset{#3}{\Longrightarrow}} #4
 \overset{#2}{\underset{#3}{\Longleftarrow}} #5}
\newcommand\RA[1]{\underset{#1}{\Longrightarrow}}
\newcommand\OUT[4]{#1
\underset{#2}{\Longleftarrow} #3
\underset{#2}{\Longrightarrow} #4}
\newcommand\RAA{\Longrightarrow}
\newcommand\LAA{\Longleftarrow}
\newcommand\DAA{\Longleftrightarrow}
\newcommand\RAO[1]{\overset{#1}{\Longrightarrow}}
\newcommand\DAO[1]{\overset{#1}{\Longleftrightarrow}}
\newcommand\cupi{\sqcup}
\newcommand{\genr}[2]{\left< \, \mathinner{#1}\vphantom{#2}\: \left|\: \vphantom{#1}\mathinner{#2} \right.\, \right>}
\newcommand{\Sym}[1]{\mathrm{Sym}({#1})}
\newcommand{\diam}{\mathrm{diam}}
\newcommand{\Aut}{\mathrm{Aut}}
\newcommand{\Comp}[1]{\overline{#1}}
\newcommand{\cC}{\mathcal{C}}
\newcommand{\ssnq}{\subsetneqq}
\newcommand{\sm}{\setminus}
\newcommand{\tto}{\overset{\sim}{\longrightarrow}}
\newcommand{\wt}[1]{\widetilde{#1}}
\newcommand{\wh}[1]{\widehat{#1}}
\newcommand\SL{\mathop\mathrm{SL}}
\newcommand{\dashrightarc}[1]{\overset{#1}\dashrightarrow}
\colorlet{DMnormalbackcolor}{gray!25}
\colorlet{DMsemilightbackcolor}{gray!15}
\colorlet{DMlightbackcolor}{gray!10}
\colorlet{DMmediumbackcolor}{gray!20}
\colorlet{DMdarkbackcolor}{gray!30}
\colorlet{DMmediumforecolor}{black!57}
\colorlet{DMlightforecolor}{black!46}
\newlength{\ksize} 
\newenvironment{aw}{\noindent\color{red}}{}
\newenvironment{fs}{\noindent\color{magenta}}{}
\newenvironment{vd}{\noindent\color{blue}}{}
 \renewcommand{\labelenumi}{(\roman{enumi})}
\begin{document}
\iftrue
\author{Volker Diekert \qquad Armin Wei\ss \\[5mm]
 Universit{\"a}t Stuttgart, FMI \\
 \texttt{$\{$diekert$,$weiss$\}$@fmi.uni-stuttgart.de}}

\title{Context-Free Groups and Bass-Serre Theory}
\maketitle
\tableofcontents
\section*{Introduction}
\addcontentsline{toc}{section}{Introduction}
The \emph{word problem} of a finitely generated group is the 
set of words over the generators which are equal to the identity in the group. The word problem is therefore a formal language. If this language happens to be context-free, then the group is called context-free.
 Finitely generated virtually free groups are context-free. 
In a seminal paper \cite{ms83} Muller and Schupp 
showed the converse:
every context-free group is virtually free. Over the past decades a wide range of other characterizations of context-free  groups have been found. This underlines that context-free  groups play a major role in combinatorial group theory. Among others these characterizations are:
\begin{itemize}
 \item Fundamental groups of finite graphs of finite groups (Karrass, Pietrowski and Solitar \cite{Karrass73}).
 \item Finitely generated groups having a Cayley graph which can be $k$-triangulated (Muller and Schupp \cite{ms83}).
 \item Finitely generated groups having a Cayley graph with finite treewidth (Kuske and Lohrey \cite{KuskeL05}).
 \item Universal groups of finite pregroups (Rimlinger \cite{Rimlinger87a}). 
 \item Groups admitting Stallings sections (Silva, Soler-Escriv\`a and Ventura \cite{SilvaSV16}).
 \item Groups having a finite presentation by some geodesic string rewriting system (Gilman, Hermiller, Holt and Rees \cite{GilHHR07}).
 \item Finitely generated groups having a Cayley graph with decidable monadic second-order theory (Muller and Schupp \cite{ms85}, Kuske and Lohrey \cite{KuskeL05}). 
\end{itemize}
For some other related results see the recent surveys \cite{Antolin11}
or \cite{CoornaertFS12}.
The proof of Muller and Schupp 
in \cite{ms83} relied on Stallings' structure theorem \cite{Stallings71}, and their result was stated first as a conjecture because Muller and Schupp needed 
the assumption that finitely presented groups are accessible.
The accessibility of finitely presented groups was proven later by Dunwoody \cite{Dunwoody85}.

The present notes survey most of the above characterizations. Our aim is to show how the different characterizations of context-free groups are interconnected. Moreover, we  present a self-contained access to the Muller-Schupp theorem without using Stallings' structure theorem or the accessibility result by Dunwoody.
We also give an introduction to some classical results linking groups with formal language theory.

Our notes start with formal language theory and rewriting systems.
 Next, we give an introduction to Bass-Serre theory using rewriting systems. As an application, we 
prove the theorem by Karrass, Pietrowski, and Solitar that the fundamental group of a finite graph of finite groups is virtually free.
In \prref{sec:pregroups} we relate pregroups and geodesic rewriting systems. 
After that we study geometric aspects of virtually free groups via their Cayley graphs. An easy, but fundamental, observation 
yields that Cayley graphs of context-free groups have finite treewidth.
The pictorial representation of finite treewidth is that the 
Cayley graph looks ``very tree-like from far away''. 
Starting with some group having a Cayley graph of finite treewidth, 
Bass-Serre theory shows the direction how to prove that the group is virtually free: We need an action on a tree with finite vertex stabilizers and finitely many orbits.
By the finite treewidth, a natural connection to some tree is visible, but the group does not act on this tree. So, the crucial step is to construct a tree where the group acts on.
This is done by developing the structure tree theory by Dicks and Dunwoody \cite{DicksD89}. Our presentation is based on the recent papers \cite{DunwoodyK15,kroen10}. Together with Bass-Serre theory this yields a proof ``which  explains'' why context-free groups are virtually free. 

Some of the material presented in these notes can also be found in the journal paper \cite{diekertW13}. This applies notably to \prref{sec:cuts}. A roadmap on the structure of these notes can be found at the end in \prref{fig:implikationen}.

These notes appeared in printed form as chapter \cite{DiekertW17crm} in the book \emph{Algorithmic and Geometric Topics Around Free Groups and Automorphisms}. They are based on the 5-hour course \emph{Locally finite graphs of finite tree width and virtually free groups} which was part of the {\em Summer School on Automorphisms of Free Groups} at CRM (Bellaterra, Barcelona, 25th to 29th September 2012).

In the present version me made some changes compared to \cite{DiekertW17crm}: we simplified the definition and construction of graphs of groups and corrected some minor mistakes in \prref{sec:bass_serre}.

\section{Preliminaries}\label{sec:prelims} 

\subsection{Rewriting Systems}\label{sec:rewrite}
Rewriting techniques have been used from the beginning of abstract group theory. 
The importance of rewriting techniques in other areas 
was emphasized in particular by Alonzo Church and John Barkley Rosser, Sr., 
when they wrote their seminal paper on lambda-calculus \cite{ChurchR36}. 
In our notes we are mainly interested in string rewriting systems.

Let $X$ be a set; a \emph{rewriting system} over $X$ is a binary relation
$\RAA\,\subseteq\, X \times X$. If $(x,y)\in\, \RAA$, we write $x \RAA y$.
 The idea of the notation is that $x\RAA y$ indicates that we can rewrite 
$x$ in one step into the element $y$. 
 We use the following notation for certain closure operators for the rewriting system $\RAA $. We denote by $\DAA$ its symmetric closure; 
by $\RAO*$ its reflexive and transitive closure; and by $\DAO*$ its reflexive, transitive, and symmetric closure. 
The relation $\DAO*$ is an equivalence relation. It is the smallest equivalence 
such that $x$ and $y$ are in the same class for all $x \RAA y$.

We also write 
$y \LAA x$ if $x \RAA y$, 
and $x \RAS{\leq k} {} y$
if $y$ can be reached in at most $k$ steps from $x$.
The rewriting system $\RA{}$ is called 
\begin{itemize}
\item \emph{strongly confluent}, if $\OUT y{}xz$ implies
$\exists w: \INS y{\leq 1}{}wz$,
\item \emph{confluent}, if $\OUTS y*{}xz$ implies
$\exists w: \INS y{*}{}wz$,
{}
\item \emph{Church-Rosser}, if $y\DAS * {}z$ implies 
$\exists w: \INS y{*}{}wz$,
{}
\item \emph{locally confluent}, if $\OUT y{}xz$ implies
$\exists w: \INS y{*}{}wz$,
{}
\item \emph{terminating} or \emph{Noetherian}, if there are no infinite chains \begin{align*}
 x_0 \RAS {}{} x_1 \RAS {}{} \cdots x_{i-1} \RAS {}{} x_i \RAS {}{} \cdots,
 \end{align*}
\item \emph{convergent}, if it is locally confluent and terminating.
\end{itemize}

Main properties of rewriting systems are stated in the following classical theorem. 
Proofs can be found in standard textbooks like \cite{bo93springer,jan88eatcs}.

\begin{theorem}\label{thm:konvtoCR}
The following assertions hold. 
\begin{enumerate}
\item Strong confluence implies confluence.
\item Confluence is equivalent to the Church-Rosser property.
\item A locally confluent and terminating system is confluent.
Thus, a convergent system satisfies the Church-Rosser property.
\end{enumerate}
\end{theorem}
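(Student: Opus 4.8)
My plan is to prove the three parts essentially independently, since they package different diagram-chasing arguments. For part~(1), strong confluence $\Rightarrow$ confluence: I would show by induction on $m+n$ that any peak $x_0 \LAS{m}{} x \RAS{n}{} y_0$ can be completed to a valley. The base case is trivial; for the inductive step, use the strong confluence hypothesis on the innermost single-step peak $x_1 \LAA x \RAA y_1$ to obtain $w$ with $x_1 \RAS{\leq 1}{} w \LAS{\leq 1}{} y_1$, then repeatedly apply the induction hypothesis to glue together the pieces. The mild annoyance here is that the $\leq 1$ in strong confluence means one must carefully track whether a step is actually taken or not; a clean way is to prove the stronger auxiliary statement that $x \LAA u \RAS{n}{} v$ implies $\exists w$ with $x \RAS{*}{} w$ and $v \RAS{\leq 1}{} w$ (a ``strip lemma''), and then iterate this on the left.

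For part~(2), confluence $\Leftrightarrow$ Church-Rosser: the direction Church-Rosser $\Rightarrow$ confluence is immediate, since a peak $x \LAS{*}{} u \RAS{*}{} y$ in particular gives $x \DAO* y$. For the converse I would induct on the length of the zig-zag chain witnessing $x \DAO* y$. If the chain has length $0$ we are done; otherwise write $x \DAO* z \DAA y$ where the last link is either $z \RAA y$ or $z \LAA y$. By induction there is $w_1$ with $x \RAS{*}{} w_1 \LAS{*}{} z$. In the case $z \RAA y$ one extends trivially; in the case $z \LAA y$, i.e. $y \RAA z$, one has a peak $w_1 \LAS{*}{} z \LAA y$ and applies confluence to close it, then composes. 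The only care needed is bookkeeping of which way each arrow points.

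For part~(3), Newman's Lemma (locally confluent $+$ terminating $\Rightarrow$ confluent): here I would use Noetherian (well-founded) induction on the terminating relation $\RAA$. Call an element $a$ confluent if every peak $b \LAS{*}{} a \RAS{*}{} c$ can be completed. Assume every element strictly below $a$ (in the strict order generated by $\RAA$) is confluent, and take a peak $b \LAS{*}{} a \RAS{*}{} c$. If either side is the empty derivation we are done; otherwise write $a \RAA a_1 \RAS{*}{} b$ and $a \RAA a_2 \RAS{*}{} c$. Apply local confluence to $a_1 \LAA a \RAA a_2$ to get $d$ with $a_1 \RAS{*}{} d \LAS{*}{} a_2$. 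Now $a_1$ is confluent (it lies below $a$), so the peak $b \LAS{*}{} a_1 \RAS{*}{} d$ closes at some $e$ with $b \RAS{*}{} e \LAS{*}{} d$; then $a_2$ is confluent, so the peak $e \LAS{*}{} a_2 \RAS{*}{} c$ (built from $a_2 \RAS{*}{} d \RAS{*}{} e$ on one side and $a_2 \RAS{*}{} c$ on the other) closes at some $f$; chasing through, $b \RAS{*}{} f \LAS{*}{} c$. The final sentence of the theorem is then just the composition of~(3) and~(2). The main obstacle in the whole proof is keeping the diagram-chase in part~(3) organized so that the induction hypothesis is invoked only on elements genuinely below $a$; this is exactly the classical subtlety that local confluence alone (without termination) does not suffice.
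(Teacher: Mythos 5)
The paper does not prove this theorem at all; it defers to the standard textbooks \cite{bo93springer,jan88eatcs}, and your three arguments (the strip lemma plus tiling for part (1), induction on the length of the zig-zag for part (2), and Newman's lemma via Noetherian induction for part (3)) are exactly the classical proofs found there, so the substance and structure are correct. One bookkeeping slip in part (2), of precisely the kind you warned about: with the paper's convention that $z \LAA y$ means $y \RAA z$, your two cases are swapped --- the case $z \LAA y$ gives $y \RAA z \RAS{*}{} w_1$ and is the one that extends trivially, whereas the case $z \RAA y$ produces the genuine peak $w_1 \LAS{*}{} z \RAA y$ that must be closed by confluence (what you call a ``peak'', namely $w_1 \LAS{*}{} z \LAA y$, is in fact a straight reduction from $y$ to $w_1$). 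With the two cases relabelled, the whole plan goes through.
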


\subsection{Presentations of Monoids and Groups}\label{sec:present} 
An \emph{alphabet} is simply a set, where
elements are called \emph{letters}. A \emph{word} $w$ is an element
in some $k$-fold Cartesian product $\Sig^k$ and $k$ is denoted as the length
$\abs w$. Frequently, we write $w = a_1 \cdots a_k$ to denote a word of length $k$.
The union $\Sig^* = \bigcup_{k\geq 0} \Sig^k$ is a monoid by
$$(a_1 \cdots a_k)\cdot (b_1 \cdots b_\ell) = a_1 \cdots a_kb_1 \cdots b_\ell.$$
The neutral element is the \emph{empty word}. It is the unique word of length $0$. 
According to the context the empty word is denoted either as $\eps$ or simply as $1$. 
The monoid $\Sig^*$ is \emph{free} over $\Sig$ because mappings from $\Sig$ to a monoid $M$ are in canonical one-to-one correspondence with \homos from 
$\Sig^*$ to $M$. If $\pi:\Sig^* \to M$ is surjective, then we call $\pi$ a \emph{presentation} of $M$ (by $\Sig^*$).

\newcommand\SGr{T}
Let $\SGr$ be a semigroup and $S \sse \SGr \times \SGr$ be a set of pairs. This defines a rewriting system
$\RAS{}S$ over $\SGr$ by 
$x \RAS{}S y$, if $x = u\ell v$ and $y = ur v$ for some 
$(\ell,r)\in S$. Thus, if a left-hand side $\ell$ of a rule $(\ell,r)\in S$
appears as a factor in a word $x \in \SGr$, then we can replace $\ell$ by the 
right-hand side $r$ and we obtain $y = ur v\in \SGr$. It is common to denote 
a rule $(\ell,r)\in S$ by $\ell \to r$.
Since $\DAS*{S}$ is an equivalence relation, we can form the set of classes. We 
let $\SGr/S = \set{[x]}{x \in \SGr}$, where $[x] = \set{y \in \SGr}{x \DAS*{S} y}$. 
The set $\SGr/S$ of equivalence classes becomes a semigroup $ \SGr/S$ by
$ [x]\cdot [y] = [xy]$.
The multiplication is well-defined because 
for $x \DAS*{S} x'$ and $y \DAS*{S} y'$ it holds: 
$$xy \DAS*{S} x'y \DAS*{S} x'y'.$$
The mapping $x\to [x]$ yields a canonical \homo $\pi:\SGr\to \SGr/S$. 
If $\SGr$ is a monoid (resp.\ group), then $\SGr/S$ is a monoid (resp.\ group), too. 
 
By a slight abuse of language we call $S \sse \SGr \times \SGr$ itself a rewriting system. (Moreover, properties like confluence transfer from $\RAS{}S$ to $S$.)
An element $x \in \SGr $ of the semigroup $\SGr$ is called \emph{irreducible} (w.\,r.\,t.\ $S$), if 
$x$ cannot be written in the form $x = u \ell v$ with $\ell \to r\in S$ and $u,v\in T$. The set of all irreducible elements is denoted by 
$\IRR(S)$. It is the set of elements where no left-hand side of $S$ can be applied to. If $S$ is confluent, then for every $x$ there is at most one element $\wh x \in \SGr$ with $x \RAS*S \wh x \in \IRR(S)$, and, if $S$ is terminating, then for every $x$ there is at least one element $\wh x \in \SGr$ with $x \RAS*S \wh x \in \IRR(S)$. Hence, if the system $S$ is convergent, then for every $x \in \SGr$ there is exactly 
one element $\wh x \in \SGr$ with $x \RAS*S \wh x \in \IRR(S)$. In the latter case, it follows that the canonical homo\-mor\-phism
$\pi:\SGr\to \SGr/S$ induces a bijection between $\IRR(S)$ and $\SGr/S$; and $\IRR(S)$
becomes a set of \emph{normal forms} for the quotient semigroup $\SGr/S$. 

In case we have $S \sse \SSig$, we call the set $S$ a \emph{\STS.} 
Thus, a \STS defines a quotient monoid $M= \Sig^*/S$ and a natural presentation 
$\pi:\Sig^*\to \Sig^*/S$. If we can choose $\Sig$ to be finite, then $M$ is called \emph{finitely generated}, and if, in addition, we can choose $S\sse \SSig$ to be finite, then $M$ is called \emph{finitely presented}.

If $G$ is a group and $R\sse G$ is a subset, then $\ggen R$ means the normal 
closure of $R$ and $G/\!\ggen R$ denotes the quotient group.
If we identify $R$ with the set of pairs $S = \set{(r, 1)\in G \times G} {r \in R}$, then
we have $G/\!\ggen R = G/S$. In the latter notation we view the system $S$ as a set of defining relations $\set{r = 1} {r \in R}$.

\begin{example}\label{ex:freeex}
Let $\Sig$ be a set and $\ov \Sig = \set{\ov a}{a \in \Sig}$ be a disjoint copy of $\Sig$. We extend $a \mapsto \ov a$ to an involution without fixed points on $\wt \Sig = \Sig \cup \ov \Sig$ by $\ov{\ov{a}} = a$. The following system is strongly confluent 
and terminating.
\begin{equation}\label{eq:freeny}
S= \set{a \ov a \to 1}{a \in \wt \Sig}
\end{equation}
The system $S$ defines the free group $F_{\Sig} = \wt \Sig^*/ S$ with basis $\Sig$.
The set $\IRR(S)$ is the set of \emph{freely reduced} normal forms.
\end{example}

\begin{example}\label{ex:freeex_dyck}
If $\Sig$ has $n$ elements, then the system $S$ of \prref{ex:freeex} uses 
$2n$ letters. It is possible, however, to use only $n+1$ letters. We show it for $n =2$. Let 
$F_{\{a,b\}}$ be the free group in two generators, choose a third letter $c$ and consider the so-called \emph{Dyck-system}
\begin{equation}\label{eq:dyck}
S_{\text{D}} = \os{abc \to 1, \, bca \to 1, \, cab \to 1}.
\end{equation}
Again, $S_{\text{D}}$ is strongly confluent 
and terminating, and we have $F_{\{a,b\}} = \os{a,b,c}^*/ S_{\text{D}}$.
Historically, this system $S_{\text{D}}$ was at the beginning of the 
abstract theory on free groups. It is very symmetric and there are no explicit 
``inverse letters''.
The system $S_{\text{D}}$ was used by Dyck in his classical papers \cite{dyck81,dyck83}. 
\end{example}

Our main focus is on finitely generated virtually free groups. 
\begin{definition}\label{def:virtfree}
A group is \emph{virtually free}, if it has a free subgroup of finite index. In our paper, we add the implicit assumption that 
a virtually free group is finitely generated. 
\end{definition}

\begin{example}\label{ex:vfreeex}
Let us show that virtually free groups have presentations by convergent string rewriting systems. 
We start with a virtually free $G$. Let $F_{\Sig}$ be a free subgroup of finite index, and 
$R\sse G$ be a set of representatives of right cosets. This means, we can write $G$ as a disjoint union $G = \overset{.}{\cup}\set{F_{\Sig}r}{r \in R}$. We may assume 
$1 \in R$ and we choose as generating set for $G$ the subset 
$\Del = \Sig \cup \Sig^{-1} \cup R \sm \os{1}$. For every pair $(a,b)\in \Del\times \Del$ 
let $w(a,b) \in (\Sig \cup \Sig^{-1})^*$ denote the freely reduced word and $r \in R$ such that $ab = w(a,b)r$ in $G$. This means $ab \in F_{\Sig}r$ and 
$w(a,b)$ is the freely reduced normal form for $abr^{-1}$. Define a \STS $S$ as follows.
\begin{equation}\label{eq:convVF}
S= \set{ab \to w(a,b)r}{a,b \in \Del, \; w(a,b) \neq a}
\end{equation}
The system $S$ is locally confluent and terminating. Hence, by \prref{thm:konvtoCR}, it is convergent. 
Moreover, we have $\Del^* / S = G$. Irreducible normal forms can be written as a product
$wr$ where $w$ is freely reduced over $(\Sig \cup \Sig^{-1})^*$ and $r\in R$.
\end{example}

\subsection{Preliminaries on Graphs}\label{sec:prelims_graphs}

This section fixes notation and recalls some basic properties of graphs. Readers who are interested mainly in formal language theory can jump directly to \prref{sec:FSfG}.

A \emph{directed graph} $\Gamma = (V,E,s,t)$ is given by the following data: 
A set of vertices $V= V(\Gamma)$, a set of edges $E=E(\GG)$ together with two
mappings $s:E \to V$ and $t:E \to V$. The vertex $s(e)$ is the \emph{source} of $e$ and $t(e)$ is the \emph{target} of $e$. A vertex $u$ and an edge $e$ are \emph{incident}
if $u \in \smallset{s(e), t(e)}$. Two vertices $u$ and $v$ are \emph{adjacent} 
if there is some $e\in E$ such that $\smallset{u,v} = \smallset{s(e), t(e)}$. The \emph{degree} of $u$ is the number of incident edges, and $\Gamma$ is called \emph{locally finite} if the degree of all vertices is finite. A graph is finite, if it has finitely many vertices and edges. A graph $\Gamma'= (V',E',s',t')$ is a \emph{subgraph} of $\Gamma= (V,E,s,t)$,
if $V'\sse V$, $E'\sse E$ and $s'$ and $t'$ are the restrictions of $s$ and $t$. For simplicity we write $\Gamma= (V,E)$ for a graph $\GG$ knowing that there are also the incidence functions.

An \emph{undirected graph} $\Gamma$ is a directed graph such that 
the set of edges $E$ is equipped with an involution $e \mapsto \ov e$ such that $e = \ov{\ov e} $, $e \neq \ov e $ and $s(e) = t(\ov e)$ for all $ e \in E$.
An \emph{undirected edge} is the set
$\smallset{e, \ov e}$. By abuse of language, we denote an undirected edge 
simply by $e$, too. We focus an graphs where the involution is
without fixed points.
In the following a \emph{graph} always means an undirected graph, otherwise 
we say specifically ``directed graph''. If $\Gam=(V,E)$ is an undirected graph, then $E^+$ refers to an orientation of $E$. 
Since $e \neq \ov e$ for all edges $e$, the set of edges $E$ is the disjoint union 
of $E^+$ and $E^-=\set{y\in E}{\ov y \in E^+}$. \Ip $2|E^+|= |E|$.

Frequently we consider graphs without loops (edges $e$ with $s(e)=t(e)$) and multi-edges (edges $e\neq f$ with $s(e)=s(f)$ and $t(e)=t(f)$). Such graphs are called \emph{simple}. For simple graphs we identify undirected edges with their sets of incident vertices. Hence, an edge given by $e$ yields the two element set $\smallset{s(e),t(e)}$. For simplicity, we also write $e=uv$ or $\smallset{u,v} \in E$ if $\smallset{s(e),t(e)} = \smallset{u,v}$ for some $e\in E$.

For $S\sse V(\GG)$ and $v \in V(\GG)$ define as usual in graph theory 
$\GG(S)$ (resp.{} $\GG - S$) to be the subgraph of $\GG$ which is induced by the vertex set $S$ (resp.{} $V(\GG) \sm S$) 
and $\GG - v = \GG - \{v\}$. We also write $\Comp S$ for the complement of $S$, \ie  $\Comp S = V(\GG) \sm S$.
Likewise for 
$e \in E(\GG)$ we let 
$\GG - e = (V(\GG), E(\GG) \sm \{e\})$.

A \emph{path} is a subgraph $(\{v_0,\ldots,v_n\}, \,\{e_1,\ldots,e_n\})$ such that $s(e_i) =v_{i-1}$ and $t(e_i)=v_{i}$ for all $1 \leq i \leq n$. It is \emph{simple} if 
the vertices are pairwise distinct. It is \emph{closed} if $v_0=v_n$.
A \emph{cycle} is a closed path with $n \geq 3$ such that $v_1,\ldots,v_n$ is a simple path. Depending on the situation we also denote paths simply by the sequence of edges or the sequence of vertices (e.\,g., when we consider simple graphs).
The \emph{distance} $d(u,v)$ between $u$ and $v$ is defined as the length (\ie  the number of edges) of a shortest path connecting $u$ and $v$. We let 
 $d(u,v) = \infty$ if there is no such path. 
A path $v_0,\dots,v_n$ is called \emph{geodesic} if $n= d(v_0,v_n)$. An infinite path is \emph{geodesic} if all its finite subpaths are geodesic. For $A,B\sse V(\Gamma)$ the distance is defined as $d(A,B) = \min\set{d(u,v)}{u\in A,v\in B}$. 
An undirected graph $\Gamma$ is called \emph{connected} if $d(u,v) < \infty$ for all vertices $u$ and $v$. 

A \emph{forest} is a simple graph without cycles. A \emph{tree} is a connected simple graph without any cycle (\ie  a connected forest). In particular, a tree is undirected. If $T =(V,E)$ is a tree, we may fix a \emph{root} $r \in V$. This gives an orientation $E^+ \sse E$ by directing all edges ``away from the root''. 
In this way a rooted tree becomes a directed graph $(V, E^+)$ which refers to the 
tree $T= (V,E^+\cup E^-)$, where $E^- = E\sm E^+$.

 A tree $T $ is called \emph{spanning tree} of a graph $\GG$ if $V(T) = V(\GG)$ and $E(T) \sse E(\GG)$, \ie  the tree $T$ connects all vertices of $\GG$.
 
 The following well-known statements will be used later. For proofs see e.\,g.\ \cite[Prop.\ I.9]{serre80} and
\cite[Lem.\ 7.1.3]{diestel06}.

\begin{lemma}\label{lem:spanning tree}
Every connected undirected graph has a spanning tree.
\end{lemma}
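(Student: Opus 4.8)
The plan is to invoke Zorn's lemma on the family of subtrees of $\GG$ that share a fixed base vertex. Fix $v_0\in V(\GG)$ and let $\mathcal{T}$ denote the set of all subgraphs $T\sse\GG$ which are trees with $v_0\in V(T)$, partially ordered by the subgraph relation. This set is nonempty, since the one-vertex graph $(\{v_0\},\es)$ belongs to it.

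Next I would verify that every chain in $\mathcal{T}$ has an upper bound, namely its union. Given a chain $(T_i)_{i\in I}$ in $\mathcal{T}$, put $T=(\bigcup_i V(T_i),\bigcup_i E(T_i))$ with the inherited incidence maps; this is a subgraph of $\GG$ containing $v_0$. It is connected: any two of its vertices lie in a common $T_i$ (here one uses that $I$ is totally ordered, so a finite set of indices has a largest element), hence are joined by a path inside $T_i\sse T$. It is acyclic: a cycle uses only finitely many edges, so it would already lie in some $T_i$, contradicting that $T_i$ is a tree. Thus $T\in\mathcal{T}$ and it is an upper bound for the chain, so by Zorn's lemma $\mathcal{T}$ has a maximal element, which I will again call $T$.

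Finally I would show $V(T)=V(\GG)$, so that $T$ is a spanning tree. Suppose not and pick $w\in V(\GG)\sm V(T)$. Since $\GG$ is connected there is a path from $v_0$ to $w$; walking along it, let $u$ be the first vertex not in $V(T)$ and let $e$ be the edge of the path entering $u$, so that the other endpoint of $e$ lies in $V(T)$. Adjoining the vertex $u$ and the undirected edge $e$ to $T$ yields a subgraph $T'$ of $\GG$: it is connected because $u$ is linked through $e$ to the connected graph $T$, and it is still acyclic because $u$ has degree one in $T'$, so no cycle can pass through $u$, while $T$ itself had no cycle. Hence $T'\in\mathcal{T}$ with $T\ssnq T'$, contradicting the maximality of $T$. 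Therefore $V(T)=V(\GG)$, and $T$ is the desired spanning tree.

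The only delicate point is the passage to infinite graphs: one must check that the union of a chain of subtrees is again a tree — connectivity from the chain being totally ordered, acyclicity from cycles being finite — and that maximality in $\mathcal{T}$ forces the tree to reach every vertex. For a finite graph one could instead argue by induction on the number of edges, deleting an edge lying on a cycle, but the Zorn argument treats both cases uniformly.
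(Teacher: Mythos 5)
Your Zorn's-lemma argument is correct and is exactly the standard proof the paper delegates to its references (Serre's Prop.\ I.9 proves the existence of a maximal subtree this way and then shows maximality forces it to contain every vertex). Both key verifications — that a union of a chain of subtrees is again a tree because cycles are finite and the chain is totally ordered, and that a non-spanning maximal tree could be enlarged by one edge along a path leaving it — are handled properly, and the use of choice is consistent with the paper's remark that the statement is equivalent to the axiom of choice.
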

\begin{remark}
 In fact, an easy exercise shows that the existence of a spanning tree is equivalent to the axiom of choice.
\end{remark}

 \begin{lemma}[K\"onigs Lemma]\label{lem:konig}
Let $T$ be an infinite locally finite tree. Then there is an infinite geodesic path in $T$.
\end{lemma}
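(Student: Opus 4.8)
The plan is to prove König's Lemma by constructing the infinite geodesic path vertex by vertex, always staying inside an infinite subtree. First I would pick the root $r$ of $T$ (or any vertex, declaring it the root) and set $v_0 = r$. The key invariant I want to maintain is: having chosen $v_0, \ldots, v_n$ forming a geodesic path from the root, the ``branch'' hanging below $v_n$ — that is, the connected component of $T - \{v_{n-1}v_n\text{-edge}\}$ (or of $T - v_{n-1}$) containing $v_n$ — is still infinite. Since $T$ is locally finite, $v_n$ has only finitely many neighbours, and deleting $v_n$ from its infinite branch splits that branch into finitely many components whose vertex sets, together with $\{v_n\}$, partition an infinite set; hence at least one of these components is infinite. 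I pick $v_{n+1}$ to be the neighbour of $v_n$ lying in such an infinite component, and this re-establishes the invariant.

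The main thing to check is that the resulting ray $v_0, v_1, v_2, \ldots$ is actually a \emph{geodesic} path in the sense defined in the excerpt, i.e.\ that every finite subpath is geodesic. This follows because in a tree there is a unique simple path between any two vertices, so the unique path from $v_i$ to $v_j$ ($i < j$) is automatically the shortest one; and by construction $v_i \ne v_j$ for $i \ne j$, since each $v_{n+1}$ is chosen from the component of $T$ minus the earlier part of the path, so the path never revisits a vertex. Thus $d(v_i, v_j) = j - i$ for all $i \le j$, which is exactly the geodesic condition. I should also note explicitly that the path is infinite — this is immediate because the construction never terminates: at each stage the relevant branch is infinite, hence nonempty beyond $v_n$, so a next vertex always exists.

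The one genuinely delicate point — and the place where a reader might want care — is the base case and the passage from ``$T$ is infinite'' to ``some branch at $v_0$ is infinite''. Here one uses that $V(T) = \{v_0\} \cup \bigcup (\text{vertex sets of components of } T - v_0)$ is a finite union (local finiteness of $v_0$) of sets whose total size is infinite, so one component is infinite; this is the only spot where an (implicit) use of a choice principle or just the pigeonhole-for-finite-unions enters, and it recurs at every step. Everything else is bookkeeping: formalising the invariant as an induction hypothesis and observing that ``unique paths in trees'' upgrades the constructed ray to a geodesic. I do not expect any serious obstacle; the proof is short once the invariant ``the branch below the current endpoint is infinite'' is isolated as the right thing to carry along.
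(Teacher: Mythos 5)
Your proof is correct: the invariant ``the branch below the current endpoint is infinite'' together with the pigeonhole argument at each locally finite vertex is exactly the standard proof of K\"onig's Lemma, and your observation that uniqueness of simple paths in a tree upgrades the constructed ray to a geodesic is the right way to match the paper's definition of an infinite geodesic path. The paper itself gives no proof of this lemma (it only cites \cite{serre80} and \cite{diestel06}), so there is nothing to compare against; your argument fills that gap correctly.
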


\subsubsection{Graph morphisms and Group Actions}\label{sec:gmga}
 Let $\GG, \GG'$ be graphs. A \emph{morphism of graphs} $\psi$ between $\GG$ and $\GG'$ is a function $\psi: V(\GG) \cup E(\GG) \to V(\GG') \cup E(\GG')$ with $\psi(V(\GG))\sse V(\GG')$ and $\psi(E(\GG))\sse E(\GG')$ which respects the incidences and involution, \ie  which satisfies $s(\psi(e)) = \psi(s(e))$, $t(\psi(e)) = \psi(t(e))$ and $\psi(\ov e) = \ov{\psi(e)}$ for every edge $e\in E(\GG')$. 

A graph morphism is called \emph{locally injective} if for every vertex it is injective on the edges leaving that vertex, \ie  if for all $e_1,e_2\in E(\GG)$, we have that $s(e_1) = s(e_2)$ and $\psi(e_1) = \psi(e_2)$ implies that $e_1 = e_2$. If a morphism of graphs is surjective, it is called an \emph{epimorphism} and if it is bijective it is called an \emph{isomorphism}. 

The graph isomorphisms $\GG \to \GG$ with composition form a group $\Aut(\Gamma)$. An action of a group $G$ on $\GG$ is a homomorphism $\alp:G\to \Aut(\Gamma)$. With other words, an action of a group on a graph consists of actions on the vertex set and on the edge set which respect both incidence functions and the involution. Writing $gx$ instead of $\alp(g)(x)$, we obtain $s(g e) = g  s(e)$, $t(g e) = g t(e)$ and $g \, \ov e = \ov {g e}$ for every edge $e\in E(\GG)$.

If $G$ acts on $\GG$, then we can define
the quotient graph $G \bs \GG$: its vertices (resp.~edges) are the 
\emph{orbits} $G\cdot u$ for $u \in V(\GG)$ (resp.~$G\cdot e$ for $e \in E(\GG)$) with incidences $s(G\cdot e)= G\cdot s(e)$ and $t(G\cdot e)= G\cdot t(e)$.
We say that \emph{$G$ acts with finitely many orbits} if $G \bs \GG$ is a finite graph. 

For $x\in V(\GG') \cup E(\GG')$ the stabilizer of $x$ is denoted by $G_x = \set{g\in G}{gx=x}$.
We call an action $G\to \Aut(\Gamma)$ \emph{free} if all vertex stabilizers are trivial, \ie  if $gx=x$ implies that $g=1$ for $x\in V(\GG') \cup E(\GG')$.

If we want a group to act freely on a graph viewed as topological space, we have to require additionally $ge \neq \ov e$ for all $e \in E(\GG), g \in G$.
We say an action is \emph{without inversion} if $ge \neq \ov e$ for every $g\in G$ and $e\in E(\GG)$.

In the following we will consider actions without inversion. 
However, note that this is not a real restriction since by passing to the barycentric subdivision of a graph we always can construct an action without inversion.
Here, the barycentric subdivision of an undirected graph is obtained by putting an additional vertex on every edge, \ie  for a graph $\GG$ we construct the barycentric subdivision $\GG'$ by $V(\GG') = V(\GG) \cup E(\GG) / \set{e = \ov e}{ e \in E(\GG)}$ and $E(\GG') = E(\GG) \times \smallset{0,1}$ with $s(e,0) = s(e)$, $t(e,0) = e$, $s(e,1) = e$, $t(e,1) = t(e)$ and $\ov{(e,i)} = (\ov e, 1-i)$.

\begin{lemma}\label{lem:locallyinj}
Let $T$ be a tree and $\GG$ a connected graph. Let $\psi:\GG \to T$ a surjective, locally injective graph morphism. Then $\psi$ is an isomorphism.
\end{lemma}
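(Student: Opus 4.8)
The plan is to show $\psi$ is both injective and surjective on vertices, and then conclude injectivity on edges follows from local injectivity together with the fact that $\psi$ is edge-surjective onto the tree. Surjectivity of $\psi$ (on vertices and edges) is already assumed, so the real content is injectivity. First I would observe that a locally injective morphism onto a tree never ``folds'' a path back on itself: if $v_0, e_1, v_1, \dots, e_n, v_n$ is a path in $\GG$ with no backtracking (meaning $e_{i+1} \neq \ov{e_i}$ for all $i$), then its image is a path in $T$ with no backtracking either, because $\psi(e_{i+1}) = \ov{\psi(e_i)}$ would force $e_{i+1} = \ov{e_i}$ by local injectivity at $v_i$ (note $\psi(\ov{e_i}) = \ov{\psi(e_i)}$, so $\psi$ maps both $e_{i+1}$ and $\ov{e_i}$ to the same edge leaving $\psi(v_i) = t(\psi(e_i))$). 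Since a non-backtracking path in a tree is automatically simple (a tree has no cycles), the image path visits pairwise distinct vertices; in particular $\psi(v_0) \neq \psi(v_n)$ whenever $v_0 \neq v_n$ are joined by a non-backtracking path.

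Next I would use connectivity of $\GG$ to produce such non-backtracking paths between arbitrary distinct vertices. Given $u \neq w$ in $V(\GG)$, take any path from $u$ to $w$ (exists since $\GG$ is connected) and reduce it by repeatedly deleting adjacent pairs $e_i, \ov{e_i}$; the result is still a path from $u$ to $w$, now non-backtracking. If this reduced path is empty then $u = w$, contradiction, so it is a genuine non-backtracking path and the previous paragraph gives $\psi(u) \neq \psi(w)$. Hence $\psi$ is injective on vertices, and being also surjective on vertices it is a bijection $V(\GG) \to V(T)$.

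For the edges: $\psi$ is surjective on edges by hypothesis. For injectivity, suppose $\psi(e) = \psi(f)$ with $e, f \in E(\GG)$. Applying the vertex bijection to $s(\psi(e)) = \psi(s(e))$ and $t(\psi(e)) = \psi(t(e))$ (and likewise for $f$) forces $s(e) = s(f)$ and $t(e) = t(f)$. Now local injectivity at the common source vertex $s(e) = s(f)$ gives $e = f$. Therefore $\psi$ is bijective on both vertices and edges, and since it respects incidences and the involution by definition of a morphism, $\psi$ is an isomorphism.

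The main obstacle is really just the bookkeeping in the first paragraph: making precise that local injectivity rules out backtracking in the image, and invoking the defining property of a tree (no cycles, equivalently every non-backtracking walk is simple) at the right moment. Everything else is routine once vertex-bijectivity is in hand. One should double-check the edge-orientation conventions so that ``the edge leaving $\psi(v_i)$'' is unambiguous, but the involution axiom $\psi(\ov e) = \ov{\psi(e)}$ handles this cleanly.
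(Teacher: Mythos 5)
Your proof is correct. The paper does not actually prove this lemma but only cites Serre (\emph{Trees}, Lem.~I.5); your argument --- local injectivity prevents folding, so reduced (non-backtracking) walks map to reduced walks, which in a tree are simple, giving vertex injectivity, after which edge injectivity follows from local injectivity at the common source --- is exactly the standard proof of that cited fact.
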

\begin{proof}
 See e.\,g.\ \cite[Lem.\ I.5]{serre80}.
\end{proof}

\section{Formal Language Theory for Groups}\label{sec:FSfG}
Throughout this section 
all alphabets are assumed to be finite and $G$ denotes a finitely generated group. 
The \emph{\wp} of $G$ is defined with respect to a surjective 
\homo $\pi: \Sig^*\to G$. 
We let $$\WPP{G}{\pi} = \set{w \in \Sig^*}{\pi(w) = 1}.$$
When the homomorphism $\pi$ is clear from the context or it is not important we write simply $\WP G$.
The word problem of $G$ is decidable \IFF and $\WP G \sse \Sig^*$ is a decidable 
language. This means there in algorithm which on input $w \in \Sig^*$ decides whether
$w \in \WP G$. If yes, then $\pi(w) =1$ in $G$. Otherwise, if no, then $\pi(w) \neq 1$ in $G$. We are interested only in properties, like decidability of the \wp, which 
do not depend on the chosen presentation $\pi$, but on $G$, only. 

In the following, we discuss some few families $\cC$ of formal languages $L \sse \Sig^*$, where
$\cC$ is closed under 
inverse \homos. This means, if $h:\Sig^* \to \Delta^*$ is a \homo between free monoids and 
$L \in \cC$ with $L \sse \Delta^*$, then $h^{-1}(L) \in \cC$, too. 
All language classes in the so-called Chomsky hierarchy are closed under inverse \homos. We focus on the lower levels in this hierarchy. These are regular and context-free
languages. 

\subsection{Regular Languages}

We start with the notion of regular language, but we do it in broader context and consider subsets of some arbitrary monoid $M$.
Therefore, we distinguish between recognizable and 
rational subsets of $M$. A subset $L \sse M$ is called 
\emph{recognizable}, if there is \homo $h:M\to N$ to a finite 
monoid $N$ such that $h^{-1}(h(L)) = L$. 

The class of recognizable sets is obviously closed under inverse \homos.
It is a Boolean algebra, this means it is closed under finite union and complementation. 
There is an easy description of recognizable subsets in groups: $L$ is a recognizable subset in a group $G$ \IFF there is a subgroup $H$ of finite index
such that $L$ is a union of left cosets $gH$ with $g \in G$. 
In particular, the one-element set $\os{1}$ is recognizable \IFF the group is finite. 

Therefore, recognizable subsets of groups are not a very interesting class.
In the context of groups another definition is more interesting. Let $M$ be again an arbitrary monoid.
The class $\Rat(M)$ of \emph{rational subsets} is inductively defined as follows (see also Section 1.1 of \refPedro). 
\begin{itemize}
\item Finite subsets of $M$ are rational.
\item If $K,L \in \Rat(M)$, then $K\cup L \in \Rat(M)$.
\item If $K,L \in \Rat(M)$, then $K\cdot L = 
\set{xy \in M}{x \in K, y \in L} \in \Rat(M)$. 
\item If $L\in \Rat(M)$, then the generated submonoid $L^*\in \Rat(M)$. 
\end{itemize}

Note that here $L^*$ does not denote the free monoid, but the submonoid generated by $L$. 
It will become clear from the context what we mean.

A classical result in formal language theory is Kleene's Theorem (\prref{thm:kleene}) stating that in finitely generated free monoids the recognizable and rational sets coincide. Moreover, they also can be characterized by finite automata.

A \emph{non-deterministic finite automaton} is a tuple $\cA= (Q,\Sig, \del, I, F)$, 
where $Q$ is a finite set of states, $\Sig$ is a finite alphabet, 
$\del \sse Q \times \Sig \times Q$ is a transition relation, 
$I \sse Q$ is a set 
of initial states, and $F\sse Q$ is a set of final states. 
We can view $\cA$ as an edge labeled finite directed graph. The automaton accepts 
a word $w = a_1\cdots a_n$ if there is path labeled by $w$ starting in some initial state $p \in I$ and ending in a final state $q \in F$. 
In pictures, initial states are indicated by incoming arcs without source
and final states have an inner circle. The following automaton is non-deterministic, because the initial state has two outgoing arcs labeled by $a$. It 
accepts the language $\os{a,b}^*a$ of words, where the last letter is $a$. 
\begin{center}
\begin{tikzpicture}[>=stealth'] 
 \draw (0,0) 
 node[inner sep=0pt,minimum size=8mm,circle,fill=DMnormalbackcolor,draw] (i) {};
 \draw (i) +(180:11mm) node (init) {};
 \draw (2,0) 
 node[inner sep=0pt,minimum size=8mm,circle,double,fill=DMnormalbackcolor,draw] (f) {};
 \draw (f) +(0:11mm) node {};
 \draw[->] (init) -- (i);
 \draw[->] (i) .. controls (0.6,0.45) and (1.4,0.45) .. node[above] {$a$} (f);
 \draw[->] (f) .. controls (1.4,-0.45) and (0.6,-0.45) .. node[above] {$b$} (i);
 
 \draw (i) +(120:14mm) node (s) {};
 \draw (i) +(60:14mm) node (t) {};
 \draw[->] (i) .. controls (t) and (s) .. node[above] {$a,b$} (i);
\end{tikzpicture}
\end{center}
\vspace*{-0.2\baselineskip}

If $\abs I \leq 1$ and 
if for every state $p\in Q$ and letter $a \in \Sig$ there is at most one 
transition $(p,a,q) \in \del$, then the automaton is called \emph{deterministic}. 
The following automaton is deterministic and it accepts again the 
language $\os{a,b}^*a$.

\begin{center}
\begin{tikzpicture}[>=stealth'] 
 \draw (0,0) 
 node[inner sep=0pt,minimum size=8mm,circle,fill=DMnormalbackcolor,draw] (i) {};
 \draw (i) +(180:11mm) node (init) {};
 \draw (2,0) 
 node[inner sep=0pt,minimum size=8mm,circle,double,fill=DMnormalbackcolor,draw] (f) {};
 \draw (f) +(0:11mm) node {};
 \draw[->] (init) -- (i);
 \draw[->] (i) .. controls (0.6,0.45) and (1.4,0.45) .. node[above] {$a$} (f);
 \draw[->] (f) .. controls (1.4,-0.45) and (0.6,-0.45) .. node[above] {$b$} (i);
 
 \draw (i) +(120:14mm) node (s) {};
 \draw (i) +(60:14mm) node (t) {};
 \draw (f) +(120:14mm) node (fs) {};
 \draw (f) +(60:14mm) node (ft) {};

 \draw[->] (i) .. controls (t) and (s) .. node[above] {$b$} (i);
 \draw[->] (f) .. controls (ft) and (fs) .. node[above] {$a$} (f);
\end{tikzpicture}
\end{center}
\vspace*{-0.2\baselineskip}

\begin{theorem}[Kleene]\label{thm:kleene}
Let $\Sig^*$ be a finitely generated free monoid and $L \sse \Sig^*$ be some language. Then the following statements are equivalent:
\begin{enumerate}
 \item $L$ is recognizable.\label{rec}
 \item $L$ is accepted by some deterministic finite automaton.\label{dfa}
 \item $L$ is accepted by some non-deterministic finite automaton.\label{nfa}
 \item $L$ is rational.\label{rat}
\end{enumerate}

\end{theorem}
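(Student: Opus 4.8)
The plan is to prove Kleene's Theorem by establishing the chain of implications $(\ref{rec}) \Rightarrow (\ref{nfa}) \Rightarrow (\ref{dfa}) \Rightarrow (\ref{rec})$ together with $(\ref{rat}) \Leftrightarrow (\ref{nfa})$, so that all four statements become equivalent. The implication $(\ref{dfa}) \Rightarrow (\ref{nfa})$ is trivial since a deterministic automaton is a special case of a non-deterministic one, so the real content lies in the determinization step $(\ref{nfa}) \Rightarrow (\ref{dfa})$, in the two translations between automata and the monoid-theoretic notion of recognizability, and in the equivalence with rational expressions.

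\textbf{From recognizable to an automaton.} Suppose $L \subseteq \Sig^*$ is recognizable, witnessed by a homomorphism $h : \Sig^* \to N$ into a finite monoid with $h^{-1}(h(L)) = L$. I would build a deterministic automaton directly: take the state set to be $N$, the initial state $1 \in N$, the transition $(n, a) \mapsto n \cdot h(a)$, and final states $h(L) \subseteq N$. An induction on word length shows the state reached after reading $w$ is exactly $h(w)$, so $w$ is accepted iff $h(w) \in h(L)$, i.e. iff $w \in h^{-1}(h(L)) = L$. This simultaneously handles $(\ref{rec}) \Rightarrow (\ref{dfa})$ (hence $\Rightarrow (\ref{nfa})$).

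\textbf{From an automaton back to recognizable.} Conversely, given a non-deterministic automaton $\cA = (Q, \Sig, \del, I, F)$, I would pass to the transition monoid: map each $a \in \Sig$ to the relation $R_a \subseteq Q \times Q$ given by $\del$, extend multiplicatively to a homomorphism $h$ from $\Sig^*$ into the finite monoid of binary relations on $Q$ (with composition), and check that $w$ is accepted iff $h(w)$ contains a pair $(p,q)$ with $p \in I$, $q \in F$. Setting $\cF$ to be the set of such relations, one gets $L = h^{-1}(\cF)$ with $h^{-1}(h(L)) = L$, so $L$ is recognizable; this closes the loop $(\ref{nfa}) \Rightarrow (\ref{rec})$. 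Determinization $(\ref{nfa}) \Rightarrow (\ref{dfa})$ can be obtained either as a corollary of the two steps just described, or directly via the classical subset construction on $2^Q$; I would simply invoke the former to keep things short.

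\textbf{Rational sets.} For $(\ref{rat}) \Rightarrow (\ref{nfa})$ I would induct on the construction of a rational expression: a finite set is accepted by a finite acyclic automaton, and the union, product, and star operations are realized by the standard automata-combining constructions (disjoint union of automata for $\cup$; linking final states of the first to initial states of the second via $\eps$-moves, or a careful merge, for $\cdot$; adding a loop back to the start for ${}^*$). One should allow $\eps$-transitions here for convenience and note they can be eliminated. For the reverse direction $(\ref{nfa}) \Rightarrow (\ref{rat})$, the cleanest route is the transitive-closure / state-elimination method: define $L_{p,q}^{(k)}$ to be the set of words labeling paths from $p$ to $q$ using intermediate states only from the first $k$ states, prove by induction on $k$ that each $L_{p,q}^{(k)}$ is rational via $L_{p,q}^{(k)} = L_{p,q}^{(k-1)} \cup L_{p,k}^{(k-1)} \bigl(L_{k,k}^{(k-1)}\bigr)^* L_{k,q}^{(k-1)}$, and conclude that $L = \bigcup_{p \in I, q \in F} L_{p,q}^{(|Q|)}$ is rational.

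\textbf{Main obstacle.} None of the individual steps is deep, but the step requiring the most care is the translation between automata and finite monoids — specifically verifying that the transition-monoid homomorphism genuinely satisfies $h^{-1}(h(L)) = L$ (not merely $L = h^{-1}(\cF)$ for some set $\cF$), and dually that the monoid-to-automaton construction tracks $h(w)$ correctly. The other routine-but-fiddly point is handling $\eps$-transitions consistently in the rational-to-automaton direction; I would state once that $\eps$-moves can always be removed (by saturating transitions along $\eps$-paths) and not belabor it.
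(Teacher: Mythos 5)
Your proposal is correct and follows essentially the same route as the paper: the monoid-to-DFA construction with state set $N$ and transitions $(n,a)\mapsto n\cdot h(a)$, the transition monoid of binary relations on $Q$ (which is exactly the paper's monoid $\B^{n\times n}$ of Boolean matrices) for the converse, and the $L_{i,j}^k$ dynamic-programming recursion together with structural induction on rational expressions for the equivalence with rationality. The only difference is cosmetic bookkeeping in how the implications are chained.
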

A language $L \sse \Sig^*$ meeting one of these conditions is also called \emph{regular}. 

\begin{proof}
First, we show that \ref{rec} implies \ref{dfa}. Let $h:\Sig^*\to N$ be a \homo to a finite 
monoid $N$ recognizing $L$, \ie  $h^{-1}(h(L)) = L$. 
We can view $(N,\Sig, \del, \os{1}, h(L))$ as a deterministic finite 
automaton accepting $L$ if we define 
$$\del = \set{(n, a, n\cdot h(a))}{n \in N, \; a \in \Sig}.$$

The step from \ref{dfa} to
\ref{nfa} is immediate.
Hence, let $\cA = (Q,\Sig, \del, I, F)$ be any (non-deterministic) finite automaton accepting $L \sse \Sig^*$. We are going to show that $L$ is recognizable.
Without restriction the states of the automaton 
are numbers in $Q= \os{1 \lds n}$. A letter $a \in \Sig$ changes states, hence, the effect of reading a letter yields a Boolean $n\times n $ matrix $M(a)$, where $M(a)_{i,j}=1$ if and only if $(i,a,j)\in \delta$.
Reading a word $w = a_1\cdots a_n$ amounts to multiplying $M(w) = M(a_1)\cdots M(a_n)$
in the finite monoid $\B^{n\times n}$ of Boolean $n\times n $ matrices.
Now, if $v,w, \in \Sig^*$ with $v \in L$ and $M(w) = M(v)$, then $w \in L$, too. 
Hence, $\B^{n\times n}$ recognizes $L$.

Finally, we are going to show the equivalence between \ref{nfa} and \ref{rat}, \ie  that $L$ is accepted by a finite automaton if and only if $L$ is rational.
Let $\cA = (Q,\Sig, \del, I, F)$ be a finite automaton accepting $L \sse \Sig^*$.
We show that $L$ is rational by a 
\emph{dynamic-programming} paradigm. Again, we may assume that 
$Q= \os{1 \lds n}$. For $i,j \in Q$ and 
$0 \leq k \leq n$ we let $L_{i,j}^k$ to be the language which is accepted when considering $i$ as the only initial state and $j$ as the only final state and restricting the path between $i$ and $j$ to use states $q\in Q$ with $q \leq k$, only. In particular, 
$$L_{i,j}^0 = \begin{cases}
 \smallset{\eps} \cup \set{a \in \Sig}{(i,a,i) \in \del }&\text{for } i=j,\\
 \set{a \in \Sig}{(i,a,j) \in \del }&\text{for } i\neq j.
 \end{cases}
$$
The finite union $L= \bigcup\set{L_{i,j}^n}{i \in I,\, j \in F}$ is the accepted language of 
$\cA$. Hence, in order to show that $L$ is rational, it is enough to see that 
all $L_{i,j}^k$ are rational. This is now straightforward by induction since
$$L_{i,j}^k = L_{i,j}^{k-1} \cup L_{i,k}^{k-1} (L_{k,k}^{k-1})^* L_{k,j}^{k-1}.$$

For the proof that \ref{rat} implies \ref{nfa}, we start 
with a rational expression for $L$. By structural induction on the expression it is an easy exercise to design a non-deterministic finite 
automaton, accepting $L$. 
\end{proof}

\begin{example}
Consider the following non-deterministic finite automaton
accepting the all words over $\os{a,b}$ ending in a positive even number of $a$'s.
\begin{center}
\begin{tikzpicture}[>=stealth']
 \draw (0,0) 
 node[inner sep=0pt,minimum size=8mm,circle,fill=DMnormalbackcolor,draw] (i) {};
 \draw (i) +(180:11mm) node (init) {};
 \draw (2,0)
 node[inner sep=0pt,minimum size=8mm,circle,fill=DMnormalbackcolor,draw] (q) {};
 \draw (4,0) 
 node[inner sep=0pt,minimum size=8mm,circle,double,fill=DMnormalbackcolor,draw] (f) {};
 \draw (f) +(0:11mm) node {};
 \draw[->] (init) -- (i);
 \draw[->] (i) .. controls (0.6,0.4) and (1.4,0.4) .. node[above] {$a$} (q);
 \draw[->] (q) .. controls (1.4,-0.4) and (0.6,-0.4) .. node[above] {$a,b$} (i);
 \draw[->] (q) .. controls (2.6,0.4) and (3.4,0.4) .. node[above] {$a$} (f);
 \draw[->] (f) .. controls (3.2,-1.1) and (0.8,-1.1) .. node[below] {$b$} (i);
 \draw[->] (f) .. controls (3.4,-0.4) and (2.6,-0.4) .. node[above] {$a$} (q);
 \draw (i) +(120:14mm) node (s) {};
 \draw (i) +(60:14mm) node (t) {};
 \draw[->] (i) .. controls (t) and (s) .. node[above] {$b$} (i);
\end{tikzpicture}
\end{center}
\vspace*{-0.3\baselineskip}

If we number the states from left to right by $1,2,3$, then we obtain the
following Boolean $3 \times 3$ matrices. 
$$ M(a) = \VDmatrix 010 101 100 \quad M(b) = \VDmatrix 100 100 100 $$
A word is accepted by the automaton \IFF the upper right entry in the matrix
$M(w)$ is equal to $1$. The entry means, it is ``true'' that there is a $w$-labeled path from state $1$ to state $3$. 
\end{example}

\begin{corollary}[McKnight]\label{thm:mcknight}
 A monoid $M$ is finitely generated \IFF every recognizable subset is rational.
\end{corollary}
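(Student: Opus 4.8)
The plan is to prove both implications directly: for ``finitely generated $\Rightarrow$ every recognizable set is rational'' I would pull everything back to a free monoid and apply Kleene's Theorem (\prref{thm:kleene}), and for the converse I would observe that $M$ itself is a recognizable subset of $M$ and analyse what a rational set can look like.

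\textbf{Forward direction.} Fix a surjective homomorphism $\pi:\Sig^* \to M$ with $\Sig$ finite, and let $L \sse M$ be recognizable. Since the class of recognizable sets is closed under inverse homomorphisms, $\pi^{-1}(L) \sse \Sig^*$ is recognizable, hence regular, hence rational by \prref{thm:kleene}. It then remains to see that $\pi$ maps rational sets to rational sets; once this is available, surjectivity of $\pi$ gives $L = \pi(\pi^{-1}(L)) \in \Rat(M)$. The statement that homomorphic images of rational sets are rational is proved by structural induction on a rational expression: a monoid homomorphism sends finite sets to finite sets and commutes with the three operations $K \cup L$, $K\cdot L$ and $L^*$ (with $L^*$ meaning the generated submonoid), so the image of a rational set is again built from finite sets by these operations.

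\textbf{Converse.} Suppose every recognizable subset of $M$ is rational. The monoid $M$ is itself a recognizable subset of $M$: the trivial homomorphism onto the one-element monoid witnesses $h^{-1}(h(M)) = M$. Hence $M \in \Rat(M)$. Now I would prove, again by structural induction, that every rational subset of $M$ is contained in a finitely generated submonoid: a finite set $F$ lies in $\gen F$, and if $K \sse \gen A$ and $L \sse \gen B$ with $A,B$ finite, then $K \cup L, K\cdot L \sse \gen{A \cup B}$ and $L^* \sse \gen B$. Applying this to the rational set $M$ yields a finite $A \sse M$ with $M \sse \gen A$, hence $M = \gen A$, so $M$ is finitely generated.

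The mathematical content is entirely contained in the two structural inductions, both of which are routine; the only thing to be careful about is notational, namely keeping the two meanings of $L^*$ apart (free monoid over $L$ versus submonoid generated by $L$) and noting that surjectivity of $\pi$ is precisely what lets one recover $L$ from $\pi^{-1}(L)$ in the forward direction. I do not expect any serious obstacle.
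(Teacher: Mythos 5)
Your proposal is correct and follows essentially the same route as the paper's proof: recognizable sets are pulled back along a finite presentation $\pi:\Sig^*\to M$ and Kleene's theorem is applied, with the observation that rational sets are closed under homomorphic images and that $L=\pi(\pi^{-1}(L))$ by surjectivity; for the converse, $M$ is recognized by the trivial homomorphism and every rational set lies in a finitely generated submonoid. The two structural inductions you flag are exactly the facts the paper invokes (in parenthetical remarks), so there is nothing to add.
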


\begin{proof}
Let every recognizable subset be rational. The monoid $M$ is recognized by the trivial homomorphism $M \to \smallset{1}$. Hence, by hypothesis, $M$ is rational. 
But every $L \in \RAT(M)$ is contained in some finitely generated submonoid of $M$. Hence, $M$ is finitely generated.
 
Now, let $M$ be finitely generated. Consider a presentation $\pi: \Sig^* \to M$ where $\Sig$ is finite. 
 Let $L \sse M$ be recognizable. Then $K = \pi^{-1}(L)\sse \Sig^*$ is a regular.
 By \prref{thm:kleene} we find a rational expression for $K$. 
 This gives a rational expression for $\pi(K) \sse M$. (Rational sets are closed under 
 \homos.) We have $L = \pi(K)$, hence the result.
 \end{proof}

\begin{corollary}[Anisimov]\label{thm:anisreg}
 A finitely generated group $G$ has a regular \wp \IFF $G$ is finite.
\end{corollary}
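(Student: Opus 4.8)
The statement has two directions, and the easy one I would dispatch first: if $G$ is finite, then the presentation $\pi\colon \Sig^* \to G$ is itself a \homo onto a finite monoid with $\WP G = \pi^{-1}(\smallset 1)$, so $\WP G$ is recognizable, and by Kleene's theorem (\prref{thm:kleene}) it is therefore regular. The content is the converse: regularity of $\WP G$ forces $G$ to be finite. My plan for this direction is to compare a recognizing \homo for $\WP G$ with the presentation $\pi$ itself, and show the former must refine the latter.

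So assume $\WP G$ is regular; by \prref{thm:kleene} it is recognizable, hence we may fix a \homo $h\colon \Sig^* \to N$ to a finite monoid $N$ with $h^{-1}(h(\WP G)) = \WP G$. The key step is the claim that $h(u) = h(v)$ implies $\pi(u) = \pi(v)$ for all $u,v\in\Sig^*$. To prove it, set $g=\pi(u)$ and — crucially using that $\pi$ maps onto the \emph{group} $G$ — pick a word $z\in\Sig^*$ with $\pi(z) = g^{-1}$. Then $\pi(uz)=1$, so $uz\in\WP G$ and thus $h(uz)\in h(\WP G)$. Since $h(vz) = h(v)h(z) = h(u)h(z) = h(uz)$, we get $vz\in h^{-1}(h(\WP G)) = \WP G$, i.e.\ $\pi(v)\pi(z)=1$, i.e.\ $\pi(v) = g = \pi(u)$.

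Granting the claim, the conclusion is immediate: $\pi$ factors through $h$, namely $\Sig^* \overset{h}{\longrightarrow} h(\Sig^*) \longrightarrow G$ with the second map sending $h(w)\mapsto\pi(w)$ (well defined by the claim, and a surjective \homo since $\pi$ is). Hence $G$ is a homomorphic image of the finite monoid $h(\Sig^*)\sse N$, and so $G$ is finite. The main obstacle is precisely this refinement claim; the idea that makes it go through is that in a group one may cancel, so appending a word representing $\pi(u)^{-1}$ converts the ``$\pi$-information about $u$'' into plain membership in $\WP G$, which $h$ already detects. (If a more hands-on argument is preferred, the same contradiction can be reached by a pumping/pigeonhole argument on a DFA for $\WP G$: an infinite finitely generated group contains elements of arbitrarily large word length, since the balls around $1$ are finite and, unless the group is finite, strictly increasing; pumping a loop inside a geodesic word of length exceeding the number of states yields a strictly shorter word representing the same element, a contradiction.)
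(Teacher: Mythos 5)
Your proof is correct and follows essentially the same route as the paper: both directions go through recognizability by a finite monoid together with Kleene's theorem, and your key claim that $h(u)=h(v)$ forces $\pi(u)=\pi(v)$ (via appending a word for $\pi(u)^{-1}$) is precisely the content behind the paper's appeal to its earlier remark that $\smallset{1}$ is recognizable in $G$ \IFF $\WP G$ is recognizable \IFF $G$ is finite. Your write-up just makes that delegated step explicit and self-contained.
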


\begin{proof}
 The singleton $\os{1}$ is a recognizable subset of $G$ \IFF and $G$ is finite.
 For a finitely generated group $\os{1}$ is recognizable \IFF $\WP G$ is recognizable
 which is equivalent of being rational or regular by \prref{thm:kleene}.
\end{proof}

Let $M$ be a monoid, then every finitely generated submonoid is rational, but the 
converse is false, in general. For example, consider $\N \times \N$ and
$N = \os {(0,0)} \cup \set{(m,n) \in \N \times \N}{m \geq 1}$. The submonoid $N$
is rational due the expression $N =(0,0) \cup (1,0)\cdot((0,0) \cup {(0,1)})^*$.
However, let $N'$ be generated by finitely many elements $(m_1,n_1) \lds (m_k,n_k)$ of $N$, then 
$(1,\max\os{n_1 \lds n_k} +1)$ belongs to $N\sm N'$. Such a situation is impossible 
for subgroups as Anisimov and Seifert showed. For the proof see \refPedro, Theorem 1.5.

\begin{proposition}[\cite{AnisimovS75}]\label{prop:ratgrup}
Let $G$ be a group and $H$ be a subgroup. Then $H$ is a rational set \IFF 
$H$ is finitely generated. 
\end{proposition}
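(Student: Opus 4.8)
The backward implication is immediate: if $H=\langle g_1,\dots,g_k\rangle$ then $H=\bigl(\{g_1^{\pm1},\dots,g_k^{\pm1}\}\bigr)^*\in\Rat(G)$, since the submonoid generated by a symmetric finite subset of a group is the subgroup it generates, and the $*$ of a finite set is rational by definition. So the content lies entirely in the forward implication, and my plan is to combine the automaton picture of rational subsets with a telescoping argument that produces a finite generating set lying \emph{inside} $H$.

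First I would observe that every rational subset of $G$ is a homomorphic image of a regular language: a rational expression for $H$ mentions only finitely many elements $g_1,\dots,g_k\in G$, so taking $\Sig=\{x_1,\dots,x_k\}$ with $\mu\colon\Sig^*\to G$, $\mu(x_i)=g_i$, and reading the same expression over $\Sig^*$ produces a regular $L\sse\Sig^*$ with $\mu(L)=H$, because $\mu$ commutes with union, product and $*$; this is the easy half of the monoid analogue of Kleene's theorem \prref{thm:kleene}. Now pick a finite automaton $\cA=(Q,\Sig,\del,I,F)$ with $L(\cA)=L$. Since $1\in H$ we have $L\neq\es$, so after discarding the states that lie on no accepting path we may assume $\cA$ is \emph{trim}, meaning every state sits on an accepting path. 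Using trimness, for each state $q$ I fix a word $\alp_q$ labelling a path from an initial state to $q$ and a word $\bet_q$ labelling a path from $q$ to a final state, with $\bet_q=\eps$ when $q\in F$, and I set $b_q=\mu(\bet_q)\in G$.

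The heart of the argument is to read off from $\cA$ a finite set of group elements that simultaneously generates $H$ and is contained in $H$. For a transition $e=(p,a,q)\in\del$ the words $\alp_p\bet_p$ and $\alp_p\,a\,\bet_q$ are both accepted, so $\mu(\alp_p\bet_p),\mu(\alp_p\,a\,\bet_q)\in H=\mu(L)$, and since $H$ is a subgroup the element $\eta_e:=\mu(\alp_p\bet_p)^{-1}\mu(\alp_p\,a\,\bet_q)=b_p^{-1}\mu(a)b_q$ lies in $H$ as well; likewise $b_s=\mu(\alp_s\bet_s)\in H$ for $s\in I$ (as $\alp_s=\eps$), while $b_f=1$ for $f\in F$. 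Now telescope along an accepting path $s=p_0\ra{a_1}p_1\ra{a_2}\cdots\ra{a_n}p_n=f$ with transitions $e_i=(p_{i-1},a_i,p_i)$: from $\mu(a_i)=b_{p_{i-1}}\eta_{e_i}b_{p_i}^{-1}$ one obtains $\mu(a_1\cdots a_n)=b_{p_0}\eta_{e_1}b_{p_1}^{-1}b_{p_1}\eta_{e_2}\cdots b_{p_n}^{-1}=b_s\,\eta_{e_1}\cdots\eta_{e_n}$, using $b_f=1$. Hence every element of $H=\mu(L)$ lies in the subgroup generated by the finite set $\set{b_s}{s\in I}\cup\set{\eta_e}{e\in\del}$; and conversely that set is contained in $H$, so the two subgroups coincide and $H$ is finitely generated.

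The step I expect to be the genuine obstacle — and the reason the proposition fails for submonoids, cf.\ the $\N\times\N$ example preceding the statement — is precisely that merely sandwiching $H$ inside \emph{some} finitely generated subgroup of $G$ is worthless, since subgroups of finitely generated groups need not be finitely generated. One has to manufacture a finite generating set whose members already lie in $H$, and the telescoping with the ``return words'' $b_q$ is what accomplishes this; the verification that each $\eta_e\in H$ is exactly where closure under inverses — that is, $H$ being a subgroup rather than just a submonoid — is used essentially.
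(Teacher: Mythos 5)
Your proof is correct, and it takes a genuinely different route from the paper's. The paper first reduces to the case where $G$ itself is finitely generated with a symmetric generating set $\Sig = \Sig^{-1}$, takes an $n$-state automaton for a language $L$ with $\pi(L)=H$, and then argues by induction on $|z|$ for $z \in L$: by the pigeonhole principle a word longer than $n$ factors as $z=uvw$ with a state repeated after $u$ and after $uv$, so $uw\in L$ and $\pi(z)=\pi(uv\ov u)\pi(uw)$, where $\ov u$ denotes the letterwise formal inverse; the finite generating set is $\set{uv\ov u}{\abs{uv}\leq n,\ \pi(uv\ov u)\in H}$, a set of bounded-length words. You instead trim the automaton, fix access and return words $\alp_q,\bet_q$ for each state, and attach one generator $\eta_e=b_p^{-1}\mu(a)b_q$ to each transition; the telescoping identity then expresses every accepted word directly, with no induction and no pumping. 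Both arguments hinge on the same point you correctly identify as the crux — the generators must be manufactured \emph{inside} $H$, which is exactly where closure under inversion is used and where the submonoid version breaks down (cf.\ the $\N\times\N$ example). What your version buys is an explicit generator count $\abs{I}+\abs{\del}$, freedom from the assumption $\Sig=\Sig^{-1}$ (you only need group inverses in $G$, not inverse letters in the alphabet), and a cleaner non-inductive verification; what the paper's version buys is that it avoids the choice of access/return words and keeps all generators as short words over the group's own generating set. One tiny point of bookkeeping in your write-up: $b_s\in H$ for $s\in I$ requires $\alp_s=\eps$, which you do arrange, but note this forces you to allow several initial states or to handle $I$ state by state; as stated this is fine.
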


\subsection{The Chomsky Hierarchy}

In this notes we are mainly interested in groups with a context-free \wp. Context-free languages are the Type-$2$ languages in the Chomsky hierarchy, whereas the regular languages correspond to Type-3. 
Before we focus on context-free languages we want to give an overview over the whole Chomsky hierarchy. In order to do so we develop the concept of grammars in general.

\begin{definition}
 A \emph{grammar} (of Type-$0$) is a tuple $\cG= (V,\Sig, P, S)$ with the following interpretation:%
 \begin{itemize}
 \item $V$ is the finite set of \emph{variables},
 \item $S \in V$ is the \emph{start symbol} or 
\emph{axiom},
\item $\Sig$ is the finite set of \emph{terminal symbols}, 
\item $P \sse ( V \cup \Sig)^* \times (V \cup \Sig)^*$ is a finite semi-Thue system, called the set of \emph{productions}.
 \end{itemize}
As for rewriting systems usual, we write 
$\links \to \rechts \in P$ if $(\links,\rechts) \in P$, and $\alp \links \bet \RAS{}P \alp \rechts \bet$ if $\links \to \rechts \in P$.
A grammar $\cG$ \emph{generates} the language 
$L(\cG) = \set{w \in \Sig^*}{S \RAS{*}P w}.$
\end{definition}

A grammar $\cG$ and the generated language $L(\cG)$ are called to be of 
Type-$i$ for $i = 0, 1, 2, 3$ if the following conditions hold:
\begin{itemize}
\item Type-0 (\emph{recursively enumerable}): There are no restrictions on $\links \to \rechts \in P$.
\item Type-$1$ (\emph{monotone} or \emph{context-sensitive}): All productions $\links \to \rechts\in P$ with one possible exception satisfy $\abs{\links} \leq \abs{\rechts}$. The exception is $S \to \eps$.
If this production is included in $P$, then $S$ must not occur in any right-hand-side of a production. 
\item Type-$2$ (\emph{context-free}): All productions $\links \to \rechts\in P$ satisfy $\abs \links \leq 1$. 
\item Type-3 (\emph{regular}): For for all $\links \to \rechts\in P$ we have $\links \in V$ and
$\rechts \in \Sig^*V \cup \os \eps$.
\end{itemize}
In examples we use some standard notation: $S$ is the axiom, capital letters $A,B,\dots$ denote variables and small letters $a,b,\dots$ denote terminal symbols. $A\to \alp_1\mid \cdots \mid \alp_k$ is 
the so-called Backus-Naur short-hand notation for denoting $k$ productions 
$A\to \alp_1 \lds A\to \alp_k$. 
Note that, although it is not immediately visible from the definition, the Type-$i$ languages form a subclass of the Type-$j$ languages for $i>j$.

Grammars are devices \emph{generating} a language, whereas 
``machines'' \emph{accept} languages. The following table gives an overview over the grammars and machines which generate resp.\ accept the different classes of the Chomsky hierarchy.

\begin{center}
\begin{tabular}{c|c|c}
Grammar Type & Languages & Machines\rule[-1.3ex]{0pt}{0ex} \\ \hline
$0$ & recursively enumerable & Turing machines \rule{0pt}{2.9ex} \\
$1$ & monotone, context-sensitive & linear bounded automata \\
$2$ & context-free &push-down automata \\
$3$ & regular & finite automata 
\end{tabular}
 \end{center}
Finite automata were introduced above. Later we will define
push-down automata. We do not define the other machines types because we only use push-down automata in the sequel.

From the characterization with machines it follows that
all classes of the Chomsky hierarchy are closed under inverse homomorphisms. 
The idea is that, if $h:\Sig^* \to \Delta^*$ is a \homo and $\cM$ a machine  accepting $L \sse \Delta^*$, one can construct a new machine accepting $h^{-1}(L)$ by simulating $\cM$ on input $h(w)$ for $w\in\Sig^*$. From the definition of the respective machine types it follows that this new machine is of the same type.
Therefore, the following definition makes sense and does not depend on the presentation.

\begin{definition}[Type-$i$ Group]\label{def:cfgroup}
A finitely generated group 
is called a \emph{Type-$i$} group if its \wp is a Type-$i$ language. It is called \emph{context-free} if it a Type-$2$ group (\ie  if its \wp is a \cfl).
\end{definition}

Type-3 languages are exactly the regular languages since every Type-3 grammar defines a finite automaton by viewing the variables as states and rules $A\to wB$ as transitions $(A,w,B) \in \delta$. The same way a finite automaton can be transformed into a Type-3 grammar.
This implies that the Type-3 groups are exactly the finite groups.

Examples for Type-$1$ groups are finitely generated linear groups, hyperbolic groups or abelian groups. There are also examples for Type-$1$ groups which are not finitely presented as the free metabelian group or the group in \prref{ex:nixfp}. No algebraic description for Type-$1$ groups is known. 

The Type-0 groups are the finitely generated groups which can be embedded into a finitely presented group by Higman's embedding theorem \cite[Thm. 7.1]{LS01}.

\subsection{Context-Free Languages}

\begin{example}\label{ex:firstcf}
We give some basic examples of context-free languages. None of them is regular. We sketch the proof for that in the first example, 
but similar arguments apply to both of them. 
\begin{itemize}
\item $S \to aSb\mid \eps$ generates the language $L= \set{a^n b^n }{n \geq 0}$. Let 
$h: \os{a,b}^* \to N$ recognize $L$, then we must have $h(a^m) \neq h(a^n)$ for all 
$m\neq n$. Hence, $N$ is infinite, and therefore $L$ is not regular. 
\item $S \to aSbS \mid \eps$ recognizes the set of words $w$ where the number of 
$a$'s is equal to the number $b$'s and where for every prefix of $w$ 
the number of 
$a$'s is not less than the number $b$'s. It is the set of words with correct bracketing, if $a$ is an opening bracket ``$($'' and $b$ is a closing bracket ``$)$''.
\end{itemize}
\end{example}

\begin{example}\label{ex:fgarecf}
Let $F_{\Sig}$ be the free group with basis $\Sig$. 
Then the \wp $\WP {F_{\Sig}}$ is generated by the following \cfg. 
$$S \to aSa^{-1}S\mid \eps \text{\quad for all } a \in \Sig \cup \Sig^{-1}$$
\end{example}

\begin{definition}[Chomsky Normal Form]\label{def:chomskyNF}
A context-free grammar is in \emph{Chomsky normal form}, if all productions are of the form $S\to \eps$, $A \to BC$ with $A,B,C \in V$ or $A\to a$ with $A\in V$ and 
$a \in \Sig$. If the rule $S\to \eps$ exists, then $S$ must not occur on any right-hand side of a production.
\end{definition}

It the following we use the well-known fact that context-free grammars can be transformed into Chomsky normal form. This also shows that the Type-$2$ languages form a subclass of the Type-$1$ languages.

Let $S = \alpha_0 \RAS{}P \alpha_1 \RAS{}P\cdots \RAS{}P \alpha_k = w$ be some derivation of a word $w$. We can view a derivation as a labeled rooted tree. Each node corresponds to a variable or terminal symbol occurring in the derivation. The root is labeled with the axiom $S$. The leaves are labeled with the letters from $w$ or the empty word. If a rule $A\to\rechts$ is applied in the derivation, then the corresponding node has one child for every symbol of $\rechts$.

\begin{example}
We consider the grammar of \prref{ex:firstcf} with the rules $S \to aSbS \mid \eps$. A derivation tree for the word $aabbab$ is as follows:
\begin{center}
\begin{tikzpicture}[scale = 0.7]
\node [minimum size=4mm, sibling distance = 5cm](r) at (6,5) {$S$}
 child[sibling distance = 3cm]{ node [minimum size=4mm] {$a$} 
 }
 child[sibling distance = 3cm]{ node [minimum size=4mm] {$S$} 
	child[sibling distance = 1.5cm]{ node [minimum size=4mm] {$a$} 
	}
	child[sibling distance = 1.5cm]{ node [minimum size=4mm] {$S$} 
		child[sibling distance =1cm]{ node [minimum size=4mm] { $\eps$} 
		}
	}
	child[sibling distance = 1.5cm]{ node [minimum size=4mm] {$b$} 
	}
	child[sibling distance = 1.5cm]{ node [minimum size=4mm] {$S$}
	child[sibling distance =1cm]{ node [minimum size=4mm] { $\eps$} 
		}
	} 
 }
 child[sibling distance = 3cm]{ node [minimum size=4mm] {$b$} 
 }
 child[sibling distance = 3cm]{ node [minimum size=4mm] {$S$}
	child[sibling distance = 1.5cm]{ node [minimum size=4mm] {$a$} 
	}
	child[sibling distance = 1.5cm]{ node [minimum size=4mm] {$S$} 
		child[sibling distance =1cm]{ node [minimum size=4mm] { $\eps$} 
		}	 
	}
	child[sibling distance = 1.5cm]{ node [minimum size=4mm] {$b$} 
	}
	child[sibling distance = 1.5cm]{ node [minimum size=4mm] {$S$}
	child[sibling distance =1cm]{ node [minimum size=4mm] { $\eps$} 
		}
	}
};
\end{tikzpicture}
\end{center}
\end{example}

The following criterion is useful to prove that certain languages are {\bf not}
context-free. The statement uses $5$ quantifier alternations. However, this somewhat complicated 
statement has an amazingly simple proof. 

\begin{lemma}[Pumping Lemma]\label{lem:pumpcf}
For every context-free language $L$ there exists some $n \geq 0$ such that for all words $z\in L $ of length at least $n$ there exists 
a factorization $z = uvwxy$ with $\abs {vwx} \leq n$ and $0 <\abs {vx}$ such that 
for all $i \in \N$ we have $uv^i wx^i y \in L$. 
\end{lemma}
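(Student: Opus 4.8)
The plan is to prove the Pumping Lemma for context-free languages by exploiting a derivation tree together with the Chomsky normal form, following the classical argument. First I would fix a context-free grammar $\cG = (V,\Sig,P,S)$ in Chomsky normal form generating $L$ (this is legitimate by the remark following \prref{def:chomskyNF}), and let $m = \abs V$ be the number of variables. I would then set $n = 2^m$ (roughly; the precise constant depends on how one counts, but a power of two works because CNF derivation trees are binary). The key quantitative observation is: in a derivation tree for a word $z$ using only the binary rules $A\to BC$ and the terminal rules $A\to a$, if the longest root-to-leaf path has at most $k$ internal (variable) nodes, then the yield $z$ has length at most $2^{k-1}$. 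Hence if $\abs z \geq n = 2^m$, the longest path must contain at least $m+1$ variable nodes, so by the pigeonhole principle some variable $A\in V$ repeats on that path.

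Next I would pick such a path and on it choose two occurrences of the same variable $A$, say an upper occurrence $A$ and a lower occurrence $A'$, chosen so that they both lie within the bottom $m+1$ variable nodes of the path; this guarantees that the subtree rooted at the upper $A$ has all its root-to-leaf paths of length at most $m+1$ variable nodes, hence its yield has length at most $2^m = n$. Write the yield of the whole tree as $z = u\,v\,w\,x\,y$, where $w$ is the yield of the subtree rooted at $A'$, the string $vwx$ is the yield of the subtree rooted at the upper $A$, and $u,y$ are the parts of $z$ to the left and right of that subtree. Then $\abs{vwx}\le n$ by the bound just mentioned. To get $0 < \abs{vx}$, I would use that the grammar is in Chomsky normal form: the upper $A$ expands via a binary rule $A\to BC$, one of whose two subtrees contains the lower $A'$ and the other contains a nonempty yield (every variable yields a nonempty terminal string in CNF, except possibly $S\to\eps$, which is excluded here since $\abs z\ge n\ge 1$); thus at least one of $v,x$ is nonempty.

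Finally, the pumping itself: let $\tau_A$ denote the subtree rooted at the upper $A$ and $\tau_{A'}$ the subtree rooted at the lower $A'$, both having root label $A$. For each $i\in\N$, build a new derivation tree by: for $i=0$, replacing $\tau_A$ by $\tau_{A'}$ (valid since both have root labeled $A$), yielding $uwy$; for $i\ge 1$, nesting $i$ copies of the ``ring'' $\tau_A$ with the bottom $A$-occurrence plugged by $\tau_{A'}$, which yields $uv^iwx^iy$. Each such tree is a legal derivation tree for $\cG$ because every splice occurs at a node labeled $A$ and respects the productions of $P$, so $uv^iwx^iy\in L$ for all $i\ge 0$. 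I expect the main obstacle — really the only subtle point — to be the bookkeeping that simultaneously secures $\abs{vwx}\le n$ and $\abs{vx}>0$: one must choose the repeated variable near the \emph{bottom} of a longest path (not just anywhere) to bound $\abs{vwx}$, and invoke the CNF branching structure to force $\abs{vx}>0$. Everything else is routine tree surgery.
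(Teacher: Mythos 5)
Your proposal is correct and follows essentially the same route as the paper: a Chomsky-normal-form grammar, a derivation tree, the pigeonhole principle applied to a longest root-to-leaf path, choosing the repeated variable near the bottom to secure $\abs{vwx}\leq n$, the binary branching of CNF to force $\abs{vx}>0$, and tree surgery for the pumping itself. The paper's version is merely terser (it phrases the choice as ``no other variable is repeated on any path below the chosen $A$''), but the argument is the same.
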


\begin{proof}
We may assume that we have a context-free grammar $\cG= (V,\Sig, P, S)$ in Chomsky normal form with $L = L(\cG)$. Let $S = \alpha_0 \RAS{}P \alpha_1 \RAS{}P\cdots \RAS{}P \alpha_k = z$ be some derivation of a word $z$. 
If $z$ is long enough, there must be a variable which occurs at least twice on some path from the root to a leaf in the derivation tree, \ie  there is some $A\in V$ such that $S \RAS{*}P u Ay \RAS{*}P u v Axy\RAS{*}P uvwxy$ with $u,v,w,x,y \in \Sigma^*$. We choose $A$ so that in the derivation $ A \RAS{*}P v Ax\RAS{*}P vwx$
no other variable is repeated on any path and $A$ is repeated only once. (See also \prref{fig:vpopl}.)
 Since $\cG$ is in Chomsky normal form, this implies that $\abs{vwx} \leq 2^{\abs V}$ and $vx \neq \eps$.
\begin{figure}[ht]
\begin{center}
\begin{tikzpicture}[scale=.9]
\node (r) at (0,0) {$S$};
\draw (r) -- (-4,-6);
\draw (r) -- (4,-6);
\draw (-4,-6) -- (4,-6);
\node (A1) at (0.5,-3.5) {$A$};
\draw[decorate, decoration=snake, segment length=3mm] (r) -- (A1);
\node (A2) at (1,-5) {$A$};
\draw[decorate, decoration=snake, segment length=3mm] (A2) -- (A1);
\draw (A2) -- (1.666667,-6);
\draw (A2) -- (0.333333,-6);
\draw (A1) -- (0.5+2.5*2/3,-6);
\draw (A1) -- (0.5-2.5*2/3,-6);
\node (u) at (-2.5,-6.4) {$u$};
\node (u) at (-0.4,-6.4) {$v$};
\node (u) at (1,-6.4) {$w$};
\node (u) at (2,-6.4) {$x$};
\node (u) at (3.1,-6.4) {$y$};
\end{tikzpicture}
\end{center}
\caption{Proof of the Pumping Lemma}\label{fig:vpopl}
\end{figure}
\end{proof}

\begin{example}\label{ex:anbncn}
The language $L = \set{a^nb^nc^n}{n \in \N}$ is not context-free. Indeed, by contradiction, consider a factorization $a^nb^nc^n = uvwxy$ according to \prref{lem:pumpcf}. Then $\abs {vwx} \leq n$ implies that not all three letters $a$, $b$ and $c$ can occur in the factor $vwx$. Say, $c$ does not occur. Now, $0 \leq \abs {vx}$ implies that at least one letter occurs in $vx$. Letting $i = 0$, we should have $uwy \in L$, but in this word the number of $c$'s does not match the number of $a$'s or $b$'s. 

However, the complement $K = \os{a,b,c}^* \sm L$ is context-free: We can write 
$ K = K_1 \cup K_2 \cup K_3$ 
as a union of three context-free languages. For this we let $K_1 = \os{a,b,c}^* \sm a^*b^*c^*$, this language is regular and regular languages are context-free. We let 
$K_2 = \set{a^kb^mc^n}{k \neq m}$ and $K_3 = \set{a^kb^mc^n}{m \neq n}$. 
A possible context-free grammar for $K_2$ is given by the following productions: 
$$ S \to ATC\mid TBC ,\quad T \to aTb \mid \eps,\quad A \to a A \mid a ,\quad B \to bB \mid b,\quad
C \to cC \mid \eps.$$
A similar context-free grammar can be constructed for $K_3$. 

It follows that the class of context-free languages 
is not closed under complementation. 
\end{example}

A classical result of Anisimov \cite{anisimov72} states that 
\cfGR{}s are finitely presented. The proof is not difficult and can be derived from 
\prref{lem:pumpcf}. Actually, the assertion can be made more precise since the finite presentation is given by a \cfg. We need some preparation. 
For a subset $L$ in a free group $F_{\Sig}$ let $\ggen{L}$ denote the generated 
normal subgroup. Thus, $F_{\Sig}/\!\ggen{L}$ is the quotient group where all 
elements of $L$ are equal to $1$. For a grammar $\cG= (V,\Sig, P, S)$
we denote by $F_{V \cup \Sig}/ P$ the finitely presented group given by the 
defining relations $\set{\links = \rechts}{\links \to \rechts \in P}$. 
A variable $A \in V$ is called \emph{reachable}, if there 
exists a derivation $S \RAS *P \alp A \bet$; it is called \emph{productive}, if there 
exists a derivation $A \RAS *P w_A$ for some $w_A \in \Sig^*$. 
A \cfg is called \emph{reduced}, if all variables are reachable and productive. 
Given a \cfg we can ``reduce'' it in polynomial time. Therefore, a \cfg can always be assumed to be reduced. It was observed by Valkema in his diploma thesis 1974 and 
independently later by Hotz that for a reduced \cfg the group $F_{V \cup \Sig}/ P$
does not depend on the grammar, but only on its generated language, see 
\cite{FrougnySV82,Hotz80}. For more general results beyond \cfg{}s see \cite{die86ai,dm89rairo}.

\begin{theorem}[Hotz Isomorphism]\label{thm:hotz}
Let $\cG= (V,\Sig, P, S)$ be a reduced \cfg and $L = L(\cG)$ its generated language
with $1\in L$. 
Then the inclusion of $\Sig^*$ into $F_{V\cup \Sig}$ induces a canonical isomorphism:
$$\phi: F_{\Sig}/\!\ggen{L} \to F_{V \cup \Sig}/ P.$$
In particular, the minimal number of defining relations for $F_{\Sig}/\!\ggen{L}$
is a lower bound on the number of productions $P$. 
\end{theorem}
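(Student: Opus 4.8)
The plan is to construct the isomorphism $\phi$ explicitly and verify it is well-defined in both directions, using the reducedness hypothesis crucially. Since $\Sig \subseteq V \cup \Sig$, the inclusion of free monoids extends to an inclusion of free groups $\iota: F_\Sig \hookrightarrow F_{V\cup\Sig}$, and composing with the quotient map $F_{V\cup\Sig} \to F_{V\cup\Sig}/P$ gives a homomorphism $\bar\iota: F_\Sig \to F_{V\cup\Sig}/P$. The first step is to show that $\bar\iota$ kills $\ggen{L}$, so that it factors through $\phi: F_\Sig/\ggen{L} \to F_{V\cup\Sig}/P$. For this, take any $w \in L$; then $S \RAS{*}{P} w$, so in $F_{V\cup\Sig}/P$ we have $S = w$. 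On the other hand, for any \emph{productive} variable $A$ with $A \RAS{*}{P} w_A$ we get $A = w_A$ in the quotient, and since $\cG$ is reduced, \emph{every} variable is productive; in particular $S = w_S$ for some $w_S \in \Sig^*$ with $1 \in L$ available. The point is to use $1 \in L$: since $S \RAS{*}{P} 1$ is possible (as $1 \in L = L(\cG)$ — more precisely $\eps \in L$, interpreting $1$ as the empty word), we get $S = 1$ in $F_{V\cup\Sig}/P$, hence $w = S = 1$ for every $w \in L$, so $\ggen{L} \subseteq \Ker \bar\iota$ and $\phi$ is defined.

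The second step is to construct a candidate inverse $\psi: F_{V\cup\Sig}/P \to F_\Sig/\ggen{L}$. Define $\psi$ on generators by $\psi(a) = a\,\ggen{L}$ for $a \in \Sig$, and $\psi(A) = w_A\,\ggen{L}$ for each variable $A \in V$, where $w_A \in \Sig^*$ is some fixed word with $A \RAS{*}{P} w_A$ (this exists since $\cG$ is reduced, hence every $A$ is productive). This gives a homomorphism $F_{V\cup\Sig} \to F_\Sig/\ggen{L}$, and the key verification is that it respects every production $\links \to \rechts \in P$, i.e.\ that the images of $\links$ and $\rechts$ agree in $F_\Sig/\ggen{L}$. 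Writing $\links = A$ (a single variable, since the grammar is context-free, $|\links| \le 1$; the case $\links = \eps$ cannot occur for a production with nonempty content, and if $|\links|=0$ we handle it separately), we must check $w_A \equiv \psi(\rechts) \pmod{\ggen{L}}$. Here the idea is that reachability enters: since $A$ is reachable, $S \RAS{*}{P} \alp A \bet$, and combining the two derivations $A \RAS{*}{P} \rechts \RAS{*}{P} (\text{terminal word})$ and $A \RAS{*}{P} w_A$, both yield, after prefixing and suffixing by terminal words obtained from $\alp$ and $\bet$, elements of $L$; since all elements of $L$ are trivial in $F_\Sig/\ggen{L}$, one deduces $w_A = \psi(\rechts)$ there. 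So $\psi$ descends to $F_{V\cup\Sig}/P \to F_\Sig/\ggen{L}$.

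The third step is routine: check $\phi \circ \psi = \id$ and $\psi \circ \phi = \id$. On the generators $a \in \Sig$ of $F_\Sig/\ggen{L}$ we have $\psi(\phi(a)) = \psi(a) = a$, so $\psi\circ\phi = \id$. For $\phi\circ\psi$, on a generator $a \in \Sig$ it is the identity, and on a variable $A$ we get $\phi(\psi(A)) = \phi(w_A\,\ggen{L}) = \bar\iota(w_A) = w_A$ in $F_{V\cup\Sig}/P$, which equals $A$ because $A \RAS{*}{P} w_A$ forces $A = w_A$ in that quotient; hence $\phi\circ\psi = \id$. Therefore $\phi$ is an isomorphism. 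The final sentence of the theorem is then immediate: a presentation of $F_{V\cup\Sig}/P$ with $|P|$ relations gives, via $\phi^{-1}$, that $F_\Sig/\ggen{L}$ is presented with $|P|$ relations on the generating set $V \cup \Sig$; a standard Tietze argument (eliminating the generators in $V$ one at a time, each of which is expressed as a word $w_A$ in the remaining generators via a defining relation) shows the deficiency cannot increase, so the minimal number of defining relations for $F_\Sig/\ggen{L}$ is at most $|P|$.

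I expect the main obstacle to be the verification in step two that $\psi$ respects the productions — precisely pinning down how to glue the derivation $\alp A \bet$ witnessing reachability together with $A \RAS{*}{P}\rechts$ and with the chosen $A \RAS{*}{P} w_A$, and with productive derivations of the variables appearing in $\alp$, $\bet$, and $\rechts$, so as to produce honest members of $L \subseteq \Sig^*$ whose triviality in $F_\Sig/\ggen{L}$ yields the desired equation. The bookkeeping of which terminal words get prepended and appended, and confirming that the "extra" factors cancel in the free group modulo $\ggen{L}$, is where care is needed; everything else is formal.
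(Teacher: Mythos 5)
Your proposal is correct and follows essentially the same route as the paper's proof: use $1\in L$ to see that every $w\in L$ equals $S=1$ in $F_{V\cup\Sig}/P$ so that $\phi$ is well-defined, define $\psi$ on variables by productive witnesses $w_A$, and use reachability to sandwich $\links\to\rechts$ inside a derivation $S \RAS{*}{P} \psi(\alp)\links\psi(\bet)$, producing two words of $L$ whose triviality in $F_\Sig/\ggen L$ forces $\psi(\links)=\psi(\rechts)$ after free cancellation of $\psi(\alp)$ and $\psi(\bet)$. The "obstacle" you flag in step two is resolved exactly by this bookkeeping, which is what the paper carries out.
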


 \begin{proof}
 Since $1 \in L$, we have $1 \DAS *P w$ for all $w \in L$. Hence, the canonical homomorphism $\phi: F_{\Sig}/\!\ggen{L} \to F_{V \cup \Sig}/ P$ is well-defined. 
 Moreover, for each $A \in V$ there exists a word $w_A \in \Sig^*$ such that $A \RAS* P w_A$ since every variable is productive. 
 Hence, $A= w_A$ in $F_{V \cup \Sig}/ P$, and $\phi$ is surjective.
 
 The words $w_A$ define also a surjective \homo $\psi$ from $(V \cup \Sig)^*$ onto $\Sig^*$ by mapping $A$ to $w_A$ 
 for $A \in V$ and leaving letters $a \in \Sig$ invariant. It is enough to show that in $F_{\Sig}/\!\ggen{L}$ we have 
 $\psi(\links) = \psi(\rechts)$ for all $\links \to \rechts\in P$, because then $\psi \circ\phi$ 
 is the identity on $F_{\Sig}/\!\ggen{L}$; and $\phi$ is injective. Now consider a production
 $\links \to \rechts\in P$. Because every variable is reachable, there exists a derivation 
 $$S \RAS *P \alp \links \bet \RAS *P \psi(\alp) \links \psi(\bet) \RAS {}P \psi(\alp) \rechts \psi(\bet) \RAS {*}P \psi(\alp) \psi(\rechts) \psi(\bet).$$ 
 We have 
 $\links \RAS *P \psi(\links)$, hence $\psi(\alp) \psi(\links) \psi(\bet)\in 
 L(\cG)$ and $ \psi(\alp) \psi(\rechts) \psi(\bet) \in 
 L(\cG)$. This yields 
 $\psi(\alp) \psi(\links) \psi(\bet)= \psi(\alp) \psi(\rechts) \psi(\bet) \in F_{\Sig}/\!\ggen{L}$, and finally $\psi(\links)= \psi(\rechts) \in F_{\Sig}/\!\ggen{L}$.
 \end{proof}

\begin{corollary}[\cite{anisimov72}]\label{cor:anis}
Context-free groups and their finitely generated subgroups are finitely presented. 
\end{corollary}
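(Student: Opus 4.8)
The plan is to exhibit a \cfGR{} as a quotient $F_\Sig/\ggen L$ for a context-free language $L$ with $1\in L$, and then to read off a finite presentation directly from the Hotz Isomorphism (\prref{thm:hotz}). So let $G$ be a \cfGR{}. First I would choose a finite \emph{symmetric} generating set, i.e.\ a presentation $\pi\colon\Sig^*\to G$ with $\Sig=\Sig^{-1}$ and $\pi(a^{-1})=\pi(a)^{-1}$, so that $\pi$ factors as $\Sig^*\to F_\Sig\overset{\bar\pi}{\to} G$ for a surjection $\bar\pi$. Put $L=\WP G$; by hypothesis $L$ is a \cfl, and $\eps\in L$, i.e.\ $1\in L$. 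An element $g\in F_\Sig$ lies in $\ker\bar\pi$ precisely when some (equivalently every) word representing it belongs to $L$, so the image of $L$ in $F_\Sig$ is exactly $\ker\bar\pi$; being a kernel, this set is already normal, hence equal to $\ggen L$, and therefore $G\cong F_\Sig/\ggen L$. Now take any \cfg generating $L$ and reduce it, obtaining a reduced \cfg $\cG=(V,\Sig,P,S)$ with $L(\cG)=L\ni 1$. \prref{thm:hotz} then gives $G\cong F_\Sig/\ggen L\cong F_{V\cup\Sig}/P$; since $V\cup\Sig$ and $P$ are finite, this is a finite presentation of $G$, so context-free groups are finitely presented.

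For a finitely generated subgroup $H\le G$ I would first show that $H$ is again a \cfGR{}, and then apply the paragraph above to it. Choose words $w_1,\dots,w_k\in\Sig^*$ representing a generating set of $H$, let $\Delta$ be an alphabet with letters $x_1^{\pm1},\dots,x_k^{\pm1}$, and let $h\colon\Delta^*\to\Sig^*$ be the monoid \homo with $h(x_i)=w_i$ and $h(x_i^{-1})=\ov{w_i}$, where $\ov{w_i}$ is the formal inverse word. The composite $\Delta^*\overset{h}{\to}\Sig^*\overset{\pi}{\to}G$ is a presentation of $H$, so a word over $\Delta$ represents $1$ in $H$ \IFF its $h$-image represents $1$ in $G$; that is, $\WP H=h^{-1}(\WP G)$. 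Since context-free languages are closed under inverse \homos, $\WP H$ is a \cfl, hence $H$ is a \cfGR{}, and by the first part $H$ is finitely presented.

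The only slightly delicate points are the identification $\ker\bar\pi=\ggen L$ — immediate once one recalls that a kernel coincides with its own normal closure — and the equality $\WP H=h^{-1}(\WP G)$, which just unwinds the definition of the induced presentation of $H$. All the substantive content is carried by the Hotz Isomorphism, so I do not expect a genuine obstacle; the proof is essentially a matter of arranging these identifications so that \prref{thm:hotz} can be quoted.
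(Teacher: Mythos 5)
Your proof is correct and takes essentially the route the paper intends: the corollary is stated as a consequence of the Hotz Isomorphism (\prref{thm:hotz}), exactly as you argue, via $G\cong F_\Sig/\ggen{\WP G}\cong F_{V\cup\Sig}/P$ for a reduced grammar. Your reduction of the subgroup case to closure under inverse homomorphisms is a harmless variant of the paper's later observation (\prref{prop:finindcf}) that $\WP H=\WP G\cap\Sig^*$ for a suitable choice of generators.
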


An analogue of \prref{cor:anis} does not hold for groups with a 
Type-$1$ \wp, in general. See the following example.

\begin{example}\label{ex:nixfp}
Let $F_{\{a,b\}}$ and $F_{\{x,y\}}$ be free groups of rank two and let
$\phi: F_{\{a,b\}} \times F_{\{x,y\}} \to \Z$ be the \homo which maps all generators $a,b,x,y$ to $1$. The kernel $K$ of $\phi$ is a finitely generated subgroup. Possible generators are the three elements $a b^{-1}$, $x b^{-1}$, and $y b^{-1}$.
The group $K$ is a standard example of a finitely generated group which is 
not finitely presented. The \wp of $F_{\{a,b\}} \times F_{\{x,y\}}$ is of Type-$1$. It shows that 
the analogue of \prref{cor:anis} fails for Type-$1$ groups. 
\end{example}

We have seen the definition of context-free languages via grammars. Now, we want to introduce 
 machines which accept context-free languages. These are the so-called \emph{push-down automata}.

A \emph{PDA} (push-down automaton) is given by a tuple 
$\cM= (Q,\Sig,Z, \del, q_0, F)$. Here again $Q$ denotes a finite set of states, $\Sig$ and $Z$ 
are finite alphabets called \emph{input} and \emph{stack alphabet} respectively
, the transition table $\del\sse Z^*Q\Sig^* \times Z^*Q$ is finite,
$q_0\in Q$ is the initial state, and $F\sse Q$ is a set of final states. 
A \emph{configuration} of $\cM$ is a word $\alp = \gam p w$ with $\gam \in Z^*$, $p \in Q$, and 
$w\in \Sig^*$. We should think that $\gam$ is written on a stack and the 
top of the stack is the right end of $\gam$. The word $w$ is written on an input tape and can be accessed from the left, only. The machine is in state $p$. It can read a bounded 
suffix of $\gam$ and a bounded prefix of $w$. Then it performs a transition from the transition
table. More formally, if $\alp = \gam p u = \gam' \links p uv$ with $(\links p u, \rechts q) \in \del$, then it can switch to the configuration $\bet = \gam' \rechts q v$. The idea is that 
the machine performs the following actions: Read$(u)$; pop$(\links)$; push$(\rechts)$; change-state-to$(q)$.  
Viewing $\delta$ as a string rewriting system over the set of configurations, we can define the accepted language of a PDA $\cM$ by 
\begin{align}L(\cM) = \set{w \in \Sig^*}{q_0w \RAS{*}{\del} p , \, p\in F }.\label{eq:accept_condition}\end{align}

That means $\cM$ accepts a word $w$ if it reaches a configuration with empty stack and a final state after reading $w$. Note that for the accepted language for PDAs there are several definitions in literature. The more common definitions either require an empty stack or final state but generally not both. However, all these conditions are equivalent and for our purposes requiring empty stack and final state is more suited.

\begin{theorem}\label{thm:cfpda}
Let $\Sig$ be a finite alphabet and $L \sse \Sig^*$ be a language. Then 
$L$ is context-free \IFF there is a PDA $\cM$ accepting $L$.
\end{theorem}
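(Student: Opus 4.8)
The plan is to prove both directions of the equivalence between context-free languages and PDA-acceptability by explicit constructions.

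\medskip

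\textbf{From a grammar to a PDA.}
First I would take a context-free grammar $\cG = (V, \Sig, P, S)$ with $L(\cG) = L$ and build a PDA $\cM$ that simulates leftmost derivations on its stack. The idea is the standard \emph{top-down parser}: the stack alphabet is $Z = V \cup \Sig$, there is essentially one state $q_0$ (plus one final state, to match our acceptance convention of empty stack \emph{and} final state), and the stack initially holds the axiom $S$. The transitions are:
\begin{itemize}
\item for each production $A \to \alpha \in P$, a rule that pops $A$ from the top of the stack and pushes $\alpha$ (reading nothing), i.e.\ $(A\, q_0,\; q_0) \in \del$ becomes $(A\, q_0,\; \alpha^{\mathrm{rev}} q_0)$ after orienting the stack correctly;
\item for each terminal $a \in \Sig$, a rule $(a\, q_0\, a,\; q_0)$ that pops $a$ from the stack while reading $a$ from the input;
\item finally, a rule moving from $q_0$ to the final state when the stack is empty.
\end{itemize}
One then checks by induction on the length of the derivation (resp.\ the length of the computation) that $S \RAS{*}{P} w$ with $w\in\Sig^*$ if and only if $\cM$ can reach a configuration with empty stack and input fully consumed; the invariant is that after reading a prefix $u$ of the input, the concatenation of $u$ with the stack contents (read top-to-bottom) is a sentential form reachable from $S$ by a leftmost derivation. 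Some care is needed with the stack orientation (we push the right-hand side so that its leftmost symbol ends up on top) and with the $S\to\eps$ case, but this is routine.

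\medskip

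\textbf{From a PDA to a grammar.}
For the converse, given a PDA $\cM = (Q,\Sig,Z,\del,q_0,F)$ accepting $L$, I would use the classical \emph{triple construction}. Since our acceptance condition already requires empty stack and final state, this is slightly cleaner than the textbook version. Introduce variables $[p\, A\, q]$ for $p,q \in Q$ and $A \in Z$ (plus the axiom $S$), with the intended meaning that $[p\,A\,q] \RAS{*}{} w$ precisely when $\cM$ started in state $p$ with $A$ on top of the stack can, reading exactly $w$, pop that $A$ and end in state $q$ (with the rest of the stack untouched). Productions:
\begin{itemize}
\item $S \to [q_0\, A_0\, f]$ for every $f \in F$, if $A_0$ is the single initial stack symbol (one can always normalize $\cM$ so that it starts by pushing a distinguished bottom symbol; alternatively handle the general finite transition table $\del \sse Z^*Q\Sig^* \times Z^*Q$ directly by a mild generalization of the triples to ``blocks'' of stack symbols);
\item for a transition that consumes stack block and input and pushes a block, productions threading a sequence of intermediate states through the pushed symbols, e.g.\ a transition replacing top $A$ by $B_1\cdots B_k$ while reading $u$ and going $p \to r$ yields $[p\,A\,q] \to u\,[r\,B_1\,r_1][r_1\,B_2\,r_2]\cdots[r_{k-1}\,B_k\,q]$ for all choices of $r_1,\dots,r_{k-1} \in Q$.
\end{itemize}
Correctness again follows by a two-way induction: one direction on the number of steps in an accepting computation (splitting at the moment the stack first returns to a given height), the other on the length of a derivation.

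\medskip

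\textbf{Main obstacle.}
The genuinely fiddly point is that this paper's PDA model is more liberal than the usual one: $\del \sse Z^*Q\Sig^* \times Z^*Q$ lets a single transition pop and push \emph{words} rather than single symbols. So before either construction I would insert a normalization lemma showing every such PDA is equivalent to one whose transitions pop at most one stack symbol and push at most two (or some similarly bounded form) — this is a straightforward but slightly tedious decomposition of each transition into a bounded sequence of elementary ones via fresh states and fresh bottom-marker symbols. Once that normal form is in place, both simulations above go through cleanly. I expect this bookkeeping — keeping the stack orientation and the empty-stack-plus-final-state acceptance convention consistent — to be the only real source of friction; the conceptual content is entirely standard.
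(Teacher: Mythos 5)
Your proposal is correct, and your PDA-to-grammar direction (normalize the transition table to bounded pop/push via fresh states and a bottom marker, then the triple construction with variables $(p,z,q)$) is essentially identical to the paper's proof, including the normalization lemma you flag as the main obstacle. The grammar-to-PDA direction, however, takes a genuinely different route: you build a nondeterministic \emph{top-down} parser that starts with $S$ on the stack, expands variables by productions, and matches terminals against the input, whereas the paper builds a \emph{bottom-up} (shift--reduce) recognizer: input letters are shifted onto the stack via rules $(q_0b,\,bq_0)$, and each production $A\to\alpha$ becomes a reduction rule $(\alpha q_0,\, Aq_0)$ that pops the word $\alpha$ and pushes $A$, accepting once the stack reduces to $S$. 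The paper's choice exploits the liberal transition format $\del \sse Z^*Q\Sig^*\times Z^*Q$ so that no normalization, reversal of right-hand sides, or leftmost-derivation invariant is needed in this direction --- the correctness statement is simply $q_0w \RAS{*}{\del} Sq_0$ iff $S\RAS{*}{P}w$. Your version works too, but note two small frictions specific to this paper's conventions: the initial configuration is $q_0w$ with an \emph{empty} stack, so you need an extra first move pushing $S$ rather than assuming the stack ``initially holds'' it; and there is no way to test for an empty stack, so your final transition to the accepting state must be unconditional (which is harmless, since acceptance requires empty stack, exhausted input, and a final state simultaneously).
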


\begin{proof}

Let $\cG= (V,\Sig, P, S)$ be a context-free grammar. We construct a PDA $\cM= (Q,\Sig,Z, \del, q_0, F)$ as follows. We let $Q = \{q_0, q_1\}$, $Z = V \cup \Sigma $, $F = \{q_1\}$ and $\del = \smallset{ (Sq_0,q_1)} \cup\set{ (\alpha q_0, Aq_0)}{ A\to \alpha \in P} \cup \set{(q_0b, bq_0) }{b \in \Sigma }$. That means the input symbols are shifted to the stack and, if possible, backward derivation steps are performed. It is straightforward to see, that $q_0w \RAS{*}{\del} Sq_0$ if and only if $S \RAS *P w$. Hence, $\cM$ accepts $w$ if and only if $w\in L(\cG)$.

For the other direction, let $\cM= (Q, \Sig, Z, \del, q_0, F)$ be a PDA. 
We introduce an additional stack symbol $\#$ for the stack bottom, which in the first step is written on the stack, and only can be removed when in a final state.
The new automaton is defined as $\cM'= (Q', \Sig,Z', \del' , q_0', F)$ with $Q' = Q\cup\{q_0'\}$, $Z' = Z \cup \{\#\}$, $\del' = \del \cup \set{(q_0', \# q_0), (\# f,f}{f\in F}$.

Without loss of generality we may assume that $\delta' \sse Z' Q \Sigma^* \times Z'^{\leq 2}Q \cup \{(q_0', \# q_0)\}$. 
In order to achieve this, rules of the form $(pu, \rechts q)$ for $p,q\in Q, u \in \Sigma^*, \rechts \in Z^*$ can be replaced by $\set{(zpu, z\rechts q)}{z\in Z'}$.
Furthermore, we introduce additional states for each $(\links p u, \rechts q) \in \del$ where $\abs{\links}> 1 $ or $\abs{\rechts}>2$. A rule $(\links p u, \rechts q) \in \del$ is replaced by a sequence of rules so that first, the stack top $\links$ is read into the state, then the transition is performed by only changing states and finally the new stack symbols are written. 

Now, we are ready to construct a grammar $\cG= (V,\Sig, P, S)$ for the language accepted by $\cM'$. We let $V = \oneset{S} \cup Q\times Z' \times Q$. The meaning of a variable $A = (p,z,q) \in V\setminus \oneset{S}$ will be the language which can be read by $\cM'$ when starting in state $p$, consuming the stack top $z$ and ending in state $q$. In order to do so, we define production rules $P$ as follows:
\begin{align*}
 P &= \set{S\to (q_0, \#, f)}{f\in F}\\
 & \qquad \cup \set{(p,z,q)\to u}{ (zpu, q) \in \del' }\\
 & \qquad \cup \set{(p,z,q)\to u(r,y,q)}{ (zpu, yr) \in \del', q \in Q}\\
 & \qquad \cup \set{(p,z,q)\to u(r,y,s)(s,x,q)}{ (zpu, xyr) \in \del', q,s \in Q }.
\end{align*}
It is straightforward to see that for all $z \in Z$, $p,q, \in Q$ we have
\[\set{w\in \Sigma^*}{zpw \RAS{*}{\del'} q} = \set{w\in \Sigma^*}{(p,z,q) \RAS{*}{P} w}.\]
 In particular, we have $L(\cM) = L(\cM') = \bigcup_{f\in F} \set{w\in \Sigma^*}{\#q_0w \RAS{*}{\del'} f} = \set{w\in \Sigma^*}{S \RAS{*}{P} w} = L(\cG)$.
\end{proof}

If $\cM= (Q,\Sig,Z, \del, q_0, F)$ is a \pda such that for every configuration $\alp$ there is at most one configuration 
$\bet$ with $\alp \RAS{}{\del} \bet$, then $\cM$ is called a \emph{special deterministic push-down automaton}.

\begin{remark} 
 Usually, deterministic push-down automata are defined in the literature to accept with final states, \ie  the language accepted by $\cM$ is $L(\cM) = \set{w \in \Sig^*}{q_0w \RAS{*}{\del} zp , \,z\in Z^*, p\in F }$. 
Languages which are accepted by a deterministic push-down automaton with final states are called \emph{deterministic context-free}. We use another acceptance condition (on empty stack \emph{and} final state) because our condition arises in a natural way when dealing with virtually free groups (see proof of \prref{prop:vfdetcf}). To make the distinction clear, our automata are called special deterministic PDA.
It is easy to see that every language which is accepted by a 
 special deterministic push-down automaton is also accepted by some deterministic push-down automaton (accepting with final states), and hence deterministic context-free. 
 Moreover, the languages accepted by special deterministic push-down automata form a proper subclass of the deterministic context-free languages as the following standard example shows.
\end{remark}

\begin{example}
 Let $L = \set{a^mb^n}{m\geq n}$. It is easy to see that $L$ is accepted by some deterministic PDA with final state. However, assume that $L$ is accepted by the special deterministic PDA $\cM= (Q, \Sig, Z, \del, q_0, F)$. Then, for $m$ large enough, there are by the pigeonhole principle $i<j<n$ and $q\in F$ such that $q_0a^mb^i \RAS{*}{\del} q$ and $q_0a^mb^j \RAS{*}{\del} q$. Now, we have $q_0a^mb^ib^{m-j+1} \RAS{*}{\del} qb^{m-j+1}\RAS{*}{\del} p\in F$ since $i+m-j+1 \leq m$. Hence, $q_0a^mb^jb^{m-j+1} \RAS{*}{\del} qb^{m-j+1}\RAS{*}{\del} p\in F$ and $a^mb^jb^{m-j+1}$ is accepted by $\cM$. However, $a^mb^{m+1}\not\in L$.
 
 Hence, $L$ is not accepted by a special deterministic push-down automaton.
\end{example}

\begin{proposition}\label{prop:vfdetcf}
Let $G$ be a finitely generated virtually free group. Then the \wp $\WP G$ is accepted by some special deterministic push-down automaton. 
\end{proposition}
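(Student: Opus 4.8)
The plan is to use the convergent rewriting system for virtually free groups constructed in Example~\ref{ex:vfreeex} and to build a deterministic PDA that, on input $w$, computes the irreducible normal form of $w$ incrementally on the stack, letter by letter. Recall from Example~\ref{ex:vfreeex} that we may write $G$ with generating set $\Del = \Sig \cup \Sig^{-1} \cup R\sm\os{1}$, where $F_{\Sig}$ has finite index in $G$ and $R$ is a (finite) transversal with $1\in R$, and that irreducible normal forms are exactly the words $vr$ with $v$ freely reduced over $\Sig\cup\Sig^{-1}$ and $r\in R$. Since the word problem is taken with respect to some fixed presentation $\pi\colon\Del^*\to G$, this is the alphabet we should work with; for a different finite generating set the result then follows because the property of having a word problem accepted by a deterministic PDA does not depend on the presentation (one rewrites each generator as a fixed word over $\Del$ and pre-processes the input, which a deterministic PDA can do).

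The construction I would carry out: the stack content will always encode the current irreducible normal form $vr$ of the prefix of $w$ read so far, with the free part $v$ sitting in the stack (top symbol = last letter of $v$) and the coset representative $r$ remembered in the finite-state control (there are only $|R|$ choices). On reading the next input letter $a\in\Del$, the automaton must update $vr$ to the normal form of $vra$. Because $r$ is stored in the state and $a$ is the input letter, the product $ra$ equals $w(r,a)\,r'$ for a freely reduced word $w(r,a)$ over $\Sig\cup\Sig^{-1}$ and some $r'\in R$ determined by the finite data of $G$; so the update amounts to multiplying the free-group element $v$ on the right by the \emph{fixed, bounded} word $w(r,a)$ and then recording the new representative $r'$. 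Right-multiplication of a freely reduced word by one generator is the standard free-group PDA move (push the letter, or if it cancels the top of the stack, pop it); doing this for the bounded-length word $w(r,a)$ is a bounded sequence of such moves, which can be realized by introducing finitely many auxiliary states. At the end of the input, we must check that $vr=1$ in $G$, i.e.\ that $v=\eps$ and $r=1$; so the automaton accepts (with empty stack and final state) precisely when the stack has been emptied and the state records $r=1$. This matches exactly the acceptance condition~\eqref{eq:accept_condition}: empty stack \emph{and} final state — which is the reason that condition was singled out in the preliminaries.

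The main things to verify are: (i) determinism — at every step the move is a function of the current top stack symbol, the current state, and the single input letter being consumed, with the auxiliary ``macro'' states chained deterministically, so there is at most one successor configuration; (ii) correctness — by induction on $|w|$ the stack-plus-state content after reading a prefix $w'$ is the normal form of $w'$, using confluence of $S$ from Example~\ref{ex:vfreeex} to know the normal form is well-defined and that $vra$ and $v\cdot w(r,a)\cdot r'$ have the same normal form; and (iii) the book-keeping for emptying the bottom of the stack, for which I would add a bottom-of-stack marker pushed in an initial step and only removable in the state with $r=1$, exactly as in the $\#$-trick used in the proof of Theorem~\ref{thm:cfpda}. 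I expect step (iii) together with the careful design of the auxiliary states implementing right multiplication by the bounded word $w(r,a)$ — without ever becoming nondeterministic when a cancellation empties the stack prematurely — to be the only mildly delicate point; everything else is routine.
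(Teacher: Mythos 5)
Your proposal is correct and follows essentially the same route as the paper: the stack holds the freely reduced word over $\Sig\cup\Sig^{-1}$, the coset representative $r\in R$ is kept in the finite control, each input letter triggers right-multiplication by the bounded word $w(r,a)$ realized by finitely many deterministic push/pop moves, and acceptance is on empty stack together with the state $r=1$. The paper handles the bottom-of-stack issue with primed copies of the stack alphabet rather than a separate marker $\#$, but this is only a cosmetic difference from your construction.
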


\begin{proof}
 Let $F_{\Sig}$ be a free group of finite index and $1 \in R \sse G$ a subset which is 
 in one-to-one with $F_{\Sig}\backslash G $ via the canonical mapping $g \mapsto F_{\Sig}g$.
 Let $\Del = \Sig \cup \Sig^{-1} \cup R$. Then $\Del$ is a finite subset of $G$ and the inclusion 
 defines a presentation $\pi: \Del^* \to G$. 
 We now construct a special deterministic push-down automaton with $R \sse Q$ such that $q_0= 1 \in R$ is the initial state. The initial configuration on input $w \in \Del^*$ is $q_0w$. We construct the 
 machine in such a way that it reads its input and stops in some 
 configuration $ur$ where $r \in R$, $u \in (\Sig\cup \Sig^{-1})^*$ is freely reduced and 
 $\pi(w) = \pi(ur)$. That means $w = 1$ in $G$ if and only if the PDA stops in configuration $ur$ where $r =1$ and $u =1$.

The PDA basically performs the reductions of the convergent rewriting system \prref{eq:convVF} in \prref{sec:present}. Let us make this more precise. Assume that the PDA is in some 
configuration $urv$, where $r\in R$, $u$ is the stack contents and freely reduced, 
$v$ is the remaining input, and $\pi(w) = \pi(urv)$. If $v =1$, we are done. Otherwise, write 
$v = a v'$, where $a\in \Del$ is a letter. Now, there are some $s \in R$ and 
 $w(r,a)\in (\Sig \cup \Sig^{-1})^*$ such that $\pi(ra) = \pi(w(r,a)s)$. 
The PDA moves to the configuration $x s v'$, where $x$ is the freely reduced normal form of $uw(r,a)$. This can be done in one step because $x$ differs only in a constant number of the last symbols from $uw(r,a)$. The necessary information can be stored in a finite control.

Formally, the PDA $\cM= (Q,\Del,Z, \del, q_0, F)$ is described as follows: Let $m$ be some constant with $m\geq \max\set{\abs{w(r,a)}}{r\in R, a\in \Delta}$, $R'$ be a disjoint copy of $R$ and $(\Sig\cup \Sig^{-1})'$ a disjoint copy of $\Sig\cup \Sig^{-1}$. When writing equations in the group we consider $r\in R$ and its copy $r'\in R'$ as the same group element and likewise with $a\in \Sig\cup \Sig^{-1}$. We set $Q = R\cup R'$, $Z = \Sig\cup \Sig^{-1} \cup (\Sig\cup \Sig^{-1})'$, $q_0 = 1'\in R'$, $F=\smallset{q_0}$, and for $a\in \Del$, $r'\in R'$, $r\in R$
\begin{align*}
(r'a,w(r'a)s)&\in \delta 	&&\text{for } s\in R \text{ with } r'a=w(r',a)s,w(r',a)\neq 1\\
(r'a,s')&\in \delta 		&&\text{for } s'\in S' \text{ with } r'a= s',\\
(xra,ys) &\in \delta 		&&\text{for } s\in R, x,y\in (\Sig\cup \Sig^{-1})^m \text{ with } ra=w(r,a)s,\\
&				&&\text{and $y$ is the freely reduced normal form of $xw(r,a)$},\\
(xra,ys) &\in \delta 		&&\text{for } s\in R, x,y\in (\Sig\cup \Sig^{-1})'(\Sig\cup \Sig^{-1})^{<m} \text{ with } ra=w(r,a)s\\
&				&&\text{and $y$ is the freely reduced normal form of for $xw(r,a)$},\\
(xra,s') &\in \delta 		&&\text{for } s'\in R', x\in (\Sig\cup \Sig^{-1})'(\Sig\cup \Sig^{-1})^{<m} \text{ with } ra=w(r,a)s'\\
&				&&\text{and $xw(r,a)$ reduces freely to $1$}.
\end{align*}
It is easy to check that this PDA in fact is deterministic and performs the above described computation.
\end{proof}

\begin{corollary}\label{cor:vfarecf}
Finitely generated virtually free groups are context-free.
\end{corollary}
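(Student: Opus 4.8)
The plan is to simply combine the two preceding results. First I would invoke \prref{prop:vfdetcf} to obtain, for the finitely generated virtually free group $G$ and the finite presentation $\pi:\Del^*\to G$ constructed there (with $\Del=\Sig\cup\Sig^{-1}\cup R$), a deterministic push-down automaton $\cM$ accepting $\WPP{G}{\pi}$. Since a deterministic PDA is in particular a PDA, \prref{thm:cfpda} immediately yields that $\WPP{G}{\pi}$ is a context-free language.

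The only point that deserves a word of care is that \prref{def:cfgroup} speaks of \emph{the} word problem of $G$, whereas \prref{prop:vfdetcf} produces an automaton tied to one specific finite generating set. This is harmless: as noted in the discussion preceding \prref{def:cfgroup}, the class of context-free languages is closed under inverse homomorphisms, so the property of $\WPP{G}{\pi}$ being context-free does not depend on the chosen presentation $\pi$. Hence $G$ is a Type-$2$ group, i.e.\ context-free, and the corollary follows.

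I do not expect a genuine obstacle here: all the work sits in \prref{prop:vfdetcf}, and this corollary is essentially a bookkeeping remark. (For completeness one could instead argue directly with grammars, using the convergent system \prref{eq:convVF} from \prref{ex:vfreeex}: its irreducible normal forms are words $wr$ with $w$ freely reduced over $\Sig\cup\Sig^{-1}$ and $r\in R$, and a context-free grammar can nondeterministically guess and verify such a reduction sequence for a given input. But routing through \prref{prop:vfdetcf} and \prref{thm:cfpda} is the shorter path, and it has the bonus of recording that the word problem is in fact deterministic context-free.)
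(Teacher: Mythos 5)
Your argument is exactly the intended one: the paper states this corollary immediately after \prref{prop:vfdetcf} precisely because it follows by combining that proposition with \prref{thm:cfpda}, together with the observation (made before \prref{def:cfgroup}) that context-freeness of the word problem is independent of the chosen finite presentation. Your proposal is correct and matches the paper's (implicit) proof.
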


 \begin{proposition}\label{prop:finindcf}
Let $G$ be a group.
If $G$ has a context-free subgroup of finite index, then $G$ is context-free. 
If $G$ is context-free, then every finitely generated subgroup is context-free, too. 
 \end{proposition}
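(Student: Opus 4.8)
\textbf{Proof plan for Proposition~\ref{prop:finindcf}.}

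The plan is to prove the two implications separately, in each case exploiting the fact that the Chomsky hierarchy classes are closed under inverse homomorphisms (noted in the excerpt) together with the standard closure properties of context-free languages under intersection with regular sets, union, and product, which follow from \prref{thm:cfpda} or from grammars.

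\emph{First implication: a context-free subgroup of finite index.} Suppose $H \leq G$ is context-free of finite index. Since finite-index subgroups of finitely generated groups are finitely generated, $H$ is finitely generated; fix a finite generating set. One natural approach: pick a finite set $R$ of coset representatives with $1 \in R$, and a finite generating set $\Sig$ of $G$ which we may assume closed under inverses; then every element of $G$ written as a word $w \in \Sig^*$ can be ``pushed through'' the cosets, tracking the current representative in a finite control, exactly as in the proof of \prref{prop:vfdetcf}. More precisely, I would build a PDA (or argue via grammars) as follows: run a finite automaton on $\Sig^*$ whose states are the coset representatives $R$, updating the state after each letter using the rule $ra = w(r,a)s$ with $w(r,a)$ a word over a generating set of $H$ and $s\in R$; this automaton outputs, along the way, a word $u$ over the generators of $H$, and $w \in \WP G$ if and only if the automaton ends in the state $1 \in R$ and the emitted word $u$ lies in $\WP H$. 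Formally this realizes $\WP G$ as the inverse image of $\WP H$ under a (generalized sequential machine) map restricted to a regular set; since CFLs are closed under such operations and under intersection with regular languages, $\WP G$ is context-free. Alternatively, and perhaps more cleanly, one can use the Reidemeister--Schreier rewriting process to get a finite presentation of $H$ inside the finite presentation of $G$ (both are finitely presented by \prref{cor:anis}, once we know the answer, but we should avoid circularity), and directly manipulate grammars.

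\emph{Second implication: finitely generated subgroups of a context-free group.} Suppose $G$ is context-free and $H \leq G$ is finitely generated, with finite generating set $\{h_1,\dots,h_k\} \subseteq H$. Write each $h_i$ as a word $v_i \in \Sig^*$ over a finite generating set of $G$. Let $h : \{y_1,\dots,y_k\}^{\pm *} \to \Sig^*$ be the monoid homomorphism sending $y_i \mapsto v_i$ and $y_i^{-1} \mapsto \ov{v_i}$ (the formal inverse word). Then a word in the generators of $H$ represents the identity of $H$ if and only if its image under $h$ represents the identity of $G$, i.e. $\WPP H{} = h^{-1}(\WPP G{})$. Since $\WP G$ is context-free and context-free languages are closed under inverse homomorphisms, $\WP H$ is context-free, so $H$ is a context-free group.

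\emph{Main obstacle.} The second implication is essentially a one-line inverse-homomorphism argument and poses no difficulty. The real work is the first implication: one must genuinely carry out the coset-representative bookkeeping to see that $\WP G$ is obtained from $\WP H$ by a sequence of operations preserving context-freeness. The subtle point is that the ``push-through'' procedure emits a word over $H$'s generators whose membership in $\WP H$ must be checked, and one has to confirm that coupling this with the finite-state coset tracking stays within the context-free class; this is exactly the kind of simulation argument used for \prref{prop:vfdetcf}, and the cleanest writeup is to phrase it as: $\WP G$ is the image, under a letter-to-word substitution and intersection with a regular language encoding correct coset behaviour, of $\WP H$ — all CFL-preserving operations. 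I expect the bulk of the proof to be making this reduction precise rather than any conceptual hurdle.
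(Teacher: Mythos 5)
Your proposal is correct and takes essentially the same approach as the paper: for the finite-index direction the paper likewise just modifies the PDA construction of \prref{prop:vfdetcf} and leaves the coset-tracking details to the reader, which you spell out correctly as tracking the representative in the finite control and feeding the emitted word to the machine for $\WP H$. For the subgroup direction the paper chooses monoid generators $\Sig \sse \Sig'$ and writes $\WP H = \WP G \cap \Sig^*$, i.e.\ intersection with a regular set rather than your inverse homomorphism $h^{-1}(\WP G)$ --- an equivalent one-line variant.
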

 
\begin{proof}
 If $H$ is context-free subgroup of $G$ of finite index, then there is a PDA accepting
 $\WP H$. A modification of the construction in the proof of \prref{prop:vfdetcf}
 yields a PDA accepting $\WP G$. Details of the construction are left to the reader. 
 Thus, $G$ is context-free. If $G$ is context-free and $H$ is finitely generated subgroup, then we can choose monoid generators $\Sig'$ for $G$ and $\Sig$ for $H$
 such that $\Sig \sse \Sig'$. It follows that $\WP H = \WP G \cap \Sig^*$ is context-free, too. 
\end{proof}

It is well-known that the class of deterministic \cfl{}s is closed under 
complementation, see e.\,g.\ \cite{HU}. Thus, the language $K = \os{a,b,c}^* \sm
\set{a^nb^nc^n}{n \in \N}$ in \prref{ex:anbncn} is a witness for a \cfl which is not deterministic context-free. 

\begin{proposition}\label{prop:cfnixcompl}
The free abelian group $\Z\times \Z$ is not context-free, but the \wp 
$\WP{\Z\times \Z}$ is the complement of a \cfl.
\end{proposition}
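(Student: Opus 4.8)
The plan is to prove the two claims separately: that $\WP{\Z\times\Z}$ is not context-free, via the Pumping Lemma (\prref{lem:pumpcf}), and that its complement is context-free, via an explicit context-free grammar. Throughout I fix the monoid generating set $\Sig=\os{a,\ov a,b,\ov b}$ for $\Z\times\Z$ with $\pi(a)=(1,0)$, $\pi(b)=(0,1)$, $\pi(\ov a)=-\pi(a)$, $\pi(\ov b)=-\pi(b)$; by \prref{def:cfgroup} together with closure of the Chomsky classes under inverse homomorphisms, the particular presentation is irrelevant. For $w\in\Sig^*$ I write $\#_a(w)$ and $\#_b(w)$ for the exponent sums, so that $\pi(w)=(\#_a(w),\#_b(w))$ and $w\in\WP{\Z\times\Z}$ \IFF $\#_a(w)=\#_b(w)=0$.

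For the first claim I would argue by contradiction. Assuming $\WP{\Z\times\Z}$ is context-free, and using that context-free languages are closed under intersection with regular languages (product construction of a \pda with a finite automaton), the language $L':=\WP{\Z\times\Z}\cap a^*b^*\ov a^{\,*}\ov b^{\,*}=\set{a^ib^j\ov a^{\,i}\ov b^{\,j}}{i,j\ge0}$ is context-free. I then apply \prref{lem:pumpcf} to $L'$ with $z=a^nb^n\ov a^{\,n}\ov b^{\,n}$, where $n$ is the pumping constant, obtaining $z=uvwxy$ with $\abs{vwx}\le n$, $\abs{vx}\ge1$ and $uv^kwx^ky\in L'$ for all $k$. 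Since $\abs{vwx}\le n$, the infix $vwx$ meets at most two of the four length-$n$ blocks, and these are then consecutive; and since $uv^2wx^2y$ must again lie in $a^*b^*\ov a^{\,*}\ov b^{\,*}$, neither $v$ nor $x$ may straddle a block boundary, so $v$ and $x$ are powers of single letters lying inside at most two consecutive blocks. As $\abs{vx}\ge1$, pumping up strictly increases the exponent of some block, while the partner of that block (the partner of the $a^n$-block is the $\ov a^{\,n}$-block, and that of the $b^n$-block is the $\ov b^{\,n}$-block) is never consecutive to it and hence is untouched; thus $uv^2wx^2y\notin L'$, a contradiction.

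For the second claim the key observation is that $w\notin\WP{\Z\times\Z}$ \IFF $\#_a(w)\ne0$ or $\#_b(w)\ne0$, so that $\Sig^*\sm\WP{\Z\times\Z}=L_a\cup L_b$ with $L_a=\set{w\in\Sig^*}{\#_a(w)\ne0}$ and $L_b$ defined symmetrically. Since context-free languages are closed under finite union, it suffices to produce a context-free grammar for $L_a$. I would take a variable $D$ with productions $D\to\eps\mid bD\mid\ov bD\mid aD\ov aD\mid\ov aDaD$ -- essentially the grammar for $\WP{F_{\{a\}}}$ of \prref{ex:fgarecf}, enlarged so that the neutral letters $b,\ov b$ may be inserted anywhere -- and prove by induction on word length, tracking the running value of $\#_a$ along prefixes, that $L(D)=\set{w\in\Sig^*}{\#_a(w)=0}$. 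Then, for $w$ with $\#_a(w)=m\ge1$, cutting $w$ immediately after its shortest prefix on which $\#_a$ takes the value $1$ (such a cut lies just past an occurrence of $a$, and the prefix before it has $\#_a=0$) and iterating writes $w$ as a product of $D$-words separated by exactly $m$ letters $a$; hence $P\to DaD\mid DaP$ generates $\set{w}{\#_a(w)\ge1}$, symmetrically $N\to D\ov aD\mid D\ov aN$ generates $\set{w}{\#_a(w)\le-1}$, and $S\to P\mid N$ generates $L_a$.

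I expect the main obstacle to be purely the elementary combinatorics: the finite case analysis in the Pumping-Lemma step, and the verification that $D$ captures exactly the words with $\#_a=0$ together with the correctness of the ``cut at the first return to level $1$'' decomposition. Neither is deep -- both are prefix-sum bookkeeping -- but the argument stays short if one first intersects with $a^*b^*\ov a^{\,*}\ov b^{\,*}$ (this is exactly what forbids $v$ and $x$ from crossing block boundaries) and first splits the complement along the two coordinates.
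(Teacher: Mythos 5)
Your proof is correct and follows essentially the same route as the paper: non-context-freeness via the Pumping Lemma after intersecting with a ``sorted'' regular language, and context-freeness of the complement by writing it as a finite union of languages each detecting an imbalance in one coordinate. The only difference is that the paper chooses the three-letter generating set $\os{a,b,c}$ with $\pi(c)=(-1,-1)$ precisely so that both halves reduce immediately to \prref{ex:anbncn}, whereas you work with the symmetric four-letter generating set and spell out the (correct) details of the pumping case analysis and the exponent-sum grammars yourself.
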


\begin{proof}
Consider the presentation $\pi: \os{a,b,c}^* \to \Z\times \Z$ which is given by 
$\pi(a) = (1,0)$, $\pi(b) = (0,1)$, and $\pi(c) = (-1,-1)$. 
By essentially the same arguments as in \prref{ex:anbncn} we obtain that $\WP{\Z\times \Z}$ is not context-free, but its complement is context-free.
\end{proof}

Let $N$ be a subgroup of a finitely generated group $G$. We say that 
$N$ has a \emph{context-free enumeration} \cite{FrougnySS89},
if there is a \cfl $L \sse \Sig^*$ and a presentation $\pi:\Sig^* \to G$ with
 $\pi(L) = N$. (Note that if we replace ``context-free'' by ``regular'', we just
 have an alternative definition of a rational subgroup. In particular, 
 every rational subgroup has a context-free enumeration.)

\begin{theorem}[\cite{FrougnySS89}]\label{thm:cfenum}
Let $G$ be a finitely generated group and $N$ be a normal subgroup. Then $N$ has a context-free enumeration \IFF $N = \ggen{R}$ for some finite set $ R\sse G$. 
\end{theorem}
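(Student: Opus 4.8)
The plan is to prove the two implications separately; the substantial direction ``context-free enumeration $\Rightarrow$ finitely normally generated'' will use only the Pumping Lemma (\prref{lem:pumpcf}). \textbf{Finite normal generation gives a context-free enumeration.} Suppose $N=\ggen R$ with $R=\set{g_1\lds g_k}$ finite. Fix a finite generating set $\Sig$ of $G$ closed under formal inverses, with presentation $\pi\colon\Sig^*\to G$. For each $i$ choose words $r_i,\ov{r_i}\in\Sig^*$ with $\pi(r_i)=g_i$, $\pi(\ov{r_i})=g_i^{-1}$, and let $T=\set{r_i,\ov{r_i}}{1\le i\le k}$. Take $L$ to be the language of all words $u_1t_1\ov{u_1}u_2t_2\ov{u_2}\cdots u_mt_m\ov{u_m}$ with $m\ge 0$, $u_j\in\Sig^*$, $t_j\in T$, where $\ov{u_j}$ is the formal inverse word of $u_j$. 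For each fixed $t\in T$ the language $\set{ut\ov u}{u\in\Sig^*}$ is generated by the productions $X\to aXa^{-1}\ (a\in\Sig)$ together with $X\to t$, hence is context-free; as context-free languages are closed under union, concatenation and Kleene star, $L$ is context-free. Finally $\pi(L)$ is precisely the set of all finite products of $G$-conjugates of elements of $R\cup R^{-1}$ (surjectivity of $\pi$ is used to realise arbitrary conjugating elements by words), so $\pi(L)=\ggen R=N$.

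\textbf{A context-free enumeration forces finite normal generation.} Let $\pi\colon\Sig^*\to G$ and a context-free $L\sse\Sig^*$ with $\pi(L)=N$ be given, and let $n$ be a pumping constant for $L$. Let $D$ be the finite set of all triples $(v,w,x)\in(\Sig^*)^3$ with $\abs{vwx}\le n$ and $\abs{vx}\ge 1$ for which there exist $u,y\in\Sig^*$ with $uv^iwx^iy\in L$ for all $i\ge 0$; for $(v,w,x)\in D$ set $d_{(v,w,x)}=\pi(w)^{-1}\pi(v)\pi(w)\pi(x)\in G$. Picking such $u,y$, both $z=uvwxy$ and $z_0=uwy$ lie in $L$, so $\pi(z),\pi(z_0)\in N$, and a direct computation gives $\pi(z)=\pi(z_0)\,\pi(y)^{-1}d_{(v,w,x)}\,\pi(y)$; hence $d_{(v,w,x)}=\pi(y)\,\pi(z_0)^{-1}\pi(z)\,\pi(y)^{-1}\in N$ because $N$ is normal in $G$. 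Since $\pi(L)=N$ is already a normal subgroup, it equals its own normal closure $\ggen{\pi(L)}$, so each $d_{(v,w,x)}$ is a finite product of $G$-conjugates of elements of $\pi(L)$; fix a finite set $F_{(v,w,x)}\sse\pi(L)$ with $d_{(v,w,x)}\in\ggen{F_{(v,w,x)}}$.

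Now put $R=\set{\pi(z)}{z\in L,\ \abs{z}\le n}\cup\bigcup_{(v,w,x)\in D}F_{(v,w,x)}$, a finite subset of $N$; thus $\ggen R\sse N$. By induction on $\abs{z}$ one shows $\pi(z)\in\ggen R$ for every $z\in L$. For $\abs{z}\le n$ this holds by construction of $R$. For $\abs{z}> n$, the Pumping Lemma provides a factorisation $z=uvwxy$ with $\abs{vwx}\le n$, $\abs{vx}\ge 1$ and $uv^iwx^iy\in L$ for all $i$, so $(v,w,x)\in D$; the word $z_0=uwy\in L$ is strictly shorter, hence $\pi(z_0)\in\ggen R$ by induction, and from $\pi(z)=\pi(z_0)\,\pi(y)^{-1}d_{(v,w,x)}\,\pi(y)$ together with $d_{(v,w,x)}\in\ggen{F_{(v,w,x)}}\sse\ggen R$ and the normality of $\ggen R$ we conclude $\pi(z)\in\ggen R$. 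Therefore $N=\pi(L)\sse\ggen R\sse N$, i.e.\ $N=\ggen R$ with $R$ finite.

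I expect the only genuine obstacle to be the step handled by the sets $F_{(v,w,x)}$: a direct induction that merely replaces a long word $z$ by the shorter $z_0$ leaves behind the ``error term'' $d_{(v,w,x)}$, which is the image of a word of bounded length but in general is \emph{not} the image of any word of $L$, and hence cannot be placed in $R$ directly. The point that makes this harmless is that only finitely many such error terms arise, each lies in $N$, and each therefore admits an \emph{a priori} finite witness of membership in $\ggen{\pi(L)}$. (Alternatively one could combine the Hotz isomorphism (\prref{thm:hotz}) with the standard fact that a finitely presented quotient of a finitely generated free group is the quotient by the normal closure of a finite subset of the given relators.)
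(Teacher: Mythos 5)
Your proof is correct, but the converse direction takes a genuinely different route from the paper. For ``finitely normally generated $\Rightarrow$ context-free enumeration'' both arguments are essentially the same: the paper writes down the grammar $S \to aSa^{-1}S \mid r \mid \eps$, which generates exactly your language of products $u_1t_1\ov{u_1}\cdots u_mt_m\ov{u_m}$. For the converse, the paper does not touch the Pumping Lemma at all: it applies the Hotz isomorphism (\prref{thm:hotz}) to conclude that $F_{\Sig}/\ggen{L}$ is finitely presented, invokes the standard fact that the normal closure $\ggen{L}$ in $F_\Sig$ is then the normal closure of a finite set $R$, and pushes $R$ forward to get $N=\ggen{\pi(R)}$. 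Your argument is the direct Pumping Lemma induction (in the spirit of Anisimov's original proof that context-free groups are finitely presented, which the paper mentions but does not carry out); it is self-contained modulo \prref{lem:pumpcf} and does not need the grammar to be reduced or the hypothesis $1\in L$, whereas the paper's route is a two-line deduction once \prref{thm:hotz} is available. One small remark on your exposition: the detour through the sets $F_{(v,w,x)}$ is unnecessary. The theorem only asks for a finite $R\sse G$ with $N=\ggen{R}$, and each error term $d_{(v,w,x)}$ lies in $N$ and ranges over a finite set (indexed by the finitely many triples with $\abs{vwx}\le n$), so you may simply adjoin the elements $d_{(v,w,x)}$ themselves to $R$; the induction then goes through verbatim. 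This does not affect correctness, but it removes the one step you flagged as a potential obstacle.
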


\begin{proof} First, let $N = \ggen{R}$ for some finite set $ R\sse G$. 
 Since $G$ is finitely generated, there is some finite generating set $\Sig$. 
 Then the following \cfg 
 generates a language which yields of context-free enumeration for the normal subgroup $N$: 
 $$S \to a Sa^{-1} S \mid r \mid \eps \text{\quad for all $a \in \Sig \cup \Sig^{-1}$ and $r \in R$}.$$
 
 For the converse, let $\pi:\Sig^* \to G$ be a presentation with $\pi(L) = N$ and 
 $L \sse \Sig^*$ context-free. By \prref{thm:hotz} the group $F_{\Sig}/\!\ggen L $ 
 is finitely presented and hence $\ggen L = \ggen R$ for some finite set 
 $R$. Hence, $N = \ggen{\pi(R)}$.
\end{proof}

\begin{corollary}\label{cor:cfenumfp}
Let $G$ be a finitely presented group and $N$ be a normal subgroup.
Then $N$ has a context-free enumeration \IFF $G/N$ is finitely presented. 
\end{corollary}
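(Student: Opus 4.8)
The plan is to deduce this from \prref{thm:cfenum}. A finitely presented group is in particular finitely generated, so that theorem applies to $G$ and shows that $N$ has a context-free enumeration if and only if $N=\ggen{R}$ for some finite $R\sse G$. Hence the corollary reduces to the purely combinatorial statement that, for $G$ finitely presented and $N$ normal in $G$, the quotient $G/N$ is finitely presented if and only if $N$ is the normal closure in $G$ of a finite subset. I would fix once and for all a finite presentation $G=\Gen{\Sigma}{S}$ with canonical epimorphism $\pi\colon F_{\Sigma}\to G$, so that $\ker\pi=\ggen{S}$.

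For the implication from finite normal generation to finite presentability of the quotient, suppose $N=\ggen{R}$ with $R\sse G$ finite. Choosing a finite set $R'\sse F_{\Sigma}$ with $\pi(R')=R$, the kernel of the composite $F_{\Sigma}\to G\to G/N$ is exactly $\ggen{S\cup R'}$, so $\Gen{\Sigma}{S\cup R'}$ is a finite presentation of $G/N$. For the converse, I would present $G/N$ over the generating set $\Sigma$: taking $W\sse F_{\Sigma}$ to be the set of all words mapping into $N$, one checks that $\ggen{S\cup W}=\pi^{-1}(N)$ and hence $G/N=\Gen{\Sigma}{S\cup W}$. Now I would invoke the standard Tietze-type fact that a finitely presented group, when given by \emph{any} presentation over a finite generating set, is already defined by finitely many of the listed relators; this produces a finite subset $W_{0}\sse W$ with $G/N=\Gen{\Sigma}{S\cup W_{0}}$. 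Comparing the kernels of the two descriptions of $F_{\Sigma}\to G/N$ gives $\pi^{-1}(N)=\ggen{S\cup W_{0}}$, whence $N=\pi(\pi^{-1}(N))=\ggen{\pi(W_{0})}$ with $\pi(W_{0})$ finite, and \prref{thm:cfenum} then yields the context-free enumeration of $N$.

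The one non-routine ingredient is the Tietze-type lemma used in the converse direction (equivalently, that finite presentability of $G/N$ is independent of the finite generating set $\Sigma$); everything else is elementary bookkeeping with normal closures and kernels. I expect no real obstacle beyond correctly citing or re-deriving that lemma.
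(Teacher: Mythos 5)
Your proof is correct and is exactly the argument the paper intends: the corollary is stated without proof as an immediate consequence of Theorem~\ref{thm:cfenum} combined with the standard fact (B.~H.~Neumann's lemma on finitely presented groups) that for a finitely presented $G$ the quotient $G/N$ is finitely presented if and only if $N$ is the normal closure of a finite subset. Your bookkeeping with kernels and normal closures, and your identification of the Tietze-type lemma as the one non-trivial ingredient, match what a written-out version of the paper's proof would contain.
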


\fi

\section{Bass-Serre Theory}\label{sec:bass_serre}
The following results are from \cite{serre80}. Our proofs differ from the original proofs by using rewriting techniques as presented in \prref{sec:prelims}, otherwise our notation remains (intentionally)
close to \cite{serre80}. 
\subsection{Fundamental group of a graph of groups}\label{sec:fggog}
In the following we only consider connected graphs.

\begin{definition}[Graph of Groups]\label{def:gog}
Let $Y = (V(Y),E(Y))$ be a connected graph. 
A \emph{graph of groups} $\cG$ over $Y$ is given by the following data:
\begin{enumerate}
\item For each vertex $P \in V(Y)$ there is a \emph{vertex group} 
$G_P$.
\item For each edge $y \in E(Y) $ there is an \emph{edge group}
$G_y $ such that $G_y \leq G_{s(y)}$ together with an isomorphism from $G_y$ to $G_{\ov y}$ which is denoted by $a \mapsto a^{\ov y}$  such that $(a^{\ov y})^{y} = a$ for all $a \in G_y$.
\end{enumerate}
The graph of groups is called \emph{finite} if $Y$ is a finite graph. 
\end{definition}
By definition, $G_y = G_{\ov y}$ as abstract groups.
We realize $G_y$ always as a subgroup of $G_{s(y)}$ because it simplifies some formulas 
and it is helpful for (effective) construction.
Thus, for $y \in E(Y) $ with $s(y)= P$ and $t(y)= Q$ there are two 
isomorphisms which are inverse to each other
\begin{align*}
G_y\tto G_{\ov y} \leq G_Q&,\qquad a \mapsto a^{\ov y}, \\
G_{\ov y}\tto G_y \leq G_P&, \qquad a \mapsto a^{y}.
\end{align*}

\begin{remark}\label{rem:simpgra}
A special case is when $G_P=\os 1$ for all vertices in $Y$. This implies $G_y=\os 1$ and the isomorphism $a\mapsto a^{\ov y}$ is the identity of the trivial group for all edges $y$. Thus, a \gog with trivial groups is nothing but an ordinary connected graph. As such we can considers its fundamental group (as a topological space).
Thus, we have the classical fact that for each $P\in V(Y)$ and 
each spanning tree $T$ there is a canonical \iso of the fundamental group $\pi_1(Y,P)$ and $\pi_1(Y,T)$.  This \iso is induced by embedding
$\pi_1(Y,P)$ into the free group $F(Y^+) = Y^*/\set{y\ov y=1}{y\in E(Y)}$ and then projecting $F(Y^+)$ onto $\pi_1(Y,T)= 
F(Y^+)/\set{y}{y\in T}.$
\end{remark}


Let $\cG$ be a \gog
over a connected graph $Y$. The roadmap for the next sections is as follows. We define groups ${\pi_1}(\cG,P)$  and ${\pi_1}(\cG,T)$ and a natural \iso between them. Each of them defines the \emph{fundamental group} ${\pi_1}(\cG)$ of $\cG$. Next, we construct the 
the \BST $\wt X$, which can be seen as universal covering of $Y$. The group ${\pi_1}(\cG)$ acts on $\wt X$ and the quotient
${\pi_1}(\cG) \bs \wt X$ yields back the original graph of groups $\cG$ over $Y$. 

\paragraph{Disjoint unions.}
In the construction we encounter several disjoint unions. 
A disjoint union is denoted either as 
$\cupi_{k\in K}S_k $ or as $\cupi\set{S_k}{k\in K}$. Formally, the disjoint union is the set $\set{(s,k)\in S\times K}{s\in S_k}$ where $S$ is the union over all $S_k$. Frequently, we denote elements
$(s,k)$ in a disjoint union in ``dot notation'' as $s\cdot k$ in order to avoid some brackets. Whenever we use the $s\cdot k$, there will be no risk of confusion.

\paragraph{The group $F(\cG)$.} The group $F(\cG)$ is defined to be the free product of the free group 
$F_{E(Y)}$ with basis $E(Y)$ and the 
groups $G_P$ with $P \in V(Y)$ modulo the set of defining relations
$\set{\ov{y}a y=a^{\ov{y}}}{a \in G_y, \, y \in E(Y)}$ according to \prref{def:gog}. 
We give an alternative definition of $F(\cG)$ in terms of a convergent
\STS. As a set of (monoid) generators we choose the disjoint union
\[\Sigma= \left(\cupi\set{G_P \setminus \smallset{1}}{P\in V(Y)}\right) \cupi E(Y).\]
Thus, the alphabet $\Sig$ is the disjoint union of the vertex groups without the neutral elements together with the disjoint union of the edges of the graph. We fix the alphabet $\Sigma $ throughout this section. 
Recall that $F_\Sig$ is the free group with basis $\Sig$.
Having fixed $\Sig$ we can write present $F(\cG)$ in two ways:
\begin{align*}
F(\cG) 	&= \bigl(\underset{ P\in V(Y)}{\star}G_P\bigr)  \star F_{E(Y)} \, / \set{\ov{y}{a}y=a^{\ov{y}}}{y\in E(Y), \, a\in G_y}\\
&=F_\Sigma/\!\set{ gh=[gh],\, \ov{y}{a}y=a^{\ov{y}}}{P\in V(Y),\, g,h \in G_P;\, y\in E(Y), \, a\in G_y}
\end{align*}
where $[gh]$ denotes the element obtained by multiplying $g $ and $h$ in $G_P$.

In addition, for each edge $y \in E(Y)$ with $s(y) = P$ we choose a set $C_y$ of representatives of the left cosets $G_P/G_{y}$ with $1 \in C_y$.
Thus, each $g \in G_P$ admits a unique factorization $g = ca$ with 
$c \in C_y$ and $a \in G_{y}$.
We now define a rewriting system $S_\cG \sse \Sigma^*\times \Sigma^*$:
\[
\begin{array}{rcll}
gh &\longrightarrow & [gh] & \quad \text{for } P\in V(Y),g, h \in G_P\setminus \os{1}\\ 
\ [ca] y &\ra{} & cy a^{\ov y} & \quad \text{for } y \in E(Y), c \in C_y, a \in G_y \setminus \smallset{1}\\
\ov y y &\ra{}& 1 & \quad \text{for } y \in E(Y)
\end{array}
\]

\begin{proposition}\label{prop:ScG_conv}
We have $F(\cG)=\Sigma^*/S_\cG$ and ${S_{\cG}}$ is a convergent \STS.
\end{proposition}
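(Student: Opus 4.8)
The plan is to establish the two assertions separately. First, the identity $F(\cG) = \Sigma^*/S_\cG$ follows from comparing defining relations. The presentation of $F(\cG)$ (as a quotient of $F_\Sigma$) uses the relators $gh = [gh]$ for $g,h \in G_P$ and $\ov y a^y y = a^{\ov y}$ for $a \in G_y$. I would show that each rule of $S_\cG$ is a consequence of these relations, and conversely. The rules $gh \to [gh]$ and $\ov y y \to 1$ are immediate (the latter being the instance $a = 1$ of $\ov y a^y y = a^{\ov y}$, since $1^y = 1 = 1^{\ov y}$). For the rule $[ca^y]y \to cy a^{\ov y}$: in $F(\cG)$ we have $[ca^y]y = c a^y y$ (using $gh = [gh]$), and $a^y y = y a^{\ov y}$ is exactly a rearrangement of the relator $\ov y a^y y = a^{\ov y}$. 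Conversely, every defining relator of $F(\cG)$ is derivable from $\DAO*{S_\cG}$ — the relators $gh = [gh]$ are literally rules, and $\ov y a^y y = a^{\ov y}$ is recovered by writing $a^y = 1 \cdot a^y$ with $1 \in C_y$, applying $[1 \cdot a^y] y \to 1 \cdot y \cdot a^{\ov y}$ (for $a \neq 1$) and prepending $\ov y$, then using $\ov y y \to 1$. So the two quotients coincide.

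The substantive part is showing $S_\cG$ is convergent, i.e.\ terminating and locally confluent (then apply \prref{thm:konvtoCR}). For \emph{termination}: I would assign to each word a well-founded weight that strictly decreases under every rule. A natural choice is the pair (number of edge-letters $y \in E(Y)$ that are \emph{not} immediately preceded by $\ov y$, with multiplicity; total length), or more cleanly: first count the number of occurrences of letters from $E(Y)$ that participate in an $\ov y y$ cancellation opportunity — but the cleanest is probably a lexicographic combination where the rule $\ov y y \to 1$ reduces length, the rule $gh \to [gh]$ reduces length, and the rule $[ca^y]y \to cy a^{\ov y}$ keeps the count of $E(Y)$-letters fixed but must be controlled by some other quantity. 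Actually $[ca^y]y \to cya^{\ov y}$: left side has one vertex-group letter before $y$ (if $ca^y \neq 1$) giving length $2$ around $y$; right side has $c$ (possibly $1$, i.e.\ absent) before $y$ and $a^{\ov y}$ (nonzero since $a \neq 1$) after, length $\leq 3$ — so length can go up. The right measure: weight the first rule and third rule by length, and for the second rule observe it moves "group mass" from the source side of $y$ to the target side; assign a weight counting, for each edge-letter, the size of the maximal vertex-group-letter block to its left — the second rule strictly shrinks that block (replacing $ca^y$, a single letter, by $c$, which is $1$ if $c=1$ and reduces the left block; in general one shows the relevant block on the left strictly decreases while nothing to the left is affected). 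I expect getting a single clean well-founded measure to be the main obstacle, and I would likely define it as a tuple ordered lexicographically: (number of $E(Y)$-occurrences not covered by an $\ov y y$ pattern, $\ldots$) — or simpler, reduce first by all $gh\to[gh]$ and $\ov y y \to 1$ rules (clearly terminating, strictly decreasing length), then argue the $[ca^y]y$ rules terminate because each strictly decreases $\sum$ over edge-letters of (distance to the previous edge-letter or string start). The second rule decreases this sum at the position of the rewritten $y$ because $ca^y$ (one letter) becomes $c$ (zero or one letter), lowering the count, and it cannot increase the count for any later edge-letter since only $a^{\ov y}$ is inserted and it sits right after $y$.

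For \emph{local confluence} I would check all overlaps of left-hand sides. The left-hand sides are $gh$ (two adjacent vertex-group letters from the same $G_P$), $[ca^y]y$ (a vertex-group letter followed by the matching edge), and $\ov y y$. Overlaps to resolve: (a) $g h h'$ with $g,h,h' \in G_P \setminus \{1\}$ — resolves by associativity in $G_P$; (b) $g\,[ca^y]\,y$ where $g, ca^y \in G_P$, i.e.\ $g \cdot [ca^y]$ overlaps the first factor of the second rule — here one branch gives $[g \cdot ca^y] y$ and must be rewritten via rule 2 after recomputing the coset representative of $g\cdot ca^y \in G_P$; the other branch does $gh\to[gh]$ differently — one checks both reach $c' y (a')^{\ov y}$ where $g c a^y = c' (a')^y$ is the unique $C_y$-factorization; (c) overlap of $[ca^y]y$ with $\ov z z$ — only possible if the trailing $y$ of the first equals... no, $\ov y y$ starts with $\ov y$, an edge letter, which cannot be the second letter of $[ca^y]y$ pattern unless we look at $\ldots [ca^y] y$ and a following $\ov y \ldots$; but $y \ov y$ is not a left-hand side, so no overlap there. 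The genuinely interesting overlap is (b), and I'd resolve it using uniqueness of coset representatives plus the relation $a^y y = y a^{\ov y}$ in $F(\cG)$. Also need to check $\ov y\,[ca^y]$? No — $[ca^y]$ starts with a vertex-group letter, $\ov y y$ ends with the edge letter $y$, so a potential overlap $\ov y y = \ov y\,(y)$ meeting $[ca^{z}]z$ at... the only shared-letter overlaps are between consecutive rule-1 applications and between rule 1 and the vertex-letter prefix of rule 2. All resolve by group-theoretic identities in the $G_P$'s together with the coset-representative uniqueness. Once local confluence and termination are in hand, convergence follows from \prref{thm:konvtoCR}(iii).
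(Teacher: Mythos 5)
Your reduction of the first assertion ($F(\cG)=\Sigma^*/S_\cG$) and your overlap analysis for local confluence are broadly in line with what is needed (the paper dispatches both with ``immediate'' and ``direct inspection''), but your termination argument has a genuine gap, and termination is the one place where this proposition requires a real idea. The measure you settle on --- the sum, over occurrences of edge letters, of the distance to the previous edge letter (or to the start of the word) --- does \emph{not} strictly decrease under the rule $[ca^y]y \to cy\,a^{\ov y}$; in fact it can strictly increase. If $c\neq 1$, the single letter $[ca^y]$ is replaced by the single letter $c$, so the block to the left of the rewritten $y$ has the \emph{same} length; and the inserted letter $a^{\ov y}$ sits between $y$ and the next edge letter, so the contribution of that next edge letter goes \emph{up} by one. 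For example $[ca^y]\,y\,[c'b^z]\,z$ with $c\neq 1$ has measure $1+1$, while its rewrite $c\,y\,a^{\ov y}[c'b^z]\,z$ has measure $1+2$. Your earlier claim that ``the relevant block on the left strictly decreases'' fails for the same reason, and restricting to a particular strategy (normalize by the length-reducing rules first) does not establish termination of the system, which is what convergence requires. The idea you are missing, and which the paper uses, is a front-to-back stabilization: the number of edge letters is non-increasing and hence eventually constant along any derivation; then the prefix $u y$ up to the first edge letter eventually stabilizes, because a rule-2 step at that position leaves a coset representative $c\in C_y$ immediately before $y$, which admits only the trivial factorization $c\cdot 1$, so rule 2 cannot fire there again until a strictly length-reducing merge inside $u$ has occurred; once that prefix is frozen, one concludes by induction on the number of edge letters.

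A secondary point: in your case analysis of overlaps you assert that $y\ov y$ is not a left-hand side and hence that $[ca^y]y$ has no overlap with the cancellation rules. But $E(Y)$ is closed under the involution, so the rule $\ov z z\to 1$ instantiated at $z=\ov y$ \emph{is} the rule $y\ov y\to 1$; the word $[ca^y]\,y\,\ov y$ therefore carries two overlapping redexes. This critical pair does resolve (one branch gives $[ca^y]$ directly, the other gives $c\,y\,a^{\ov y}\ov y \RAS{}{S_\cG} c\,y\,\ov y\,a^{y} \RAS{*}{S_\cG} [ca^y]$), so the statement survives, but the case must be checked rather than dismissed.
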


\begin{proof}
The \homo from $\Sigma^*/{S_{\cG}}$ to $F(\cG)$ is induced by the inclusion 
of $\Sig$ into the free group $F_\Sigma$. It is immediate that this is an \iso. 
Local confluence of $S_\cG$ follows by a direct inspection. 
We prove termination as follows. Consider any 
derivation sequence 
$$w_0 \RAS{}{S_\cG} w_1 \RAS{}{S_\cG} w_2\RAS{}{S_\cG} w_3 \cdots $$
 We have to show that it is finite. After finitely many steps 
 the number of letters $y \in E(Y)$ in each $w_i$ remains
stable. If $w_i$ does not contain any $y \in E(Y)$, then only length reducing rules can be 
applied. Thus, the sequence is finite. We may assume that every word $w_i$ has a
prefix $u_iy$ where no $y \in E(Y)$ occurs in $u_i$. 
Hence, after finitely many steps the prefix $u_iy$ remains stable and does not change anymore. Termination follows now by induction on the number of 
$y$ occurring in $w_0$. 
\end{proof}

For $P,Q\in V(Y) $ we denote the set of paths from $P$ to $Q$ in $Y$ with ${\Pi}(P,Q)$; more precisely
 $${\Pi}(P,Q) = \set{y_1\cdots y_k}{s(y_1) = P,\, t(y_k) = Q,\, t(y_{i}) = s(y_{i+1}) \text{ for $1\leq i < k$}}.$$
Furthermore, we define subsets $\pi(\cG, P,Q)$ of the group $F(\cG)$ by 
the elements 
$g_0y_1\cdots g_{k-1}y_kg_{k}\in F(\cG)$, which satisfy 
\begin{equation}\label{eq:pipq}
y_1\cdots y_k\in\Pi(P,Q) ,\, g_i\in G_{s(y_{i+1})} \text{ for $0\leq i < k$},\, g_{k} \in G_{Q}.
\end{equation}

Note that $\pi(\cG, P,Q)\cdot \pi(\cG, Q,R) = \pi(\cG, P,R)$ and 
$\pi(\cG, P,P)$ is a group for all vertices $P\in V(Y)$ whereas $\pi(\cG, P,R)$ is not a group for $P\neq R$.

\begin{definition}Let $P\in V(Y)$. 
The \emph{fundamental group} of $\cG$ with respect to the base point $P$ is defined as $\pi_1(\cG,P)=\pi(\cG,P,P)$. 
\end{definition}

Recall that we have made the hypothesis that $Y$ is connected. 
Thus, there exists a 
spanning tree $T= (V(Y),E(T))$ of $Y$.
The following classical result will be used and can be viewed as the  germ of the Bass-Serre theory. 
\begin{proposition}\label{prop:germ}
Suppose that $G_P=\os 1$ for all $P\in V(Y)$. Then~$F=\pi_1(\cG,P)$ is a free group. A generating set for $F$ is given by the $E(Y)\sm E(T)$.
\Ip if $V(Y)$ is finite, then the rank $r(F)$ is given
by $r(F)-1= |E^+(Y)| - |V(Y)|$ where  $E^+(Y)$ is any orientation of 
$E(Y)$.  
\end{proposition}
\begin{proof}
If $G_P=\os 1$ for all $P\in V(Y)$, then we are in the classical situation of connected graphs as explained in \prref{rem:simpgra}. Thus
$F= \pi_1(Y,P)=\pi_1(Y,T)$ is free group; and the rank formula is well-known.
\end{proof}

\begin{definition}
Let $T$ be a spanning tree of $Y$. The \emph{fundamental group} of $\cG $ with respect to $T$ is defined by: 
\[\pi_1(\cG,T)= F(\cG)/\set{y=1}{y\in T}.\]
\end{definition}

\begin{proposition}\label{prop:twofunds}
The canonical homomorphism $\eta$ from the subgroup $\pi_1(\cG, P)$ 
of $F(\cG)$ to the quotient group $\pi_1(\cG,T)$ is an isomorphism.
\end{proposition}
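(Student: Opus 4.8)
The plan is to exhibit mutually inverse homomorphisms between $\pi_1(\cG,P)$ and $\pi_1(\cG,T)$. One direction, $\phi$, is already given: it is the restriction to the subgroup $\pi_1(\cG,P)\le F(\cG)$ of the quotient map $F(\cG)\to \pi_1(\cG,T)=F(\cG)/\ggen{\set{y=1}{y\in E(T)}}$. So the real work is to construct a homomorphism $\psi\colon \pi_1(\cG,T)\to \pi_1(\cG,P)$ and to check that $\psi\circ\phi=\id$ and $\phi\circ\psi=\id$. The standard device is: for every vertex $Q\in V(Y)$ let $[P,Q]$ denote the unique reduced path in the spanning tree $T$ from $P$ to $Q$, viewed as a word in $E(T)^*\subseteq\Sigma^*$, with $[Q,P]=\ov{[P,Q]}$ and $[P,P]=1$. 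Then define a map on generators of $F(\cG)$ by sending $g\in G_Q\setminus\smallset 1$ to $[P,Q]\,g\,[Q,P]$ and sending an edge $y\in E(Y)$ with $s(y)=Q$, $t(y)=R$ to $[P,Q]\,y\,[R,P]$. A direct check shows this respects the defining relations of $F(\cG)$ (the relation $gh=[gh]$ inside a fixed $G_Q$ is obviously preserved since the conjugating words telescope; the relation $\ov y a^y y = a^{\ov y}$ is preserved because the inserted tree-paths at the matching endpoints cancel), so it extends to a homomorphism $F(\cG)\to F(\cG)$ whose image lies in $\pi_1(\cG,P)$ (every generator image is a closed path at $P$), and which kills every $y\in E(T)$ (for such $y$, $[P,s(y)]\,y\,[t(y),P]=[P,s(y)]\,y\,\ov{[P,s(y)]\,y}$ collapses, using $[P,t(y)]=[P,s(y)]\,y$ in $T$). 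Hence it factors through a homomorphism $\psi\colon\pi_1(\cG,T)\to\pi_1(\cG,P)$.

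Next I would verify the two compositions are identities. For $\phi\circ\psi=\id$ on $\pi_1(\cG,T)$: in $\pi_1(\cG,T)$ every edge of $T$ is trivial, so all the inserted tree-path words $[P,Q]$ become $1$, and $\psi$ followed by the quotient map sends each generator to itself; hence $\phi\circ\psi$ is the identity on a generating set, so on all of $\pi_1(\cG,T)$. For $\psi\circ\phi=\id$ on $\pi_1(\cG,P)$: an element of $\pi_1(\cG,P)=\pi(\cG,P,P)$ is represented by a word $g_0y_1g_1\cdots y_kg_k$ with $y_1\cdots y_k$ a closed path at $P$ and $g_i\in G_{s(y_{i+1})}$ (indices as in~\eqref{eq:pipq}). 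Applying $\psi$ (via $\phi$, which on the subgroup is just inclusion into $F(\cG)$ followed by the tree-path substitution) inserts tree paths $[P,\cdot]$ and their inverses between consecutive letters; these are arranged precisely so that consecutive inserted pairs $[Q,P][P,Q]$ telescope to $1$, returning the original word. So $\psi\circ\phi=\id$ on the generating pieces of $\pi_1(\cG,P)$, hence on the whole subgroup.

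I expect the main obstacle to be bookkeeping rather than conceptual difficulty: one must be careful that $\psi$ is genuinely well defined, i.e. that the assignment on the monoid generators $\Sigma$ of $F_\Sigma$ descends through \emph{all} of the relations defining $F(\cG)$ (both the within-vertex-group multiplication relations and the edge relations $\ov y a^y y=a^{\ov y}$), and that the telescoping cancellations are written out with the correct orientation conventions for $\ov y$, $[Q,P]=\ov{[P,Q]}$, and the direction in which paths are composed. A secondary point worth a sentence is independence of the base point: the isomorphism type of $\pi_1(\cG,P)$ does not depend on $P$ (conjugate by any path $P\to P'$), which together with the above identifies all of $\pi_1(\cG,P)$ with the single group $\pi_1(\cG,T)$. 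Once the conventions are pinned down, everything reduces to the two telescoping computations sketched above, so I would present it as: (1) define $[P,Q]$; (2) define $\psi$ on generators and check the relations; (3) check $\psi$ kills $E(T)$, hence factors through $\pi_1(\cG,T)$; (4) check both composites are the identity.
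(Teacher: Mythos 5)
Your proposal is correct and follows essentially the same route as the paper: define the tree-path words $T[P,Q]$, use them to build $\psi\colon F(\cG)\to\pi_1(\cG,P)$ on generators, check the relations and that $\psi$ kills $E(T)$ so it descends to $\pi_1(\cG,T)$, and then compare the composites with $\phi$. The only (harmless) difference is that you verify both composites explicitly, whereas the paper checks only $\phi\circ\psi=\id$ and combines it with the surjectivity of $\psi$.
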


\begin{proof}
 For two vertices $Q,R \in V(Y)$ we write $T[Q,R]=y_1 \cdots y_k$ for the sequence of edges of the unique shortest path from $Q$ to $R$ in the spanning tree $T$. We can read the word $T[Q,R]\in E(Y)^*$ as an element in the group $F(\cG)$. 
 It yields a homomorphism $\tau: F(\cG)\to {\pi_1}(\cG,P) $ by 
 \begin{align*}
 	&&\tau(y) &= T[P,s(y)] \,y\, T[t(y),P] &&\text{ for } y \in E(Y) \text{ and}\\
 	&&\tau(g) &= T[P,Q] \,g \,T[Q,P] &&\text{ for } Q\in V(Y)  \text{ and } g\in G_Q.
 \end{align*}
The \homo $\tau$ is well-defined because $\tau(\ov y a y) = \tau(a^{\ov y})$. 
Moreover, $\tau$ is surjective and $\tau(y) = 1$ for all edges $y \in E(T)$. 
Hence, $\tau$ induces a surjective \homo (also denoted by $\tau$) of $\pi_1(\cG,T)$ onto ${\pi_1}(\cG,P)$. 
The composition $\pi_1(\cG,T)\overset{\tau}{\to} {\pi_1}(\cG,P) \overset{\eta}{\to}\pi_1(\cG,T) $ is the identity.\footnote{The notation $\eta$ and $\tau$ reflects that 
$\eta$ is defined through the ``embedding" of ${\pi_1}(\cG,P)$ into $F(\cG)$ and that 
$\tau$ depends on the chosen (spanning) tree $T$.}
This shows the result. 
\end{proof}

\prref{prop:twofunds} shows in particular that the two definitions of a fundamental group are independent of the choice of the base point or the spanning tree.
As an abstract group we denote it by ${\pi_1}(\cG)$. It is the 
fundamental group of the graph of groups $\cG$ over the connected graph $Y$.

\begin{corollary}\label{cor:allsame}
The canonical mapping from the subset $\pi(\cG, P,Q)$ 
of $F(\cG)$ to the quotient group $\pi_1(\cG,T)$ is a bijection.
\end{corollary}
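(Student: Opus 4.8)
The plan is to reduce the statement about the subset $\pi(\cG,P,Q) \sse F(\cG)$ to the already-established \prref{prop:twofunds}, which handles the case $P = Q$. First I would fix a vertex $P$ (the basepoint) and the spanning tree $T$, and recall from the proof of \prref{prop:twofunds} the element $T[P,Q] = y_1\cdots y_k \in E(Y)^* \sse F(\cG)$, the word spelling out the unique reduced path in $T$ from $P$ to $Q$. The key observation is that right-multiplication by $T[Q,P]$ inside $F(\cG)$ carries $\pi(\cG,P,Q)$ bijectively onto $\pi(\cG,P,P) = \pi_1(\cG,P)$: indeed $\pi(\cG,P,Q)\cdot T[Q,P] \sse \pi(\cG,P,Q)\cdot\pi(\cG,Q,P) = \pi(\cG,P,P)$ by the composition law for these path-sets noted just before the definition of $\pi_1$, and the inverse map is right-multiplication by $T[P,Q]$, so this is a genuine bijection of subsets of $F(\cG)$.

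Next I would examine how this bijection interacts with the canonical map $q\colon F(\cG) \to \pi_1(\cG,T) = F(\cG)/\smallset{y = 1 \mid y \in E(T)}$. Since every edge $y$ occurring in $T[Q,P]$ lies in the spanning tree, we have $q(T[Q,P]) = 1$ in $\pi_1(\cG,T)$; hence for every $w \in \pi(\cG,P,Q)$ we get $q(w) = q(w\cdot T[Q,P])$. In other words, the composite
\[
\pi(\cG,P,Q) \xrightarrow{\;\cdot\, T[Q,P]\;} \pi(\cG,P,P) = \pi_1(\cG,P) \xrightarrow{\;\phi\;} \pi_1(\cG,T)
\]
agrees with the canonical map $\pi(\cG,P,Q) \to \pi_1(\cG,T)$ from the corollary statement. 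The first arrow is a bijection (previous paragraph) and the second is a bijection by \prref{prop:twofunds} (there $\phi$ is the restriction of $q$ to $\pi_1(\cG,P)$, which is an isomorphism of groups, in particular a bijection of sets). A composite of bijections is a bijection, which is exactly the claim.

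I do not anticipate a serious obstacle; the only point requiring a little care is making sure the two "canonical maps" genuinely coincide, i.e. that the map named in the corollary is literally the restriction of $q$ to the subset $\pi(\cG,P,Q)$ — this is how "canonical" should be read, since both $\pi(\cG,P,Q)$ and the earlier-used $\pi_1(\cG,P)$ sit inside $F(\cG)$ and $\pi_1(\cG,T)$ is a quotient of $F(\cG)$. One should also check that the argument is independent of the choice of basepoint $P$, but that follows because \prref{prop:twofunds} already establishes independence of the fundamental group from $P$ and $T$, and any two choices of $T[Q,P]$-type correcting words differ by an element of $\pi_1(\cG,P)$ mapping to $1$ in $\pi_1(\cG,T)$.
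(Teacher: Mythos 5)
Your proposal is correct and follows essentially the same route as the paper: the paper's proof also sends $g \mapsto g\,T[Q,P]$ to obtain a bijection of $\pi(\cG,P,Q)$ with $\pi_1(\cG,P)$, notes that $g$ and $g\,T[Q,P]$ have the same image in $\pi_1(\cG,T)$, and concludes by \prref{prop:twofunds}. The extra care you take in identifying the ``canonical map'' with the restriction of the quotient map is a fair elaboration of what the paper leaves implicit.
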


\begin{proof}

Choose an edge sequence $T[Q,P]$ in the spanning tree from $Q$ to $P$. 
Then the mapping $g \mapsto g T[Q,P]$ yields a bijection between $\pi(\cG, P,Q)$ 
and $\pi_1(\cG, P)$. The image of $g$ and of $g T[Q,P]$ in $\pi_1(\cG,T)$
is the same. The result follows by \prref{prop:twofunds}.
\end{proof}

\begin{corollary}\label{cor:embedP}
Let $G_P$ be a vertex group. 
The canonical homomorphism of $G_P$ to $\pi_1(\cG,T)$ is injective. 
\end{corollary}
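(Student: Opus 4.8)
The plan is to realize the canonical homomorphism $G_P \to \pi_1(\cG,T)$ as the restriction of the isomorphism $\phi$ from \prref{prop:twofunds}, so that injectivity is inherited for free. The only thing that requires a small argument is the claim that $G_P$ genuinely sits inside $F(\cG)$, and inside $\pi_1(\cG,P)$. For the first point I would use \prref{prop:ScG_conv}: since $S_\cG$ is convergent, $\IRR(S_\cG)$ is a set of normal forms for $F(\cG)=\Sigma^*/S_\cG$, and every $g \in G_P \setminus \smallset 1$ is a single letter of $\Sigma$ to which no left-hand side of $S_\cG$ applies, hence lies in $\IRR(S_\cG)$. Distinct such letters are distinct normal forms, so the canonical map $G_P \to F(\cG)$ (sending $1$ to the empty word and $g \neq 1$ to the class of the letter $g$) is injective.

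Next I would observe that, viewed inside $F(\cG)$, we have $G_P \subseteq \pi(\cG,P,P) = \pi_1(\cG,P)$: an element $g \in G_P$ is precisely an element $g_0 y_1 \cdots g_{k-1} y_k g_k$ of the shape \prref{eq:pipq} with the degenerate choice $k = 0$, $g_0 = g$, and $y_1\cdots y_k$ the empty path at $P$. Since the canonical homomorphism $G_P \to \pi_1(\cG,T)$ is by definition the composite of $G_P \hookrightarrow F(\cG)$ with the quotient map $F(\cG)\to\pi_1(\cG,T)=F(\cG)/\set{y=1}{y\in T}$, and $\phi$ in \prref{prop:twofunds} is exactly the restriction of that same quotient map to $\pi_1(\cG,P)$, the map $G_P \to \pi_1(\cG,T)$ coincides with $\phi$ restricted to the subgroup $G_P \subseteq \pi_1(\cG,P)$.

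Finally, \prref{prop:twofunds} tells us $\phi$ is an isomorphism, in particular injective, and the restriction of an injective map to a subgroup stays injective; hence $G_P \to \pi_1(\cG,T)$ is injective. (Alternatively, one can cite \prref{cor:allsame} with $Q=P$, which directly gives that $\pi(\cG,P,P)\to\pi_1(\cG,T)$ is a bijection, and then restrict to $G_P$.) I do not expect any real obstacle here: the entire content is the normal-form bookkeeping of the first paragraph, which is routine given \prref{prop:ScG_conv}, after which the corollary is immediate.
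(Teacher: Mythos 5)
Your argument is correct and is essentially the paper's own proof: both rest on the observation that elements of $G_P$ are irreducible with respect to the convergent system $S_\cG$ of \prref{prop:ScG_conv}, so that $G_P \sse \pi_1(\cG,P) \sse F(\cG)$, and then invoke the isomorphism $\pi_1(\cG,P)\cong\pi_1(\cG,T)$ of \prref{prop:twofunds}. You merely spell out the normal-form bookkeeping that the paper leaves implicit.
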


\begin{proof}
 By \prref{prop:twofunds} we may identify $\pi_1(\cG,T)$ with $\pi_1(\cG, P)$.
 We have $\pi_1(\cG, P) \sse F(\cG)$, but all elements of $G_P$ are irreducible 
 with respect to the convergent 
\STS $S_\cG$ of \prref{prop:ScG_conv}. Thus, $G_P \sse \pi_1(\cG, P) \sse F(\cG)$.
\end{proof}

\begin{example}\label{ex:free_group_gog}
Let $Y = (V,E)$ be a finite nonempty connected graph with $n$ vertices and $m$ edges and
$G_P= \os{1} $ for all vertices $P \in V$. Then $\pi_1(\cG)$ is the 
usual fundamental group of an undirected graph. It is a free group of rank 
$m-n+1$. In its simplest form $Y$ is the following graph:
 \begin{center}
 \begin{footnotesize}
 \begin{tikzpicture}
 \node [draw,semithick,circle ](b) at (0,0 ) {$P$};
 \draw [semithick,-] (b) .. controls(2.5,2.9) and (2.5,1.3) .. (b)node[ pos=.5,right]{$\smallset{y_1,\ov y_1}$};
 \draw [semithick,-] (b) .. controls(2.5,1.2) and (2.5,0.0) .. (b)node[ pos=.5,right]{$\smallset{y_2,\ov y_2}$};
 \path [semithick,-] (b) .. controls(1.5,0.7) and (1.5,-1.4) .. (b)node[ pos=.5,right]{$\vdots$};
 \draw [semithick,-] (b) .. controls(2.5,-1.3) and (2.5,-2.9) .. (b)node[ pos=.5,right]{$\smallset{y_m,\ov y_m}$};
 \end{tikzpicture}
 \end{footnotesize}
 \end{center}
 \vspace*{-1cm}
 Then ${\pi_1}(\cG)$ is the free group with basis $\smallset{y_1,\dots, y_m}$.
 \end{example}

\begin{example}\label{ex:amal}
Let $G_P$ and $G_Q$ be two groups with a common subgroup $G_y = G_P \cap G_Q$.
Then the amalgamated product $G_P \star_{G_y}G_Q$ of $G_P$ and $G_Q$ over $G_y$ is the 
fundamental group of the graph of groups over $Y$, where 
 $Y$ is as follows:
\begin{center}
\begin{footnotesize}
\begin{tikzpicture}
\node[draw,semithick, circle ] (a) at (-1.5, 0) {$P$};
\node [draw,semithick,circle ](b) at (1.5,0 ) {$Q$};
\draw [semithick,-] (a) -- (b)node[ pos=.5,below]{$\smallset{y,\ov y}$};
\end{tikzpicture}
\end{footnotesize}
\end{center}
\end{example}

\begin{example}\label{ex:hnn}
 Let $\cG$ be a graph of groups over the following graph:
 \vspace*{-0.7cm}
 \begin{center}
 \begin{footnotesize}
 \begin{tikzpicture}
 \node [draw,semithick,circle ](b) at (0,0 ) {$P$};
 \draw [semithick,-] (b) .. controls(2.5,1.5) and (2.5,-1.5) .. (b)node[ pos=.5,right]{$\smallset{y,\ov y}$};
 \end{tikzpicture}
 \end{footnotesize}
 \end{center}
 \vspace*{-1cm}
Then the fundamental group ${\pi_1}(\cG)$ is the 
HNN-extension with stable letter $y$ which is defined as 
$${\pi_1}(\cG) = G_P \star F_{\{y\}}/ \set{\ov{y}{a}y=a^{\ov{y}}}{a \in G_y}.$$
 \end{example}
 
 \subsection{Britton Reductions over Graphs of Groups}\label{sec:britton}
For some purposes we do not need unique normal forms, hence the rewriting system $S_\cG$ is too precise and too complicated. The following observation gives rise to another rewriting system:
 If $w = g_0y_1\cdots g_{k-1}y_kg_{k}$ with $y_1 \cdots y_k\in \Pi(P,Q)$, $g_i\in G_{s(y_{i+1})}$, and $y_{i}g_i y_{i+1} = \ov y {a} y$ for some $i$ with $a\in G_y$ (note that this implies $\ov y {a}y \RAS*{S_\cG} a^{\ov y}$), then also $y_1 \cdots y_{i-1}y_{i+2} \cdots y_k\in \Pi(P,Q)$ and
$$g_0y_1\cdots g_{i-2} y_{i-1}[g_{i-1}a^{\ov y}g_{i+1}] y_{i+2}g_{i+2}\cdots y_kg_{k} \in \pi(\cG, P,Q).$$

Motivated by \prref{ex:hnn}, this leads to the notion of \emph{Britton reduction}. Britton reductions are given by the rewriting system $B_\cG\sse \Sigma^*\times \Sigma^*$ with the following rules:
\begin{align*}
&&gh &\ra{} [gh] & \quad \quad \text{for } P\in V(Y),\ g, h \in G_P \sm \smallset{1}\\
&&\ov{y}{a}y&\ra{} a^{\ov{y}} & \quad \quad \text{for } y\in E(Y),\ a\in G_y
\end{align*}
As $B_\cG$ is length reducing, it is terminating. 
A word is called \emph{Britton-reduced}\footnote{Originally, the notation \emph{Britton reduction} was used only for HNN extensions, like in \cite{LS01}.} if no rule of $B_\cG$ can be applied. 
Furthermore, the reflexive, symmetric and transitive closures $\DAS{*}{B_\cG}$ and $\DAS{*}{S_\cG}$ are the same. In particular, $F(\cG) = \Sigma^* / B_\cG$.

\begin{remark}\label{rem:britton}
The following facts are crucial: 
\begin{enumerate}
\item If $g_0x_1 \cdots g_{k-1}x_k g_{k} \RAS*{S_{\cG}} h_0y_1 \cdots h_{\ell-1}y_\ell h_{\ell}$, with $x_1 \cdots x_k\in \Pi(P,Q)$, then $y_1 \cdots y_\ell\in \Pi(P,Q)$, too.
\item If $ g \in \pi(\cG,P,Q)$ with $g= g_0y_1 \cdots g_{k-1}y_k g_{k} \in \IRR(B_\cG)$, then the path $y_1 \ldots y_k\in \Pi(P,Q)$
is uniquely defined by $g$. If we have $g \RAS*{S_{\cG}} \hat g\in \IRR({S_{\cG}})$,
then the ``$y$-sequence'' $y_1 \ldots y_k$ does not change, and we can write $\hat g = c_0y_1 \cdots c_{k-1}y_k \tilde g_{k}$ as a word in $\Sig^*$ where $c_i \in C_{y_{i+1}}$ and $\tilde g_{k}\in G_Q$. 
Moreover, the prefix $c_0y_1 \cdots c_{k-1}y_k$ depends on $gG_Q$, only. 
\end{enumerate}
\end{remark}

\prref{rem:britton} gives us an easy algorithm for the decidability of the word problem of $\pi_1(\cG,P)$ as we will see next.  
\begin{corollary}\label{cor:WPgog}
 Let $\cG$ be a graph of groups with underlying graph $Y$. For all $y\in E(Y)$ let the membership problem of $G_y$ in $G_{s(y)}$ be decidable and let the word problem for $G_P$ be decidable for some (or equivalently every) $P\in V(Y)$. Furthermore, let the isomorphisms $G_y \to G_{\ov y}$ be effectively computable for all $y\in E(Y)$. Then the word problem of $\pi_1(\cG,P)$ is decidable.
\end{corollary}
Here, the membership problem of a subgroup $H$ in a group $G$ is to decide whether a given $g \in G$ is contained in $H$. 
\begin{proof}
 Apply Britton reductions until it is not possible anymore. Britton reductions can be effectively computed since the membership problem for $G_y$ in $G_{s(y)}$ is decidable and the isomorphisms $G_y \to G_{\ov y}$ are computable. If at the end there is still some $y$ in the resulting word, the input word is not equal to $1$ in $\pi_1(\cG,P)$. Otherwise, the algorithm for the word problem in $G_P$ is applied.
\end{proof}

Note that 
\prref{cor:WPgog} does not state anything about the complexity. The problem is that even if all computations can be performed efficiently, the blow up due to the calculations of the isomorphisms $G_y \to G_{\ov y}$ might prevent an efficient solution of the word problem in $\pi_1(\cG,P)$. An example is the Baumslag group $\mathrm{BG}(1,2) = \genr{a,t,b}{tat^{-1}= a^2, bab^{-1}=t}$ which is an HNN-extension of the Baumslag-Solitar group $\mathrm{BS}(1,2)$. For this group, the straightforward algorithm as in \prref{cor:WPgog} leads to a non-elementary running time (\ie there is no fixed tower of exponentials as bound for the running time). However, in \cite{muw11bg} it is shown that the word problem still can be solved in polynomial time by using super exponential compression of the intermediate results.

\subsection{The Bass-Serre Tree}\label{sec:bst}
Let $\cG$ be a graph of groups over $Y$.
Our construction of the \BST for $\cG$ relies on the convergent 
\STS $S_\cG$ of \prref{prop:ScG_conv}.

We define the \emph{\BST} $\widetilde X$ as a subset of 
$\IRR({S_{\cG}})$. Since $\IRR({S_{\cG}})$ is a prefix-closed subset of $\Sigma^*$, there is a natural tree structure given by the prefix relation on words. 
First, we fix a vertex $P_0 \in V(Y)$ as a base point. 
The nodes in $\widetilde X$ are the words $v = c_0y_1 \cdots c_{k-1}y_k \in \IRR({S_{\cG}})$ 
such that $v \in \pi(\cG, P_0,P)$ for some
$P\in V(Y)$, \ie  $y_1 \ldots y_k \in \Pi(P_0,t(y_k))$ and $c_{i-1}\in C_{y_{i}} \sse  G_{s(y_{i})}$, where $C_{y_{i}}$ is the system of coset representatives for $G_{s(y_{i})}/G_{y_{i}}$ with $1\in C_{y_{i}}$. The root of $\widetilde X$ is the empty word $1$. 
If $v = c_0y_1 \cdots c_{k-1}y_k$ is a node, then the children
of $v$ are the words $vcy$ where $vcy \in \IRR({S_{\cG}})$. 
We label such an edge from $v$ to its child by $cy$. This means
either $k = 0$ or $c \neq 1$ or $k >0$ and $\ov y \neq y_k$. 
If $v = c_0y_1 \cdots c_{k-1}y_k$ is not the root (\ie  $k >0$), then 
$c_0y_1 \cdots c_{k-2}y_{k-1}$ is the parent node of $v$. 
We label this edge from $v$ to its parent by $\ov{y_k}$. 
The node $P = t(y_k)$ is uniquely defined by $v$. Moreover, for each 
edge $y\in {E(Y)}$ and each $c \in C_y$ there is a unique 
edge leaving $v$ with label $cy$.

The node set $V(\wt X)$ is in a canonical bijection with the
disjoint union
$\cupi \set{\pi({\cG},P_0,P)/ G_P}{P \in V(Y)}$. The edge set of $\wt X$ corresponds to the
disjoint union $ \cupi \set{\pi({\cG},P_0,s(y))/ G_y}{y \in {E(Y)}}$.
Recall our convention in \prref{sec:fggog} to denote elements in a disjoint union. 
Therefore, vertices in $V(\wt X)$ are denoted as $gG_P\cdot P$ with $gG_P \in \pi_1({\cG},T)/ G_P$ and $P \in V(Y)$. Likewise we denote edges as $gG_y \cdot y$. 

In fact, let $v=c_0y_1 \cdots c_{k-1}y_k$. Then the map $v\mapsto G_{P_0} \cdot P_0$ for $k=0$ and $v\mapsto c_0y_1 \cdots c_{k-1}y_kG_{t(y_k)}\cdot {t(y_k)}$ is a bijection because its inverse is well-defined by \prref{rem:britton}.

Directing the edges ``away-from-the-root'' $P$, a directed edge 
\begin{align*}
\big(c_0y_1 \cdots c_{k-1}y_k \,,\, 
c_0y_1 \cdots c_{k-1}y_kcy\big)\in E^+(\wt X)
\end{align*}
of $\wt X$ is mapped to 
the element $c_0y_1 \cdots c_{k-1}y_kc\, G_y \cdot y$. We conclude $\ov{gG_y \cdot y}=gyG_{\ov y}\cdot \ov y$.\footnote{In order to illustrate that formula, consider $k=1$ and $c_0y_1h$ with  $h\in G_{t(y_1)}=G_{s(y)}$ for some $y$. 
Then we have $h\in cG_{y}$ for some unique $c\in C_{y}$; and there is a directed edge to a coset which contains the element $c_0y_1cy$ with $c\in G_{t(y_1)}$. We have $t(y)=s(\ov y)$ and therefore the edge $\ov y$ leads back to the coset containing $c_0y_1cy\ov y = c_0y_1c$ which is indeed $c_0y_1G_{t(y_1)} = c_0y_1hG_{t(y_1)}$.}
 The incidences are $s(gG_y \cdot y) = gG_{s(y)}\cdot s(y)$ and $t(gG_y\cdot y)=s(gyG_{\ov y}\cdot \ov y) = gyG_{t(y)} \cdot t(y)$ for all edges $gG_y \cdot y$. That is, the source of an edge is defined by set wise inclusion.

Using \prref{cor:allsame} we may identify $\pi({\cG},P_0,P)$ with the 
fundamental group $\pi_1({\cG},T)$.
Therefore, for every $P_0\in V(Y)$ and every spanning tree $T$ of $Y$ we may write vertex and edge sets of the Bass-Serre tree $\wt X$ as disjoint unions:
\[
\begin{array}{lcl}
V(\wt X) = \cupi  \set{\pi_1({\cG},P_0)/ G_P}{P \in V(Y)}\!\!\! &=&\!\!\! \cupi  \set{\pi_1({\cG},T)/ G_P}{P \in V(Y)},\\
E(\wt X) = \cupi \set{\pi_1({\cG},P_0)/ G_y}{y \in E(Y)} \!\!\!&=&\!\!\! \cupi \set{\pi_1({\cG},T)/ G_y}{y \in E(Y)}.
\end{array}
\]

There is a natural action of ${\pi_1}({\cG},T)$ on the \BST\ $\widetilde X$ by left-multiplication: 
$g(f G_P\cdot P) = (gfG_P) \cdot P\in {\pi_1}({\cG},T)/G_P \sse V(\wt X)$.
Obviously, the action is without edge inversion. We have 
\[{\pi_1}({\cG},T)\bs \widetilde X = Y.\]


\subsection{Groups acting on connected graphs}\label{sec:gcg}
Graphs of groups arise in a natural way when a group $G$ acts on some connected graph $X = (V,E)$ (without edge inversion, \ie $\ov e \notin Ge$ for all $e \in E$) as it was described in \prref{sec:gmga}.

We let $Y = G\bs X$ be the quotient graph with 
vertex set $V(Y) = \set{Gv}{v \in V}$ and edge set $E(Y) = \set{Ge}{e \in E}$. Since $X$ is connected, $Y$ is connected, too. 
Choosing representatives $\iota(P)$ for $P\in V(Y)$, we assume $V(Y) \sse V$ (and henceforth write simply $P$ instead of $\iota(P)$). 
For $y\in E(Y)$ we choose a representative $\iota(y)\in E$ such that 
$s(\iota(y))= \iota(s(y))\in V(Y) \sse V$. 
For $P \in V(Y)$ we define the vertex group as stabilizer of the representative: $G_P = \{g \in G \mid gP = P\}$.
In the same way, for an edge $y \in E(Y)$, we define the edge group as stabilizer of the representative: $G_y = \{g \in G \mid g\iota(y) = \iota(y)\} \leq  G_{s(y)}$.

Note that we cannot enforce $t(\iota(y))\in V(Y)$.
However, there is some $g_y\in G$ such that $ g_y\, \iota(\ov y) = \ov {\iota(y)}$, see \prref{fig:gy}.
Note that $g_y$ is not unique, but only determined up to the edge stabilizer $G_{y}$.
We can choose $g_{\ov y}$ such that $g_y^{-1} = g_{\ov y}$. Indeed, given 
an undirected edge $\os{y,\ov y}$ we may choose first some $g_y$ such that $g_y\, \iota(\ov y) = \ov {\iota(y)}$. Then we define $g_{\ov y}$ as $\oi g_y$. It is clear that $g_y(g_{\ov y}(\ov y)) = \ov y$. Moreover, if $\iota(\ov y) = \ov{\iota(y)}$, we additionally require $ {g}_y = {g}_{\ov y} = 1\in G$. 

We obtain the isomorphisms $G_y\tto G_{\ov y}, a \mapsto g_y^{-1} a g_y$.
Let $F_{E(Y)}$ be the free group with basis $E(Y)$. We obtain a \homo $\phi:F_{E(Y)}\to G$ by defining
$\phi(y)=g_y$.

Recall that for vertices $P\in V(Y)$ we typically omit $\iota$ because we don't have to distinguish between $P$ and $\iota P$. For edges $y\in E(Y)$ we keep the notation $\iota(y)\in E$ because, in general,
$s(\iota(y))\neq t(\iota(\ov y))$, in contrast to $s(y)=t(\ov y)$ for $y\in E(Y)$.
We have $s(\iota(\ov y))\in V(Y)$ and $\oi{g}_{y}\cdot t(\iota(\ov y)) \in V(Y)$. 
Note that as abstract groups the vertex $G_P$ and edge groups $G_y$ are independent of the choice of representatives since stabilizers in the same orbit are conjugate.

\begin{figure}[ht]
	\begin{center} 
	\begin{tikzpicture}[xscale=1,yscale=1]
    \draw (-1.4,1) node (Pi-1) {\hspace{1.5cm}$\,s(\iota(y))\in V(Y)$};
    \draw (2.8,1) node (Pi) {\hspace{1.5cm}$\,\iota(t(y))\in V(Y)$};
    \draw (0.8,-0.2) node (Q') {\hspace{0cm}$Q'$};
    \draw (0.6,2) node (Q) {$Q$};
\draw[->,>=latex] (Pi-1) to node [above] {$\iota(y)\qquad$} (Q);
\draw[dashed, bend left=8,->,>=latex] (Pi) to node [above] {\hspace{2cm}$g_{y}=\phi(y)$} (Q);
\draw[dashed, bend left=8,->,>=latex] (Q') to node [above] {\hspace{1cm}$g_{y}$} (Pi-1);
\draw[->,>=latex] (Q') to node [below] {\hspace{1.7cm}$\oi g_y\iota(y)= \ov{\iota(\ov y)}$} (Pi);
\end{tikzpicture}
\end{center}
	\vspace{-0.5cm}
	\caption{Solid arrows are directed edges in $E$. Dashed arrows indicate actions of $G$. The picture holds for all $y\in Y$ for the chosen $g_y\in G$.}\label{fig:gy}
\end{figure}

\subsubsection{The homomorphism $\phi: F(\cG) \to G$}\label{sec:homphi}
Recall that $F(\cG)$ is the quotient of the free product 
of $\star_{P\in V(Y)} G_P$ and $F_{E(Y)}$ which is defined by the defining equations $\ov y a y= a^{\ov y}$ for all $y\in E(Y)$ and $a\in G_{y}$. 
Consider the \homo $\phi: \star_{P\in V(Y)} G_P \star F_{E(Y)}\to G$ defined by $\phi(g) = g$ for $g \in G_P$ for $P \in V(Y)$ and $\phi(y) = g_y \in G$ for every edge $y \in E(Y)$.  Recall that 
$\oi {g}_y =  g_{\ov y}$ and  $g_y^{-1}\cdot t(\iota(y))= s(\iota(\ov y))$. 
Since 
$ \phi(\ov y {a} y) =\phi(\ov y)\phi(a) \phi(y) =  g_{\ov y} {a}  g_y = a^{\ov y} = \phi(a^{\ov y})$, we obtain a well-defined
\homo $\phi: F(\cG) \to G$.
\begin{proposition}
 \label{prop:gXX}Let $\phi: F(\cG) \to G$ be defined as above
by $\phi(g)=g$ for $g\in G_P$ and  $\phi(y)=g_y$ for $y\in E(Y)$. Then
the restriction $\phi: \pi_1(\cG,P)\to G$ is surjective and injective 
on the vertex groups $G_P$ where $P\in V(Y)$. 
\end{proposition}
\begin{figure}[ht]
	\begin{center} 
\begin{tikzpicture}[xscale=1,yscale=1]
    \draw (-1.4,1) node (Pi-1) {\hspace{1.5cm}$\,\iota(P_{i-1})\in V(Y)$};
    \draw (2.8,1) node (Pi) {\hspace{1.5cm}$\,\iota(P_{i})\in V(Y)$};
    \draw (0.8,-0.2) node (Q') {\hspace{0cm}$Q'$};
    \draw (0.6,2) node (Q) {$Q$};
    \draw (2,-3) node (ePi-1) {$e_{i-1}\,\iota(P_{i-1})$};
    \draw (4,-1.8) node (ePi) {\hspace{1.5cm}$e_i\,\iota(P_{i})= e_{i-1}Q$};
\draw[->,>=latex] (Pi-1) to node [above] {$\iota(y_{i})\qquad$} (Q);
\draw[dashed, bend right=50, ->,>=latex] (Pi-1) to node [right] {$e_{i-1}$} (ePi-1);
\draw[dashed, bend left=8,->,>=latex] (Pi) to node [above] {\hspace{2.5cm}$g_{y_{i}}
= \phi(y_i)$} (Q);
\draw[dashed, bend right=10,->,>=latex] (Pi) to node [right] {$e_{i}$} (ePi);
\draw[->,>=latex] (ePi-1) to node [above] {\hspace{-.8cm}$e_{i-1}\,\iota(y_i)$} (ePi);
\draw[dashed, bend left=8,->,>=latex] (Q') to node [above] {\hspace{1cm}$g_{y_{i}}$} (Pi-1);
\draw[->,>=latex] (Q') to node [above] {${}$} (Pi);
\end{tikzpicture}
\end{center}
	\vspace{-0.5cm}
	\caption{We have  $e_0=1$ and $P_i=\iota(P_i)$ in the notation of (\ref{eq:pathX}).}\label{fig:STHD}
\end{figure}
\begin{proof}
Its restriction to the vertex groups $G_P$ is injective because $\phi(g) = g$ for $g\in G_P$. Let us show that the restriction of $\phi$ to $\pi_1(\cG,P)$ is surjective. 

Let $g \in G$. Recall that we assumed $X$ to be connected. Hence, in $X$ we 
can find a path
\begin{align}\label{eq:pathX}
P = e_0P_0 \ra{e_{0} \iota(y_1)}e_1P_1 \cdots \ra{e_{k-1}\iota(y_k)}e_kP_k = gP
\end{align}
such
that $P_i = \iota(P_i)\in V(Y)$, $y_i \in E(Y)$, $P_0 = P_k = P$, $e_i\in G$, and $e_0 \in G_P$. Since $g\in e_kG_P$, it is enough to show that $e_k \in \phi(\pi_1(\cG,P))$. 
To see this, let us show $e_i \in \phi(\pi(\cG,P,P_i))$ for all $0\leq i \leq k$ by induction.

Since $e_0 \in G_P$, we may assume $i\geq 1$ and  $e_{i-1} \in \phi(\pi_1(\cG,P,P_{i-1}))$ by induction. 
As $ e_{i-1}t(\iota(y_{i})) = e_{i} P_{i}$, we have $ e_{i-1}^{-1}e_{i} P_{i}  = t(\iota(y_{i})) = s(\ov{\iota(y_{i})}) = s( g_{ y_{i}}\iota(\ov y_{i})) = g_{ y_{i}} P_i = \phi(y_i) P_i$. 
This implies $e_i= e_{i-1}\phi(y_i) h_i$ for some $h_i\in G_{P_{i}}$. 
We have $e_{i-1}=\phi(h_0y_1h_1 \cdots y_{i-1}h_{i-1})$ 
with $h_0y_1h_1 \cdots y_{i-1}h_{i-1}\in \pi(\cG,P,P_{i-1})$. Hence
\begin{align}\label{eq:ei-1}
e_{i}= \phi(h_0y_1h_1 \cdots y_{i-1}h_{i-1}y_i h_{i})\in \pi(\cG,P,P_i).
\end{align}
Hence, $\phi: \pi_1(\cG,P)\to G$ is surjective.
The situation is explained and depicted in \prref{fig:STHD}.
In the figure solid arrows are directed edges in $E(X)$ and 
dashed arrows indicate the action of $G$ on $E(X)$: a dashed arrow 
$R\dashrightarc{g}R'$ means $g\cdot R=R'$. Note that
$g'\cdot R=R'$ implies $g'=gh$ for some unique $h\in G_R$. 
\end{proof}
\begin{corollary}
 \label{cor:gXX} Let $T$ be a spanning tree of $Y=G\bs X$ and  the representatives $\iota: Y \to X$ chosen such that $T' = \iota(T)$ is a subgraph of $X$ (as in \prref{lem:serreprop14} below). 
  Let $\phi: F(\cG) \to G$ be defined as above. Then
the restriction $\phi: \pi_1(\cG,P)\to G$ factorizes through 
$\pi_1(\cG,T)$.
\end{corollary}

\begin{proof}
As $T'$ is a subgraph of $X$, we have  $\iota(\ov y) = \ov{\iota(y)}$ for all $y \in E(T)$. Thus, by our assumption $\phi(y)=g_y=1$ for all $y\in E(T)$. Hence the result follows.
\end{proof}
For later use, we refine the embedding of $V(Y)$ into $V$ by adding another condition. For that we choose any spanning tree $T$
in the graph $Y$. It exists because $X$ is connected which implies that $Y$ is connected, too. 
A proof for the following (well-known) result can be found in \cite[Prop.~14]{serre80}. 
\begin{lemma}\label{lem:serreprop14}
Let $X$ be connected and $T$ be a spanning tree in $Y=G\bs X$. 
Then there is an embedding $\iota$ of $V(Y)$ into 
$V$ and  tree $T'=(\iota(V),E(T'))$ in $X$ such that the mapping
$e\in E(T')$ to $Ge\in E(Y)$ induces a bijection between $T'$ and~$T$.
\end{lemma}
\begin{proof}
For finite $Y$ the proof is an easy induction on $\abs Y$. The extension to infinite graphs $Y$ uses the Lemma of Zorn. For details we refer to \cite{serre80}.
\end{proof}

\subsubsection{The graph morphism $\psi:\wt X \to X$}\label{sec:morpsi}

In the following we let $Y = G\bs X$ for $X = (V,E)$ be the quotient graph and $\cG$ the resulting \gog.  
Moreover, let $T$ be a spanning tree of $Y$. 
By \prref{lem:serreprop14}, $T$ lifts to a tree $T'$ in $X$. 
We choose our representatives such that $ V(Y) = V(T')\sse V$ (as before we identify a vertex $P$ of $Y$ with its representative $\iota(P)$).
We lift an edge $y\in E(T)$ to the corresponding edge $\iota(y) \in E(T')$ (\ie $y = G \iota(y)$). Notice that this means $s(\iota(y))\in V(T) =  V(Y)$, which is consistent with the requirements we posed at the beginning of \prref{sec:gcg}, and we have $\iota(\ov y) = \ov{\iota(y)}$ for $y\in E(T)$. We lift all other edges arbitrarily as described at the beginning of \prref{sec:gcg} meaning that $s(\iota(y))\in V(Y)\sse V$ for all $y \in E(Y)$.

Then we have (note that the unions are indeed disjoint)\footnote{Note that the dot here denotes the action of $G$ on $X$. However, reading it as the dot of a disjoint union also gives the right understanding.} 
\[\begin{array}{rcl}
V(X) &=& 
\bigcup \set{G \cdot P}{P\in V(Y)}
\\
E(X) &=& 
\bigcup \set{G\cdot \iota(y)}{y\in E(Y)}
\end{array}
\]

Let us summarize our construction up to now: We have a homomorphism $\phi:  \pi_1(\cG,P_0) \to G$ with $\phi(g) = g$ for $g \in G_P$ and $\phi(y) = g_y$ for $y \in E(Y)$.
As $\iota(\ov y) = \ov{\iota(y)}$ for $y \in E(T)$, by assumption, we have $g_y = 1$ for $y \in E(T)$. Hence, by \prref{cor:gXX}, we see that $\phi:  \pi_1(\cG,P_0) \to G$ factorizes through $\pi_1(\cG,T)$. We also write  $\phi$ for the induced homomorphism $\phi:  \pi_1(\cG,T) \to G$.

As before, let $\wt X$ denote the Bass-Serre tree.
We define a mapping $\psi: \tX\to X$ by
\[\begin{array}{rcll}
\psi(gG_P\cdot P )&=&\phi(g)\cdot P &\text{ for }P\in V(Y),\\
\psi(gG_y\cdot y)&=&\phi(g) \cdot \iota(y)& \text{ for }y\in E(Y).
\end{array}
\]
Notice that here we use the definition of the Bass-Serre tree via $\pi_1({\cG},T)$.

\begin{proposition}\label{prop:locinj}
The mapping $\psi:\wt X \to X$ is a locally injective graph epimorphism. 
\end{proposition}
\begin{proof}
Let $x = gG_y\cdot y \in E(\wt X)$ be an edge. Then $\psi$
respects the sources (and hence the targets) thanks to the following two lines:
\begin{align*}
		 \psi(s(gG_y\cdot y )) &= \psi(g G_{s(y)} \cdot s(y)) = \phi(g) s(y) \\
		 s(\psi(gG_y\cdot y )) &= s(\phi(g) \cdot \iota(y)) = \phi(g) s(\iota(y)) = \phi(g) s(y) 
\intertext{The mapping $\psi$
respects the involution thanks to}
	\psi(\ov{gG_y\cdot y }) &= \psi(gy G_{\ov y} \cdot \ov y) = \phi(gy) \iota(\ov y)= \phi(g) g_y \iota(\ov y) 
	\\
	\ov{\psi(gG_y\cdot y )} &= \ov{\phi(g) \cdot \iota(y)} = \phi(g) \ov{\iota(y)}= \phi(g) g_y \iota(\ov y)
\end{align*}
Thus, $\psi$ is a graph morphism.
The surjectivity of $\psi$ on vertices and edges follows immediately from the definition and the surjectivity of the \homo $\phi:\pi_1(\cG,T) \to G$. 
	
Consider any $Q\in V(\wt X)$ and edges $x,z\in E(\wt X)$ with 
$s(x)=s(z)=Q$. We have to show that $x\neq z$ implies $\psi(x)\neq \psi(z)$ in $E(X)$. This is clear, if $Gx\neq Gz$ because the orbit
$Gx$ is an edge in $Y$ and $\psi(gG_y\cdot y) \in G\cdot \iota(y)$ for
$y=Gx\in G\bs X$. 
Thus, we may assume $Gx=Gz$. 

According to \prref{sec:bst}, after choosing a base point $P_0\in  V(Y)$, we can identify 
$Q$ with some $v= c_0y_1 \cdots c_{k-1}y_k\in \IRR(S_\cG)$ and direct the edge $x$ and $z$ 
such that first, $x,z\in E^+(\wt X)$ and second, 
\begin{align}\label{eq:xSG}
x&=\big(c_0y_1 \cdots c_{k-1}y_k \,,\, 
c_0y_1 \cdots c_{k-1}y_kc y\big)\\
z&=\big(c_0y_1 \cdots c_{k-1}y_k \,,\, 
c_0y_1 \cdots c_{k-1}y_k d  y\big)
\end{align}
where $c,d\in C_{y}$ are coset representatives with  $1\in  C_{y}\sse G_{s(y)}$. 

A directed edge 
\begin{align*}
\big(c_0y_1 \cdots c_{k-1}y_k \,,\, 
c_0y_1 \cdots c_{k-1}y_kcy\big)\in E^+(\wt X)
\end{align*}
of $\wt X$ is mapped to 
the element $\phi(c_0y_1 \cdots c_{k-1}y_kc) \cdot \iota(y)$. As for  $c,d\in C_{y}$ we have $c \cdot \iota(y) = d \cdot \iota(y)$ if and only if $c= d$, it follows that
 $\phi(x) = \phi(z)$ if and only if $c= d$ if and only if $x = z$.
\end{proof}
\subsection{Groups Acting on Trees}\label{sec:actT}
The following result describes the structure of a group acting on a tree without edge inversion
as a fundamental group of a \gog. It is the central result in Bass-Serre theory. 

\begin{theorem}[\cite{serre80}]\label{thm:bst}
Let $G$ be a group acting on a connected graph $X$ without edge inversion and let $\cG$ be the associated graph of groups over $Y = G \bs X$. 
Let $T$ be a spanning tree of $Y$, $\psi : \wt X \to X$ 
and $\phi : {\pi_1}(\cG, T) \to G$ as above. Then the following assertions are equivalent:
\begin{enumerate}
\item The graph $X$ is a tree. \label{bst_1}
\item The morphism $\psi: \wt X \to X$ is an isomorphism of graphs.\label{bst_2}
\item The \homo $\phi : {\pi_1}(\cG, T) \to G$ is an isomorphism of groups. \label{bst_3}
\end{enumerate}
\end{theorem}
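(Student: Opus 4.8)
The plan is to prove the cycle of implications \ref{bst_3} $\Rightarrow$ \ref{bst_1} $\Rightarrow$ \ref{bst_2} $\Rightarrow$ \ref{bst_3}, using the tools already assembled. The implication \ref{bst_2} $\Rightarrow$ \ref{bst_1} is immediate, since $\wt X$ is a tree by construction (it was defined as a prefix-closed subset of $\IRR(S_\cG)$ with the tree structure given by the prefix relation), and an isomorphic copy of a tree is a tree; so it suffices to establish \ref{bst_1} $\Rightarrow$ \ref{bst_2} and then \ref{bst_2} $\Leftrightarrow$ \ref{bst_3}. Actually a cleaner route is: \ref{bst_1} $\Rightarrow$ \ref{bst_2}, then \ref{bst_2} $\Rightarrow$ \ref{bst_3}, then \ref{bst_3} $\Rightarrow$ \ref{bst_1}. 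I will organize around that.

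\textbf{Step 1: \ref{bst_1} $\Rightarrow$ \ref{bst_2}.} Assume $X$ is a tree. By \prref{lem:phi_surj}, $\psi : \wt X \to X$ is a surjective, locally injective graph morphism. Since $\wt X$ is connected (it is a tree, being prefix-closed and rooted) and $X$ is a tree, \prref{lem:locallyinj} applies verbatim and gives that $\psi$ is an isomorphism. This step is short precisely because the two preliminary lemmas were set up for it.

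\textbf{Step 2: \ref{bst_2} $\Rightarrow$ \ref{bst_3}.} Assume $\psi$ is a graph isomorphism. The point is that $\phi$ and $\psi$ are "equivariant": $\psi(g \cdot x) = \phi(g)\cdot \psi(x)$ for $g \in \pi_1(\cG,T)$ and $x \in \wt X$, which follows directly from the definitions of the action of $\pi_1(\cG,T)$ on $\wt X$ (left multiplication on cosets) and of $\psi$. We already know $\phi$ is surjective (\prref{prop:gXX}, carried over to $\pi_1(\cG,T)$). To see $\phi$ is injective, suppose $\phi(g) = 1$. Then for every vertex $gG_P \cdot P$ of $\wt X$ we have $\psi(g\cdot (G_P\cdot P)) = \phi(g)\cdot \psi(G_P\cdot P) = \psi(G_P\cdot P)$, and since $\psi$ is injective, $g$ fixes every vertex $G_P\cdot P$ with $P \in V(Y)$; in particular $g$ fixes the root $G_{P_0}\cdot P_0$. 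But the stabilizer of the root vertex in $\pi_1(\cG,T)$ is exactly $G_{P_0}$ (by the coset description of $V(\wt X)$, together with \prref{cor:embedP} which tells us $G_{P_0}$ embeds), so $g \in G_{P_0}$; and then $\phi(g) = g = 1$ forces $g = 1$ since $\phi$ restricted to $G_{P_0}$ is the identity inclusion. Hence $\phi$ is an isomorphism.

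\textbf{Step 3: \ref{bst_3} $\Rightarrow$ \ref{bst_1}.} Assume $\phi : \pi_1(\cG,T) \to G$ is an isomorphism. We want $X$ to be a tree, i.e. connected and acyclic; $X$ is connected by hypothesis, so we must rule out cycles. I would argue via $\psi$ again: $\psi : \wt X \to X$ is still a surjective, locally injective morphism (\prref{lem:phi_surj}), and now I claim it is also injective, hence an isomorphism, whence $X \cong \wt X$ is a tree. For injectivity of $\psi$ on vertices: if $\psi(gG_P\cdot P) = \psi(hG_Q\cdot Q)$, then $\phi(g)P = \phi(h)Q$ in $X$, so $P = Q$ (they lie in the same $G$-orbit, and $V(Y)\subseteq V(X)$ is a set of orbit representatives) and $\phi(h)^{-1}\phi(g) \in G_P$ (the stabilizer of $P$ in $G$, which equals the image of $G_P$ under $\phi$). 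Writing $\phi(h)^{-1}\phi(g) = \phi(a)$ with $a \in G_P$ and using that $\phi$ is injective, $h^{-1}g = a \in G_P$, so $gG_P = hG_P$; the edge case is analogous. Thus $\psi$ is bijective and locally injective, so an isomorphism of graphs, so $X$ is a tree.

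\textbf{Main obstacle.} I expect the delicate point to be Step 3 — specifically, bookkeeping the identification $\pi(\cG,P_0,P)\cong \pi_1(\cG,T)$ and making sure the stabilizer computations (stabilizer of a vertex $gG_P\cdot P$ in $\wt X$ is the conjugate $gG_Pg^{-1}$, stabilizer of $P$ in $G$ is $\phi(g)$-conjugate of $G_P$, etc.) are stated and used consistently, and that the equivariance $\psi(g\cdot x) = \phi(g)\cdot\psi(x)$ is checked on both vertices and edges. The logic is otherwise a routine diagram chase; the real content was already extracted into \prref{lem:locallyinj}, \prref{lem:phi_surj}, \prref{prop:gXX}, and \prref{cor:embedP}.
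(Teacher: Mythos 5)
Your proposal is correct and follows essentially the same route as the paper: the equivalence of \ref{bst_1} and \ref{bst_2} via \prref{lem:phi_surj} and \prref{lem:locallyinj} (plus the observation that $\wt X$ is a tree), and the equivalence with \ref{bst_3} by showing that injectivity of $\psi$ on vertices is equivalent to injectivity of $\phi$, using that $\phi$ restricts to the identity on the vertex groups. Your organization as a cycle of implications rather than two biconditionals, and your explicit appeal to equivariance in Step 2, are only cosmetic differences from the paper's argument.
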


\begin{proof}
\prref{prop:locinj} $\psi: \wt X \to X$ is a surjective and locally injective epimorphism. 
\prref{lem:locallyinj} yields the equivalence between \ref{bst_1} and \ref{bst_2}. 

It remains to show the equivalence of \ref{bst_3} and \ref{bst_2}.   
Thus, we have to show that $\psi$ is injective on vertices \IFF $\phi$ is injective. 

By definition, a vertex $gG_P\cdot P$ of the \BST is mapped to the 
vertex $\phi(g)P$. If $\phi$ is not injective, then there is $1 \neq g \in {\pi_1}(\cG, T)$ with $\phi(g) = 1$. We have $g \not \in G_P$ because by definition $\phi$ is injective on the vertex groups. Hence, we have $gG_P\cdot P \neq G_P \cdot P\in V(\wt X)$, and $\psi(gG_P\cdot P) = \phi(g) P = P = \psi(G_P\cdot P)$. Thus, 
 $\psi$ is not injective on vertices.

 Let $\phi$ be injective. From $\psi(gG_P\cdot P) = \psi(hG_Q\cdot Q)$, \ie  $\phi(g) P = \phi(h) Q$, it follows that $P=Q$ and $\ov g h\in G_P$. Hence, $gG_P\cdot P = hG_Q\cdot Q$.
 This proves the 
equivalence between \ref{bst_2} and \ref{bst_3}.
\end{proof}

Serre's well-known characterization of free groups becomes a direct corollary. 
Recall that a group acts freely on a graph if all the vertex stabilizers are trivial.

\begin{corollary}[\cite{serre80}]\label{cor:freegr}
A group $G$ is free \IFF $G$ acts freely on a tree without edge inversion.
In particular, subgroups of free groups are free. 
\end{corollary}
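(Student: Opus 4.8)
The plan is to deduce both implications from the Bass--Serre theorem (\prref{thm:bst}) together with the description of fundamental groups of \gog{}s with trivial vertex groups in \prref{ex:free_group_gog}.

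\emph{Step 1: a free group acts freely on a tree without edge inversion.} Let $G = F_B$ be free with basis $B$. Take $Y$ to be the one-vertex graph with vertex $P_0$ carrying the trivial group and with $|B|$ geometric loops, and let $\cG$ be the resulting \gog; by \prref{ex:free_group_gog} (or directly, since with trivial vertex groups the relation $\ov y a^y y = a^{\ov y}$ collapses to $\ov y y = 1$, making $F(\cG)$ the free group on a choice of orientation of the loops, and the spanning tree $T$ has no edges) we get $\pi_1(\cG,T) \cong G$. The \BST $\tX$ of \prref{sec:bst} is a tree by construction, the action of $\pi_1(\cG,T) = G$ on $\tX$ is without edge inversion (as noted there), and the stabilizer of a vertex $gG_{P_0}\cdot P_0$ is the conjugate $gG_{P_0}g^{-1}$ of the trivial vertex group, hence trivial. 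So $G$ acts freely on the tree $\tX$ without edge inversion.

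\emph{Step 2: a group acting freely on a tree without edge inversion is free.} Conversely, suppose $G$ acts freely on a tree $X$ without edge inversion. Put $Y = G\bs X$ and let $\cG$ be the associated \gog with spanning tree $T$ (\prref{ex:GX}). Freeness of the action forces every vertex stabilizer, hence every vertex group $G_P$ and every edge group $G_y^y$, to be trivial. As above, $F(\cG)$ is then free on a set of orientations of the geometric edges of $Y$, and $\pi_1(\cG,T) = F(\cG)/\ggen{y : y\in E(T)}$ is free on the geometric edges lying outside the spanning tree $T$ (this is the content of \prref{ex:free_group_gog}, valid verbatim when $Y$ is infinite). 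Since $X$ is a tree, \prref{thm:bst} gives an isomorphism $\phi : \pi_1(\cG,T) \to G$, so $G$ is free. Finally, if $H$ is a subgroup of a free group $F$, then by Step 1 $F$ acts freely and without edge inversion on a tree $X$; the restriction of this action to $H$ is again free and without edge inversion, so $H$ is free by Step 2.

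Apart from the routine appeals to \prref{thm:bst}, the single point that genuinely needs a line of argument is the claim used twice above that $\pi_1(\cG,T)$ is free when all vertex and edge groups are trivial: one must observe that the geometric edges of a spanning tree form, up to orientation, a subset of a free basis of $F(\cG)$, and that a free group modulo the normal closure of part of a free basis is free on the complementary basis elements. I expect this bookkeeping with the infinite graph $Y$ to be the main (and only minor) obstacle; everything else is a direct invocation of the already established machinery.
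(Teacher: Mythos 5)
Your proof is correct and follows essentially the same route as the paper: the substantive direction is handled identically, via \prref{thm:bst} together with the observation that $\pi_1(\cG,T)$ is free when all vertex groups are trivial (and you are right, and slightly more careful than the paper, that \prref{ex:free_group_gog} must be allowed to have an infinite underlying graph $Y$ here). For the easy direction the paper simply lets a free group act on its Cayley graph with respect to a basis, which is a tree; your detour through the \BST of a rose with trivial vertex groups produces exactly that same tree, so the difference is cosmetic.
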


\begin{proof}
 A group acts freely on its Cayley graph. If the group is free we can choose the 
 Cayley graph to be a tree and the action to be without edge inversion.
 The other direction follows from \prref{thm:bst} because 
 the fundamental group ${\pi_1}(\cG, T)$ is free, if all vertex groups are trivial. 
\end{proof}

\prref{thm:bst} yields also Schreier's formula for subgroups of finite index of free groups. This formula can be generalized as follows.

\begin{corollary}[\cite{Karrass73}{\cite[II.3.7]{dicks80}}]\label{cor:free_tree}
Let $\cG$ be a finite graph of groups over the graph $Y$ with finite vertex groups. Let $G={\pi_1}(\cG,T) $ and let $F$ be a free subgroup of $G$ of finite index and $r(F)$ its rank. Then the following equation holds:
\[\dfrac{r(F) -1}{(G:F)} =\sum_{y\in E(Y)} \frac{1}{2\cdot\abs{G_y}}-\sum_{P\in V(Y)} \frac{1}{\abs{G_P}}\] 
\end{corollary}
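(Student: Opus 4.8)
The statement is an Euler-characteristic count, so the natural strategy is to compute the Euler characteristic of a suitable graph on which the free subgroup $F$ acts freely, in two ways. The starting point is \prref{cor:freegr}: since $F$ is free of finite rank $r(F)$, it acts freely (and without edge inversion) on a tree; concretely, take the \BST $\wt X$ of $\cG$ with respect to $T$, on which $G = {\pi_1}(\cG,T)$ acts with vertex stabilizers the conjugates of the $G_P$ and edge stabilizers the conjugates of the $G_y^y$ (this is exactly the action described in \prref{sec:bst}, and by \prref{thm:bst} $\wt X$ really is a tree). Restrict this action to $F$. Since $F$ is torsion-free and the stabilizers are finite, $F$ acts \emph{freely} on $\wt X$, and with finitely many orbits because $(G:F) < \infty$ and $Y$ is finite. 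So the quotient graph $F \bs \wt X$ is a finite graph, and being a quotient of a tree by a free action it is connected; by \prref{cor:freegr} again (or directly by \prref{ex:free_group_gog}) its fundamental group is $F$, whence its first Betti number is $r(F)$, i.e.
\[
\abs{E(F\bs\wt X)}/2 - \abs{V(F\bs\wt X)} = r(F) - 1,
\]
where we divide the (directed) edge count by $2$ to count geometric edges.

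**Counting the orbits.** The second step is to count $\abs{V(F\bs\wt X)}$ and $\abs{E(F\bs\wt X)}$ in terms of the data of $\cG$. The $G$-orbits on $V(\wt X)$ are indexed by $P \in V(Y)$, and the $G$-orbit corresponding to $P$ is $G/G_P$. Refining the $G$-action to the $F$-action, the $F$-orbits inside $G/G_P$ are the double cosets $F\bs G/G_P$; since $F$ acts freely, the $F$-orbit of a coset $gG_P$ has size $\abs{FgG_P}/\abs{G_P \cap g^{-1}Fg}$ — but the stabilizer is trivial — more simply, a standard orbit-counting / transitivity argument gives
\[
\abs{F \bs (G/G_P)} \;=\; \frac{(G:F)}{\abs{G_P}},
\]
because $G/G_P$ has $(G:F)\cdot\abs{G_P}^{-1}\cdot\abs G$-many... let me state it cleanly: $G$ acts on the finite-ish set $G/G_P$ transitively with point stabilizer $G_P$; restricting to the finite-index subgroup $F$ which acts freely, every $F$-orbit has size $\abs F$ when $F$ is infinite — so the right bookkeeping is to compare with the $(G:F)$ copies, yielding $\abs{F\bs(G/G_P)} = (G:F)/\abs{G_P}$. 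Summing over $P$ gives $\abs{V(F\bs\wt X)} = (G:F)\sum_{P}\abs{G_P}^{-1}$. Identically, the geometric edges contribute $\abs{E(F\bs\wt X)}/2 = (G:F)\sum_{y\in E(Y)} (2\abs{G_y^y})^{-1} = (G:F)\sum_y (2\abs{G_y})^{-1}$, using $\abs{G_y^y} = \abs{G_y}$ and that each geometric edge $\{y,\ov y\}$ is counted once in the sum over $E(Y)$ with the factor $\tfrac12$. Substituting both counts into the Betti-number identity and dividing through by $(G:F)$ yields the claimed formula.

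**Main obstacle.** The genuinely delicate point is the orbit-counting step: one must be careful that $F$ acts freely (this needs $F$ torsion-free, which holds since $F$ is free, \emph{and} that the edge stabilizers $G_y^y$ embed in the vertex stabilizers, so that freeness on vertices forces freeness on edges too — already noted in the remark preceding \prref{lem:locallyinj} about inversion, but here it is the finiteness of stabilizers that matters), and that the formula $\abs{F\bs(G/G_P)} = (G:F)/\abs{G_P}$ is correctly derived. The clean way is the counting-measure / Euler-characteristic-of-groups argument: assign to each orbit of cells the weight $1/\abs{\text{stabilizer}}$ and observe this ``orbifold Euler characteristic'' $\chi(\cG) := \sum_P \abs{G_P}^{-1} - \sum_y (2\abs{G_y})^{-1}$ is multiplicative under finite-index passage, $\chi(F) = (G:F)^{-1}\chi(G)$, combined with $\chi(F) = -(r(F)-1)/1$ for $F$ free (a free group of rank $r$ has $\chi = 1 - r$). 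Rearranging gives the displayed equation. I would present the proof via this weighted count, since it makes the cancellations transparent and sidesteps any orbit-size subtleties, invoking only \prref{thm:bst}, \prref{cor:freegr}, and \prref{ex:free_group_gog} from the text.
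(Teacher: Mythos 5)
Your proposal is correct and follows essentially the same route as the paper: let $F$ act freely on the Bass--Serre tree, identify $r(F)$ via the Euler characteristic of the finite quotient graph, and count vertex and edge orbits as $(G:F)/\abs{G_P}$ and $(G:F)/\abs{G_y}$ respectively. The one step you flag as delicate is handled in the paper by the clean identity $\abs{F\bs (G/G_P)}=\abs{(F\bs G)/G_P}=(G:F)/\abs{G_P}$, using that the finite group $G_P$ acts freely on the right on the finite set $F\bs G$; this is tidier than your orbit-size bookkeeping but amounts to the same count.
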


\begin{proof}
Let $\wt X$ be the \BST. Since $F$ has trivial intersection with the vertex stabilizers, it acts freely and without inversion on $\wt X$. Hence, $F$ is isomorphic to the fundamental group of $F\bs \wt X$ with all vertex groups being trivial. 
Let $T'$ be a spanning tree of $F\bs \wt X$. Then as above the unoriented edges of $F\bs \wt X - E(T')$ form a basis of $F$. This yields
\[r(F)= \frac{1}{2}\abs{E(F\bs \wt X)} -\left(\abs{V(F\bs \wt X)}-1\right).\]
Note, that the factor $1/2$ appears because we count every unoriented edge twice. By the construction of $\wt X$, we know for the number of edges:
\[\abs{E(F\bs\wt X)}=\sum_{y\in E(Y)}\abs{F\bs (G/G_y)}=\sum_{y\in E(Y)}\abs{(F\bs G)/G_y}= \sum_{y\in E(Y)}\frac{(G:F)}{\abs{G_y}}\]
The analogue formula holds for vertices. Hence, the result follows.
\end{proof}

\begin{proposition}[\!\,{\cite[II.3.7]{dicks80}}]\label{prop:endl_erz_knoten}
Let $\cG$ be a graph of groups over a finite graph $Y$ with finitely generated edge groups. Then ${\pi_1}(\cG)$ is finitely generated if and only if all the vertex groups are finitely generated. 
\end{proposition}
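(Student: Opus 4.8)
The idea is to build a second \gog $\cG'$ over the \emph{same} underlying graph $Y$, whose vertex groups $G_P'$ are finitely generated subgroups of the $G_P$ and whose fundamental group is still $G:=\pi_1(\cG)$, and then to show that this forces $G_P'=G_P$ for every $P$.

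First I would fix a spanning tree $T$ of $Y$ and a base vertex $P_0$, so that (after the identifications of \prref{prop:twofunds} and \prref{cor:allsame}) $G=\pi_1(\cG,T)=\pi_1(\cG,P_0)$ is finitely generated and each vertex group $G_P$ is a subgroup of $G$ by \prref{cor:embedP}. Pick a finite generating set of $G$; each of its elements is the image of some word over the alphabet $\Sigma=\bigcup_{P\in V(Y)}(G_P\setminus\{1\})\cup E(Y)$, so only finitely many letters of $\Sigma$ occur among these words. For each $P$, let $G_P'\le G_P$ be the subgroup generated by the (finitely many) letters lying in $G_P$ that occur, together with the images $G_y^y$ of the edge groups with $s(y)=P$ (finitely many edges, finite groups). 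Then $G_P'$ is finitely generated and contains every $G_y^y$ with $s(y)=P$, so the inclusions $G_y\hookrightarrow G_y^y\subseteq G_P'$ define a \gog $\cG'$ over $Y$, and these inclusions induce a homomorphism $\iota\colon\pi_1(\cG',T)\to\pi_1(\cG,T)$ that is the identity on $E(Y)$-letters and the inclusion on each $G_P'$.

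The next step is to check that $\iota$ is an isomorphism. Surjectivity is clear since the image of $\iota$ contains all the chosen generators of $G$. For injectivity I would invoke Britton reduction (\prref{sec:britton}): writing $\Sigma'$ for the sub-alphabet belonging to $\cG'$, the rules of $B_{\cG'}$ are exactly the rules of $B_\cG$ that live over $\Sigma'$, so a word over $\Sigma'$ is $B_{\cG'}$-reduced iff it is $B_\cG$-reduced. Hence if $x\in\ker\iota$, represent it by a $B_{\cG'}$-reduced word $w$; then $w$ is $B_\cG$-reduced and trivial in $F(\cG)$, so $w$ must be the empty word (a $B_\cG$-reduced word with nonempty $y$-sequence keeps a nonempty $y$-sequence under $S_\cG$-reduction, hence is $\neq 1$ in $F(\cG)$ by \prref{rem:britton}, while a single vertex-group letter is $\neq 1$ by \prref{cor:embedP}); thus $x=1$.

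Finally, identify both fundamental groups with $G$ via $\iota$. The inclusions $G_P'\subseteq G_P$ (together with the common edge-group images) induce a $G$-equivariant graph morphism $\Psi\colon\wt X'\to\wt X$ between the two \BST{}s, sending $gG_P'\cdot P$ to $gG_P\cdot P$. It is surjective, and it is locally injective: at a vertex of type $P$ the outgoing edges are parametrized by $\bigcup_{y\colon s(y)=P}G_P'/G_y^y$ in $\wt X'$ and by $\bigcup_{y\colon s(y)=P}G_P/G_y^y$ in $\wt X$, and for $c,c'\in G_P'$ the equality $cG_y^y=c'G_y^y$ in $G_P$ already gives $c^{-1}c'\in G_y^y\subseteq G_P'$. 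Since $\wt X'$ and $\wt X$ are trees, \prref{lem:locallyinj} shows $\Psi$ is an isomorphism, in particular injective on vertices; but a putative $g_0\in G_P\setminus G_P'$ would make $G_P'\cdot P$ and $g_0G_P'\cdot P$ distinct vertices of $\wt X'$ with the same image $G_P\cdot P$. Hence $G_P=G_P'$ is finitely generated. The only genuinely nontrivial point is this last step — that replacing $\cG$ by a finitely generated sub-\gog cannot shrink $\pi_1$ without already recovering the full vertex groups; everything else is routine bookkeeping with the rewriting systems of \prref{sec:bass_serre}.
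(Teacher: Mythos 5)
Your argument is correct, but it takes a longer and structurally different route than the paper's. The paper's proof essentially stops after your first paragraph: having fixed a finite set $\Sigma$ of letters containing all of $E(Y)$, all edge-group images, and enough vertex-group letters to express a generating set, it takes an arbitrary $g\in G_P$, writes it as a \emph{shortest} word over $\Sigma$ in $\pi(\cG,P,P)$, and observes that such a word must be Britton-reduced (a factor $\ov y a^y y$ could be replaced by the single letter $a^{\ov y}$, which again lies in $\Sigma$ because the edge groups were thrown in); by the uniqueness of the $y$-sequence (\prref{rem:britton}) a Britton-reduced word for an element of $G_P$ contains no edge letters, so $g$ is a product of the finitely many letters of $\Sigma\cap G_P$. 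You instead package the occurring letters into a sub-graph-of-groups $\cG'$ with finitely generated vertex groups, prove $\pi_1(\cG')\cong\pi_1(\cG)$ (surjectivity by construction, injectivity again via Britton reduction and \prref{rem:britton}), and then deduce $G_P'=G_P$ by comparing the two Bass-Serre trees through \prref{lem:locallyinj} -- a step the paper avoids entirely by applying Britton reduction directly to the shortest word. Your detour is sound (the edge sets of $\wt X'$ and $\wt X$ are canonically identified, so $\Psi$ is even globally injective on edges), and it isolates a reusable structural fact in the spirit of \prref{thm:bst}, namely that a sub-graph-of-groups over the same graph with the same edge groups cannot carry all of $\pi_1$ without already having the full vertex groups; the price is roughly a page of extra bookkeeping for a statement the direct argument settles in a few lines. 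Both proofs ultimately rest on the same engine, the uniqueness of the $y$-sequence of a Britton-reduced word.
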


\begin{proof}
	Clearly, if all vertex groups are finitely generated and $Y$ is finite, the fundamental group is finitely generated, too. Thus, we only have to show the other implication.
	
	For each edge $y \in E(Y)$ let $\Sigma_y$ be a finite generating set of $G_y$.
 Let $Z = \bigcup\set{\Sigma_y}{y \in E(Y)}$ be the union over all the generating sets $\Sigma_y$ of the edge groups. Note that $Z$ is finite.  Moreover, for each 
 vertex $Q \in V(Y)$ let $\Sigma_Q$ be some generating set of the vertex group $G_Q$ (possibly infinite).   
 
 Now, let $P \in V(Y)$ be a fixed vertex.
 There is a finite generating set $\Sigma$ of ${\pi_1}(\cG) = {\pi_1}(\cG,P)$ inside $\bigcup\set{\Sigma_Q}{Q \in V}\cup E(Y) \cup Z$ such that $E(Y) \cup Z \sse \Sigma$. Consider any generator $g \in \Sigma_P$. It is enough to 
 show that $g$ can be expressed as a word in $(\Sigma \cap G_P)^*$. 
 To see this, write $g$ as a word in ${\pi}(\cG,P,P)$ (according to \prref{eq:pipq}) with letters from 
 $\Sigma$ with the fewest number of letters from $ E(Y)$ possible. Assume this word contained a factor
 $\ov y a y$ for some $y \in E(Y)$ and $a\in G_y$.
Then we could perform a Britton reduction replacing $\ov y a y$ by
 $a^{\ov y}$ (note that $a$ and $a^{\ov y}$ here are written as words over $Z$). 
 This would lead to a word in $\Sigma^*$ with fewer letters from $ E(Y)$ because 
 $E(Y) \cup Z \sse \Sigma$. Hence, the word representing $g$ is Britton reduced. Now, a Britton reduced word in ${\pi}(\cG,P,P)$ for an element in $G_P$ 
uses letters from $\Sigma\cap G_P\sse \Sigma_P \cup Z $, only. 
 \end{proof}

\subsection{Finite Vertex Groups}
The aim of this section is to show that the fundamental group of a finite graph of groups with finite vertex groups is virtually free. The result is due to Karrass, Pietrowski and Solitar \cite{Karrass73}. Here, we present the proof from \cite[II.3.6]{dicks80}. The converse is due to \cite{Karrass73}, too: virtually free groups are the fundamental groups of finite graphs of groups with finite vertex groups. (Recall our convention that virtually free groups are, by definition, finitely generated, see  \prref{def:virtfree}.) We will prove this fact in Sections \ref{sec:treewidth} and \ref{sec:cuts}.

An action of a group $G$ on a set $X$ can be viewed as a homomorphism $\alpha: G\to \Sym{X}$, where $\Sym{X}$ denotes the symmetric group over $X$.

\begin{lemma}\label{lem:2_Ops}
Let $G$ be a group acting freely (= fixed-point free) on a finite set $X$ in two ways $\alpha,\beta : G \to \Sym{X}$. Then there is some $\phi \in \Sym{X}$ such that for all $ g\in G$
we have
\[\alpha(g)=\phi^{-1}\circ\beta(g)\circ \phi.\] 
\end{lemma}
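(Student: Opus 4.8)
The statement is the classical fact that any two free actions of a group $G$ on a finite set $X$ are conjugate in $\Sym{X}$. The key observation is that a free action partitions $X$ into orbits, each of which is a ``copy'' of $G$ acting on itself by left translation: if $G$ acts freely on $X$, then picking a base point $x$ in each orbit gives a bijection $G \to G\cdot x$, $g \mapsto \alpha(g)x$, and under this bijection the action $\alpha$ becomes the regular representation of $G$. In particular every orbit has cardinality $\abs G$, so a free action of $G$ on $X$ can only exist if $\abs G$ divides $\abs X$, and then the number of orbits is $m = \abs X / \abs G$, which is the same for $\alpha$ and for $\beta$.

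So the plan is: first, decompose $X$ into its $\alpha$-orbits $X = \bigsqcup_{i=1}^m A_i$ and into its $\beta$-orbits $X = \bigsqcup_{j=1}^m B_j$; since both families consist of exactly $m$ sets each of size $\abs G$, fix any bijection between the index sets, say $A_i \leftrightarrow B_i$. Second, for each $i$ choose a base point $a_i \in A_i$ and $b_i \in B_i$; freeness of $\alpha$ (resp.\ $\beta$) makes $g \mapsto \alpha(g)a_i$ a bijection $G \to A_i$ (resp.\ $g \mapsto \beta(g)b_i$ a bijection $G \to B_i$) — injectivity is exactly freeness, surjectivity is the orbit being $A_i$. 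Third, define $\phi \in \Sym X$ orbitwise by $\phi(\alpha(g)a_i) = \beta(g)b_i$ for every $g \in G$ and every $i$; this is well-defined and bijective because on each orbit it is the composite of the two bijections above with $G$. Finally check the conjugacy relation: for $h \in G$,
\[
(\beta(h)\circ\phi)(\alpha(g)a_i) = \beta(h)\beta(g)b_i = \beta(hg)b_i = \phi(\alpha(hg)a_i) = \phi(\alpha(h)\alpha(g)a_i) = (\phi\circ\alpha(h))(\alpha(g)a_i),
\]
and since the elements $\alpha(g)a_i$ range over all of $X$, this gives $\beta(h)\circ\phi = \phi\circ\alpha(h)$, i.e.\ $\alpha(h) = \phi^{-1}\circ\beta(h)\circ\phi$ for all $h \in G$.

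There is really no hard part here; the only thing to be slightly careful about is well-definedness of $\phi$, which rests on the fact that the representation $g \mapsto \alpha(g)a_i$ of an orbit is a \emph{bijection} (not merely surjective) — this is precisely where the freeness hypothesis is used, and it should be stated explicitly. One should also note at the outset that the lemma is only non-vacuous when $G$ is finite (so that $\Sym X$ can contain a free $G$-action on a finite set at all), which is the setting in which it is applied — here to the two actions of a finite vertex/edge group coming from two different coset decompositions. If one prefers to avoid choosing an arbitrary bijection $A_i \leftrightarrow B_i$, one can instead just enumerate both orbit sets as $A_1,\dots,A_m$ and $B_1,\dots,B_m$ arbitrarily and pair them up in that order; nothing canonical is needed.
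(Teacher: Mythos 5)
Your proof is correct and follows essentially the same route as the paper: choose a system of orbit representatives for each action, match them up using the fact that both actions have the same number of orbits (each of size $\abs G$, since the actions are free and $X$ is finite), and extend via $\alpha(g)a_i \mapsto \beta(g)b_i$. The paper leaves the final verification of the conjugacy relation to the reader, whereas you spell it out explicitly; the well-definedness point you flag is exactly where freeness enters in both arguments.
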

\begin{proof}
We can choose systems of representatives $R,S \sse X$ such that 
$X=\cupi _{r\in R} \alp(G) (r )= \cupi _{s\in S} \bet(G) (s)$ where the unions are disjoint.
Since both actions are free and $X$ is finite, it follows that $G$ is finite and all orbits $\alp(G) (r )$, $\bet(G) (s)$ have size $\abs{G}$.
Hence, $\abs{R}=\abs{S}=|X|/|G|$, \ie there is a bijection between these two sets, which can be extended to a bijection $X \to X$ via
$\alp(g)(r)\mapsto \bet(g)(s)$ whenever $R\ni r\mapsto s\in S$. One can easily verify that this bijection is the element $\phi$ we were looking for.
\end{proof}

\begin{theorem}[\cite{Karrass73}]\label{thm:endl_gog_virt_free}
Let $\cG$ be a graph of groups over a finite connected graph $Y$ with finite vertex groups. Then the fundamental group $\pi_1(\cG)$ has a finitely generated free subgroup of finite index.
Thus, $\pi_1(\cG)$ is a finitely generated virtually free group. 
\end{theorem}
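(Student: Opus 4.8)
The plan is the following. Finite generation of $\pi_1(\cG)$ is immediate: the free product defining $F(\cG)$ is generated by the finite set $E(Y)\cup\bigcup_{P\in V(Y)}G_P$, so $F(\cG)$ is finitely generated, and $\pi_1(\cG,T)=F(\cG)/\ggen{\,y : y\in E(T)\,}$ is a quotient of it, hence also finitely generated; by \prref{prop:twofunds} this is $\pi_1(\cG)$. For virtual freeness, recall that $\pi_1(\cG)\cong\pi_1(\cG,T)$ acts without inversion on the Bass--Serre tree $\wt X$ of \prref{sec:bst}, that $\wt X$ is a tree, and that the stabilizer of the vertex $gG_P\cdot P$ is the conjugate $gG_Pg^{-1}$ of the finite vertex group $G_P$ (edge stabilizers being conjugates of subgroups of vertex groups). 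So it suffices to produce a finite-index \emph{normal} subgroup $N\trianglelefteq\pi_1(\cG)$ with $N\cap G_P=\smallset{1}$ for every $P\in V(Y)$: normality then gives $N\cap gG_Pg^{-1}=\smallset{1}$ for all $g$, so $N$ meets every vertex (and edge) stabilizer trivially, i.e.\ $N$ acts freely and without inversion on the tree $\wt X$, whence $N$ is free by \prref{cor:freegr}; thus $\pi_1(\cG)$ is virtually free.

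To build such an $N$, I would construct a homomorphism $\rho\colon\pi_1(\cG)\to\Sym{F}$ into a finite symmetric group whose restriction to each $G_P$ is injective, and take $N:=\Ker\rho$. Choose a finite set $F$ whose cardinality is a common multiple of $\set{\abs{G_P}}{P\in V(Y)}$, so that each $G_P$ admits a \emph{free} action on $F$. Using the presentation of $F(\cG)$ by generators (the nontrivial vertex-group elements and the edges $E(Y)$) and relations $gh=[gh]$, $\ov y a^y y=a^{\ov y}$ from \prref{prop:ScG_conv}, a homomorphism $F(\cG)\to\Sym{F}$ amounts to: a homomorphism $\rho_P\colon G_P\to\Sym{F}$ for each $P$, together with a permutation $\rho(y)\in\Sym{F}$ for each edge with $\rho(\ov y)=\rho(y)^{-1}$, satisfying $\rho(\ov y)\,\rho_{s(y)}(a^y)\,\rho(y)=\rho_{t(y)}(a^{\ov y})$ for all $a\in G_y$. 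Pick each $\rho_P$ to be a free action. For an edge $y$ with $s(y)=P$, $t(y)=Q$, the maps $a\mapsto\rho_P(a^y)$ and $a\mapsto\rho_Q(a^{\ov y})$ are both \emph{free} actions of the finite group $G_y$ on the finite set $F$ (freeness is inherited from $\rho_P$ on $G_y^y$, resp.\ $\rho_Q$ on $G_y^{\ov y}$), so by \prref{lem:2_Ops} some $\phi_y\in\Sym{F}$ conjugates one to the other; a suitable choice of $\rho(y)\in\{\phi_y,\phi_y^{-1}\}$ (with $\rho(\ov y)=\rho(y)^{-1}$) then makes the relation above hold, consistently for both orientations. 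This defines $\rho\colon F(\cG)\to\Sym{F}$; restricting it to the subgroup $\pi_1(\cG,P_0)\cong\pi_1(\cG)$ gives the desired $\rho$. Its restriction to each vertex group $G_P$ — which embeds into $\pi_1(\cG)$ by \prref{cor:embedP} — is, up to conjugacy in $\Sym{F}$, the free action $\rho_P$, hence injective since a free action is faithful.

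Finally, $N=\Ker\rho$ has index at most $\abs{\Sym{F}}<\infty$ in $\pi_1(\cG)$, is normal, and satisfies $N\cap G_P=\smallset{1}$ for every $P$; as explained above this yields that $N$ acts freely and without inversion on the tree $\wt X$, so $N$ is free by \prref{cor:freegr}. Together with the finite generation of $\pi_1(\cG)$ this shows that $\pi_1(\cG)$ is a finitely generated virtually free group.

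I expect the only real work to be in the middle step: checking that the chosen data genuinely respects all defining relations of $F(\cG)$ (in particular that $\rho(\ov y)\rho(y)=\id$ and that the patching permutations for the two orientations of an edge are mutually consistent), and — if one prefers to realise $\rho$ directly on $\pi_1(\cG,T)$ rather than on the subgroup $\pi_1(\cG,P_0)$ of $F(\cG)$ — arranging the free actions $\rho_P$ inductively along the spanning tree $T$ so that $\rho(y)=\id$ for every $y\in E(T)$; this is possible because extending a free action of an edge group to a free action of a vertex group on the same set $F$ only requires $\abs{G_P}\mid\abs{F}$. The two inputs that make everything go through are \prref{lem:2_Ops} (conjugacy of free actions of a finite group on a finite set) and \prref{cor:freegr} (a group acting freely and without inversion on a tree is free).
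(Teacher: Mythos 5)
Your proposal is correct and follows essentially the same route as the paper's proof: choose a finite set whose cardinality is divisible by all $\abs{G_P}$, fix free actions of the vertex groups on it, use \prref{lem:2_Ops} to conjugate the two induced free actions of each edge group so that the defining relations of $F(\cG)$ are respected, and take the kernel of the resulting homomorphism into the finite symmetric group, which acts freely and without inversion on the Bass--Serre tree and is therefore free by \prref{cor:freegr}. Your additional remarks (explicit normality of the kernel to handle conjugates of vertex stabilizers, and the finite-generation statement) only make explicit what the paper leaves implicit.
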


\begin{proof}
Let ${X}$ be some finite set such that $\abs{G_{P}}$ divides $\abs{{X}}$ for every vertex ${P}\in V(Y)$. Therefore, for each $P$ we can choose a free action of $G_P$ on ${X}$, and hence an injective homomorphism $G_{P} \to \Sym{{X}}$. For each edge group $G_y$ we obtain two free actions on ${X}$. By \prref{lem:2_Ops}, for each $y \in E(Y)$ we can choose some $\phi = \phi(y) \in \Sym{X}$ such that the following diagram commutes.
 \[
 \begin{xy}
 		\xymatrix @R=5pt{
 		& G_{s(y)} \ar[r] & \Sym{X} \ar[dd]^{\sig \mapsto \phi^{-1}\circ\sig\circ \phi} \\
 G_y \ar[ru] \ar[rd] & & \\
 		& G_{t(y)} \ar[r] & \Sym{X} 
 		}

 \end{xy}
 \] 
 The universal property of the group $F(\cG)$ yields 
 a homomorphism $h: F(\cG) \to \Sym{X}$ such that restriction to $G_{P}$ is injective for all $P\in V(Y)$.
We fix some $P\in V(Y)$ and we let $F$ be the kernel of $h$ inside $\pi_1(\cG,P)$. Thus, 
 $F = \set{g \in \pi_1(\cG,P)}{h(g) = 1}$. Then we have $F \cap G_{Q} =\smallset{1}$ for all ${Q}\in V(Y)$. This means that the group $F$ acts freely on the \BST.
 Since $\pi_1(\cG,P)$ acts without edge inversion, the same is true for 
 $F$. We conclude that $F$ is free by \prref{cor:freegr}. Furthermore, we have $[\pi_1(\cG,P) : F]< \infty$, because $\abs{X} < \infty$. Thus, by \prref{cor:free_tree}, $F$ is finitely generated.
\end{proof}

\subsection{Embedding into Semidirect Products}
\newcommand{\GR}{Q}

Using an approach by Dahmani and Guirardel in \cite{DahmaniGui10}, the proof of \prettyref{thm:endl_gog_virt_free} can be refined to show that 
every fundamental group of a finite graph of finite groups and thus, by the results of the following sections, every virtually free group $G$, can be embedded in a special way into a semidirect product $F \rtimes \GR$ where $F$ is free and $\GR$ is a finite quotient group of $G$.

\begin{proposition}\label{prop:sdvf} 
Let $\cG$ be a graph of groups over a finite connected graph $Y$ with finite vertex groups.
	 Then there is a finite quotient group $\GR$, a finite graph $\Gam$, and an orientation $S^+$ of its edge set such that $\pi_1(\cG)$ embeds into a semidirect product
	$F_{S^+}\rtimes \GR$ where the action of $\GR$ on the free group $F_{S^+}$ is induced by graph morphisms on $\Gam$ (in particular, it is by permutations of vertices). Moreover, the embedded group $\pi_1(\cG)$ satisfies 
$\pi_1(\cG) \cap F_{S^+}=\pi_1(\Gam,P)$ (seen as subgroup of $F_{S^+}$) for some vertex $P\in V(\Gam)$.
\end{proposition}
Note that here $\pi_1(\Gam,P)$ is the fundamental group of the graph of groups where all vertex groups are trivial (\ie the fundamental group as a topological space).

\begin{proof}
	Let $G= {\pi_1}({\cG},P_0)$ for some $P_0 \in V(Y)$ and let $F$ be the free normal subgroup constructed in the proof of \prettyref{thm:endl_gog_virt_free} and $\GR$ the quotient $G/F$, and $\phi:G \to \GR$ the projection. 
	Recall that $G$ acts
	on the \BST $\wt X$ by left multiplication. Thus, we can form the quotient graph
	$\Gam= F\backslash {\wt X}$, which is finite because for every $G$-orbit of vertices (resp.\ edges) of the \BST there are at most $\abs{Q}$ vertices (resp.\ edges) in $\Gam$. Moreover, the action of $G$ on $\wt X$ induces an action of $\GR$  on $\Gam$. Let $S = E(\Gam)$ be the set of edges of $\Gam$ and $S^+$ and orientation of it (one edge for every pair $e, \ov e$). Since the action on $\wt X$ is without edge inversion, we obtain an action of $Q$ on $S^+$, which extends to an action on $F_{S^+} = F_S/\!\set{e \ov e = 1}{e \in S}$ denoted by $(f,y) \mapsto {}^f\!y$ for $f \in Q$ and $y \in F_{S^+}$. Now, we can define the semidirect product 
	$F_{S^+}\rtimes \GR$. Its elements are denoted  by $[x,f]$ with $x\in F_{S^+}$ and $f\in \GR$. The multiplication in $F_{S^+}\rtimes \GR$ is
	$$[x,f]\cdot[y,g] = [x\, {}^f\!y,fg].$$
	We define a map $\theta: G \to F_{S^+}\rtimes \GR$ as 
	$$\theta(w) = [\psi(w), \phi(w)]$$
	with $\psi$ as follows: take the unique geodesic from $G_{P_0} \cdot P_0$ (the ``base point'') to $w G_{P_0} \cdot P_0$ in the \BST $\wt X$ and project it to $\Gam$. Now $\psi(w)$ is defined as the sequence of edges visited by the projected path (as a word over $S$).
	We have to show that 
	\begin{itemize}
		\item $\theta$ is a homomorphism,
		\item $\theta(F) = \pi_1(\Gam,P)$, and
		\item $\theta$ is injective.
	\end{itemize}
	The first point is due to the following observation: let $v,w \in G= {\pi_1}({\cG},P_0)$. In order to obtain the geodesic path from $G_{P_0} \cdot P_0$ to $vw G_{P_0} \cdot P_0$, we first follow the geodesic from $G_{P_0} \cdot P_0$ to $v G_{P_0} \cdot P_0$, then we take the geodesic from $G_{P_0} \cdot P_0$ to $w G_{P_0} \cdot P_0$ but ``shift it by $v$'' so that it becomes a path from $vG_{P_0} \cdot P_0$ to $vw G_{P_0} \cdot P_0$; finally, in order to obtain the geodesic we have to reduce this path in the middle where the two geodesics meet. In the quotient graph $\Gam$, it suffices to shift the path by $\phi(v)$; the reduction in the middle is done by freely reducing the word in $F_{S^+}$.
	
	For the second point notice that $\theta(F) \leq F_{S^+}$ because $F$ is the kernel of $\phi$. By \prettyref{thm:bst}, $\psi_{\vert F}$ is an isomorphism $ F \to \pi_1(\Gam,\star )\leq F_{S^+}$ where $\star = FG_{P_0} \cdot P_0$ is the projection of the base point of the \BST.
	The third point follows because $\phi(g) \neq 1$ for $g \not\in F$ and, by the above, $\theta$ is injective on $F$.
\end{proof}

By \prref{thm:endl_gog_virt_free} we know that every fundamental group of a finite graph of finite groups is virtually free. 
Later, without using this section, we show the result of \cite{Karrass73}
in ``its full glory'': being the fundamental group of a finite graph of finite groups is the same as being a finitely generated virtually free group. Thus, we obtain the following corollary. (The if-part is because subgroups of virtually free groups are virtually free: if $H\leq G$ and the free group $F$ has finite index $k$ in $G$, then $H\cap F$ is free by \prref{cor:freegr} and it has index at most $k$ in $H$.)

\begin{corollary}\label{cor:sdvf}Let $G$ be a finitely generated group. 
Then $G$ is virtually free \IFF $G$ is a subgroup of a semidirect product
of a finitely generated free group by a finite group. 
\end{corollary}

\begin{remark}\label{rem:sizeG}
	For an estimation on the size of $\GR= G/F$ we have some doubly exponential bound in the size of $\cG$. Indeed, $\GR$
appears as subgroup in $\Sym{X}$ where $\abs X \leq \prod\set{\abs{G_P}}{P\in V(Y)}$ is singly exponential in the size of $\cG$. 
\end{remark}

\begin{example}
One of the easiest and most prominent examples of an amalgamated product (fundamental group of a graph of groups with two vertices and one connecting edge~-- see \prref{ex:amal}) is $\SL(2,\Z)$, the special linear group of $2\times 2$ matrices over $\Z$. It is well-known that $\SL(2,\Z)=\Z/4\Z \star_{\Z/2\Z} \Z/6\Z$. 
The corresponding graph of groups consists of two vertices $P,R$ and one undirected edge $\oneset{y, \ov y}$.
The vertex groups are $G_P= \Z/4\Z= \gen \rho $ and $G_ {R}= \Z/6\Z= \gen \del$; the edge group is $G_y=\Z/2\Z=\gen\tau$. Hence, $\rho^2=\tau=\del^3$ and $\tau^2=1$. Mapping $\rho$ to $3$ and $\del$ to $2$, we obtain a surjective 
\homo $ {G}\to  {Q}$ where $ {Q}=\Z/12\Z$. The kernel is a free group $F$ of index $12$ in $ {G}$. (It is well-known that $F$ is equal to the 
commutator subgroup $[ {G}, {G}]$.) 
The free subgroup $F$ is the fundamental group of the graph $\Gam$ 
depicted in \prref{fig:sl2z}. (In our case $\Gam$ is the complete bipartite graph $K_{32}$.)
\begin{figure}[h!]
	\begin{center}		
		\begin{tikzpicture}[
		xscale=2.5,
		yscale=0.8,
		inner sep=.5,
		arc/.style={->, >=latex},
		node/.style={circle},
		label/.style={pos=0.5, above=2pt}
		]
		
		
		\node [node] (P1) at (0,4) {$P_1$};
		\node [node] (P2) at (0,2) {$P_{\delta}$};
		\node [node] (P3) at (0,0) {$P_{\delta^2}$};
		\node [node] (R1) at (2,3) {$R_1$};
		\node [node] (R2) at (2,1) {$R_{\rho}$};
		
		
		\draw [arc] (P1) to node [label] {$a$} (R1);
		\draw [arc] (P1) to node [label] {$b$} (R2);
		\draw [arc, bend left=80,dashed,semithick, distance=2.8cm] (P2) to node [label] {$c$} (R1);
		\draw [arc] (P2) to node [label] {$d$} (R2);
		\draw [arc, bend right=70,dashed,semithick, distance=2.2cm] (P3) to node [label] {$f$} (R1);
		\draw [arc] (P3) to node [label] {$e$} (R2);
		
		\end{tikzpicture}
		\caption{The graph $\Gam$ for $\SL(2,\Z)$ with oriented 
			edge set $S^+=   \os{a\lds f}$ and bridges $c$ and $f$.}
		\label{fig:sl2z}\end{center}
\end{figure}
 Choosing $c$ and $f$ as bridges we have
$F=F(b \ov d c \ov a,b \ov ef\ov a)$, a free group of rank $2$. 
The action of $Q$ on $\Gam$ is as follows: $\rho$ stabilizes the vertices $P_\alp$ and $\del$ stabilizes the vertices $ {R}_\bet$. We can identify  $ {Q}/G_P =\os{1,\del,\del^2}$ and  $ {Q}/G_ {R} =\os{1,\rho}$.
Without restriction $\del P_\alp= P_{\del \alp}$ and $\rho  {R}_\bet=  {R}_{\rho \bet}$. We see that the action of $Q$ on $\Gam$ is not faithful. 
Therefore, we really need both factors of the semidirect product to obtain an injective mapping.

We conclude that $\SL(2,\Z)$ embeds into a semidirect product of 
a free group of rank $6$ by $\Z/12\Z$ acting as permutations on the basis of the free group. 
\end{example}

\section{Pregroups and Geodesic Rewriting Systems}\label{sec:pregroups}
We now turn to the notion of pregroup in the sense of Stallings
\cite{Stallings71,Stallings87}.
\begin{definition}[Pregroup]
A \emph{pregroup} $P$ consists of a set $P$ together with a partial multiplication $D \to P$, $(x,y)\mapsto xy$ for some $D\sse P\times P$ (the domain of the multiplication~-- if $(x,y)\in D$, we say $xy$ is defined), an involution $P\to P$, $x\mapsto \ov x$, and a distinguished element $1_P \in P$ such that the following conditions are fulfilled:
\begin{enumerate}[(P1)]
\item $(1_P,x), (x,1_P) \in D$ and $1_Px=x1_P=x$ for all $x\in P$. \label{P1}
\item $(\ov x,x),(x,\ov x) \in D$ and $x\ov x=\ov xx=1_P$ for all $x\in P$.\label{P2}
\item For $x,y,z \in P$ with $(x,y),(y,z) \in D$ (\ie  $xy$, $yz$ are defined):\label{P3}
\[(xy,z)\in D \text{ if and only if } (x,yz)\in D.\]
Moreover, whenever $(xy,z) ,(x,yz)\in D$, then $(xy)z=x(yz)$ and we simply write $xyz$.
\item For $w,x,y,z \in P$ we have:\label{P4}\[(w,x),(x,y),(y,z)\in D \text{ implies } (w,xy)\in D \text{ or } (xy,z) \in D.\]
\end{enumerate}
\end{definition}
 Note that the involution is not required to be without fixed points.
Stallings' original description also contained one more requirement:
if $ xy$ is defined, then so is
$\ov y\cdot \ov x,$ and
$\ov{(xy)} = \ov y \cdot \ov x$.
This, however, is an immediate consequence of (P1), (P2), and (P3).

Every group $G$ is a pregroup. Moreover, if $P \sse G$ is closed under forming inverses, then (P\ref{P1})--(P\ref{P3}) are fulfilled. However, (P\ref{P4}) is a strong additional requirement, which finally implies that a pregroup defines a geodesic rewriting system.

\begin{definition}[Universal Group]
The \emph{universal group} $U(P)$ of a pregroup $P$ is defined as
\[U(P) = P^* / \set{ab=c,1_P =1}{(a,b) \in D, ab = c}.\]
\end{definition}
It follows from (P2) that $U(P)$ in fact is a group. It is universal with the following property:
Given a group $G$ and a map $\phi :P\to G$, with $\phi(a)\phi(b)=\phi(ab)$ for all $a,b \in P$, whenever $ab$ is defined. Then there is a unique group homomorphism $U(P)\to G$ extending $\phi$.

\begin{example}\label{ex:prefree}
 If ${\Sigma}$ is any set, then
the disjoint union $P= \smallset{1_P} \cup \Sigma \cup \ov{\Sigma}$
where $\ov{\Sigma}$ is a copy of ${\Sigma}$
yields a pregroup with involution given by $\ov{1_P}=1_P$, $\ov{\ov{a}}=a$
for all $a\in \Sigma$ and $p \ov{p} = 1_P$
for all $p\in P$.
In this case the universal
group $U(P)$ is the free group $F_{\Sigma}$.
\end{example}

\begin{example}\label{ex:preamal}
Let $A$ and $B$ be groups intersecting in a common subgroup $H$.
Consider the subset $P=A\cup B \subseteq  A \star_HB$. Let $D=A\times A\cup B\times B$.
The result of the partial multiplication $p \cdot q$ is defined by the product in $A$ resp.\ $B$.
Then $P$ is a pregroup and we have
$U(P) =  A\star_HB$.
\end{example}

Stallings \cite{Stallings71} showed that the composition of the
inclusion $P \rightarrow P^\ast$
 with the standard quotient map $P^\ast \rightarrow U(P)$ is injective. In order to do so, he introduced {\em reduced forms} of elements of $U(P)$, which are words in $a_1\dots a_n \in P^*$ such that $(a_i,a_{i+1})\not\in D$ for all $i$. Then he showed that two such words represent the same element in $U(P)$ if and only if they can be transformed into each other by a finite sequence of rewriting steps of the form $$ a_1\dots a_i\,a_{i+1}\dots a_n\RA{}
a_1\dots [a_ic] \,[\ov ca_{i+1}]\dots a_n,$$
where $(a_i,c),(\ov c,a_{i+1}) \in D.$
As before, $[ab]$ denotes the element in $P$, whereas $ab$ denotes the word in $P^*$.
Here, we give a simpler proof, which follows \cite{ddm10}, by constructing a confluent rewriting system which defines $U(P)$.

\begin{definition}\label{def:sp}
For a pregroup $P$ we can define a rewriting system $S_P\sse P^*\times P^*$ by the following rules:
\[
\begin{array}{rcll}
1_P & \longrightarrow & 1 &\mbox{(= empty word)}\\
ab & \longrightarrow &[ab] & \mbox{if } \; (a,b) \in D \\
ab & \longrightarrow &[ac][\ov cb]& \mbox{if }
 \;(a,c), \, (\ov c,b) \in D
\end{array}
\]
\end{definition}
Obviously, we have $P^*/\DAS{*}{S}\, \cong U(P)$. 
\begin{lemma}\label{lem:sp_konfluent}
The rewriting system $S_P$ is confluent.
\end{lemma}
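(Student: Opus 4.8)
The plan is to derive confluence from \prref{thm:konvtoCR}. The first point to notice is that $S_P$ is \emph{not} terminating: applying the third rule to $[ac][\ov cb]$ with parameter $\ov c$ gives back $ab$, so $ab\RAA[ac][\ov cb]\RAA ab$ is a cycle whenever $c\neq 1_P$; hence \prref{thm:konvtoCR}(iii) is not available. Instead I would prove that $S_P$ is \emph{strongly confluent} and then invoke \prref{thm:konvtoCR}(i). The tools needed are the pregroup axioms, chiefly (P\ref{P2}), the partial associativity (P\ref{P3}), and the four-element axiom (P\ref{P4}), together with the derived identity $\ov{xy}=\ov y\,\ov x$ recorded after the definition, and the observation that $S_P$ is invariant under the reversal $a_1\cdots a_n\mapsto \ov{a_n}\cdots\ov{a_1}$, which halves the case analysis.

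So let $y\LAA x\RAA z$ be a one-step peak. If the two redexes lie in disjoint factors of $x$, applying the other rule closes the peak in one step on each side. Otherwise the left-hand sides overlap; since the first rule has a length-one left-hand side and the second and third rules have length-two left-hand sides, the overlaps reduce to a short list of patterns. An overlap of the first rule with the second or third (a $1_P$ occurring inside a defined product $ab$) collapses at once via (P\ref{P1})--(P\ref{P3}). For the remaining overlaps it is convenient to record three elementary moves: the third-rule output $[ac][\ov cb]$ contracts back to $ab$ in one step (third rule, parameter $\ov c$); one has $[ab]c\RAA a[bc]$ in one step (third rule, parameter $\ov b$) whenever $(b,c)\in D$; and $[ac][\ov cb]\RAA[ab]$ in one step by the second rule whenever $(a,b)\in D$, since (P\ref{P3}) gives $(ac,\ov cb)\in D$ exactly when $(a,b)\in D$. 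Using these, a second/second overlap on a factor $abc$ closes at $[abc]$ when $(ab,c)\in D$ (by (P\ref{P3})) and directly via $[ab]c\RAA a[bc]$ otherwise; two instances of the third rule at the same position $ab$ both retract to $ab$; a second/third overlap at the same position $ab$ closes because the third-rule output contracts to $[ab]$; and a third/third overlap on a factor $abc$ is trivial, since each output retracts to $abc$.

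The substantial case is the overlap of the second rule on $ab$ (so $(a,b)\in D$) with the third rule on $bc$ via a parameter $e$ (so $(b,e),(\ov e,c)\in D$), which produces $y=[ab]c$ and $z=a[be][\ov ec]$. Here I would apply (P\ref{P4}) to the quadruple $(a,b,e,\ov ec)$, whose three consecutive products are all defined, obtaining $(a,be)\in D$ or $(be,\ov ec)\in D$. In the first case both $y$ and $z$ reach $[abe][\ov ec]$ in one step: from $y$ by the third rule with parameter $e$, from $z$ by the second rule applied to $a[be]$. In the second case (P\ref{P3}) yields $(b,c)\in D$, and then both sides reach $a[bc]$ in one step: from $z$ by the second rule applied to $[be][\ov ec]$, from $y$ by the move $[ab]c\RAA a[bc]$. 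The mirror overlap (third rule on $ab$, second rule on $bc$) follows by applying the same argument to the reversed word. This gives strong confluence, and then \prref{thm:konvtoCR}(i), together with the Church-Rosser reformulation in \prref{thm:konvtoCR}(ii), finishes the proof. I expect this last overlap to be the only real obstacle: closing it naively runs through the length-three intermediate word and costs two steps on one side, so one must choose, in each branch of the dichotomy supplied by (P\ref{P4}), the third-rule parameter that closes the diagram in a single step.
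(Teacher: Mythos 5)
Your proof is correct and follows exactly the route the paper takes: the paper's proof consists of the single sentence that checking all overlapping pairs shows $S_P$ is strongly confluent, whence confluence by \prref{thm:konvtoCR}. You have simply carried out that check in full (including the one genuinely delicate overlap, rule two against rule three, which you correctly resolve with (P4) and a careful choice of the closing parameter), so your write-up is a valid expansion of the paper's argument rather than a different one.
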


\begin{proof}
 By checking all overlapping pairs of rules we can see that $S_P$ is strongly confluent. Hence, by \prref{thm:konvtoCR} it is confluent.
\end{proof}

Let $S$ be some rewriting system. A \emph{geodesic} word w.r.t\ $S$ is a word which is shortest among all words in its $\DAS{*}{S}$-class.
\begin{definition}\label{def:geodesic}
A rewriting system $S$ is called \emph{geodesic} if every word can be reduced to a geodesic by only applying length reducing rules (\ie  rules $(\ell,r)\in S$ with $\abs{\ell} > \abs{r}$).
\end{definition}

In particular, if $S$ is geodesic, then every $w$ with $w\DAS*{S} 1$ can be reduced to the empty word by only applying length reducing rules. 
Together with \prref{thm:endl_pregroup}, the next result shows that virtually free group can be defined by geodesic rewriting systems (see also \refPedro, Theorem 2.5).

\begin{proposition}[\cite{ddm10}]\label{prop:sp_geodesic}
The rewriting system $S_P$ is geodesic.
\end{proposition}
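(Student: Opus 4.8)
The plan is to exploit that among the three rules of $S_P$ only the first two are length-reducing, while the third one preserves length. I will call a word $a_1\cdots a_n\in P^*$ \emph{reduced} if no $a_i$ equals $1_P$ and $(a_i,a_{i+1})\notin D$ for every $i$; equivalently, if neither of the two length-reducing rules of $S_P$ is applicable to it (these are essentially Stallings' reduced forms). Applying length-reducing rules to an arbitrary word $w$ terminates after at most $\abs w$ steps and produces a reduced word lying in the same $\DAS{*}{S_P}$-class. Hence it will be enough to prove that \emph{every reduced word is geodesic}, since the rewriting witnessing this uses only length-reducing rules.

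The heart of the proof will be the following claim: if $w$ is reduced and $w\RAS{}{S_P}w'$, then $w'$ is again reduced and $\abs{w'}=\abs w$. Only the third rule can be applied to a reduced word, say to a factor $a_ia_{i+1}$, replacing it by $[a_ic][\ov ca_{i+1}]$ for some $c$ with $(a_i,c),(\ov c,a_{i+1})\in D$; this clearly preserves the length, so the issue is that $\cdots a_{i-1}[a_ic][\ov ca_{i+1}]a_{i+2}\cdots$ must again be reduced. I will check that neither $[a_ic]$ nor $[\ov ca_{i+1}]$ equals $1_P$ and that none of the three pairs $(a_{i-1},[a_ic])$, $([a_ic],[\ov ca_{i+1}])$, $([\ov ca_{i+1}],a_{i+2})$ lies in $D$ (the boundary pairs being simply dropped when $i=1$ or $i+1=n$). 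Each of these is established by contradiction: assuming membership in $D$ and rewriting the relevant products by means of (P1), (P2), (P3) (for instance $[a_ic]\ov c=a_i$ and $c[\ov ca_{i+1}]=a_{i+1}$), axiom (P4) applied to a suitable quadruple forces one of $(a_{i-1},a_i)$, $(a_i,a_{i+1})$, $(a_{i+1},a_{i+2})$ to lie in $D$, contradicting that $w$ is reduced; e.g.\ from $(a_{i-1},[a_ic])\in D$ together with $([a_ic],\ov c)\in D$ and $(\ov c,a_{i+1})\in D$, axiom (P4) applied to $(a_{i-1},\,[a_ic],\,\ov c,\,a_{i+1})$ yields $(a_{i-1},a_i)\in D$ or $(a_i,a_{i+1})\in D$. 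This claim, and specifically the appeal to (P4), is the only real obstacle; the rest is bookkeeping.

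Granting the claim, the proposition follows. By induction on the length of a derivation, every $v$ with $w\RAS{*}{S_P}v$ for a reduced $w$ is itself reduced and satisfies $\abs v=\abs w$: at each step the current word is reduced, so the length-reducing rules cannot apply and only the length-preserving third rule is used. Now let $w,w'$ be reduced with $w\DAS{*}{S_P}w'$. Since $S_P$ is confluent by \prref{lem:sp_konfluent}, it is Church-Rosser by \prref{thm:konvtoCR}, so there is $v$ with $w\RAS{*}{S_P}v$ and $w'\RAS{*}{S_P}v$; by the previous step $\abs w=\abs v=\abs{w'}$. Thus any two reduced words in the same $\DAS{*}{S_P}$-class have equal length. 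Finally, given an arbitrary word $w$, reduce it by length-reducing rules to a reduced word $w_0$ with $w_0\DAS{*}{S_P}w$. For any $u$ with $u\DAS{*}{S_P}w_0$, reducing $u$ by length-reducing rules gives a reduced word $u_0$ with $\abs{u_0}\le\abs u$ and $u_0\DAS{*}{S_P}w_0$, whence $\abs u\ge\abs{u_0}=\abs{w_0}$. So $w_0$ has minimal length in its class, i.e.\ is geodesic, and it was reached from $w$ using only length-reducing rules. Hence $S_P$ is geodesic.
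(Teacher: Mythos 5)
Your proposal is correct and follows essentially the same route as the paper: reduce to showing that a word to which no length-reducing rule applies stays that way after an application of the symmetric rule, verify this with (P3)/(P4), and conclude via confluence. The paper explicitly leaves the (P3)/(P4) verification to the reader, and your sketch of it (including the quadruple $(a_{i-1},[a_ic],\ov c,a_{i+1})$ for the boundary pairs; the middle pair in fact needs only (P3)) fills in exactly that gap correctly.
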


\begin{proof}
We start with a sequence $w= a_1 \cdots a_n \in P^*$ to which no length reducing rule can be applied, \ie  there is no $i$ such that $a_ia_{i+1}$ is defined. We have to show that $w$ is a geodesic.
By \prref{lem:sp_konfluent} $S_P$ is confluent, and hence we know that $w$ can be reduced to a geodesic by applying rules in $S_P$. Therefore, it is sufficient to show that after applying some symmetric rule of $S_P$ to $w$, there is still no possibility to apply a length reducing rule. This can be done easily by playing with the axioms (P\ref{P3}) and (P\ref{P4}) of a pregroup and we leave it to the reader.
\end{proof}

\begin{corollary}[\cite{Stallings71}]
 The canonical map $P \to U(P)$ is injective.
\end{corollary}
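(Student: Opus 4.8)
The plan is to pass to the string-rewriting picture. By construction $U(P) \cong P^*/\DAS{*}{S_P}$, and the canonical map $P\to U(P)$ is the composite $P\hookrightarrow P^*\to U(P)$; it sends $a\in P$ to the $\DAS{*}{S_P}$-class of the one-letter word $a\in P^*$. So injectivity is exactly the statement: if $a,b\in P$ satisfy $a\DAS{*}{S_P} b$ as words, then $a=b$ in $P$.

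The only input needed is confluence of $S_P$, \prref{lem:sp_konfluent}, which by \prref{thm:konvtoCR} is equivalent to the Church--Rosser property. The first step is a small bookkeeping observation about one-letter words: among the three rules of $S_P$, the only one whose left-hand side has length $1$ is $1_P\to 1$, while the other two have left-hand sides of length $2$. Hence a one-letter word $a$ with $a\neq 1_P$ is irreducible with respect to $S_P$, and the only rewriting step available for the word $1_P$ is $1_P\to 1$, which yields the empty word~-- itself irreducible. In particular, a one-letter word reduces to the empty word only if that word equals $1_P$.

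Now one finishes via Church--Rosser. Suppose $a\DAS{*}{S_P} b$ with $a,b\in P$. If $a\neq 1_P$ and $b\neq 1_P$, both $a$ and $b$ are irreducible, so they possess a common reduct which must equal each of them; thus $a=b$. If $a=1_P$, then $a\DAS{*}{S_P}1$, hence $b\DAS{*}{S_P}1$ as well, and by the previous paragraph a one-letter word can only reduce to the empty word when it equals $1_P$, so $b=1_P=a$; the case $b=1_P$ is symmetric. This exhausts all cases.

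I do not expect a genuine obstacle: the substance is already contained in \prref{lem:sp_konfluent} (and, if one prefers to route the argument through geodesics, in \prref{prop:sp_geodesic}, since a one-letter word $\neq 1_P$ admits no length-reducing rule and is therefore a geodesic). The only points deserving care are to keep the letter $1_P\in P\subseteq P^*$ distinct from the empty word $1\in P^*$, and to remember that it is the Church--Rosser property, rather than termination, that forces two irreducible words in a single class to coincide.
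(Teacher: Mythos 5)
Your proof is correct and follows the route the paper intends: the paper explicitly announces that injectivity will follow ``by constructing a confluent rewriting system which defines $U(P)$'', and your argument is precisely the standard filling-in of that claim via \prref{lem:sp_konfluent}, the Church--Rosser property, and the observation that a one-letter word other than $1_P$ is irreducible for $S_P$. The care you take to distinguish the letter $1_P$ from the empty word, and to note that Church--Rosser (not termination) is what forces two irreducible words in one class to coincide, is exactly right.
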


From \prref{prop:sp_geodesic} it follows that every word representing the identity in $U(P)$ can be reduced to the empty word by only applying length reducing rules of $S_P$. Reading these rules backward we obtain the following corollary.

\begin{corollary}\label{cor:precf}
 Let $\cG = (P,P,R,1_P)$ be the context-free grammar with variables and terminals $P$, productions $R= \set{1_P \to 1, [ab] \to a b}{(a,b)\in D}$ and axiom $1_P$. Then $\cG$ is a context-free grammar for the word problem of $U(P)$.
\end{corollary}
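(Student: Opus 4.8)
The plan is to show $L(\cG)=\WP{U(P)}$, the word problem being taken with respect to the canonical presentation $\pi\colon P^*\to U(P)$. That $\cG$ is context-free is immediate: every production of $R$ has a left-hand side of length $\le 1$, namely $1_P$ or a single letter $[ab]\in P$. Moreover, since the variable set and the terminal set of $\cG$ both equal $P$, any string derivable from the axiom $1_P$ is already a string of terminals, so that $L(\cG)=\set{w\in P^*}{1_P\RAS{*}{R}w}$.

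\textbf{Inclusion $L(\cG)\subseteq\WP{U(P)}$.} Here I would note that every production of $R$ is a valid equation in $U(P)$: one has $\pi(1_P)=1$, and for $(a,b)\in D$ the relation $ab=[ab]$ is among the defining relations of $U(P)$, so $\pi(ab)=\pi(a)\pi(b)=\pi([ab])$. Hence $\alpha\RAS{}{R}\beta$ implies $\pi(\alpha)=\pi(\beta)$, and an induction on the length of a derivation $1_P\RAS{*}{R}w$ yields $\pi(w)=\pi(1_P)=1$, i.e.\ $w\in\WP{U(P)}$.

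\textbf{Inclusion $\WP{U(P)}\subseteq L(\cG)$.} Let $w\in\WP{U(P)}$, i.e.\ $w\DAS{*}{S_P}\eps$, and let $S'$ be the subsystem of $S_P$ formed by its length-reducing rules: $1_P\to 1$ together with $ab\to[ab]$ for $(a,b)\in D$ (the rules $ab\to[ac][\ov cb]$ are length-preserving, so these are all of them). Since $S_P$ is geodesic (\prref{prop:sp_geodesic}), the remark following \prref{dfn:geodesic} gives $w\RAS{*}{S'}\eps$. It thus suffices to prove: if $w\RAS{*}{S'}\eps$, then $1_P\RAS{*}{R}w$. I would argue by induction on the number of $S'$-steps. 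For zero steps, $w=\eps$ and $1_P\RAS{}{R}\eps$ via $1_P\to 1$. Otherwise write $w\RAS{}{S'}w'\RAS{*}{S'}\eps$; by induction $1_P\RAS{*}{R}w'$, and it remains to recover $w$ from $w'$ in at most one step. If $w=uabv\RAS{}{S'}u[ab]v=w'$, then the production $[ab]\to ab$ gives $w'\RAS{}{R}w$. If $w=u\,1_P\,v\RAS{}{S'}uv=w'$ with $uv\ne\eps$, then for $v=bv_0$ the production $[1_P b]\to 1_P b\in R$ (legitimate because $(1_P,b)\in D$ and $[1_P b]=b$ by (P\ref{P1})) turns $w'=ubv_0$ into $u\,1_P\,bv_0=w$, while for $v=\eps$ (so $u=u_0a$) the production $[a\,1_P]\to a\,1_P\in R$ turns $w'=u_0a$ into $u_0a\,1_P=w$. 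Finally, if $u=v=\eps$, then $w=1_P$ and $1_P\RAS{*}{R}w$ holds trivially. In all cases $1_P\RAS{*}{R}w$, so $w\in L(\cG)$. Combining the two inclusions gives $L(\cG)=\WP{U(P)}$.

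The step I expect to be the real obstacle is the second inclusion. The productions of $\cG$ are \emph{almost} the reversed length-reducing rules of $S_P$ — except that $1_P\to 1$ is retained, not reversed — so a naive reversal of an $S'$-reduction would require inserting fresh occurrences of $1_P$, for which $\cG$ has no direct rule. The trick is that such an insertion is always realized by one of the productions $[1_P b]\to 1_P b$ or $[a\,1_P]\to a\,1_P$, provided there is an adjacent letter to attach to; the only configuration without such a neighbour is $w=1_P$, which is handled separately. The remaining checks — that the cited productions indeed lie in $R$, and that each rewriting step acts on the intended occurrence — are routine consequences of the pregroup axioms (P\ref{P1})–(P\ref{P2}).
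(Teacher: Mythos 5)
Your proof is correct and follows the same route as the paper: it uses \prref{prop:sp_geodesic} to reduce every word representing $1$ to the empty word by length-reducing rules of $S_P$ alone, and then reverses those reduction steps inside the grammar $\cG$ (the paper dispatches this reversal in a single sentence). The one point you elaborate — that $\cG$ has no production inserting $1_P$ out of nothing, so the reversal of a step $u\,1_P\,v \to uv$ must instead be realized by a production $b \to 1_P b$ or $a \to a 1_P$ coming from $(1_P,b),(a,1_P)\in D$, with the isolated case $w=1_P$ handled by reflexivity — is precisely the detail that makes the paper's remark rigorous.
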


The following proposition has been stated first for length reducing rewriting systems in \cite{die87stacs}.

\begin{proposition}\label{prop:geocf}
Let $G$ be a finitely generated group presented (as a monoid) by some finite geodesic rewriting system.
Then there is a special deterministic push-down automaton which recognizes the word problem of $G$.
\end{proposition}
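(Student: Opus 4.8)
The plan is to build the DPDA so that, after reading a prefix $p$ of the input, its stack contains a geodesic word $u \in \Sigma^*$ (with the stack top at the right end of $u$) satisfying $\pi(u) = \pi(p)$, and so that it is in a distinguished \emph{idle} control state, which will be the unique final state. Since $\Sigma$ is a monoid generating set of the group $G$, every $\pi(a)^{-1}$ ($a\in\Sigma$) is represented by a word over $\Sigma$; let $\kappa$ be the maximum, over $a\in\Sigma$, of the length of a shortest such word, and let $k=\max\{\,|\ell| : (\ell,r)\in S\,\}$ -- both constants. I will use repeatedly that, because $S$ is geodesic, a word is geodesic (shortest in its $\DAS{*}{S}$-class) if and only if it is \emph{length-irreducible}, i.e.\ contains no length-reducing left-hand side as a factor: ``geodesic $\Rightarrow$ length-irreducible'' is clear, and conversely a length-irreducible word cannot be reduced by length-reducing rules at all, hence already equals the geodesic it reduces to. In particular every factor of a geodesic word is geodesic, and a geodesic word represents $1$ in $G$ iff it is the empty word.

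The heart of the construction is the following bound. Let $u$ be geodesic and $a\in\Sigma$. Every length-reducing rewrite strictly decreases length, and a maximal length-reducing rewrite sequence starting from $ua$ ends at a length-irreducible, hence geodesic, word $u'$ with $|u'|=d(1,\pi(ua))$ in the (left-invariant) word metric; by the triangle inequality $d(1,\pi(ua))\ge d(1,\pi(u))-\kappa=|u|-\kappa$, so the sequence has at most $(|u|+1)-(|u|-\kappa)=\kappa+1$ steps. Moreover one checks by induction on such a sequence that, writing each intermediate word as $Q\cdot D$ with $Q$ the longest prefix that is also a prefix of $u$, the ``dirty suffix'' $D$ has length at most $1+(\kappa+1)(k-1)$, and that every redex contracted along the way lies within the last $L:=1+(\kappa+2)(k-1)+\kappa+k$ symbols of the current word: the first contracted redex is a suffix of $ua$ because all prefixes of $u$ are length-irreducible, each contraction enlarges the dirty region by less than $k$, and there are at most $\kappa+1$ of them. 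Consequently the passage $u\rightsquigarrow u'$ touches only a bounded suffix of $u$.

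Accordingly, on reading a letter $a$ the DPDA enters an \emph{update} phase which, using only $\varepsilon$-moves, (i) pops the top $\min(|u|,M)$ stack symbols into the finite control, where $M$ is a fixed constant with $M>L$, obtaining a factorization $u=u_0t$ with $|t|\le M$; (ii) contracts length-reducing redexes inside $ta$ greedily, by a fixed deterministic strategy, until the visible word is length-irreducible, obtaining $t'$ after at most $\kappa+1$ contractions -- all of which stay inside the buffer by the window bound; (iii) pushes $t'$ back; (iv) returns to the idle state. One verifies that $u_0t'$ is again geodesic: since the reductions act only near the right end, the front of $t$ is never touched, so $u_0t'$ is a prefix of $u$ followed by the short processed part, and it is length-irreducible because any factor that could be a redex is either a factor of $t'$, which was made length-irreducible, or a factor of $u$. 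Thus the stack invariant is preserved, and the phase is finite and deterministic, so the whole device is a deterministic PDA. The initial configuration is the empty stack in the idle state, and a word $w$ is accepted precisely when, after reading $w$, the automaton is in the idle state with empty stack; it always ends in the idle state, its stack then holds a geodesic $u$ with $\pi(u)=\pi(w)$, and this stack is empty iff $u=\varepsilon$ iff $\pi(w)=1$, so the DPDA recognizes $\WP G$ with the required acceptance condition.

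The main obstacle is the combinatorial bound of the third paragraph: recognizing that the geodesic hypothesis is exactly what forces each single-letter update to be a bounded, stack-top-local operation -- for a merely length-reducing presentation without the geodesic property this fails, which is why the statement genuinely needs ``geodesic''. The remaining work -- handling a possibly non-symmetric generating set (whence the constant $\kappa$ rather than $1$), the empty-stack bookkeeping, and checking that the updated word is still geodesic -- is routine but has to be carried out with some care.
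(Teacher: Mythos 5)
Your proposal is correct and follows essentially the same route as the paper's proof: keep a geodesic on the stack, bound the number of length-reducing steps per input letter by observing that the geodesic length can drop by at most a constant (your $\kappa$ plays the role of the paper's $\abs{w_a}$), conclude that the reduction is confined to a bounded suffix, and simulate it in the finite control, accepting on empty stack. Your extra bookkeeping (the window constant $L$, the check that the updated stack word is still length-irreducible, and the fixed deterministic reduction strategy) merely spells out details the paper leaves implicit.
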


\begin{proof}
The following proof is taken from \cite{ddm10}. The argument is originally from \cite{die87stacs} and also appears in \cite{GilHHR07}.
Starting with a finite geodesic rewriting system $S$, we describe a push-down automaton for the word problem of $G$. Let $S_R\sse S$ be the length reducing rules of $S$.

Consider a word $w$ and write it as $w=uv$
such that $u$ is geodesic. The prefix $u$ is kept on a push-down stack.
Suppose that $v = a v'$, for some letter $a$.
Push $a$ onto the top of the stack: so the stack
becomes $ua$. There is no reason to suppose that $ua$ is geodesic and, if necessary, 
we perform length reducing reduction steps to produce an equivalent
geodesic word $\widehat u$. Suppose this requires $k$ steps:
$$ua \RAS{k}{S_R} \widehat u.$$
Let us show that we can bound $k$ by some
constant depending on $S$, only. Indeed for all letters $a$ we may fix a
word $w_a$ such that $aw_a \RAS*{S_R} 1$. But this means
$$\widehat u w_a \RAS{*}{S_R} \widetilde u,$$
where $\widetilde u$ is geodesic and $\widetilde u$ represents the same
group element as $u$. Because $u$ was geodesic, it follows $\abs{u}
= \abs{\widetilde u}$. Therefore, $|\widehat u|\ge |u| - |w_a|$ and this
tells us $k \leq |w_a|$. Since $k$ is bounded by some constant, we see
that the whole reduction process involves a bounded suffix
of the word $ua$, only. This means we can factorize
$ua = pq$ and $\widehat u = pr$, where the length of $q$ is bounded
by some constant depending on $S$, only. Moreover,
$q \RAS{k}{S_R} r$. Since the length of $q$ is bounded, this reduction
can be performed using the finite control of the
push-down automaton. The automaton stops once the input has been read and
then the stack gives us a geodesic corresponding to the input word $w$.
In particular, in the end the stack is empty if and only if the input word was equal to $1$ in the group.
\end{proof}

\subsection{Finite Graphs of Groups and Pregroups}

\newcommand{\Pg}{\mathcal{P}}
In the monograph ``Pregroups and Bass-Serre theory'' \cite{Rimlinger87a} Rimlinger showed that any fundamental group of a finite graph of groups with finite vertex groups can be written as universal group of a finite pregroup, and vice versa. This section is dedicated to the proof of one direction of his theorem. Using Britton reductions we are able to simplify the proof considerably.
The other direction of Rimlinger's theorem will then follow by the detour via finite treewidth.

Let $\cG$ be a graph of groups with underlying graph $Y$. Furthermore, let the alphabet $\Sigma$, the group $F(\cG)$ and the rewriting system $B_\cG$ (Britton reductions) be as in \prref{sec:bass_serre}.
We define a subset $\Pg\sse F(\cG)$ as follows (here, $T$ again is a fixed spanning tree of $Y$, $P_0 \in V(Y)$, and $T[P,Q]$ denotes the edge sequence of the geodesic path from $P$ to $Q$ in $T$):
\begin{align*}
\Pg&= \left\{ \,g_0y_1\cdots g_{s-1}y_sg_{s}\in \pi_1(\cG, P_0) \: | \: 
	      \exists\, k:\ y_k \in (E(Y)\setminus E(T)) \cup \{1\},\right.\\ 
    &\left.\qquad\qquad y_1\cdots y_{k-1} = T[P_0, s(y_k)],\ y_{k+1}\cdots y_s = T[t(y_k),P_0] \,\right\}.
\end{align*}

Here, we allow $y_k$ to be the empty word in order to avoid to have to distinguish two cases. If $y_k=1$ we set $s(y_k)= t(y_k)= s(y_{k+1})= t(y_{k-1})$ (for $k=1$ we set $s(y_k)=P_0$).

Note that for $w,u\in\Sig^*$ with $w\RAS{*}{B_\cG}u$, we have $w\in \Pg$ if and only if $u$ is of the form $\,g_0y_1\cdots g_{s-1}y_sg_{s}$ with 
$ y_k \in (E(Y)\setminus E(T)) \cup \{1\},
 y_1\cdots y_{k-1} = T[P_0, s(y_k)],\ y_{k+1}\cdots y_s = T[t(y_k),P_0]$ and $g_i\in G_{s(y_{i+1})}$.
 
 The underlying paths of elements of $\Pg$ may have length zero. If $Y$ is finite, an upper bound is given by $2 \diam(T) + 1$ where $\diam(T)$ denotes the diameter of $T$. Therefore, if the graph $Y$ is finite and all the vertex groups are finite, $\Pg$ is finite, too.
We define the partial multiplication and involution on $\Pg$ by the respective operations in $F(\cG)$, \ie  $(x,y)\in D$ if and only if $xy \in \Pg$.

\begin{theorem}[\cite{Rimlinger87a}]\label{thm:endl_pregroup}
Let $\cG$ be a graph of groups. Then $\Pg$ is a pregroup and
\[U(\Pg) \cong \pi_1(\cG).\] 
Moreover, if $Y$ and all vertex groups are finite, then $\Pg$ is finite.
\end{theorem}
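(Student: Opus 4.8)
The plan is to verify the four pregroup axioms (P\ref{P1})--(P\ref{P4}) for $\Pg$ directly, then identify $U(\Pg)$ with $\pi_1(\cG)$ using the universal property, and finally observe finiteness. Throughout I would work with Britton-reduced representatives: by \prref{rem:britton} every element $g \in \pi_1(\cG,P_0)$ has a Britton-reduced form $g_0 y_1 \cdots g_{s-1} y_s g_s$ whose ``$y$-sequence'' $y_1 \cdots y_s \in \Pi(P_0,P_0)$ is uniquely determined, and membership of $g$ in $\Pg$ depends only on this sequence having the prescribed shape $T[P_0,s(y_k)]\, y_k\, T[t(y_k),P_0]$ with $y_k \in (E(Y)\setminus E(T)) \cup \{1\}$. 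The key structural point is that such an element is ``geodesic in the tree direction'': the $T$-edges before and after $y_k$ form geodesic paths in $T$, so no cancellation of $T$-letters can occur among them, and the only possible Britton reduction inside such a word must involve the distinguished letter $y_k$ (or be a vertex-group multiplication).

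First I would establish (P\ref{P1}) and (P\ref{P2}), which are immediate: $1 \in \Pg$ (take $k=1$, $y_k = 1$, empty $y$-sequence), and $\Pg$ is closed under the involution of $F(\cG)$ since reversing $g_0 y_1 \cdots y_s g_s$ reverses the $y$-sequence and $\overline{T[P,Q]} = T[Q,P]$, while $\overline{y_k} \in (E(Y)\setminus E(T)) \cup \{1\}$ again. Next, (P\ref{P3}): if $xy$ and $yz$ are both defined (i.\,e.\ lie in $\Pg$), I must show $xyz \in \Pg \iff$ both bracketings agree, which in $F(\cG)$ is automatic once I know the product $xyz$ taken in $F(\cG)$ lands in $\Pg$ under either parenthesization — and since $F(\cG)$ is a group, $(xy)z = x(yz)$ there regardless, so the content is just that $\Pg$-membership of the common value is well-posed; this follows by reducing $xyz$ to Britton-reduced form and checking the $y$-sequence has the right shape (using that the ``middle'' element $y$ is itself in $\Pg$ so its reduced form is $T[P_0,s(y_k^y)]\,y_k^y\,T[t(y_k^y),P_0]$, and the outer factors contribute only tree-geodesic prefixes/suffixes). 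The real work is (P\ref{P4}): given $w,x,y,z \in \Pg$ with $wx, xy, yz$ all defined, I must show $w(xy)$ or $(xy)z$ is defined. Here I would argue by a case analysis on where the ``special'' letters $y_k^w, y_k^x, y_k^y, y_k^z$ of the four elements sit, using Britton reductions: since $wx \in \Pg$, reducing the concatenation of the reduced forms of $w$ and $x$ causes the tail of $w$ and the head of $x$ to cancel against each other in the tree part and then telescope through one special letter; the combinatorics force that at most one ``survives'' as the special letter of $wx$, and similarly for $xy, yz$. Counting special letters and tracking which tree-paths cancel, one shows that either attaching $z$ on the right of $xy$ still produces a word of the $\Pg$-shape, or attaching $w$ on the left does.

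Once $\Pg$ is a pregroup, the identity $U(\Pg) \cong \pi_1(\cG)$ follows from the universal property of \prref{dfn:sp}/the definition of $U(P)$: the inclusion $\Pg \hookrightarrow \pi_1(\cG)$ is a partial-multiplication-preserving map into a group, hence extends to a homomorphism $U(\Pg) \to \pi_1(\cG)$; conversely $\Pg$ contains all vertex groups $G_P$ (via the reduced form $T[P_0,P]\,g\,T[P,P_0]$, which has $y_k=1$) and all the letters $T[P_0,s(y)]\,y\,T[t(y),P_0]$ for $y \in E(Y)$, and these generate $\pi_1(\cG,P_0)$; moreover the defining relations $\overline{y}a^y y = a^{\overline y}$ and the vertex-group multiplications all hold inside $\Pg$ as instances of the partial multiplication, so by the presentation $F(\cG) = F_\Sigma / (\cdots)$ combined with \prref{prop:twofunds} there is a homomorphism $\pi_1(\cG) \to U(\Pg)$, and the two are mutually inverse. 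Finally, if $Y$ and all vertex groups are finite: every element of $\Pg$ has underlying path of length at most $2\diam(T)+1$, there are finitely many such paths, and for each the possible $g_i$ range over finite vertex groups, so $\Pg$ is finite. The main obstacle is unquestionably the verification of (P\ref{P4}) — the case analysis on the relative positions of the distinguished non-tree edges is fiddly, and the point of using Britton reductions (rather than Stallings' original argument) is precisely to make this bookkeeping manageable.
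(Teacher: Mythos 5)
Your plan follows essentially the same route as the paper: (P\ref{P1})--(P\ref{P3}) are immediate because $\Pg$ is an inverse-closed subset of a group (indeed no Britton reduction is needed for (P\ref{P3}): $(xy,z)\in D$ and $(x,yz)\in D$ are by definition the \emph{same} condition $xyz\in\Pg$ on the same group element), the isomorphism $U(\Pg)\cong\pi_1(\cG)$ comes from the universal property exactly as in the text, and finiteness is the same diameter count. For (P\ref{P4}) the paper makes your ``track which tree-paths cancel'' idea precise via \prref{lem:in_P}, which shows that for Britton-reduced $v,w\in\Pg$ the product $vw$ lies in $\Pg$ iff one of exactly three cancellation patterns occurs (total, left-absorbing, right-absorbing), and then checks that among four elements with three consecutive products defined at least one triple realizes a compatible pair of the resulting relations $\preccurlyeq,\succcurlyeq$; this trichotomy and its composition rules are the concrete bookkeeping your sketch still owes, but the strategy is the same.
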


\begin{proof}
The axioms (P\ref{P1})--(P\ref{P3}) hold trivially since $\Pg$ is a subset of a group closed under forming inverses.
We define $\phi(g) = T[P_0,P]gT[P,P_0]$ for $g\in G_P, P\in V(Y)$ and $\phi (y) = T[P_0,s( y)]yT[t( y),P_0]$ for $y \in E(Y)$.
Let $a\in G_y$, then we have as equality in $U(\Pg)$:
\begin{align*}
 \phi(\ov y) \phi(a) \phi(y)
 &= \underbrace{T[P_0,s( \ov y)]\ov y T[t( \ov y),P_0]}_{\in \Pg} \cdot
	 \underbrace{ T[P_0,s( y)]aT[s( y),P_0]}_{\in \Pg} \cdot \,
	 \phi(y)\\ 
 &= \underbrace{T[P_0,t( y)]\ov yaT[s( y),P_0]}_{\in \Pg} \cdot
	 \underbrace{ T[P_0,s( y)] y T[t( y),P_0]}_{\in \Pg}\\
 &=\underbrace{T[P_0,t( y)]\ov yayT[t( y),P_0]}_{\in \Pg} 
 =T[P_0,t( y)]a^{\ov y}T[t( y),P_0]
 =\phi(a^{\ov y})
\end{align*}
Furthermore, $\phi(y) = 1$ for $y \in E(T)$.
Hence, there is a homomorphism $\phi:\pi_1(\cG,T) \to U(\Pg)$ extending this definition. Since $\Pg$ is contained in the image, it is surjective. Reading the elements of $U(\Pg)$ in $\pi_1(\cG,T)$, we obtain an inverse map. Therefore, we have $U(\Pg) \cong \pi_1(\cG,T)$.

It remains to show (P\ref{P4}). In order to do so, we need a preliminary lemma:

\begin{lemma}\label{lem:in_P}
Let $v,w\in \Pg$:
\begin{align*}
v&=g_0x_1 \cdots g_{k-1}x_k g_{k}x_{k+1}\cdots g_{r-1}x_rg_{r} \in \Pg,\\		
w&=h_0y_1 \cdots h_{\ell-1}y_\ell h_{\ell} y_{\ell+1}\cdots h_{s-1}y_s h_{s}\in \Pg,
\end{align*}
with $x_k,y_\ell \in (E(Y)\setminus E(T)) \cup \{1\}$, $x_1\cdots x_{k-1} = T[P_0, s(x_k)]$, $x_{k+1}\cdots x_{r} = T[t(x_k), P_0]$, $y_1\cdots y_{\ell-1} = T[P_0, s(y_\ell)]$, and $y_{\ell+1}\cdots y_s = T[t(y_\ell),P_0]$.
If one of the following reductions are possible, then $vw \in \Pg$.
\begin{enumerate}[(I)]
\item $x_kg_{k} \cdots x_{r}g_{r}h_0y_1\cdots h_{\ell-1}y_\ell \RAS{*}{B_\cG} \tilde{g} \in G_{t(x_{k-1})}=G_{s(y_{\ell+1})}$, \label{I}
\item $x_kg_{k} \cdots x_rg_{r}h_0y_1\cdots h_{\ell-1}y_\ell \RAS{*}{B_\cG} \tilde{g}_{k-1} y_{\ell-j}\cdots h_{\ell-1}y_\ell$ with $\tilde{g}_{k-1} \in G_{t( x_{k-1})}$, $j\geq 0$, \label{II}
\item $x_kg_{k} \cdots x_rg_{r}h_0y_1\cdots h_{\ell-1}y_\ell \RAS{*}{B_\cG} x_kg_{k}\cdots x_{k+j}\tilde{h}_{\ell}$ with $\tilde{h}_{\ell} \in G_{s(y_{\ell+1})}$, $j\geq 0$.\label{III}
\end{enumerate}
Moreover, if $v$ and $w$ are Britton reduced and $vw \in \Pg$, then one of the above reductions is possible.
\end{lemma}

\begin{proof}
Let $v,w$ be Britton reduced. Then every Britton reduction applied to $vw$ has to involve letters of both $v$ and $w$. Hence, if none of the above cancellations are applicable, then
\[vw\RAS{*}{B_\cG} g_0 x_1 \cdots g_{k-1} x_k \,\tilde u \,y_\ell h_{\ell} \cdots y_sh_{s} = u\]
is Britton reduced, where $\tilde u\in \Sigma^*$ is of the form $\tilde u = g_{k} x_{k+1}\tilde u'$ if $x_k=1$, and $\tilde u = \tilde u'' y_{\ell-1}h_{\ell-1}$ if $y_\ell=1$ for properly chosen $\tilde u', \tilde u''\in \Sigma^*$.
However, this means that $u$ does not meet the conditions for $\Pg$ since at least twice the underlying path sets back or uses an edge which is not in $T$.

Now, let (\ref{I}), (\ref{II}) or (\ref{III}) apply. If (\ref{I}), then
\[vw\RAS{*}{B_\cG} g_0 x_1 \cdots g_{k-2}x_{k-1}[ g_{k-1} \tilde{g} h_{\ell}] \,y_{\ell+1}h_{\ell+1} \cdots y_sh_{s}\]
which meets the conditions for $\Pg$ since $ g_{k-1} \tilde{g} h_{\ell} \in G_{t( x_{k-1})}$. 
The case (\ref{III}) is symmetric to (\ref{II}); therefore, we only consider (\ref{II}). We have
\[vw\RAS{*}{B_\cG} g_0x_1\cdots g_{k-2}x_{k-1} [g_{k-1}\tilde{g}_{k-1}] \, y_{\ell-j}h_{\ell-j}\cdots y_\ell h_{\ell} \cdots y_sh_{s}=u\] 
with $j\geq 0$ and $g_{k-1}\tilde{g}_{k-1} \in G_{t( x_{k-1})}$ such that $u$ is Britton reduced. 
By hypothesis the paths $x_1,\dots, x_{k-1}$ as well as $y_{\ell+1},\dots, y_s$ and $y_{\ell-j},\dots, y_{\ell-1}$ are in the spanning tree $T$ and without backtracking. Hence, it remains to show that $x_{k-1}\neq \ov{y}_{\ell-j}$. 

We have $x_k=1$, for, if we had $x_k\in E(Y)\setminus E(T)$, it could only cancel with $y_\ell$ what, by assumption, is not the case. Therefore, $x_{k-1} = \ov{x}_{k+1}$. Furthermore, $\ov{x}_{k+1} = y_{\ell-j-1}$ since these two edges are cancelled by a Britton reduction. Because $y_{1},\dots, y_{\ell-1}$ is without backtracking, it follows that $y_{\ell-j-1} \neq \ov{y}_{\ell-j}$.  Hence, we have
\[x_{k-1} = \ov{x}_{k+1} = y_{\ell-j-1} \neq \ov{y}_{\ell-j}.\]
Therefore, $x_1,\dots, x_{k-1}, \,y_{\ell-j},\dots, y_{\ell-1}$ is a path without backtracking, and hence $vw \in \Pg$.
\end{proof}

We write $v \preccurlyeq w$ if a reduction of type (I) or (II) occurs and $v \succcurlyeq w$ if a reduction of type (I) or (III) occurs. Let $u,v,w,uv,vw \in \Pg$. The following facts are immediate by \prref{lem:in_P}:
\begin{enumerate}
 \item If $u \preccurlyeq v$ and $v \preccurlyeq w$, then $uvw\in \Pg$.
 \item If $u \succcurlyeq v$ and $v \succcurlyeq w$, then $uvw\in \Pg$.
 \item If $u \preccurlyeq v$ and $v \succcurlyeq w$, then $uvw\in \Pg$.
\end{enumerate}
Now, if $u,v,w,x \in \Pg$ are Britton reduced and $uv,vw,wx \in \Pg$, then, by \prref{lem:in_P}, $u,v,w$ or $v,w,x$ at least meet one of these three conditions.
\end{proof}

\begin{example}
Let $\cG$ be a graph of groups over $Y$ with $E(Y)=\smallset{y_1,\dots ,y_m,\, \ov y_1,\dots ,\ov y_m}$ , $V(Y)=\smallset{P}$ (as in \prref{ex:free_group_gog}) such that $\pi_1(\cG,T)\cong F_{\smallset{y_1,\dots ,y_m}}$. Then we obtain the same pregroup as in \prref{ex:prefree} $\Pg=\smallset{1}\cup E(Y)=\oneset{1,y_1,\dots ,y_m,\, \ov y_1,\dots ,\ov y_m}$ as the pregroup constructed according to \prref{thm:endl_pregroup} (for $y\in E(Y)$ the multiplication is defined only with $\ov y$ and with $1$). We have $U(\Pg)\cong F_{\smallset{y_1,\dots ,y_m}}$.
\end{example}

\begin{example}
Let $\cG$ be the graph of groups over $Y$ with $E(Y)=\smallset{y,\ov y}$, $V(Y)=\smallset{P}$ and $G_v=G_y=G_{\ov y} =\Z/2\Z=\genr{a}{a^2=1}$ (the incidences and inclusions are defined the obvious way). Then the fundamental group is $\pi_1(\cG,T)\cong \Z \times \Z/2\Z$ and
\[\Pg= \oneset{1,a,y,\ov y,ay,a\ov y}.\]
The partial multiplication is as the following table shows (the order of the operands does not matter since the group is abelian):
\begin{center}
\begin{tabular}{c||c|c|c|c|c|c}
$\cdot$ & $1$ & $a$& $y$ & $\ov y$& $ay$& $a\ov y$ \\\hline\hline
$1$ & $1$ & $a$& $y$ & $\ov y$& $ay$& $a\ov y$ \\\hline
$a$ & $a$ & $1$& $ay$ & $a\ov y$& $y$& $\ov y$\\\hline
$y$ & $y$ & $ay$& - & $1$& - & $a$ \\\hline
$\ov y$ & $\ov y$ & $a\ov y$& $1$ & - & $a$ & -\\\hline
$ay$ & $ay$ & $y$& - & $a$&-& $1$\\\hline
$a\ov y$ & $a\ov y$ & $\ov y$& $a$ & - & $1$ & -
\end{tabular}
\end{center}
\end{example}

\section{Graphs and Treewidth}\label{sec:treewidth}

We introduce the concept of tree decompositions of graphs.
Tree decompositions were used by Robertson and Seymour in connection with their famous result on graph minors, 
\cite{RobertsonS04}. 

Throughout this section $\Gamma= (V,E)$ denotes an undirected simple graph
which is nonempty and connected. The restriction to simple graphs is principally for simplicity. Most proofs could be rewritten allowing multi-edges and loops. In the remainder of these notes we always assume that $\GG$ is simple even if we do not state it explicitly.

\begin{definition}[Tree Decomposition]\label{def:tree_decomposition}
A \emph{tree decomposition} of $\Gamma$
is a tree $T=(V(T),E(T))$ together with a mapping $t \mapsto X_t$ where each 
$X_t$ is a finite subset of $V$ such that the following conditions are satisfied.
\begin{enumerate}[(T1)]
\item For every node $v\in V$ there is some $t\in V(T)$ such that $v\in X_t$, \ie  $V= \bigcup_{t\in V(T)} X_t$. 
\item For every edge $e= uv\in E$ there is some $t\in V(T)$ such that $u, v \in X_t$.
For simplicity, we say that $X_t$ contains the edge $e$.
\item If $v \in X_t \cap X_s$, then we have $v \in X_r$ for all vertices $r$ 
of the tree which are on the geodesic from $s$ to $t$, \ie  the set $\set{ t\in V(T)}{ v \in X_t}$ forms a subtree of $T$.
\end{enumerate}
\end{definition}

By abuse of language we denote a tree decomposition simply with its associated tree $T$. The sets $X_t$ are called \emph{bags} in the following.
For a tree decomposition $T$ of $\GG$ we define the \emph{bag-size} $\bags(T)$ by 
$$\bags(T) = \sup\set{\abs{X_t}}{t \in V(T)};$$
\ie  the bag-size is the least $k\in \N\cup \oneset{\infty}$ such that $\abs{X_t} \leq k$ for all $t \in V(T)$.

\begin{definition}\label{def:fintreewidth}
We say that $\GG$ has \emph{finite treewidth}, if there is some 
tree decomposition $T$ with bag-size $\bags(T) < \infty$.
\end{definition}

A tree has a tree decomposition with bag-size $2$; therefore, Robertson and Seymour
defined (somewhat unfortunately) the \emph{treewidth} by the minimal value $\bags(T)-1$ over all tree decompositions. 

Later we will show that context-free groups have Cayley graphs of finite treewidth. First, we list list some basic properties of tree decompositions.

\begin{lemma}\label{lem:subgraph}
Let $\GG$ be a graph and $\GG'$ a subgraph of $\GG$. If $\GG$ has treewidth $k$, then $\GG'$ has treewidth at most $k$.
\end{lemma}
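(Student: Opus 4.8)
The statement is: if $\GG$ has treewidth $k$ and $\GG'$ is a subgraph of $\GG$, then $\GG'$ has treewidth at most $k$. The natural approach is to take a tree decomposition of $\GG$ witnessing treewidth $k$ — that is, a tree $T=(V(T),E(T))$ with bags $(X_t)_{t\in V(T)}$ satisfying (T1)--(T3) and $\bags(T) \le k+1$ — and restrict each bag to the vertex set of $\GG'$. Concretely, I would set $X_t' = X_t \cap V(\GG')$ and keep the same tree $T$, then check that $(T, (X_t')_{t})$ is a tree decomposition of $\GG'$.

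First I would verify (T1) for $\GG'$: given $v \in V(\GG')$, since $v \in V(\GG)$ there is some $t$ with $v \in X_t$, and as $v \in V(\GG')$ we get $v \in X_t \cap V(\GG') = X_t'$. Next (T2): for an edge $e = uv \in E(\GG')$, we have $e \in E(\GG)$ since $\GG'$ is a subgraph, so some $X_t$ contains both $u$ and $v$; as $u,v \in V(\GG')$, both lie in $X_t'$. Finally (T3): the set $\set{t \in V(T)}{v \in X_t'}$ equals $\set{t \in V(T)}{v \in X_t}$ for any $v \in V(\GG')$ (because membership in $V(\GG')$ is a fixed side condition not depending on $t$), and the latter is a subtree of $T$ by (T3) for $\GG$. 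Hence all three axioms hold.

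It remains to bound the bag-size: $\abs{X_t'} = \abs{X_t \cap V(\GG')} \le \abs{X_t} \le k+1$ for every $t$, so $\bags(T, (X_t')) \le k+1$, which means $\GG'$ has treewidth at most $k$. One small point worth a remark is that $\GG'$ is tacitly assumed connected and nonempty (as stated at the start of the section), so the notion of treewidth applies to it; if $\GG'$ were empty the restricted decomposition is still valid with all bags empty. I do not expect any real obstacle here — the only thing to be careful about is that $\GG'$ being a subgraph of a \emph{simple} graph is again simple, and that the definition of treewidth as $\bags(T)-1$ transfers cleanly, which it does since the $-1$ shift is uniform.
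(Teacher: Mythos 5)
Your proof is correct and follows exactly the paper's one-line argument: restrict each bag of a tree decomposition of $\GG$ to $V(\GG')$ and keep the same tree. The verification of (T1)--(T3) and the bag-size bound is the routine check the paper leaves implicit.
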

\begin{proof}
 Every tree decomposition of $\GG$ yields a tree decomposition of $\GG'$ by restricting the bags to the vertices of $\GG'$.
\end{proof}

\begin{proposition}\label{prop:basictreecut}
 Let $T=(V(T),E(T))$ be a tree decomposition of $\Gamma =(V,E)$.
 \begin{enumerate}
  \item \label{bas0} Let $X$, $Y$, $Z$ be bags and $Z$ be in the tree $T$ on a geodesic from bag $X$ to bag $Y$.
Let $x \in X$ and $y\in Y$ and 
$x=x_0, \ldots, x_n=y$ be any path in $\GG$ connecting $x$ and $y$. 
Then we have $x_i \in Z$ for some $0\leq i \leq n$. 
\item \label{basiii}
If two bags $X\neq\emptyset$ and $Y\neq\emptyset$ are connected by some edge in the tree $E(T)$, then 
$X \cap Y \neq \es$.
 \end{enumerate}
\end{proposition}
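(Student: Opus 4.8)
The two assertions are closely related, so I would prove \eqref{bas0} first and then derive \eqref{basiii} as a special case. For \eqref{bas0}, the key idea is to use property (T3): for each vertex $v$ of $\GG$ the set $B(v) = \set{t \in V(T)}{v \in X_t}$ is a subtree of $T$, and by (T2) for each edge $x_{i-1}x_i$ of the path in $\GG$ the intersection $B(x_{i-1}) \cap B(x_i)$ is nonempty, hence also a subtree of $T$. The plan is to argue by contradiction: suppose $x_i \notin Z$ for all $i$, i.\,e.\ the node $t_Z \in V(T)$ with bag $Z$ lies in none of the subtrees $B(x_i)$. Removing $t_Z$ from $T$ decomposes the remaining forest into connected components; I would show that all of $B(x_0), \dots, B(x_n)$ together with the ``bridging'' subtrees $B(x_{i-1}) \cap B(x_i)$ must lie in a single such component. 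Since a subtree avoiding $t_Z$ is entirely contained in one component of $T - t_Z$, and consecutive subtrees overlap, a simple induction on $i$ shows $B(x_0), \dots, B(x_n)$ all lie in the same component $C$ of $T - t_Z$. But $X$ is the bag at a node $t_X \in B(x_0) \subseteq C$ and $Y$ is the bag at $t_Y \in B(x_n) \subseteq C$, so the geodesic in $T$ from $t_X$ to $t_Y$ stays inside $C$ and therefore does not pass through $t_Z$ — contradicting the hypothesis that $Z$ lies on that geodesic.

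For \eqref{basiii}, let $X = X_s$ and $Y = X_t$ be bags joined by an edge $st \in E(T)$. Since $\GG$ is connected and nonempty, pick any $x \in X$ and $y \in Y$ (the bags are nonempty because... actually one should first note bags could a priori be empty, but then the claim is vacuous unless we know both are nonempty; since $\GG$ is connected with at least one vertex, (T1) forces at least one bag to be nonempty, and one can choose the edge so that both endpoints have nonempty bags — alternatively, if a bag is empty the statement $X \cap Y \neq \es$ is simply false, so implicitly we assume both nonempty, which is the interesting case). Connect $x$ to $y$ by a path in $\GG$. The edge $st$ is a geodesic in $T$ from the node carrying $X$ to the node carrying $Y$, and every node on this geodesic is $s$ or $t$; applying \eqref{bas0} with $Z = X$ (say) is degenerate, so instead I would apply \eqref{bas0} directly: some $x_i$ lies in every bag on the $s$–$t$ geodesic. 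Hmm, that gives $x_i \in X \cap Y$ only if both $s$ and $t$ are ``interior,'' which they are not. So the cleaner route is: apply \eqref{bas0} taking the geodesic from the bag $X$ to the bag $Y$ and letting $Z$ range — actually the slickest argument is to subdivide. Let me instead argue directly from (T3): by (T2) each edge $x_{i-1}x_i$ of the $\GG$-path satisfies $\{x_{i-1},x_i\} \subseteq X_{r_i}$ for some $r_i \in V(T)$; the subtrees $B(x_0), \dots, B(x_n)$ form an overlapping chain covering a connected subtree of $T$ containing both $s$ and $t$, hence containing the edge $st$; therefore some $B(x_i)$ contains the edge $st$, i.\,e.\ $x_i \in X_s \cap X_t = X \cap Y$, so this intersection is nonempty.

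\textbf{Main obstacle.} The routine part is (T3) bookkeeping; the one genuine subtlety is the ``overlapping chain of subtrees covers a connected region'' lemma — namely that if $T_0, \dots, T_n$ are subtrees of a tree $T$ with $T_{i-1} \cap T_i \neq \es$ for each $i$, then $T_0 \cup \dots \cup T_n$ is connected and the convex hull (geodesic closure) of $T_0 \cup T_n$ is contained in it. This is where I expect to spend the most care: proving it cleanly by induction, using that in a tree the union of two connected subgraphs with a common vertex is connected, and that the geodesic between two points of a connected subgraph of a tree stays inside it. Once that lemma is in hand, both parts follow immediately. I would state and prove this small tree lemma first, then deduce \eqref{bas0} and \eqref{basiii} in a few lines each.
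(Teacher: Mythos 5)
Your proof is correct, but it takes a genuinely different route from the paper's. The paper proves \eqref{bas0} by induction on the length of the $\GG$-path: it picks a bag $X'$ containing the edge $x_0x_1$ (by (T2)) and uses the elementary tree fact that $Z$, lying on the geodesic from $X$ to $Y$, must lie either on the geodesic from $X$ to $X'$ (then $x_0\in Z$ by (T3), since $x_0\in X\cap X'$) or on the geodesic from $X'$ to $Y$ (then induction applies to $x_1,\ldots,x_n$). For \eqref{basiii} the paper sets $i=\max\{\,j : x_j\in X_s\,\}$, takes a bag $X'$ containing $x_ix_{i+1}$, and uses part \eqref{bas0} together with the maximality of $i$ to force $X_t$ onto the geodesic from $X_s$ to $X'$, whence $x_i\in X_s\cap X_t$. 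You instead isolate the ``chain of overlapping support subtrees'' lemma for $B(v)=\{t\in V(T) : v\in X_t\}$ and derive both parts from it; this is more uniform and conceptually cleaner (it is essentially the Diestel-style argument), at the price of having to prove the auxiliary lemma, whereas the paper gets by with only the three-point geodesic fact in trees.

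One fine point in your part \eqref{basiii}: the conclusion requires a \emph{single} $B(x_i)$ containing both $s$ and $t$, and this does not follow from the lemma as you state it (convex hull of $T_0\cup T_n$ contained in the union of vertex sets), since the geodesic from $s$ to $t$ has vertex set $\{s,t\}$ and you already know $s,t$ lie in the union. You need the union-of-subgraphs version, or more directly a first-crossing argument: the two components of $T-st$ are disjoint, $B(x_0)$ meets the one containing $s$ and $B(x_n)$ the one containing $t$, consecutive supports intersect, so some connected $B(x_i)$ meets both components and hence contains the edge $st$, i.e.\ $x_i\in X_s\cap X_t$. Your prose gestures at exactly this, so it is a matter of stating the lemma in the right strength rather than a real gap. (Your remark that \eqref{basiii} tacitly assumes the bags are nonempty is accurate; the paper's own proof makes the same tacit assumption.)
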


\begin{proof}
\ref{bas0} The result is clear for $n=0$. For $n>0$ consider the 
bag $X'$ which contains $x_0$ and $x_1$. If $Z$ is on the geodesic from $X$ to 
$X'$, then $Z$ contains $x_0$. Otherwise, $Z$ is on the geodesic from $X'$ to $Y$, and we are done by induction. 

\ref{basiii} Let $x \in X_s$, $y\in X_t$ such that $st \in E(T)$. Recall that we assumed $\GG$ to be connected.
Let $x=x_0, \ldots, x_n=y$ be some path in $\GG$ connecting $x$ and $y$
and $i = \max\set{i}{x_i \in X_s}$. If 
$i = n$, then $y \in X_s \cap X_t$. Thus, we may assume $0 \leq i < n$. 
Consider the bag $X'$ which contains $x_i$ and $x_{i+1}$. Due to the choice of $i$, the bag $X_s$ cannot be on the geodesic from $X'$ to $X_t$. Thus, $X_t$ is on the geodesic
from $X_s$ to $X'$. It follows $x_i \in X_s \cap X_t$.
\end{proof}

\begin{proposition}\label{prop:basictree}
Let $k \in \N$ and $\Gamma =(V,E)$ 
be a locally finite, connected simple graph having a tree decomposition of bag-size $k$. Then there 
is a tree decomposition $T=(V(T),E(T))$ of bag-size $k$ satisfying the following conditions. 
\begin{enumerate}
\item\label{basi} Each vertex $u\in V$ occurs in finitely many bags, only. 
\item \label{basiia}
We have $X_t \neq \es$ for all $t \in V(T)$.
\item\label{basiv} The tree $T$ is locally finite. 
\item \label{basv}If $X_s \sse X_t$, then $s=t$. Thus, we can identify nodes in the tree with 
 finite non-empty subsets of $V$, which are pairwise incomparable w.\,r.\,t.\ inclusion.
\end{enumerate}
\end{proposition}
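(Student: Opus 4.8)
The plan is to start from an arbitrary tree decomposition $T_0$ of bag-size $k$ and successively modify it to enforce properties \ref{basv}, \ref{basi}, \ref{basiia}, \ref{basiv}, being careful that each modification preserves bag-size $\leq k$ and the earlier properties. First I would deal with \ref{basv}: whenever there is an edge $st \in E(T_0)$ with $X_s \subseteq X_t$, contract that edge (identify $s$ with $t$, keeping the bag $X_t$, and redirecting all other incident tree-edges to $t$). This preserves (T1)--(T3) — the subtree condition (T3) is stable under contraction — and cannot increase bag-size. One must check this process terminates: each contraction removes a tree-node, but $V(T_0)$ may be infinite, so I would instead argue that after performing all such contractions "in the limit" (or by a maximal application / Zorn-type argument on the set of contractions) no comparable adjacent bags remain; since along any tree-path the bags need not be monotone, once no two \emph{adjacent} bags are comparable, one gets that distinct nodes have distinct bags, because if $X_s = X_t$ for $s \neq t$ then by (T3) every bag on the $s$--$t$ geodesic contains $X_s$, hence equals a superset of $X_s$, and the neighbour of $s$ on that path has a bag comparable to $X_s$ — contradiction. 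This yields the identification of nodes with pairwise incomparable finite nonempty subsets of $V$ (nonemptiness of \ref{basiia} is handled together: an empty bag adjacent to any bag is contained in it, so it gets contracted; a tree with an isolated empty bag only occurs if $V = \emptyset$, excluded by hypothesis).

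Next I would establish \ref{basi}, that each vertex $u \in V$ lies in only finitely many bags. By (T3) the set $T_u = \set{t \in V(T)}{u \in X_t}$ is a subtree. Suppose it is infinite. Since $\GG$ is locally finite, $u$ has finitely many neighbours $v_1,\dots,v_d$ in $\GG$; by \prref{prop:basictreecut}\ref{basiii} every tree-edge $st$ has $X_s \cap X_t \neq \emptyset$, and applying this inside the subtree $T_u$ — together with \prref{prop:basictreecut}\ref{bas0} — I would show that an edge of $T_u$ leaving a node $t$ "uses up" a neighbour of $u$ or an edge incident to $u$. More precisely: if $st$ is an edge of the subtree $T_u$ and $X_s \cap X_t$ contains some $w \neq u$, then $uw \in E$ or else... — here is where I need the real argument. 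The clean way: use \prref{prop:basictreecut}\ref{bas0} with $x = y = u$: any cycle-free structure forces that $u$'s subtree $T_u$, if it had a node $t$ with $u$ appearing in two adjacent bags separating $u$ from itself via distinct $\GG$-neighbours, must branch at most $\deg_\GG(u)$ times, and its "ends" correspond to distinct directions — but an infinite locally finite tree has an infinite ray (\prref{lem:konig}), along which the bags are $\subseteq$-incomparable yet all contain $u$ and pairwise intersect in shrinking... This is the step I expect to be the main obstacle; I would pin it down by arguing that along an infinite ray $t_0, t_1, \dots$ in $T_u$, consecutive intersections $X_{t_i}\cap X_{t_{i+1}}$ each contain $u$, and by local finiteness of $\GG$ infinitely many of them must share a common further vertex $w$, forcing (via (T3)) $w$ into all $X_{t_i}$ and eventually $X_{t_i} \supseteq X_{t_{i+1}}$ or vice versa for some $i$, contradicting incomparability. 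Once \ref{basi} holds, \ref{basiv} (local finiteness of $T$) follows quickly: a node $t$ has bag $X_t$ with $|X_t| \leq k$, and each neighbour $s$ of $t$ satisfies $X_s \cap X_t \neq \emptyset$ by \prref{prop:basictreecut}\ref{basiii}, so $s$ contains some $u \in X_t$; since each such $u$ lies in finitely many bags and (by (T3)) the nodes containing $u$ form a subtree, only finitely many neighbours of $t$ can contain $u$, whence $t$ has degree at most $\sum_{u \in X_t}(\text{number of bags containing }u) < \infty$.

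Finally I would reassemble: perform the \ref{basv}-contractions first (getting incomparable nonempty bags), then observe these contractions do not destroy local finiteness of $\GG$ nor the bound $k$, establish \ref{basi} on the resulting decomposition, and derive \ref{basiv}; note that the contraction step does not re-introduce comparable adjacent bags or empty bags, and shrinking the tree only helps \ref{basi}. The one genuine subtlety, as flagged, is ruling out a vertex $u$ lying in infinitely many bags when $T$ is a priori an arbitrary (possibly non-locally-finite) tree; the argument must use local finiteness of $\GG$ — not of $T$ — crucially, via the intersection property \prref{prop:basictreecut}\ref{basiii} and König's Lemma applied to the subtree $T_u$ after one first argues $T_u$ is locally finite (which itself reduces to the same kind of neighbour-counting). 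I would present that sub-argument as a short lemma: \emph{if $\GG$ is locally finite and $T$ is a tree decomposition with all bags finite, then every vertex lies in only finitely many bags, provided no two adjacent bags are $\subseteq$-comparable.} Everything else is bookkeeping.
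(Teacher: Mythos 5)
Your strategy for condition (i) does not work: the lemma you propose at the end --- that for locally finite $\Gamma$, a tree decomposition with finite, pairwise (or even just adjacently) incomparable bags automatically has each vertex in only finitely many bags --- is false. Take $\Gamma$ to be the one-sided infinite path $v_0,v_1,v_2,\dots$ and let $T$ be the ray $t_0,t_1,t_2,\dots$ with bags $X_{t_i}=\{v_0,v_{i+1},v_{i+2}\}$. One checks (T1)--(T3) directly; all bags have exactly three elements and are pairwise distinct, hence pairwise incomparable, the tree is locally finite, and no bag is empty --- yet $v_0$ lies in every bag. This also defeats the specific sub-argument you sketch: along the ray the consecutive intersections $X_{t_i}\cap X_{t_{i+1}}=\{v_0,v_{i+2}\}$ all contain $u=v_0$ but share no \emph{further} common vertex, so local finiteness of $\Gamma$ forces no repetition and no comparability ever arises. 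In short, condition (i) is not a consequence of the other conditions; it must be enforced by actually changing the decomposition.

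The paper does exactly that: for each vertex $u$ fix one node $t_u$ whose bag contains $u$ and, for each of the finitely many edges $uv$ of $\Gamma$ incident to $u$, one node $t_{uv}$ whose bag contains both endpoints; the subtree $T_u$ of $T$ spanned by these finitely many nodes is finite, and deleting $u$ from every bag outside $T_u$ preserves (T1)--(T3) and the bag-size while making $u$ appear in only finitely many bags. With (i) in hand, (iii) follows by the neighbour-counting you describe, and (ii) by discarding empty bags (which form the complement of a connected subtree since $\Gamma$ is connected). For (v) the paper first \emph{enlarges} bags, replacing $X_s$ by $X_t$ along tree edges with $X_s\subsetneq X_t$ working outward from a root, and only then contracts the resulting subtrees of equal bags; your direct edge-contraction is workable in principle, but the ``limit''/Zorn step you wave at is precisely where care is needed, and in any case it must come after, not before, the bag-shrinking step that secures (i).
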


\begin{proof}
We start with a tree decomposition $T=(V(T),E(T))$ of bag-size $k$. We transform $T$ 
in consecutive steps into a tree decomposition meeting the desired conditions. 

\ref{basi} For every vertex $u\in V$ we fix some vertex $t_{u} \in V(T)$ with $u \in X_{t_u}$, and the same for edges. 
For every edge $uv \in E$ we fix some vertex $t_{uv} \in V(T)$ with $ \smallset{uv} \sse X_{t_{uv}}$. Now, for each vertex $u$ let $T_u$ be the finite subtree spanned by $t_{u}$ and the $t_{uv}$ for $uv \in E$. It is finite because $\Gamma$ is locally finite.
Remove $u$ from all bags which do not belong to $T_u$. This yields still a tree decomposition and $u$ appears in finitely many bags, only. 

\ref{basiia} 
Since  $\GG$ is connected, \prref{prop:basictreecut} \ref{bas0} implies that the non-empty bags form a connected subtree of $T$. Hence, removing all empty bags still yields a tree decomposition. 

\ref{basiv}
Let $X$ be some bag. By \prref{prop:basictree} \ref{basiii} each neighbor bag of $X$
shares at least one element with $X$.
But every vertex is contained in only finitely many bags. Hence, the result follows.

\ref{basv} We perform the transformation in two phases without destroying the three properties 
above. In the first phase we make bags larger in order to achieve that either $X_s$ and $X_t$ are incomparable w.\,r.\,t.\ inclusion or
$X_s= X_t$. First we choose a vertex $r \in V(T)$ as a root. 
In case that there are bags $X$ and $Y$ with $X\sse Y$ but $X\neq Y$, 
there is also an edge $st \in E(T)$ with $X=X_s\sse X_t$ but $X_s\neq X_t$.
We choose such an $s$ of minimal distance to the root $r$ and we replace the bag $X_s$ by $X_t$.
This does not change the tree structure and every bag is replaced by some other bag only finitely many times. Thus, in the limit we obtain a well-defined tree decomposition such that 
$X_s\sse X_t$ implies $X_s = X_t$ for all $s,t\in V(T)$. The bag-size does not increase because no new bags are introduced. Moreover, still every $x\in V$ appears in finitely many bags of the new tree only. This finishes the first phase. 

Now, in the second phase we contract subtrees. Since $X_s\sse X_t$ implies $X_s = X_t$ for all $s,t\in V(T)$, we see that for each bag $X$, the set of vertices 
$s \in V(T)$ with $X_s = X$ is forms indeed a finite subtree. This gives a partition 
of $T$ into finite subtrees. We contract each such subtree into a single point and 
we obtain the desired result. 
\end{proof}

For $U \sse V(\GG)$ let $N(U) = U \cup \set{v\in V}{\exists u \in U: uv \in E}$ denote the \emph{neighborhood} of $U$. 
Furthermore, we write $N^\ell$ for the $\ell$-th neighborhood, \ie  $N^0(U) = U$ and $N^\ell(U) = N(N^{\ell-1}(U))$ for $\ell \geq 1$.

\begin{lemma}\label{lem:umgeb}
Let $T=(V(T),E(T))$ be a tree decomposition of $\GG$. If we replace all bags $X$ by
$N(X)$, we still have a tree decomposition of $\GG$.
\end{lemma}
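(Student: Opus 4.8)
The plan is to verify the three axioms (T1)--(T3) for the new assignment $t\mapsto X_t':=N(X_t)$ on the \emph{same} tree $T$. Since $N$ is extensive, i.e. $X_t\subseteq N(X_t)=X_t'$ for every node $t$, axioms (T1) and (T2) are inherited from the original decomposition with no work: every vertex of $\GG$, resp. every edge of $\GG$, already lies in some old bag $X_t$, hence in the enlarged bag $X_t'$. One should also check that each $X_t'$ is still a finite subset of $V$; this uses that $\GG$ is locally finite (the case in all applications in these notes), or else one reads the statement with the finiteness requirement on bags dropped. In either reading nothing below changes.

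The only axiom with content is (T3). Let $r$ be a node on the geodesic of $T$ from $s$ to $t$, and suppose $v\in X_s'\cap X_t'=N(X_s)\cap N(X_t)$. Then there are $u_s\in X_s$ and $u_t\in X_t$ with $u_s,u_t\in N(\smallset{v})$, i.e. each of $u_s,u_t$ either equals $v$ or is adjacent to $v$ in $\GG$. If $u_s=u_t$, then $u_s\in X_s\cap X_t$, so the original axiom (T3) gives $u_s\in X_r$, hence $v\in N(X_r)=X_r'$. Otherwise $u_s,v,u_t$ (possibly with $u_s=v$ or $v=u_t$) is a path in $\GG$ from $u_s$ to $u_t$ passing through $v$. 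I would then apply \prref{prop:basictreecut}~\ref{bas0} with the bags $X=X_s$, $Y=X_t$, $Z=X_r$ and this path: since $X_r$ lies on the geodesic from $X_s$ to $X_t$ in $T$, some vertex of the path lies in $X_r$. That vertex is $u_s$, $v$, or $u_t$, and in each case $v\in N(X_r)=X_r'$. This gives (T3), and hence $(T,X_\bullet')$ is a tree decomposition of $\GG$.

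\textbf{Main obstacle.} There is essentially no obstacle: the only subtlety is recognising that (T3) for the enlarged bags is precisely the kind of statement already packaged by \prref{prop:basictreecut}~\ref{bas0} --- a neighbour of $v$ in $X_s$ and a neighbour of $v$ in $X_t$ are joined by a short path through $v$, and every bag between them must catch one of the three vertices on that path --- rather than something requiring a fresh median/gate argument in $T$. The two minor points to be careful about are the degenerate case where the two chosen neighbours coincide (handled by a direct appeal to the original (T3)) and, if one insists on finite bags, the tacit local-finiteness hypothesis on $\GG$.
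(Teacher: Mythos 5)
Your proof is correct and follows essentially the same route as the paper: (T1) and (T2) are immediate since $X_t\subseteq N(X_t)$, and (T3) is reduced to \prref{prop:basictreecut}~(i) applied to the short path through the common vertex $v$ joining a witness in $X_s$ to one in $X_t$. Your treatment of the degenerate cases ($u_s=u_t$, or $u_s=v$, or $u_t=v$) is slightly more careful than the paper's, which tacitly assumes $z$ is a genuine common neighbour.
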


\begin{proof}
We have to show that (T3) still holds. Let $x\in X$ and $y \in Y$ for bags $X$ and $Y$ such that $x$ and $y$ have a common neighbor $z$. Thus, $z \in N(X) \cap N(Y)$. Let $Z$ be on the geodesic from $X$ to $Y$. 
 We have to show that $z \in N(Z)$. Now, $(x,z,y)$ is a path connecting
 $x$ and $y$, hence we have $\os{x,z,y} \cap Z \neq \es$ by \prref{prop:basictreecut}. Hence, the result follows. 
\end{proof}

A \emph{clique} is a complete subgraph, \ie  a subgraph such that for every pair of vertices there is some edge connecting the two vertices. It is called \emph{maximal} if it is not contained in any other clique.

\begin{proposition}\label{prop:maxclique}
Let $T$ be a tree decomposition of a locally finite graph $\GG=(V,E)$. Then every clique 
of $\GG$ is contained in some bag $X$. 
\end{proposition}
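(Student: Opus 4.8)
The plan is to handle the finite case first (where the clique is finite automatically, since $\GG$ is locally finite) and reduce to a statement about subtrees of $T$. Let $K$ be a clique of $\GG$; since $\GG$ is locally finite, $K$ is finite, say $K = \{v_1,\dots,v_m\}$. For each $v_i$ the set $T_i = \set{t \in V(T)}{v_i \in X_t}$ is a nonempty subtree of $T$ by axiom (T3). The claim that some bag contains all of $K$ is exactly the claim that $\bigcap_{i=1}^m T_i \neq \es$.

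The key step is the Helly property for subtrees of a tree: a finite family of pairwise intersecting subtrees of a tree has a common vertex. To apply it I first need that the $T_i$ are pairwise intersecting, which follows from (T2): for each pair $v_i, v_j$ there is an edge $v_iv_j \in E$ (as $K$ is a clique), so by (T2) there is a bag $X_t$ with $v_i, v_j \in X_t$, i.e.\ $t \in T_i \cap T_j$. Then I would prove the Helly property by induction on $m$. For $m \le 2$ it is immediate. For the inductive step, consider $m$ subtrees; by induction any $m-1$ of them meet, so in particular $T_1 \cap \dots \cap T_{m-1}$, $T_1 \cap \dots \cap T_{m-2} \cap T_m$, etc.\ — actually the cleanest route is: let $S = T_1 \cap \dots \cap T_{m-1}$, which is a nonempty subtree by induction (intersection of subtrees of a tree is a subtree, possibly after checking connectedness via the unique-path property), and show $S \cap T_m \neq \es$. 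Pick $a \in S$ and $b \in T_m$; if they are equal we are done, otherwise look at the geodesic from $a$ to $b$ in $T$. For each $i < m$, since $a \in T_i$ and (by pairwise intersection) $T_i \cap T_m \neq \es$, the subtree $T_i$ contains $a$ and some vertex of $T_m$; because $T_m$ is a subtree and $b \in T_m$, the geodesic from $a$ to any vertex of $T_m$ passes through the "gate" of $T_m$ — more concretely, the vertex $z$ on the $a$–$b$ geodesic that is the closest point of $T_m$ to $a$ lies in every $T_i$ that contains $a$ and meets $T_m$, hence $z \in S \cap T_m$.

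So the main obstacle is just packaging the Helly property cleanly: one needs the elementary facts that in a tree the intersection of two subtrees is again a subtree (using uniqueness of geodesics) and that a subtree $T_m$ has a well-defined nearest point to any external vertex through which all geodesics into $T_m$ pass. Both are standard consequences of $T$ being a tree, and I would state them as a short sublemma or inline remark rather than belabour them.

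I would write it roughly as follows:

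\begin{proof}
Since $\GG$ is locally finite, every clique is finite; let $K = \{v_1,\dots,v_m\}$ be a clique. For $1\le i\le m$ put $T_i = \set{t\in V(T)}{v_i\in X_t}$. By (T1) each $T_i$ is nonempty, and by (T3) each $T_i$ induces a subtree of $T$. For $i\neq j$ we have $v_iv_j\in E$ since $K$ is a clique, so by (T2) there is a node $t$ with $v_i,v_j\in X_t$, i.e.\ $T_i\cap T_j\neq\es$. It therefore suffices to prove that a finite family of pairwise intersecting subtrees of a tree has a common node; then any $t\in\bigcap_{i=1}^m T_i$ gives a bag $X_t\supseteq K$.

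We argue by induction on $m$, the cases $m\le 2$ being trivial. Let $m\ge 3$ and set $S = T_1\cap\dots\cap T_{m-1}$. By the induction hypothesis applied to $T_1,\dots,T_{m-1}$ we have $S\neq\es$; moreover $S$ is a subtree, because an intersection of subtrees of a tree is connected (the unique geodesic between two of its nodes lies in each of them) and hence a subtree. Pick $a\in S$ and $b\in T_m$. If $a=b$ we are done; otherwise let $z$ be the node on the geodesic from $a$ to $b$ which is nearest to $a$ among the nodes of the subtree $T_m$. For each $i<m$, the subtree $T_i$ contains $a$ and meets $T_m$, say in a node $c_i$; since $T_m$ is a subtree containing $b$ and $c_i$, the geodesic from $a$ to $c_i$ meets $T_m$ first at $z$, and this initial segment lies in $T_i$, so $z\in T_i$. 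Hence $z\in S$, and also $z\in T_m$, so $S\cap T_m\neq\es$. This completes the induction.
\end{proof}
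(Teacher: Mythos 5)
Your proof is correct, but it is packaged differently from the one in the paper. You isolate the Helly property for subtrees of a tree (a finite family of pairwise intersecting subtrees has a common node) as the key lemma, obtain the pairwise intersections from (T2) and the subtree structure from (T3), and then prove Helly by the gate/nearest-point argument: the first vertex $z$ of $T_m$ on the geodesic from $a$ to $b$ lies on the geodesic from $a$ to \emph{every} vertex of $T_m$, hence lies in each $T_i$ containing $a$ and meeting $T_m$. The paper instead runs a direct induction on the clique size with an extremal choice: it takes a bag $X_n$ containing the clique minus $x_n$ at \emph{minimal} distance to a bag $X_1$ containing $\{x_1,x_n\}$, roots the tree at $X_n$, and argues that either two of the bags $X_i$ ($i<n$) lie in different subtrees of the root --- forcing $x_n\in X_n$ by (T3) --- or all of them lie below a single child $X'$, which then also contains $\{x_1,\dots,x_{n-1}\}$ and is strictly closer to $X_1$, a contradiction. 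The two arguments use the axioms in the same way and are of comparable length; yours has the advantage of making the reusable combinatorial content (the Helly property, a classical fact about chordal/tree structures) explicit, at the cost of needing the small auxiliary facts that intersections of subtrees are subtrees and that the gate of a subtree is well defined --- both of which you correctly flag and justify. One small point worth keeping: your opening observation that local finiteness forces cliques to be finite is needed to start the induction and is only implicit in the paper's proof.
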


\begin{proof}
 Let $C= \os{1 \lds n}$ be a clique and $n \in \N$. We have to show that
 $C \sse X$ for some bag. This is clear for $n\leq 1$. Therefore, let $n \geq 2$.
 There are bags $X_i$ containing $x_i$ and $x_n$ for all $1 \leq i <n$. 
 By induction there is a bag $X_n$ containing the clique $\os{1 \lds n-1}$. 
 We choose $X_n$ such that the distance to $X_1$ is minimal. If $X_1 = X_n$ we have 
 $x_n \in X_n$ and we are done. Hence, we may assume that 
 $X_1 \neq X_n$. We can think of $X_n$ as the root of the tree decomposition. 
 If for some $i \neq j$ the bags $X_i$ and and $X_j$ are in different subtrees of 
 the root, then $x_n \in X_n$ and we are done again. Thus, by contradiction, there 
 is a child $X'$ of $X_n$ such that 
 $X_1 \lds X_{n-1}$ are in the same subtree below $X'$. But then $\os{1 \lds n-1} \sse X'$, but the distance of $X'$ to $X_1$ is shorter. This is impossible. Thus, 
 indeed $x_n \in X_n$.
\end{proof}

A simple graph is called \emph{chordal} if it does not have an induced cycle of length $\geq 4$.
The next result extends the well-known characterization of finite chordal graphs
to locally finite graphs. The result for finite graphs can be found e.\,g.\ in
\cite[Prop.\ 10.3.10]{diestel06}. For infinite 
 graphs we use a limit process. 
\begin{proposition}[\,{\!\cite[Thm.\ 6.3.8]{diestel90}}]\label{prop:chordal_maxclique}
Let $\GG$ be a connected, locally finite, simple graph. Then the following assertions are equivalent. 
\begin{enumerate}
\item\label{chordal_maxi} $\GG$ is chordal.
\item\label{chordal_maxii} $\GG$ has a tree decomposition, where all bags are cliques.
\item\label{chordal_maxiii} $\GG$ has a tree decomposition, where the bags correspond to maximal cliques
and cliques which are adjacent in the tree intersect non-trivially.
\end{enumerate}
\end{proposition}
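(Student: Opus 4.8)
The plan is to establish the equivalence $\ref{chordal_maxi}\Rightarrow\ref{chordal_maxiii}\Rightarrow\ref{chordal_maxii}\Rightarrow\ref{chordal_maxi}$, handling the finite case by the cited textbook result and lifting it to the locally finite case by exhausting $\GG$ by finite connected subgraphs. The implications $\ref{chordal_maxiii}\Rightarrow\ref{chordal_maxii}$ is trivial, and $\ref{chordal_maxii}\Rightarrow\ref{chordal_maxi}$ is easy: given a tree decomposition in which every bag is a clique, suppose $c_1,\dots,c_n$ ($n\geq 4$) were an induced cycle; by \prref{prop:maxclique} (or rather an argument of that flavour) each edge $c_ic_{i+1}$ sits in a bag, and chasing the geodesics between the relevant bags in $T$ together with condition (T3) forces two non-adjacent cycle vertices to lie in a common bag, hence to be adjacent — contradiction. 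So the whole content is in $\ref{chordal_maxi}\Rightarrow\ref{chordal_maxiii}$.

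For that direction I would fix an exhaustion $\GG_1\subseteq\GG_2\subseteq\cdots$ of $\GG$ by finite connected induced subgraphs with $\bigcup_n\GG_n=\GG$ (possible since $\GG$ is connected and locally finite: take balls around a fixed vertex). Each $\GG_n$ is again chordal (an induced cycle in $\GG_n$ is an induced cycle in $\GG$), so by the finite case \cite[Prop.\ 10.3.10]{diestel06} each $\GG_n$ has a tree decomposition $T_n$ whose bags are exactly the maximal cliques of $\GG_n$, with adjacent bags intersecting nontrivially. The key structural fact I would use is that in a chordal graph the maximal cliques are "stable" under the exhaustion in the following sense: every clique of $\GG$ is finite (it is contained in the neighbourhood of any of its vertices, which is finite by local finiteness), hence is a clique of some $\GG_n$, and for $m\geq n$ large enough a maximal clique of $\GG_n$ that is contained in a maximal clique of $\GG$ eventually stabilizes. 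Then I would pass to a limit: the vertex set of the desired tree decomposition is the set $\cM$ of maximal cliques of $\GG$, and I must put an edge structure on $\cM$ making it a tree satisfying (T1)--(T3).

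The cleanest way to get the limit tree is a compactness/König argument. Consider the maximal cliques of $\GG$ as vertices; I want to realize the "correct" tree $T$ as a subtree of an inverse limit of the finite trees $T_n$ (after identifying, for $m\ge n$, the bags of $T_n$ that persist as bags of $T_m$). Concretely: each maximal clique $C$ of $\GG$ determines, for all sufficiently large $n$, a vertex of $T_n$ (the unique bag of $T_n$ containing $C$, once $n$ is big enough that $C\subseteq V(\GG_n)$ and $C$ is maximal in $\GG_n$ — here I need that a maximal clique of $\GG$ is maximal in $\GG_n$ for large $n$, which follows from finiteness of all cliques). Two maximal cliques $C,D$ should be declared adjacent in $T$ if they are adjacent in $T_n$ for all sufficiently large $n$; one shows this "eventual adjacency" is well-defined and that the resulting graph $T$ on $\cM$ is connected and acyclic (a cycle in $T$ would appear in some $T_n$). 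Conditions (T1) and (T2) are immediate since every vertex and every edge of $\GG$ lies in a maximal clique. Condition (T3) — the subtree condition — is where I would spend the most care: given $v\in C\cap D$ for maximal cliques $C,D$, and a bag $E$ on the $T$-geodesic from $C$ to $D$, I would pull this configuration back to a large enough $T_n$ containing $C$, $D$, $E$ and the relevant path, apply (T3) for $T_n$, and use that the $T$-geodesic between $C$ and $D$ is, for large $n$, the $T_n$-geodesic.

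The main obstacle I anticipate is precisely this limiting step: making rigorous the claim that the finite trees $T_n$ can be chosen coherently enough that their geodesics and adjacencies stabilize. The finite chordal-graph theorem only gives \emph{some} tree decomposition of each $\GG_n$, with no coherence between different $n$, so naively there is no inverse system. The fix is to observe that the tree decomposition into maximal cliques of a finite chordal graph, while not unique as a tree, \emph{is} essentially canonical at the level of "which pairs of maximal cliques must be adjacent" — more precisely, the clique-separator structure is determined by the graph. I would make this precise by characterizing adjacency of maximal cliques $C,D$ in $T_n$ via the minimal vertex cut separating $C\setminus D$ from $D\setminus C$ and the fact that in a chordal graph minimal separators are cliques (\cite{diestel06}); then eventual stabilization of these separators under the exhaustion gives the well-definedness of $T$. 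Once $T$ is built and (T1)--(T3) checked, the bags are maximal cliques by construction and adjacent bags share a (nonempty, in fact clique) vertex set, giving \ref{chordal_maxiii}, and the proof is complete. An alternative, possibly smoother route for the limit is to invoke a general compactness theorem for tree decompositions of bounded ... but the graph here need not have bounded treewidth, so I would stick with the clique-separator approach, which only uses local finiteness.
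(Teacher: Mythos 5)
Your reduction to the implication from \ref{chordal_maxi} to \ref{chordal_maxiii} is sound, and your arguments for \ref{chordal_maxiii}$\Rightarrow$\ref{chordal_maxii} and \ref{chordal_maxii}$\Rightarrow$\ref{chordal_maxi} match the paper's. The gap is in the limiting step, and it is exactly where you say you anticipate trouble: your proposed fix does not work. You want to declare two maximal cliques $C,D$ of $\GG$ adjacent in the limit tree if they are adjacent in $T_n$ for all large $n$, and you justify well-definedness by claiming that ``which pairs of maximal cliques must be adjacent'' is determined by the graph via its minimal separators. That is false: a finite chordal graph generally has many clique trees, and the minimal separators determine only the \emph{multiset} of edge labels of a clique tree, not the tree itself. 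Already for $K_{1,3}$ with centre $b$ the maximal cliques $\smallset{a,b},\smallset{b,c},\smallset{b,d}$ can be arranged as a path in any order or as a star; there is no canonical adjacency, so ``eventual adjacency'' depends on the (uncoordinated) choices of the $T_n$ and need not stabilize. Consequently your limit graph $T$ is not well defined, and the subsequent claims (acyclicity because ``a cycle in $T$ would appear in some $T_n$'', connectivity, and the pull-back argument for (T3)) all presuppose a coherence between the $T_n$ that you have not established.

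The paper avoids this by aiming lower in the limit. It sets up a genuine inverse system: restricting a clique tree decomposition of $\GG_{n+1}$ to $\GG_n$ (and deleting empty bags) yields a tree decomposition of $\GG_n$ into cliques -- \emph{not} necessarily maximal ones -- and these restriction maps turn the collection of all such restricted decompositions into a locally finite forest with finitely many roots, to which K\"onig's lemma applies. The resulting infinite path gives a coherent sequence and hence a tree decomposition of $\GG$ into cliques, i.\,e.\ statement \ref{chordal_maxii} only; the paper explicitly remarks that the limit may avoid the maximal-clique decompositions. Statement \ref{chordal_maxiii} is then recovered from \ref{chordal_maxii} by the general machinery already proved (\prref{prop:basictree}, \prref{prop:maxclique} and \prref{prop:basictreecut}), which enlarges bags to maximal cliques and removes redundancies. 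If you want to keep your direct route to \ref{chordal_maxiii}, you would have to run the compactness argument over the clique trees themselves (choosing, for each $n$, a clique tree of $\GG_n$ compatible with one of $\GG_{n+1}$) rather than appeal to a canonicity that does not exist; otherwise, follow the paper and factor through \ref{chordal_maxii}.
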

Note, that in general \prref{prop:chordal_maxclique} does not hold for graphs which are not locally finite. Diestel \cite{diestel90} gave an example for a graph which is chordal, but does not have a tree decomposition into cliques.
\begin{proof}
In order to show that \ref{chordal_maxi} implies \ref{chordal_maxii}, we first show by induction that \ref{chordal_maxi} implies \ref{chordal_maxiii} for finite graphs.
A well-known characterization says that a finite graph is chordal 
\IFF it has a ``perfect elimination ordering'', \cite{golum80}. This is 
an ordering of the vertices $v_1, v_2, \ldots, v_n$ such that the neighbors of 
$v_i$ which belong to $\os{v_{i+1} \lds v_n}$ form a clique. The induced subgraph of 
$ v_2, \ldots, v_n$ is connected and, by induction, it has a tree decomposition $T$, where the bags correspond to maximal cliques
and cliques which are adjacent in the tree intersect non-trivially.
The neighbors of $v_1$ together with $v_1$ form a maximal clique 
in the original graph. If the neighbors of $v_1$ form a maximal clique in the graph induced by $ v_2, \ldots, v_n$, we add $v_1$ to the corresponding bag. Otherwise, we can attach a new bag to $T$ with an edge to one of the existing bags which contains the neighbors of $v_1$. Thus, \ref{chordal_maxi} implies \ref{chordal_maxiii} for finite graphs. 

For infinite graphs we 
write $\GG$ as a union $\GG = \bigcup\set{\GG_n}{n \in \N}$ such that 
each $\GG_n $ is a finite, connected induced subgraph of $\GG$, the graph 
$\GG_0$ is a maximal clique, and $\GG_n \sse \GG_{n+1}$ for all $n$. 

We can define a map from the finite tree decompositions of $\GG_{n+1}$ to the finite tree decompositions of $\GG_n$ by restricting the bags to $\GG_n$ and deleting empty bags (since the $\GG_n$ are connected, the non-empty bags form one connected component in the tree).
This gives rise to an infinite directed forest with these maps as edges. The vertices are the tree decompositions which are obtained when starting with a tree decomposition into maximal cliques of some $\GG_n$.
The roots of the connected components are tree decompositions into cliques of the maximal clique $\GG_0$~-- note that here the edges are directed towards the roots. At distance $n$ from a root are the tree decomposition into cliques of $\GG_n$.
Since every vertex is contained in finitely many maximal cliques, only, the forest has finite degree and finitely many roots. By K\"onigs' lemma there is an infinite path. 
This path defines a tree decomposition
into cliques of $\GG$. Thus, we have \ref{chordal_maxii}.
(Note that we do not claim that the path yields a tree decomposition of $\GG$ into maximal cliques since the infinite path may avoid such tree decompositions.)

The result that \ref{chordal_maxii} implies \ref{chordal_maxiii}
follows directly from \prref{prop:basictree}, \prref{prop:maxclique} and \prref{prop:basictreecut} (ii). (Recall that the proof of \prref{prop:basictree} 
involved another limit process.) 

As the implication from \ref{chordal_maxiii} to \ref{chordal_maxii} is trivial, it remains to show that \ref{chordal_maxii} implies \ref{chordal_maxi}.
This can be done exactly as in the case of finite graphs. 
Assume that there is some tree decomposition
into cliques and consider a cycle of length at least $4$. Then there are two different edges $uv$ and $wx$ with $\os {u,v} \cap \os {w,x} = \es$ and simple disjoint 
paths from $u$ to $w$ and from $v$ to $x$. A simple inspection using \prref{prop:basictreecut} shows that there must be some bag which contains vertices $y,z$ with $y\neq u$, $z\neq x$ such that the vertex $y$ is on the path from $u$ to $w$, 
and $z$ is on the path from $v$ to $x$. Since every bag is a clique, we see that $yz$ is a chord. 
\end{proof}

\begin{corollary}\label{cor:chordtw}
Let $k \in \N$ and $\GG$ be a connected, locally finite, chordal graph such that 
the maximal size of a clique in $\GG$ is $k$. Then $\GG$ has finite treewidth 
$k-1$. 
\end{corollary}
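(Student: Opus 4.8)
The plan is to derive the corollary directly from the two structural results already in hand, \prref{prop:chordal_maxclique} and \prref{prop:maxclique}, by sandwiching the bag-size of a suitable tree decomposition from both sides. There is no genuinely new content: this is essentially bookkeeping on top of the preceding propositions, and the only care needed is to match hypotheses and to use the phrase ``the maximal size of a clique is $k$'' in its two distinct roles.

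For the upper bound I would invoke \prref{prop:chordal_maxclique}: since $\GG$ is connected, locally finite and chordal, the implication \ref{chordal_maxi} $\Rightarrow$ \ref{chordal_maxii} supplies a tree decomposition $T$ all of whose bags are cliques. Because no clique of $\GG$ has more than $k$ vertices, every bag satisfies $\abs{X_t} \leq k$, whence $\bags(T) \leq k < \infty$. This already shows that $\GG$ has finite treewidth, and, since the treewidth is by definition the least value of $\bags(T') - 1$ over all tree decompositions $T'$, it gives treewidth at most $k-1$.

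For the lower bound I would fix a clique $C$ of $\GG$ with $\abs{C} = k$ — such a clique exists precisely because the maximal clique size is $k$ — and take an arbitrary tree decomposition $T'$ of $\GG$. Here local finiteness enters a second time: \prref{prop:maxclique} guarantees that $C$ is contained in a single bag $X$ of $T'$, so $\bags(T') \geq \abs{X} \geq k$ and hence $\bags(T') - 1 \geq k-1$. As $T'$ was arbitrary, the treewidth of $\GG$ is at least $k-1$. Combining the two inequalities yields that the treewidth equals $k-1$, which is the assertion.

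I do not anticipate a real obstacle; the subtle points are merely that \prref{prop:chordal_maxclique} and \prref{prop:maxclique} both require local finiteness (and the former also connectedness), so I would state explicitly that these hypotheses are exactly what the corollary assumes, and that ``maximal clique size $k$'' is used both as a uniform bound on the sizes of the clique-bags and as the existence of a witnessing $k$-clique for the matching lower bound.
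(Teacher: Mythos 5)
Your argument is correct and coincides with the paper's own proof: the upper bound via \prref{prop:chordal_maxclique} (tree decomposition into cliques, each of size at most $k$) and the lower bound via \prref{prop:maxclique} (a $k$-clique must lie in a single bag of any tree decomposition). You have merely spelled out the bookkeeping that the paper leaves implicit.
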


\begin{proof}
 \prref{prop:chordal_maxclique} shows that the treewidth of $\GG$ is at most
$k-1$. By \prref{prop:maxclique} it is at least $k-1$.
\end{proof}

\subsection{Cayley Graphs}

Let $G$ be a group with $1$ as neutral element. Let $\Sigma \sse G$ be a generating set of $G$. For convenience we assume $1 \not\in \Sigma$. The \emph{Cayley graph} $\Gamma= \Gamma_\Sig(G)$ of $G$ (with respect to $\Sigma$) is defined by $V(\Gamma) = G$ and 
$E(\Gamma) = G \times (\Sigma\cup \Sigma^{-1})$, with the incidence functions 
$s(g,a) = g$, $t(g,a) = ga$, and involution $\ov{(g,a)} = (ga,a^{-1})$. 
For an edge $(g,a)$ we call $a$ the label of $(g,a)$ and extend this definition to paths. Thus, the label of a path is a sequence 
(or \emph{word}) in the free monoid $\Sigma^*$. The Cayley graph is a simple graph (without loops and without multi-edges). 
 It is connected because $\Sigma$ generates $G$. The Cayley graph 
$\Gamma$ is locally finite \IFF $\Sigma$ is finite. In the following we always assume that $\Sigma$ is finite.
Sometimes we suppress $\Sigma$ if there is a standard choice for the generating set. 
For example, if
$G= F_{\Sigma}$ is the free group over $\Sigma$, then the Cayley graph of $G$ refers 
to $\Sigma$ and it is a tree. Similarly, by the \emph{infinite grid} we mean the Cayley graph of $\Z \times \Z$ with generators $(1,0)$ and $(0,1)$.

\begin{example}\label{ex:nixZZ}
Let $\GG$ be the infinite grid $\Z \times \Z$, \ie  the vertices are pairs $(i,j)$, $i,j \in \Z$ and there are edges from $(i,j)$ to $(i,j \pm 1)$, $(i\pm 1,j)$.
Then $\GG$ does not have finite treewidth.

\begin{figure}[ht]

\begin{center}
\begin{tikzpicture}[scale = 0.7]
\foreach \x in {-3,-1.5,...,4}{
\draw[thin](\x-2,-2) -- (\x+2,2);
	\foreach \y in {-2,...,2}{
		\draw[thin](\x+\y-0.33,\y-0.33) -- (\x+\y,\y);
};
};

\foreach \y in {-1.5,...,1.5}{
	\draw[thin](-4+\y,\y) -- (4+\y,\y);
	\foreach \x in {-3.75,-2.25,...,4.75}{
		\draw[thin](\x-0.5+\y,\y) -- (\x+\y,\y);
};
};
\node at (-4.5,0) {$\cdots$};
\node at (4.5,0) {$\cdots$};
\end{tikzpicture}
\end{center}
 \caption{The infinite grid does not have a tree decomposition of finite bag-size.}\label{fig:grid}
\end{figure}
\end{example}
\begin{example}\label{ex:nixZ2Z3}
Let $\GG$ be the Cayley graph of the modular group $\mathrm{PSL}(2,\Z)\cong \Z/2\Z \star \Z/3\Z$. 
Then $\GG$ has a tree decomposition of finite width where bags are the triangles and the bridges between triangles. Thus, there is a tree decomposition with bag-size $3$. Therefore, the treewidth is $2$. It cannot be $1$  because $\GG$ is not a tree. 
\begin{figure}[ht]
\begin{center}
\tikzstyle{every node}=[inner sep=0pt]
\begin{small}
\begin{tikzpicture}[scale=.5, transform shape]
\node (o)at (0,0) {};
\newcommand{\dist}{1.3};
\newcommand{\factor}{0.61}
\tikzstyle{every node}=[fill,circle,inner sep=0pt]
\foreach \A/\lab in {0/1,120/b,240/bb}{
  
  	\node (1) at (\A+0:1) {};
  	\node (a) at (\A+120:1) {};
   	\path (1) -- ++(\A+291:0.24) node[fill=none] {};
	\draw (1)--(a);
	\path[draw] (1)-- ++(\A+0:1.3) node (t){};	
	\ifthenelse{\equal{\lab}{1}}
            {\path (t) -- ++(\A+255:0.24) node[fill=none] {};}
           {\path (t) -- ++(\A+265:0.3) node[fill=none] { };}

	\path (t)-- +(\A+330:\dist) node (ta){}-- +(\A+30:\dist)node (tb){}--(t){};
	\draw (ta)-- (tb);

	\foreach \B/\k in {\A+60/tb,\A+300/ta}{
	
	  	\draw (t)-- (\k)  node (ia){}-- ++ (\B+0:\dist*\factor)node(it){};
	  	\path (it)-- +(\B+330:\dist*\factor) node (ita){}-- +(\B+30:\dist*\factor)node (itb){}--(t){};
		\draw (ita)-- (itb);
			
		\foreach \C/\l in {\B+60/itb,\B+300/ita}{

			\draw (it)-- (\l)  node (iia){}-- ++ (\C+0:\dist*\factor*\factor)node(iit){};
		  	\path (iit)-- +(\C+330:\dist*\factor*\factor) node (iita){}-- +(\C+30:\dist*\factor*\factor)node (iitb){}--(t){};
			\draw (iita)-- (iitb);
			
			\foreach \D/\l in {\C+60/iitb,\C+300/iita}{
			
				\draw (iit)-- (\l)  node (iiia){}-- ++ (\D+0:\dist*\factor*\factor*\factor)node(iiit){};
				\path (iiit)-- +(\D+330:\dist*\factor*\factor*\factor) node (iiita){}-- +(\D+30:\dist*\factor*\factor*\factor)node (iiitb){}--(t){};
				\draw (iiita)-- (iiitb);
				
				\foreach \E/\l in {\D+60/iiitb,\D+300/iiita}{
				
					\draw (iiit)-- (\l)  node (iiiia){}-- ++ (\E+0:\dist*\factor*\factor*\factor*\factor)node(iiiit){};
					\path (iiiit)-- +(\E+330:\dist*\factor*\factor*\factor*\factor) node (iiiita){}-- +(\E+30:\dist*\factor*\factor*\factor*\factor)node (iiiitb){}--(t){};
					\draw (iiiita)-- (iiiitb);

					\foreach \F/\l in {\E+60/iiiitb,\E+300/iiiita}{
						
						\draw (iiiit)-- (\l)  node (iiiiia){}-- ++ (\F+0:\dist*\factor*\factor*\factor*\factor*\factor)node(iiiiit){};
						\path (iiiiit)-- +(\F+330:\dist*\factor*\factor*\factor*\factor*\factor) node (iiiiita){}-- +(\F+30:\dist*\factor*\factor*\factor*\factor*\factor)node (iiiiitb){}--(t){};
						\draw(iiiiit)-- (iiiiita)-- (iiiiitb)--(iiiiit);
						\foreach \G/\l in {\F+60/iiiiitb,\F+300/iiiiita}{
							\draw  (\l)  -- ++ (\G+0:\dist*\factor*\factor*\factor*\factor*\factor*\factor)node {}-- ++ (\G+330:\dist*\factor*\factor*\factor*\factor*\factor*\factor)node {}-- ++ (\G+90:\dist*\factor*\factor*\factor*\factor*\factor*\factor)node {}-- ++ (\G+210:\dist*\factor*\factor*\factor*\factor*\factor*\factor)node {};
						};				
					};	
				};	
			};		
		};
	};	
};
\end{tikzpicture}
%
%
%
%
%
%
 \end{small}
\caption{The Cayley graph of $\mathrm{PSL}(2,\Z)$ 
has treewidth $2$.}\end{center}
 \end{figure}
\end{example}

The following result is due to Muller and Schupp \cite{ms83, ms85}. We rephrase it in the terminology 
of treewidth. In this terminology it was first stated by Kuske and Lohrey in their work about monadic second order logic of Cayley graphs \cite{KuskeL05}. The proof is taken from \cite{diekertW13}, but it follows the original proof in \cite{ms83, ms85}.
For the proof we need the following definition: For a subset $C\sse V(\GG)$ of vertices of some graph $\GG$ we define the \emph{vertex-boundary} of $C$ as
$\beta C = \set{ u,v\in V(\Gamma)}{\exists\, uv\in E(\Gamma) \text{ with } u\in C, v\in \Comp{C} \text{ or } u\in \Comp{C}, v\in C}$.
\begin{theorem}[\cite{KuskeL05, ms83, ms85}]\label{thm:cfftw}
Let $\Gamma$ be a Cayley graph of a context-free group $G$
with respect to a finite generating set $\Sigma$. Then $\Gamma$ has finite treewidth. 
\end{theorem}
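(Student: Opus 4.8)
The plan is to build, directly from context-freeness, a tree decomposition of $\GG$ whose bag-size is bounded by a constant; this is exactly the assertion that $\GG$ has finite treewidth. Fix a presentation $\pi\colon\Sig^*\to G$ with $\WP G$ context-free and let $n\geq 0$ be the constant furnished by the Pumping Lemma (\prref{lem:pumpcf}) for the language $\WP G$; we may assume $\Sig=\Sig^{-1}$, so that words over $\Sig$ are exactly the labels of paths in $\GG$. For $r\geq 0$ write $B_r$ for the set of vertices at distance at most $r$ from the vertex $1$. The building blocks of the decomposition are the connected components of the graphs $\GG-B_r$: since $\GG-B_{r+1}\sse\GG-B_r$, every component of $\GG-B_{r+1}$ is contained in a unique component of $\GG-B_r$, so the family $\cC$ of all these components (over all $r\geq 0$) is laminar under inclusion.

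From $\cC$ one reads off a tree decomposition exactly as in the limit arguments of \prref{prop:basictree}: the underlying tree $T$ has a root $t_\star$ together with one node $(r,C)$ for every component $C$ of $\GG-B_r$, with $(r,C)$ joined to $(r-1,C')$ whenever $C\sse C'$ and $(0,C)$ joined to $t_\star$; the bags are $X_{(r,C)}=\beta C$ and $X_{t_\star}=\os{1}$. Checking (T1)--(T3) of \prref{dfn:tree_decomposition} is then purely formal: a vertex at distance $r$ from $1$ lies in $\beta C$ for the unique component $C$ of $\GG-B_{r-1}$ containing it, and in $\beta C'$ for exactly those children $C'$ of that $C$ to which it is adjacent, so the bags containing it form a star around $X_{(r-1,C)}$ in $T$; and any edge of $\GG$ joins two vertices whose distances to $1$ differ by at most one, so it lies in a common bag. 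Hence $\bags(T)=\sup_{r,C}\abs{\beta C}$, and the whole theorem reduces to the following.

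\emph{Claim.} There is a constant $K=K(n)$ such that $\abs{\beta C}\leq K$ for every $r\geq 0$ and every connected component $C$ of $\GG-B_r$.

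The Claim is where the real difficulty lies, and it is the only place where context-freeness enters, through \prref{lem:pumpcf}. Since $\GG$ is locally finite, for $r\leq n$ one has $\abs{\beta C}\leq\abs{B_{n+1}}<\infty$ trivially, so it suffices to bound $\abs{\beta C}$ when $r>n$. A component $C$ of $\GG-B_r$ with a large vertex-boundary then yields, by connectedness of $C$, a loop $p$ at the vertex $1$ that leaves $B_r$ through $\beta C$, runs through $C$, and returns to $1$ along a reversed geodesic; its label $z\in\WP G$ has length $>n$, so \prref{lem:pumpcf} applies, and writing $z=uvwxy$ and taking $i=0$ produces a strictly shorter word $uwy\in\WP G$ describing a related loop. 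One then argues that, because the outgoing and returning parts of $p$ are \emph{geodesics} from $1$, the deletion of $v$ and $x$ cannot touch either of them without exhibiting a representative of a vertex of $C$ of length $<r+1$, contradicting that this vertex lies outside $B_r$; hence the pumpable factor is confined to the piece of $p$ running through $C$, and a pigeonhole over the finitely many group elements of length at most some bound depending only on $n$ forces $\abs{\beta C}$ to be bounded. The delicate point --- and the genuine heart of the argument, going back to \cite{ms83} and streamlined in \cite{diekertW13} --- is precisely to make the quantifiers of \prref{lem:pumpcf} cooperate, i.e.\ to choose the loop $p$ so that the factor $vwx$ returned by the Pumping Lemma can be assumed to lie on a geodesic portion of $p$. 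Everything else is laminar-family bookkeeping in the style of \prref{prop:basictree} together with elementary distance estimates.
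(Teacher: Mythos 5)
Your tree decomposition is exactly the one the paper uses (bags $\beta C$ for the connected components $C$ of $\GG-B_r$, joined along inclusion of components), and your verification of (T1)--(T3) is fine. The problem is entirely in your Claim, which you correctly identify as the only place where context-freeness enters --- and there your argument has a genuine gap, not just a missing detail. The Pumping Lemma (\prref{lem:pumpcf}) gives a single factorization $z=uvwxy$ with no control whatsoever over where the factor $vwx$ falls inside $z$; you acknowledge this as ``the delicate point'' but do not resolve it, and the plain Pumping Lemma cannot resolve it (one would need something like Ogden's lemma with marked positions, which is not available here). Worse, the contradiction you sketch for the case where $vwx$ meets a geodesic portion of the loop does not go through: deleting the letters of $v$ and $x$ changes the group elements represented by \emph{all subsequent prefixes} of the word, so the shortened word $uwy$ does not exhibit a short representative of any particular vertex of $C$ --- the tail of the new loop is translated by $\pi(uwy\text{-prefix})\cdot\pi(uvwxy\text{-prefix})^{-1}$ and need not visit $C$ at all. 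Finally, even granting that the deletable factor sits inside the portion of the loop running through $C$, it is not explained how a single deletion, or a pigeonhole over short group elements, bounds $\abs{\beta C}$; no quantitative conclusion is extracted.

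What the paper does instead is to work with the \emph{derivation tree} of a Chomsky-normal-form grammar for $\WP G$ rather than with the Pumping Lemma. Given $g,h\in\beta C$, one takes geodesics $\alpha$ (from $1$ to $g$, label $u$) and $\gamma$ (from $h$ to $1$, label $w$) and a path $\beta$ through $C$ (label $v$), and in a derivation of $uvw$ one locates a rule $A\to BC$ whose split point falls strictly inside $v$ while the yield of $A$ covers all of $v$; this is always possible by taking the lowest node whose yield contains $v$. Since every variable derives some terminal word of length at most $k$, and any two terminal words derived from the same variable represent the same group element, this produces three vertices $x\in\alpha$, $y\in\beta$, $z\in\gamma$ with pairwise distances at most $k$. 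The geodesic property then gives $d(x,g)=d(1,g)-d(1,x)\leq d(1,y)-d(1,x)\leq d(x,y)$ because $d(1,g)\leq r+1\leq d(1,y)$, and likewise for $h$, whence $d(g,h)\leq 3k$ and $\abs{\beta C}$ is bounded by local finiteness. The essential advantage of the derivation-tree argument over the Pumping Lemma is precisely that \emph{you} get to choose where the triangulating node sits relative to the factorization $uvw$, which is the control your proposal needs and cannot obtain. You should replace your Claim's proof by this triangulation argument (or an equivalent one); the rest of your write-up can stand.
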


\begin{proof}If $G$ is finite, then the assertion is trivial. Therefore, let $G$ be infinite. 
We may assume that $1 \notin \Sigma\sse G$.

The vertex set of $\GG = \GG_\Sig(G)$ is the group $G = V(\Gamma)$, by $B_n$ we denote the ball of radius 
$n$ around the origin $1 \in G$, \ie  $B_n= \set{g \in V(\Gamma)}{d(1,g)\leq n}$. 
We are heading for a tree decomposition where certain finite subsets 
of $G$ become nodes in the tree. 
For $n \in \N$ we define sets $V_n$ of level $n$ such that $V_0 = V(\GG-1)$ and 
$V_n = \set{C\sse V(\Gamma) }{ C \text{ is a connected component of } \Gamma - B_{n}}$ for $n \geq 1$. This defines a tree $T$ with root $B_1$ as follows: 
\begin{align*}
V(T) &= \set{ \beta C }{ C \in V_n\,, n \in \N}, \\
 E(T) &= \set{\smallset{ \beta C, \beta D}}{D \sse C \in V_n\,, D \in V_{n+1}, \, n \in \N}.
 \end{align*}
The nodes are subsets of $G$, hence we can identify nodes $t \in T$ with their bags $X_t \sse G$. 
If $(g,a)$ is an edge in the Cayley graph $\GG$, then 
there are essentially two cases; either $d(1,g) = n $ and $d(1,ga) = n + 1$ 
or $d(1,g) = d(1,ga) = n + 1$ for some $n$. In both cases the elements $g$, $ga$ are in some bag $\bet C$ for some $C \in V_n$ and $n \in \N$. 

It remains to show that $\abs{\bet C}$ is bounded by some constant for all $C \in V_n$, $n\in\N$. It is here where
the context-freeness comes into the play. 
We denote 
$\widetilde\Sig= \Sigma \cup \Sigma^{-1}$. This is a set of monoid generators of $G$. We let 
$L_G = \set{w \in \widetilde\Sig^*}{w = 1 \in G}$ its associated group language. 
By hypothesis, $L_G$ is generated by some context-free grammar $(V,\widetilde\Sig, P,S)$, and we 
may assume that it is in Chomsky normal form. Recall that this means all rules are either of the form $A \to BC$ with $A,B,C \in V$ or of the form $A\to a$ with $A\in V$ and 
$a \in \widetilde\Sig$. 
We define a constant $k \in \N$ such that 
$$k \geq \max_{A \in V}\min\set{ \abs{w}}{ A \RAS*{P} w \in \widetilde\Sig^*}.$$ 

Consider $C\in V_n $ and $n \in \N$.
Let $g,h\in \bet C$. We are going to show that $d(g,h)\leq 3k$.
For $n = 0$ we have $\bet C = B_1$. Hence, we may assume $n \geq 1$. 

Let $\alpha$ be a geodesic path from $1$ to $g$ with label $u \in \widetilde\Sig^*$, $\gamma$ a geodesic path from $h$ to $1$ with label $w\in \widetilde\Sig^*$, and $\beta$ some path from $g$ to $h$ with label $v \in \widetilde\Sig^*$ which is entirely contained in $C$. Such a path exists since $C$ is connected.
The composition of these paths forms a closed path $\alpha \beta\gamma$ with label 
$uvw$. We have $uvw \in L_G$ and there is a derivation $S\RAS{*}{}uvw$. 
We may assume that $\abs v \geq 2$ because otherwise there is nothing to do. 

Since the grammar is in Chomsky normal form, we can find a rule $A \to BC$ and derivations as follows: 
$$S \RAS*{P} u'Aw' \RAS{}{P} u'BCw'\RAS*{P} u'v'v''w' = uvw $$
such that $B \RAS*{P} v'$, $C \RAS*{P} v''$, and $\abs{u'} \leq \abs{u} < \abs{u'v'} < \abs{uv } \leq\abs{u'v'v''}$. 

This yields three nodes $x\in \alpha$, $y \in \beta$, and $z\in \gamma$ such that $d(x,y)$, $d(y,z)$, $d(x,z)\leq k$, see \prref{fig:diamC}. (These three nodes correspond exactly to a triangle with endpoints $x,y,z$ in the $k$-triangulation of the closed path $\alpha \beta\gamma$ in \cite{ms83}.)

\begin{figure}[ht]
\begin{center}
\def\nodedist{5}
\begin{tikzpicture}
\node[] (OO) at (-5, 0) {};
\node[left of=OO,node distance=\nodedist] () {$1$};

\node[] (g) at (2, 2) {};
\node[] (h) at (2, -2) {};
\node[above of=g,node distance=\nodedist] () {$g$};
\node[below of=h,node distance=\nodedist] () {$h$};

\draw (2,-2) arc(-60:60:2.309401076758503058);
\node (vv) at (2+1.154700538379251529,0){};
\node[right of=vv,node distance=\nodedist] () {$\beta$};

\draw (-5, 0) ..controls(-3.75,0.12).. (-3,0.2)
	..controls(-1.5,0.36) and (0,1)..
node[above] (){$\alpha$} (1, 1.46)
..controls(1.5,1.69)..
(2, 2){};

\draw (-5, 0) ..controls(-3.75,0.12).. (-3,0.2)
	..controls(-2.25,0.28) and (-1.,0.6) ..
node[below] (){$\gamma$} (1, -1)
..controls(1.5,-1.4)..
(2, -2){};

\node[] (x) at (1, 1.46) {};
\node[] (z) at (1, -1) {};
\path[] (2-1.154700538379251529,0) -- ++(20:2.309401076758503058) node (y){};
\node[above of=x,node distance=\nodedist] () {$x$};
\node[below of=z,node distance=\nodedist] () {$z$};
\node[right of=y,node distance=\nodedist] () {$y$};

\draw(1, 1.46)--node[left](){$A$} (1, -1);
\draw (2-1.154700538379251529,0) + (20:2.309401076758503058)--node[above](){$C$} (1, -1);
\draw(2-1.154700538379251529,0) + (20:2.309401076758503058)--node[below](){$B$} (1, 1.46);

\end{tikzpicture}
\end{center}
\caption[]{The distance between $g$ and $h$ is bounded by $3k$.}\label{fig:diamC}
\end{figure}

Now we have:
\[d(x,g)=d(1,g)-d(1,x) \leq d(1,y)-d(1,x)\leq d(x,y).\]
The first equality holds because $\alpha$ is geodesic and $x$ lies on $\alpha$; the second one because $ d(1,g) \leq n+1 \leq d(1,y)$. Likewise we obtain $d(z,h)\leq d(z,y)$.
Thus, it follows
\begin{align*}
d(g,h)&\leq d(g,x)+d(x,z)+ d(z,h)\\
&\leq d(y,x)+d(x,z)+d(z,y) \leq 3k.
\end{align*}
This implies that the size of the bags is uniformly bounded by some constant since $\Gamma$ has uniformly bounded degree.
\end{proof}

In \cite {AutebertBS87} the following theorem has been stated in terms of \emph{locally primary} groups. A group is locally primary if every generating set can be extended such that the Cayley graph is chordal.

\begin{theorem}[\,{\!\cite[Thm.\ III.3]{AutebertBS87}}]\label{thm:chordal_cayley} 
Let $\cG$ be a finite graph of groups with underlying graph $Y$ and $T$ a spanning tree of $Y$. Every finite generating system of $\pi_1(\cG,T)$ can be extended such that the Cayley graph is chordal.
\end{theorem}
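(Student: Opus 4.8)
I would use the \BST $\wt X$ of $\cG$ \emph{itself} as the index tree of a tree decomposition of the Cayley graph whose bags are cliques, and then quote \prref{prop:chordal_maxclique}. Here I use that the vertex groups of $\cG$ are finite (this is genuinely needed: $\Z\times\Z$ is the fundamental group of a finite graph of groups but has no chordal Cayley graph, since a locally finite chordal graph of bounded clique number has finite treewidth by \prref{cor:chordtw}, unlike the grid, \prref{ex:nixZZ}). Write $G:=\pi_1(\cG,T)$. Finiteness of $Y$ and of all $G_P$ makes $\wt X$ locally finite -- the number of edges at a vertex $gG_P\cdot P$ is $\sum_{y\colon s(y)=P}[G_P:G_y^y]<\infty$ -- and makes every vertex stabiliser finite, being a conjugate of some $G_P$. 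Fix the base vertex $\wt v_0:=G_{P_0}\cdot P_0$, so $\Stab_G(\wt v_0)=G_{P_0}$, and write $d$ for the distance in $\wt X$.

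\textbf{The construction.} Given a finite generating set $\Sigma_0$ of $G$, put $r:=\max\set{d(\wt v_0,s\wt v_0)}{s\in\Sigma_0}$ and set
\[\Sigma:=\set{g\in G\setminus\os{1}}{d(\wt v_0,g\wt v_0)\le 2r}.\]
Then $\Sigma$ is finite (the ball of radius $2r$ in the locally finite tree $\wt X$ is finite, and for each vertex $w$ in it $\set{g\in G}{g\wt v_0=w}$ is a coset of the finite group $\Stab_G(\wt v_0)$), it is symmetric since $G$ acts by isometries, and it contains $\Sigma_0$, hence generates $G$. To each $v\in V(\wt X)$ assign the bag $X_v:=\set{g\in G}{d(g\wt v_0,v)\le r}$, finite by the same argument. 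Then $(\wt X,\,v\mapsto X_v)$ is a tree decomposition of $\Gamma_\Sigma(G)$: (T1) holds because $g\in X_{g\wt v_0}$; for (T2), an edge of $\Gamma_\Sigma(G)$ has the form $\os{g,gs}$ with $s\in\Sigma=\Sigma^{-1}$, so $d(g\wt v_0,gs\wt v_0)=d(\wt v_0,s\wt v_0)\le 2r$, and a vertex $v$ on a geodesic from $g\wt v_0$ to $gs\wt v_0$ at distance $\le r$ from both endpoints gives $g,gs\in X_v$; for (T3), if $g\in X_v\cap X_w$ and $v'$ lies on the geodesic $[v,w]$, then, using that $d(x,v')\le\max\smallset{d(x,v),d(x,w)}$ for any $x$ in a tree, $g\in X_{v'}$. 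Finally every bag is a clique: $g,h\in X_v$ with $g\neq h$ gives $d(\wt v_0,g^{-1}h\,\wt v_0)=d(g\wt v_0,h\wt v_0)\le 2r$, hence $g^{-1}h\in\Sigma$. As $\Gamma_\Sigma(G)$ is connected, locally finite and simple, \prref{prop:chordal_maxclique} shows it is chordal, proving the theorem.

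\textbf{Where the work is.} The substance is entirely in balancing the two radii: $\Sigma$ must be the $2r$-ball while the bags are $r$-balls, so that two elements of one bag automatically differ by a generator while each Cayley edge still fits into a single bag. This is exactly the estimate underlying the finite-treewidth theorem \prref{thm:cfftw}, sharpened here from bounded bags to \emph{clique} bags. Two degeneracies are harmless: an empty bag $X_v$ is vacuously a clique and plays no role in \prref{prop:chordal_maxclique}, and if $G$ is finite the argument still goes through (with $\wt X$ and $\Sigma$ finite), so no special case is needed.
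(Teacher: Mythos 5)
Your proof is correct, and it takes a noticeably different route from the one in the paper, although both ultimately produce a tree decomposition of the Cayley graph into cliques indexed by (a variant of) the \BST and then invoke \prref{prop:chordal_maxclique}. The paper works algebraically: it uses the barycentric subdivision of $\wt X$ with bags $gG_P$ and $gG_y^y\cup gG_y^yy$, verifies that this is a tree decomposition for the \emph{standard} generating set $E(Y)\cup\bigcup G_P$, then absorbs an arbitrary finite generating set by thickening the bags via \prref{lem:umgeb}, and finally adds finitely many generators per orbit of bags (using the $G$-action) to turn every bag into a clique. You instead work metrically on $\wt X$ itself: your bags are preimages of $r$-balls under the orbit map $g\mapsto g\wt v_0$, and the enlarged generating set is chosen up front as the full set of nontrivial elements displacing $\wt v_0$ by at most $2r$, which makes the bags cliques automatically and makes the interplay between the two radii completely explicit. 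This avoids both \prref{lem:umgeb} and the orbit-by-orbit cliquification, at the price of redoing by hand the convexity-of-balls and finiteness-of-fibers checks that the coset description gives for free; all of these checks in your write-up are sound. Your parenthetical remark that the vertex groups must be finite is also well taken: under the paper's definition a ``finite'' \gog only has finite underlying graph, so the theorem as literally stated would fail for $\Z\times\Z$, and the paper's own proof tacitly uses finiteness of the $G_P$ both to have finite bags and to cliquify them with only finitely many new generators.
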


 \begin{remark}\label{rem:chordal_cayley}
 If a finitely generated group has a chordal Cayley graph $\GG$, then $\GG$ has finite treewidth. This follows by \prref{prop:chordal_maxclique} and the fact that the maximal size of a clique is bounded by the cardinality of the generating system plus one.
 \end{remark}

\begin{proof}
First, we show that for the standard generating system $\Sigma = E(Y)\cup \bigcup G_P$ as in \prref{sec:bass_serre} there is a tree decomposition of the Cayley graph of $G=\pi_1(\cG,T)$ with the barycentric subdivision of the \BST $\wt X$ as underlying tree $\widetilde T$.
That means we have $V(\widetilde T) = V(\wt X) \cup E(\wt X)/\{\,e = \ov e\,|\,e\in E(\wt X)\,\}$ (see \prref{sec:prelims_graphs}).

 To each $gG_P\cdot P \in V(\wt X)$ we associate the bag $gG_P \sse \pi_1(\cG,T)$ and 
 to each $gG_y ^y \cdot y \in E(\wt X)$ we associate the bag $gG_y^y \cup gG_y^y y$.
 This is well-defined because $G_y^y \cup G_y^y y = y G_y^{\ov y} \cup y G_y^{\ov y} {\ov y}.$
 
 Note that each $g \in \pi_1(\cG,T)$ appears in $gG_P$. Moreover, if $(g, a)$ is an edge (connecting $g$ and $ga$) in the Cayley graph with 
 generator $a \in G_P$, then the edge is contained in the bag $gG_P$. If $(g, y)$ is an edge in the Cayley graph with generator
 $y \in E(Y) $, then the edge is contained in the bag $gG_y^y \cup gG_y^y y$.
 
 Now, let $g \in g_1G_P \cap g_2G_Q$. We have to show that there is a path from $g_1G_P$ to $g_2G_Q$ in $\wt T$ such that $g$ is contained in all bags on that path. We do not need to consider bags of the type $gG_y^y \cup gG_y^y y$ since in that case we have $g \in gG_{s(y)}$. Without loss of generality we may assume that $g = g_1 = g_2$.
 We take the geodesic path $T[P,Q] = P_0, \dots, P_k$ in the spanning tree $T$ from $P$ to $Q$. This path lifts to a path $gG_{P_0}\cdot P_0, \dots,gG_{P_k}\cdot P_k$ from $gG_P\cdot P$ to $gG_Q\cdot Q$ in the \BST. Since all edges of $T$ are equal to $1$ in $\pi_1(\cG,T)$, we know that $g$ is contained in every bag on this path.

 Up to now, we have constructed a tree decomposition of the Cayley graph of $G$ such that $G$ acts on it (not only on the tree, but also on the bags). By \prref{lem:umgeb} we can extend the bags to their neighborhoods, and hence may assume that all generators of our non-standard generating system are contained in a bag. By doing this the action of $G$ is not changed.
 
 As a last step, we choose a system of representatives of the bags and for each of these bags we add generators (\ie  edges in the Cayley graph) so that these bags become cliques. Note that these are only finitely many. Because of the action of $G$, this means that all bags of the tree decomposition become cliques, and hence by \prref{prop:chordal_maxclique} we obtain the desired result.
 \end{proof}

 \subsection{Quasi-Isometries and Treewidth}

\begin{definition}[Quasi-Isometry]
 Let $\GG = (V,E)$ and $\GG' = (V',E')$ be two graphs. The distance in both graphs is denoted with $d$. A \emph{quasi-isometry} between graphs $\GG$ and $\GG'$ is a function $f: V\to V'$ satisfying the following properties:
 \begin{enumerate}
 \item There is some constant $k$ such that for every $v'\in V'$ there is some $v\in V$ with $d(v', f(v))\leq k$.
 \item There is some constant $k$ such that for all $u,v \in V$ the following inequalities hold:
 \[\frac{1}{k}\cdot d(u,v)-k \leq d(f(u),f(v)) \leq k\cdot d(u,v)+k.\]
 \end{enumerate}
\end{definition}
If there is some quasi-isometry between $\GG$ and $\GG'$, we say the two graphs are \emph{quasi-isometric}. Note that the above definition is a special case of quasi-isometries on metric spaces.
It follows from the axiom of choice that being quasi-isometric is a symmetric relation. It is easy to see that it also is reflexive and transitive.
The following well-known fact is straightforward to see. 

\begin{lemma}\label{lem:quasifi}
 Let $G$ be a group and $H$ a subgroup of finite index. Then with respect to all generating sets of $H$ and of $G$, the Cayley graph of $H$ is quasi-isometric to the Cayley graph of $G$.
\end{lemma}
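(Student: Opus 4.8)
Let $G$ be a group and $H$ a subgroup of finite index. Then the Cayley graph of $H$ is quasi-isometric to the Cayley graph of $G$.

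The plan is to fix finite generating sets and then exhibit an explicit quasi-isometry, namely the inclusion map $H \hookrightarrow G$ on vertex sets, and verify the two quasi-isometry axioms. First I would set up notation: let $\Sigma$ be a finite generating set of $G$, let $R$ be a finite set of right coset representatives of $H$ in $G$ with $1 \in R$, so $G = \bigcup_{r \in R} Hr$, and let $n = [G:H] = |R|$. A finite generating set $\Delta$ for $H$ can be obtained by a Reidemeister--Schreier type argument (or as in the construction around \prref{prop:ratgrup} and \prref{ex:vfreeex}): the elements $r a (\overline{ra})^{-1}$ with $r \in R$, $a \in \Sigma \cup \Sigma^{-1}$, where $\overline{g}$ denotes the representative in $R$ of the coset $Hg$, generate $H$ and are finite in number. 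Write $d_G$ and $d_H$ for the word metrics on $\Gamma_\Sigma(G)$ and $\Gamma_\Delta(H)$ respectively, and let $f: H \to G$ be the inclusion.

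The two things to check are coarse surjectivity and the bi-Lipschitz-with-additive-error estimate. For coarse surjectivity: every $g \in G$ lies in some coset $Hr$, so $gr^{-1} \in H$ and $d_G(g, f(gr^{-1})) = d_G(g, gr^{-1}) \le \max_{r \in R} d_G(1, r^{-1}) =: k_0$, a finite constant since $R$ is finite; so take $k \ge k_0$. For the metric distortion: the upper bound $d_G(f(u), f(v)) \le k \cdot d_H(u,v)$ (no additive term needed here, but it is harmless) follows because each generator in $\Delta$ has bounded $d_G$-length, say at most $M = \max_{\delta \in \Delta} d_G(1,\delta)$; writing $u^{-1}v$ as a word of length $d_H(u,v)$ over $\Delta$ and mapping into $G$ gives a path of length at most $M \cdot d_H(u,v)$. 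For the lower bound, suppose $d_G(u,v) = m$, so $u^{-1}v = a_1 \cdots a_m$ with $a_i \in \Sigma \cup \Sigma^{-1}$; consider the prefixes $p_j = u \cdot a_1 \cdots a_j$ in $G$, pick the coset representative $r_j \in R$ with $p_j r_j^{-1} \in H$ (with $r_0$ and $r_m$ chosen so that $p_0 r_0^{-1} = u$ and $p_m r_m^{-1} = v$ — possible since $u, v \in H$ already, so $r_0 = r_m = 1$). Then consecutive elements $h_j := p_j r_j^{-1} \in H$ satisfy $h_{j-1}^{-1} h_j = r_{j-1} a_j r_j^{-1}$, which is a single element of $H$ of bounded $\Delta$-length (again because there are only finitely many such elements, indexed by $r_{j-1}, a_j, r_j$); so $d_H(u, v) \le C \cdot m = C \cdot d_G(u,v)$ for a constant $C$. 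Combining, with $k = \max(M, C, k_0)$ we get $\tfrac{1}{k} d_H(u,v) - k \le \tfrac{1}{C} d_H(u,v) \le d_G(u,v) \le M \, d_H(u,v) \le k \, d_H(u,v) + k$, so $f$ is a quasi-isometry.

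The main obstacle — really the only non-formal point — is producing the finite generating set $\Delta$ for $H$ and checking that the ``transition elements'' $r a \bar{}$ and $r_{j-1} a_j r_j^{-1}$ lie in $H$ and are finite in number; but this is exactly the standard finite-index-subgroup generation argument already used implicitly in \prref{ex:vfreeex}, so it can be invoked rather than reproved. Everything else is bookkeeping with the word metric and the finiteness of $R$ and $\Sigma$. I would also remark at the end that the statement is symmetric since, as noted just before the lemma, being quasi-isometric is a symmetric relation, so it does not matter which Cayley graph we regard as source.
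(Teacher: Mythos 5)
Your proof is correct, and since the paper states \prref{lem:quasifi} without proof (calling it ``straightforward to see''), your argument simply supplies the standard justification: the inclusion $H\hookrightarrow G$ together with the Schreier generating set $\Delta$ for $H$ is exactly the intended quasi-isometry, and your verification of coarse surjectivity and the two metric inequalities is sound (indeed, since $r_j=\overline{r_{j-1}a_j}$, each transition element $r_{j-1}a_jr_j^{-1}$ is itself a Schreier generator or trivial, so one may even take $C=1$). The only cosmetic points are to discard the identity from $\Delta$ to conform to the paper's convention $1\notin\Sigma$, and to recall that the quasi-isometry type is independent of the chosen finite generating sets, so proving the statement for your particular $\Delta$ suffices.
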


\begin{proposition}\label{prop:quasitree}
Let $\GG$ have finite treewidth and assume that the degree of $\GG$ and $\GG'$ is uniformly 
bounded by some constant $d$. If $\GG'$ is quasi-isometric to $\GG$, then $\GG'$ has finite treewidth, too.
\end{proposition}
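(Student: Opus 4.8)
The plan is to pull a tree decomposition of $\GG$ of finite bag-size back to $\GG'$ along a quasi-isometry, after a preliminary enlargement (``fattening'') of the bags. Since being quasi-isometric is a symmetric relation, I would first fix a quasi-isometry $f\colon V(\GG')\to V(\GG)$ together with a single integer constant $k\ge 1$ for which
\[\frac{1}{k}\, d(u',v')-k\;\le\; d\bigl(f(u'),f(v')\bigr)\;\le\; k\, d(u',v')+k\]
holds for all $u',v'\in V(\GG')$. Let $(T,(X_t)_{t\in V(T)})$ be a tree decomposition of $\GG$ with $\bags(T)\le m<\infty$; note that $\GG$ is locally finite, having degree $\le d$. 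Iterating \prref{lem:umgeb} $2k$ times, the pair $(T,(X_t')_{t\in V(T)})$ with $X_t':=N^{2k}(X_t)$ is again a tree decomposition of $\GG$, and each bag $X_t'$ is still finite because $\GG$ has bounded degree.

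I would then use the pullbacks $Y_t:=f^{-1}(X_t')\sse V(\GG')$ as bags, keeping the same underlying tree $T$, and verify the three axioms of a tree decomposition for $(T,(Y_t)_{t\in V(T)})$. Axiom (T1) holds because for every $v'\in V(\GG')$ the vertex $f(v')$ lies in some bag $X_t\sse X_t'$, so $v'\in Y_t$. Axiom (T3) transfers formally: if $v'\in Y_s\cap Y_t$ then $f(v')\in X_s'\cap X_t'$, so $f(v')$ lies in every bag of the enlarged decomposition along the $T$-geodesic from $s$ to $t$, and applying $f^{-1}$ shows the same for $v'$. Axiom (T2) is exactly the reason for the fattening: for an edge $u'v'$ of $\GG'$ one has $d(f(u'),f(v'))\le k\cdot 1+k=2k$, so if $X_s$ is any bag with $f(u')\in X_s$, then $f(v')\in N^{2k}(X_s)=X_s'$ and hence $u',v'\in Y_s$.

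Finally I would bound the bag-size. The size of $X_t'=N^{2k}(X_t)$ is at most $m$ times the number of vertices in a ball of radius $2k$ in $\GG$, which is a constant since $\GG$ has degree $\le d$. Moreover the fibres of $f$ are uniformly small: if $f(u')=f(v')$, then the left inequality above gives $d(u',v')\le k^2$, so $f^{-1}(w)$ is contained in a ball of radius $k^2$ in $\GG'$ and therefore has at most a constant number of vertices, as $\GG'$ has degree $\le d$. Hence $\abs{Y_t}\le C$ for a constant $C=C(d,k,m)$, the decomposition $(T,(Y_t))$ has bag-size $\le C<\infty$, and $\GG'$ has finite treewidth.

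The main obstacle — and the only place where both degree hypotheses enter — is precisely this last uniform bound: dropping the degree bound on $\GG$ could make the enlarged bags $N^{2k}(X_t)$ infinite, and dropping it on $\GG'$ could make the fibres $f^{-1}(w)$ infinite, either of which would violate the requirement that bags be finite. The axiom checks themselves are routine once the fattening radius $2k$ is chosen so as to absorb the additive and multiplicative constants of the quasi-isometry that appear in the verification of (T2).
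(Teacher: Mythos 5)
Your proof is correct and follows essentially the same route as the paper: fatten each bag of a finite-width tree decomposition of $\GG$ by a fixed neighborhood radius (via \prref{lem:umgeb}) so that images of edges of $\GG'$ land inside single bags, pull the bags back along the quasi-isometry, and bound the resulting bag-size using the degree bound on $\GG$ for the fattening and the degree bound on $\GG'$ for the fibres. The only difference is cosmetic: you make the radius $2k$ and the fibre-diameter bound $k^2$ explicit, where the paper simply names the relevant constants.
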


\begin{proof}
Let $\phi: \GG' \to \GG$ be a quasi-isometry and assume that we have a tree decomposition of $\GG$ with finite bag-size. 
Let $\ell$ be a constant such that for all edges $uv\in E(\GG')$ we have $d(\phi(u),\phi(v))\leq \ell$.
We construct a tree decomposition of $\GG'$ with the same underlying tree by replacing every bag $X$ with $\phi^{-1}(N^\ell(X))$. 
In fact, replacing every bag $X$ of the original tree decomposition by $N^\ell(X)$ yields a tree decomposition by \prref{lem:umgeb}. 
The step to the preimage does not destroy the conditions (T1) and (T3). 
By the choice of $\ell$ also (T2) is assured.

It remains to show that the bag-size is finite. Since $\phi(u) = \phi(v)$ implies that $u$ and $v$ are in bounded distance and the degree in $\GG'$ is uniformly bounded, there is some constant $k$ such that $\abs{\phi^{-1}(v)}\leq k$ for all $v\in V(\GG)$.
We have $\abs{N^\ell(X)} \leq d^\ell \cdot \abs X$, and hence $\abs {\phi^{-1}(N^\ell(X))} \leq k \cdot d^\ell\cdot\abs X$.
\end{proof}

The next statements follow directly from \prref{lem:quasifi} and \prref{prop:quasitree}.

\begin{corollary}\label{cor:quasiso}
Let $G$ be a virtually free group. Then the Cayley graph of $G$ is quasi-isometric to a tree. 
\end{corollary}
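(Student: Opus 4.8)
The plan is to unwind the definition of \emph{virtually free} and then invoke the two facts that are already at hand. By definition, $G$ has a free subgroup $F = F_{\Sig}$ of finite index. First I would choose the free basis $\Sig$ itself as generating set for $F$; then, as recorded in the discussion of Cayley graphs above (the case $G = F_{\Sig}$), the Cayley graph $\Gamma_{\Sig}(F)$ is a tree.

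Next I would apply \prref{lem:quasifi} to the pair $F \leq G$: it yields that the Cayley graph of $F$ is quasi-isometric to the Cayley graph of $G$. Since being quasi-isometric is a symmetric relation (this was observed immediately after the definition of quasi-isometry, using the axiom of choice), the Cayley graph of $G$ is quasi-isometric to $\Gamma_{\Sig}(F)$, which is a tree. This is exactly the assertion of the corollary.

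The only point needing a word of care is that ``the Cayley graph of $G$'' is well-defined only once a finite generating set is fixed; but any two finite generating sets of a finitely generated group give quasi-isometric Cayley graphs — this is the standard fact underlying the statement of \prref{lem:quasifi} — and quasi-isometry is transitive, so the conclusion holds for every finite generating set of $G$. There is no genuine obstacle here: the corollary is immediate from \prref{lem:quasifi} together with the symmetry and transitivity of quasi-isometry. Note that \prref{prop:quasitree} is not actually used for this statement itself; it enters only for the companion consequence that the Cayley graph of a virtually free group, being quasi-isometric to a tree (which trivially has finite treewidth), has finite treewidth.
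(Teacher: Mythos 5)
Your proof is correct and is exactly the argument the paper intends: the corollary is stated as following directly from \prref{lem:quasifi}, applied to a finitely generated free subgroup of finite index whose Cayley graph with respect to a free basis is a tree, together with symmetry and transitivity of quasi-isometry. Your side remark that \prref{prop:quasitree} is only needed for the subsequent treewidth consequence, not for this statement itself, is also accurate.
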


\begin{corollary}\label{cor:treewidthquasitotree}
Let $\GG$ be quasi-isometric to a tree and of uniformly bounded degree. Then $\GG$ has finite treewidth.
\end{corollary}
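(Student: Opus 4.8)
The plan is to combine two facts: every tree has finite treewidth, and finite treewidth is preserved under quasi-isometry between graphs of uniformly bounded degree, which is exactly \prref{prop:quasitree}.

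For the first fact, given a tree $T$, fix a root and orient the edges away from it; then $t\mapsto X_t := \{t\}\cup\{\mathrm{parent}(t)\}$ (with $X_r=\{r\}$ at the root $r$) is a tree decomposition with underlying tree $T$ itself. Indeed, (T1) is clear, (T2) holds since an edge $uv$ with $u=\mathrm{parent}(v)$ lies in $X_v$, and (T3) holds because the nodes $t$ with $v\in X_t$ are $v$ together with its children, which span a subtree. Its bag-size is at most $2$, so $T$ has finite treewidth irrespective of its degree. If the tree $T$ to which $\GG$ is quasi-isometric happens to have uniformly bounded degree, the corollary is now immediate from \prref{lem:quasifi} is not even needed: one applies \prref{prop:quasitree} directly with this $T$ in the role of $\GG$.

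To handle an arbitrary (possibly not locally finite) tree $T$, I would simply re-run the argument of \prref{prop:quasitree}. Let $\phi\colon V(\GG)\to V(T)$ be a quasi-isometry with constant $k$, let $d$ be a uniform bound on the degree of $\GG$, and let $\ell$ be such that $d_T(\phi(u),\phi(v))\le \ell$ whenever $uv\in E(\GG)$. Take the tree decomposition of $\GG$ whose underlying tree is the rooted $T$ from above and whose bags are $Y_t:=\phi^{-1}\!\bigl(N^\ell(X_t)\bigr)$. Conditions (T1) and (T2) are verified exactly as in the proof of \prref{prop:quasitree}, and (T3) follows by applying \prref{lem:umgeb} $\ell$ times to the decomposition $t\mapsto X_t$ of $T$ and then taking preimages under $\phi$. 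The only point that differs is the bound on the bag-size: $X_t$ has $T$-diameter at most $1$, so $N^\ell(X_t)$ has $T$-diameter at most $2\ell+1$; hence any two vertices of $Y_t$ have $\phi$-images at $T$-distance $\le 2\ell+1$, and the quasi-isometry inequality gives them $\GG$-distance at most $m:=k(2\ell+1)+k^2$. Thus $Y_t$ lies in a ball of radius $m$ in $\GG$, so $\abs{Y_t}\le 1+d+\dots+d^{\,m}$, a bound independent of $t$.

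The main obstacle — really the only subtlety — is precisely this last step: \prref{prop:quasitree} uses bounded degree of \emph{both} graphs to keep the enlarged bags finite, whereas the tree one is handed need not be locally finite. The resolution is the observation just made: pulling the bags back along $\phi$ turns them into bounded-diameter subsets of $\GG$, and bounded diameter together with bounded degree of $\GG$ already forces bounded cardinality, so the degree of $T$ never enters. (Equivalently, one could first replace $T$ by a quasi-isometric bounded-valence tree and then quote \prref{prop:quasitree} verbatim.)
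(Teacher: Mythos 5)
Your proof is correct, and at its core it is the paper's argument: a tree has a tree decomposition of bag-size $2$, and one transports finite treewidth back along the quasi-isometry as in \prref{prop:quasitree}. The paper simply cites \prref{prop:quasitree} with no further comment, but you rightly notice that this citation is not literally licensed: \prref{prop:quasitree} assumes \emph{both} graphs have uniformly bounded degree (its bag-size bound $\abs{N^\ell(X)} \leq d^\ell\cdot\abs{X}$ counts vertices in the target graph), whereas the tree $T$ in the corollary need not be locally finite. Your repair is the right one and is cleaner than invoking the proposition as a black box: since each bag $X_t$ of the tree decomposition of $T$ has $T$-diameter at most $1$, the pulled-back bag $\phi^{-1}(N^\ell(X_t))$ has $\GG$-diameter at most $k(2\ell+1)+k^2$ by the lower quasi-isometry inequality, and bounded diameter plus bounded degree of $\GG$ alone forces bounded cardinality, so the valence of $T$ never enters. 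The verifications of (T1)--(T3) via \prref{lem:umgeb} and preimages are exactly as in \prref{prop:quasitree}, and your explicit bag-size-$2$ decomposition of a rooted tree is standard and correct. In short: same route as the paper, with a genuine gap in the paper's one-line justification identified and filled.
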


\begin{corollary}\label{cor:treewidthsubgroup}
Let $G$ be a group with a Cayley graph having finite treewidth. 
Then the Cayley graph of every finitely generated subgroup w.\,r.\,t.\ to any finite set of generators has finite treewidth. 
\end{corollary}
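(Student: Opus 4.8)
The plan is to reduce everything to a single fixed Cayley graph of $G$ that simultaneously has finite treewidth \emph{and} contains the generators of the given subgroup, and then use that passing to a subgraph cannot increase treewidth. Fix notation: let $\Sigma_0$ be a finite generating set of $G$ for which $\GG_{\Sigma_0}(G)$ has finite treewidth (this is the hypothesis; in particular $G$ is finitely generated), and let $\Sigma_H$ be the given finite generating set of the subgroup $H$. Set $\Sigma = (\Sigma_0 \cup \Sigma_H) \setminus \smallset{1}$; this is again a finite generating set of $G$, with $\Sigma_0 \sse \Sigma$ and $\Sigma_H \sse \Sigma$ (removing $1$ is harmless by our convention on Cayley graphs).

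\textbf{Step 1: $\GG_\Sigma(G)$ has finite treewidth.} Since $\Sigma_0 \sse \Sigma$, every edge of $\GG_{\Sigma_0}(G)$ is an edge of $\GG_\Sigma(G)$, while conversely each edge of $\GG_\Sigma(G)$ corresponds to a path of length at most $\ell := \max\set{d(1,a)}{a \in \Sigma \cup \Sigma^{-1}}$ in $\GG_{\Sigma_0}(G)$ (this maximum is finite because $\Sigma_0$ generates $G$). Hence the identity map on $G$ is a quasi-isometry between $\GG_\Sigma(G)$ and $\GG_{\Sigma_0}(G)$, and both graphs have uniformly bounded degree. By \prref{prop:quasitree}, $\GG_\Sigma(G)$ has finite treewidth. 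Alternatively, one may argue without quasi-isometries: start from a tree decomposition of $\GG_{\Sigma_0}(G)$ of finite bag-size and replace each bag $X$ by its $\ell$-th neighbourhood $N^\ell(X)$ in $\GG_{\Sigma_0}(G)$; by iterated application of \prref{lem:umgeb} this is still a tree decomposition of $\GG_{\Sigma_0}(G)$, its bag-size is finite because $\GG_{\Sigma_0}(G)$ is locally finite, and conditions (T1)--(T3) for $\GG_\Sigma(G)$ are immediate since every new edge joins two vertices at $\GG_{\Sigma_0}(G)$-distance at most $\ell$.

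\textbf{Step 2: conclude via subgraphs.} The Cayley graph $\GG_{\Sigma_H}(H)$ is a subgraph of $\GG_\Sigma(G)$: its vertex set $H$ is a subset of $G = V(\GG_\Sigma(G))$, and each of its edges $(h,a)$ with $a \in \Sigma_H \cup \Sigma_H^{-1}$ is also an edge of $\GG_\Sigma(G)$, since $a \in \Sigma \cup \Sigma^{-1}$, $h, ha \in H \sse G$, and the source, target and involution maps agree. Now \prref{lem:subgraph} gives that the treewidth of $\GG_{\Sigma_H}(H)$ is at most that of $\GG_\Sigma(G)$, which is finite by Step~1. This proves the corollary.

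The only point that is not pure bookkeeping is Step~1: the hypothesis supplies only one Cayley graph of $G$ with finite treewidth, and it need not contain the generators of $H$, so one genuinely needs the invariance of finite treewidth of a Cayley graph under change of finite generating set. This is exactly the combination of quasi-isometry invariance (\prref{prop:quasitree}) with the elementary fact that two finite generating sets of the same group yield quasi-isometric Cayley graphs; everything after that is the one-line subgraph argument of Step~2.
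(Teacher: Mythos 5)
Your proof is correct and is essentially the argument the paper intends: the paper gives no explicit proof, merely asserting the corollary "follows directly" from quasi-isometry invariance of finite treewidth (\prref{prop:quasitree}), and your two steps (enlarge the generating set of $G$ to contain that of $H$, using that changing finite generating sets is a quasi-isometry, then realize $\GG_{\Sigma_H}(H)$ as a subgraph and apply \prref{lem:subgraph}) are exactly the standard way to fill that in. You also correctly identify that the cited \prref{lem:quasifi} (finite-index subgroups) is not what is really needed here, and supply the right substitute.
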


\begin{corollary}\label{cor:quasivirt}
Let $G$ be a finitely generated virtually free group. 
Then its Cayley graph w.\,r.\,t.\ to any finite set of generators has finite treewidth. 
\end{corollary}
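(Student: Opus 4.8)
The plan is simply to assemble the two immediately preceding corollaries. First I would observe that the Cayley graph $\GG_\Sig(G)$ of a finitely generated group with respect to a finite generating set $\Sig$ has uniformly bounded degree: every vertex has degree at most $2\abs\Sig$, since $E(\GG) = G \times (\Sig \cup \Sig^{-1})$ and the graph is simple. This is the only hypothesis of \prref{cor:treewidthquasitotree} that still needs checking once we know $\GG_\Sig(G)$ is quasi-isometric to a tree.

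Next, \prref{cor:quasiso} says that the Cayley graph of a finitely generated virtually free group $G$ is quasi-isometric to a tree. Concretely, one takes a free subgroup $H \cong F_{\Del}$ of finite index; $H$ is finitely generated, being of finite index in the finitely generated group $G$, so its Cayley graph with respect to a finite set of free generators is a (bounded-degree) tree, and \prref{lem:quasifi} gives that this tree is quasi-isometric to $\GG_\Sig(G)$. Combining the two observations, $\GG_\Sig(G)$ is a graph of uniformly bounded degree which is quasi-isometric to a tree, so \prref{cor:treewidthquasitotree} yields that it has finite treewidth. Since $\Sig$ was an arbitrary finite generating set, the claim follows.

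There is essentially no obstacle: all the substantive content — the quasi-isometry invariance of finite treewidth among uniformly bounded-degree graphs in \prref{prop:quasitree}, and the fact that a virtually free group is quasi-isometric to a tree — has already been established. The one point worth a moment's care is to note that "quasi-isometric to a tree" in \prref{cor:quasiso} refers to the Cayley graph of the free finite-index subgroup, which really is a tree and hence has treewidth $1$, so that the hypotheses of \prref{cor:treewidthquasitotree} are met verbatim.
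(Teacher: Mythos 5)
Your proof is correct and follows exactly the route the paper intends: \prref{cor:quasivirt} is grouped with \prref{cor:quasiso} and \prref{cor:treewidthquasitotree} as a direct consequence of \prref{lem:quasifi} and \prref{prop:quasitree}, which is precisely the chain you assemble. Your explicit checks of the bounded-degree hypotheses (for both the Cayley graph and the tree) are the right points to verify and are handled correctly.
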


\section{Cuts and Structure Trees}\label{sec:cuts}
In order to prove that the groups having a Cayley graph of finite treewidth~-- and hence the context-free groups~-- are fundamental groups of finite graphs of groups with finite vertex groups, we have to do some work. The proof we present here is via cuts and structure trees. It goes back to the ideas of Dunwoody \cite{Dunwoody82}, which was rewritten in \cite{kroen10}.
The following text is taken from \cite{diekertW13}. It is reproduced in order to keep these notes self-contained.

\subsection{Cuts in Graphs of Finite Treewidth}

Let $\GG= (V(\GG),E(\GG))$ be a connected and locally finite graph. As in the previous section we assume that $\GG$ is simple without mentioning it further. Similarly to the vertex-boundary we define the edge-boundary of some subset $C\sse V(\GG)$: 
\[
\begin{array}{lrcl}
\text{Edge-boundary: }&\delta C & =&\set{uv\in E(\Gamma)}{u\in C, v\in \Comp{C}}.\\
\text{Vertex-boundary: } &\beta C&=& \set{\; u\in V(\Gamma)}{\exists\, v\in V(\Gamma) \text{ with } uv\in \delta C}.
\end{array}
\]

\begin{definition}\label{def:cut}
A \emph{cut} is a subset $C \subseteq V(\GG)$ such that
 \begin{enumerate}
\item $C$ and $\Comp{C}$ are non-empty and connected, 
\item $\delta C$ is finite.
\end{enumerate}
The \emph{weight} of a cut is defined by $\abs{ \del C}$. If $\abs{\delta C}\leq k$, then we call $C$ a \emph{$k$-cut}.
\end{definition}
We are interested in cuts where both parts $C$ and $\Comp{C}$ are infinite. 
However, there might be no such cuts. For instance, consider the infinite grid
$\Z\times \Z$. 
It is connected and locally finite, but there is no cut splitting it into two infinite connected components. We will see that there is always such a cut if $\GG$ is a graph of finite treewidth with $\abs{\Aut(\GG)\bs \GG}<\infty$.

The following crucial observation can be found in \cite{ThomassenW93} in a slightly different formulation:

\begin{lemma}
\label{lem:endl_k_cuts}
Let $\GG= (V(\GG),E(\GG))$ be a connected and locally finite graph, let $S \sse V(\Gamma)$ be finite
and $k \geq 1$. There are only finitely many $k$-cuts $C$ with $\beta{C}\cap S \neq \es$.
\end{lemma}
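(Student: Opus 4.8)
The plan is to argue that any $k$-cut $C$ with $\beta C \cap S \neq \es$ is entirely controlled by the finitely many possibilities for its edge-boundary $\delta C$, and that there are only finitely many such boundaries. First I would fix a vertex $s_0 \in S$ (the statement is trivial if $S = \es$) and observe that since $\GG$ is connected and locally finite, the ball $B_n$ around $S$ of any fixed radius $n$ is finite; in particular the set of edges incident to $S$ is finite. The key reduction is: a cut $C$ is determined by its edge-boundary $\delta C$, because $\Comp C$ is the union of those connected components of $\GG - \delta C$ not containing... — more carefully, once $\delta C$ is known, $C$ is a union of connected components of the graph $\GG'$ obtained by deleting the edges in $\delta C$, and the requirement that both $C$ and $\Comp C$ be connected forces $C$ to be exactly one such component (the one meeting the prescribed side). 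So it suffices to bound the number of possible sets $\delta C$.

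Next I would show that every edge $uv \in \delta C$ lies within bounded distance of $S$. Suppose $uv \in \delta C$ with, say, $u \in C$, $v \in \Comp C$. Pick a vertex $w \in \beta C \cap S$; without loss of generality $w \in C$ (the other case is symmetric, using $\Comp C$ connected). Since $C$ is connected, there is a path inside $C$ from $w$ to $u$. I claim this path can be taken of length at most $k = \abs{\delta C}$: indeed, a shortest path in $C$ from $w$ to any vertex incident to $\delta C$ cannot be long, because... — here is the real content. The vertex-boundary $\beta C$ restricted to $C$ has at most $k$ vertices, and any vertex of $C$ reachable from $w$ within $C$ but "far" from $\beta C$ would be deep in $C$; but $u$ is incident to $\delta C$, so $u \in \beta C \cap C$. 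The correct bound comes from the fact that $\beta C \cap C$ is a finite set of at most $k$ vertices and $C \setminus \beta C$ may be large, so I cannot bound $d(w,u)$ inside $C$ directly. Instead I would use: $w$ and $u$ both lie in $\beta C \cap C$, which is a set of at most $k$ vertices; consider the subgraph of $\GG$ induced on $\beta C$ — it need not be connected. The clean argument is to bound $d_\GG(w,u)$ (distance in the ambient graph, not inside $C$): any path from $w$ to $u$ that leaves $C$ must cross $\delta C$, and one can splice; ultimately $d_\GG(w,u) \le $ something like $2k$ by an Eulerian-type or exchange argument on the at most $k$ boundary edges.

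Granting that every vertex in $\beta C$ lies within some fixed distance $\rho = \rho(k)$ of $S$, I conclude: every edge of $\delta C$ has both endpoints in the finite set $N^\rho(S)$ (the $\rho$-th neighborhood of $S$), which is finite by local finiteness of $\GG$. Hence $\delta C$ is a subset of the finite edge set $E_0 := \{\, e \in E(\GG) \mid e \subseteq N^\rho(S) \,\}$ of size at most $k$, so there are only finitely many possibilities for $\delta C$, and therefore — by the determination argument of the first paragraph — only finitely many cuts $C$ with $\beta C \cap S \neq \es$.

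**Main obstacle.** The delicate point is the distance bound in the second paragraph: showing that all of $\beta C$ stays within a radius depending only on $k$ (not on $C$) of the fixed finite set $S$. The naive attempt — bounding the distance inside $C$ — fails because $C$ can have an arbitrarily long "thin" part before fanning out, yet $\beta C$ itself has at most $k$ vertices so this cannot actually happen; the rigorous version is a connectivity/exchange argument on the boundary edges: if $\delta C$ has $k$ edges, the $2k$ endpoints span a set whose pairwise ambient distances are bounded by $2k$, because any two boundary vertices on the same side are joined by a path on that side, and a shortest such path cannot revisit the boundary structure more than $k$ times. I would carry this out carefully as the heart of the proof; everything else (finiteness of neighborhoods, reduction to boundaries, determination of $C$ from $\delta C$) is routine given local finiteness and the definition of a cut.
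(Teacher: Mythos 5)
Your reduction steps are sound: a cut $C$ is determined, up to choosing one of the two components of $\GG-\delta C$, by its edge-boundary, so it would indeed suffice to show that only finitely many edge sets arise as $\delta C$. The gap is exactly in the step you single out as the heart of the proof: the claim that the vertices of $\beta C$ of a $k$-cut have pairwise distance bounded by a function of $k$ alone is \emph{false}. Let $\GG$ consist of two disjoint bi-infinite paths $A=(a_i)_{i\in\Z}$ and $B=(b_i)_{i\in\Z}$ together with the two extra edges $a_0b_0$ and $a_nb_n$; this graph is connected and locally finite. Then $C=A$ is a $2$-cut (both $C$ and $\Comp C=B$ are connected and $\delta C=\smallset{a_0b_0,a_nb_n}$), yet $d(a_0,a_n)=n$ can be arbitrarily large. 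The shortest path inside $C$ from $a_0$ to $a_n$ meets $\beta C$ only at its two endpoints, so no ``revisiting the boundary'' or exchange argument can shorten it: one side of a cut can be a long thin tube carrying a single boundary edge at each end. Hence there is no radius $\rho=\rho(k)$ with $\beta C\sse N^\rho(S)$, and your finite edge set $E_0$ does not exist. A radius depending on $\GG$ and $S$ would of course suffice for your scheme, but its existence is (given your first reduction) essentially equivalent to the lemma, so it cannot serve as an independent stepping stone without a new idea.

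The paper's proof avoids all metric estimates and proceeds by induction on $k$. By local finiteness it is enough to fix a single edge $e=uv$ and show that only finitely many $k$-cuts $C$ have $e\in\delta C$. If $e$ is a bridge, every such cut has weight $1$ and is determined by $e$; otherwise fix a path $\gamma$ from $u$ to $v$ in $\GG-e$. Any $k$-cut $C$ with $e\in\delta C$ is a $(k-1)$-cut of the connected graph $\GG-e$, and since $\gamma$ joins $u\in C$ to $v\in\Comp C$, its boundary must meet the finite vertex set of $\gamma$; the induction hypothesis applied to $\GG-e$ and that vertex set finishes the argument. You should rebuild your proof along these lines rather than via a distance bound on $\beta C$.
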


\begin{proof} 
Let $e=uv \in E(\GG)$ be some fixed edge.
Since $\Gamma$ is locally finite, it is enough to show that the set of $k$-cuts $C$ with $e \in \delta C$ is finite. 
For $k=1$ this is trivial because there is at most one cut with $\smallset{e} = \delta C$. If the graph $\Gamma - e$ is not connected anymore,
\ie  $e$ is a so-called \emph{bridge}, then all cuts with $e \in \delta C$ have weight $k=1$. 
Thus, we may assume that the graph $\Gamma - e$ is still connected; and we may fix a path $\gamma$ from $u$ to $v$ in $\Gamma - e$.
Every $k$-cut $C$ with $e \in \delta C$ becomes a $k-1$-cut $C$ in the graph $\Gamma - e$. Such a cut must use one edge of $\gamma$ because otherwise we had either both $u,v \in C$ or both 
$u,v \in \Comp C$. By induction, there are only finitely
many $k-1$-cuts using vertices of $\gamma$. Thus, we are done. 
\end{proof}

In the following bi-infinite simple paths will play an important role for us (a bi-infinite path is a subgraph $(\{\dots, v_{-1},v_0,v_1,\ldots\}, \,\{\dots e_{-1},e_0,e_1,\ldots\})$ such that $s(e_i) =v_{i-1}$ and $t(e_i)=v_{i}$ for all $i\in \Z$). 
Note that if there is a cut with $C$ and $\Comp{C}$ infinite, then we can take two one-sided infinite paths one lying entirely in each component $C$ and $\Comp{C}$. Connecting the two paths results in a bi-infinite path $\alpha$ such that $\abs{\alp \cap C} = \infty = \abs{\alp \cap \Comp C}$. However, in general, not every bi-infinite path is split by a cut into two infinite pieces. For a bi-infinite simple path $\alp$ we define:
\begin{align*}
\cC(\alp) &= \set{C \sse V(\GG)}{\text{$C$ is a cut and } \abs{\alp \cap C} = \infty = \abs{\alp \cap \Comp C}}.
\end{align*}
That means $\cC(\alp)\neq \es$ \IFF there is a cut such that
the graph $\alp - \del C$ has exactly two infinite components each of these two being a one-sided infinite subpath of $\alp$. 

We say $\GG$ has \emph{more than one end} if there is some finite set $S\sse V(\GG)$ such that $\GG -S$ has more than one infinite connected component. Otherwise, $\GG$ has at most \emph{one end}.

In our setting ($\GG$ connected and locally finite) this means that $\GG$ has more than one end if and only if there is some bi-infinite simple path $\alp$ such that 
$\cC(\alp) \neq \es$. Note that in literature there are various different definitions for the ends of a graph. However, for connected, locally finite graphs they all coincide.

\begin{lemma}\label{lem:sep_von_1}
Let $\Gamma$ be a graph of finite treewidth and uniformly bounded degree. Then there exists some $k\in \N$ satisfying the following property: For every one-sided infinite simple path $\gamma$, every $v_0\in V(\Gamma)$, and every $n \in \N$ there is some $k$-cut $D$ with $d(v_0, \Comp{D})\geq n$, $v_0\in D$, and $\abs{\Comp{D}\cap \gamma} = \infty$.
\end{lemma}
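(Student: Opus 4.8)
The plan is to use a tree decomposition $T$ of $\GG$ of finite bag-size $b$, and translate the combinatorial structure of $T$ into the existence of the desired cut. First I would fix the one-sided infinite simple path $\gamma$, the vertex $v_0$, and $n\in\N$. Since $\gamma$ is infinite and each bag is finite, by property (T3) the set of bags $t\in V(T)$ with $X_t\cap\gamma\neq\es$ cannot be confined to a bounded part of $T$: more precisely, I would argue that $\gamma$ must "escape to infinity" along $T$, producing a ray in $T$. Concretely, consider the ball $B = N^{n}(\{v_0\})$ in $\GG$, which is finite because the degree is bounded; by (T1) and (T3) there are only finitely many bags meeting $B$ (again using that each vertex lies in finitely many bags, after passing to a tree decomposition as in \prref{prop:basictree}), and these finitely many bags span a finite subtree $T_0$ of $T$. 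Since $\gamma$ is infinite and all but finitely many of its vertices lie outside $B$, infinitely many vertices of $\gamma$ lie in bags outside $T_0$. Because $T\setminus T_0$ has only finitely many connected components (as $T_0$ is finite and $T$ is locally finite, after \prref{prop:basictree}), one of these components $T_1$ contains bags meeting $\gamma$ in infinitely many points.

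Now I would set $C=\bigcup_{t\in T_1} X_t$ and $D = V(\GG)\setminus(C\setminus \beta C)$ — i.e., the cut should separate the "far" region hit by $\gamma$ from $v_0$. The key point is that the edge set between $T_0$-bags and $T_1$-bags is controlled: there is a single edge $e = s_0s_1$ of $T$ with $s_0\in T_0$, $s_1\in T_1$ (since $T_1$ is a component of $T\setminus T_0$ attached to $T_0$ at one node), and by \prref{prop:basictreecut}\ref{basiii} the "interface" vertex set $X_{s_0}\cap X_{s_1}$ has size at most $b$. By the standard separation property of tree decompositions — which is exactly \prref{prop:basictreecut}\ref{bas0} applied with $Z=X_{s_0}$ (or $X_{s_1}$) on the geodesic between any $T_0$-bag and any $T_1$-bag — every path in $\GG$ from a vertex appearing only in $T_0$-bags to a vertex appearing only in $T_1$-bags must pass through $X_{s_0}\cap X_{s_1}$. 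This separating set $Z:=X_{s_0}\cap X_{s_1}$ has size $\leq b$, so the edge-boundary of the component of $\GG-Z$ containing the tail of $\gamma$ has size bounded by $b\cdot(\text{max degree})=:k$, giving a $k$-cut. I would take $D$ to be the complement of that component (the side containing $v_0$), so $v_0\in D$, $\abs{\Comp D\cap\gamma}=\infty$, and $\delta D$ is the finite set of edges incident to $Z$ on the far side.

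It remains to guarantee $d(v_0,\Comp D)\geq n$. Here I would be more careful in the choice of $T_0$: instead of $B=N^{n}(\{v_0\})$, take $B=N^{n}(\{v_0\})$ and note that $\Comp D$ is disjoint from the interior $C\setminus\beta C$ of the region cut off, while $Z\subseteq\beta D$; since $Z$ lies in a bag meeting $B^{c}$... — more cleanly, one observes that any vertex of $\Comp D$ can be joined to $v_0$ only through $Z$, and by the construction $Z\cap N^{n-1}(\{v_0\})=\es$ provided we enlarge $T_0$ to contain all bags meeting $N^{n}(\{v_0\})$; then no bag of $T_1$ (hence no vertex of $\Comp D$) meets $N^{n-1}(\{v_0\})$, so $d(v_0,\Comp D)\geq n$. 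Finally $k=b\cdot(\text{max degree})$ depends only on $\GG$, as required. I expect the main obstacle to be the bookkeeping that makes $\Comp D$ genuinely infinite and simultaneously at distance $\geq n$ from $v_0$ while keeping $\Comp D$ connected: one must check that the component of $\GG-Z$ carrying the infinite tail of $\gamma$ is the correct one and that replacing it by its complement does not accidentally make $\Comp D$ finite — this uses connectedness of $\GG$ and the fact that $\gamma$ is simple and infinite on the far side. The connectedness of $C$ (needed only if one wants $D$ itself to be a cut in the strict sense of \prref{dfn:cut}) can be arranged by passing to the connected component of $\Comp D$ containing the tail of $\gamma$, which only shrinks the boundary.
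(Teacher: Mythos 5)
Your proposal is correct and follows essentially the same route as the paper: both use the tree decomposition to locate a bag disjoint from the ball $N^n(\{v_0\})$ whose removal separates $v_0$ from an infinite tail of $\gamma$, and both take $k$ to be the bag-size times the maximal degree; the paper finds this bag by walking along a ray $t_0,t_1,\dots$ in $T$ determined by $\gamma$, whereas you carve out the finite subtree of bags meeting the ball, which is only a cosmetic difference. The one point to tighten is the final connectivity repair: it is the $v_0$-side $D$, not the far side $\Comp{D}$, that may fail to be connected, and the fix is exactly the paper's~-- replace $D$ by its connected component containing $v_0$ and note that the enlarged complement remains connected because every discarded component of $D$ has all its external neighbours in the connected far side.
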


\begin{proof}
Let $d$ be the maximal degree of $\Gamma$ and let $T$ be a tree decomposition with bag-size $m = \bags(T)$.
We set $k = dm$. 

Let $t_0\in V(T)$ such that $v_0\in X_{t_0}$. Consider vertices $u, v \in V(\Gamma) - X_{t_0}$ which are in bags of two different connected components of $T- t_0$. Then by \prref{prop:basictreecut}, every path from $u$ to $v$ has a vertex in $X_{t_0}$, so $u$ and $v$ are not in the same connected component of $\Gamma - X_{t_0}$. Since $ X_{t_0}$ is finite, there is exactly one connected component of $\Gamma - X_{t_0}$ which contains infinitely many vertices of $\gamma$.
Let $C_{t_0,\gamma}$ be this component.
Then the set $C_{t_0,\gam}$ is contained in the union of the bags of one connected component of $T- t_0$. Let $t_1$ be the neighbor of $t_0$ in this connected component, which is uniquely defined because $T$ is a tree.

Repeating this procedure yields a simple path $t_0, t_1, t_2, \ldots$ in $T$ and a sequence of connected sets
$C_{t_0,\gam}, C_{t_1,\gam}$, $C_{t_2,\gam}, \ldots$ such that $\abs{\gam \cap C_{t_i,\gam}} = \infty$ for all $i\in \N$. By \prref{prop:basictree}, we may assume that every node $v \in V(\Gamma)$ is contained in only finitely many bags. Hence, we can choose $\ell$ large enough such that $X_{t_\ell}$ does not contain any $v \in V(\Gamma)$ with $d(v_0,v) \leq n$. 

Now, let $D$ be the connected component of $\Comp{C_{t_\ell,\gam}}$ which contains $v_0$. Then $\Comp{D}$ is connected because every vertex in another connected component of $\Comp{C_{t_\ell,\gam}}$ is connected with $C_{t_\ell,\gam}$ inside of $\Comp{D}$ and $C_{t_\ell,\gam}$ itself is connected.

Since every edge of $\delta D$ has one of its incident nodes in $X_{t_\ell}$, we have $\abs{\delta D} \leq dm = k$. Thus, $D$ is a $k$-cut with $v_0\in D$ and $\abs{\Comp{D}\cap \gamma} = \infty$. Furthermore, since every path from $v_0$ to a vertex $v\in \Comp{D}$ uses a vertex of $X_\ell$, we have $d(v_0, \Comp{D})\geq n$.
\end{proof}

From \prref{lem:sep_von_1} we can derive that, if there is a cut splitting some bi-infinite simple path, then there is already such a cut with weight less than some constant which only depends on $\GG$. This leads to the following definition 
 due to \cite{ThomassenW93}:
\begin{definition}\label{def:accessible}
A graph is called \emph{accessible} if there exists a constant $k \in \N$ such that for every bi-infinite simple path $\alp$ either $ \cC(\alp) $ is empty or $ \cC(\alp) $ contains some $k$-cut
 \end{definition}
 
 The origin of this definition is the accessibility of groups: using results from \cite{DicksD89}, Thomassen and Woess \cite{ThomassenW93} showed that a group is accessible if and only if its Cayley graph is accessible.

\begin{proposition}\label{prop:fritz}
Let $\Gamma$ be a graph of finite treewidth and uniformly bounded degree. 
Then $\Gamma$ is accessible.
\end{proposition}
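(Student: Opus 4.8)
The plan is to derive accessibility directly from \prref{lem:sep_von_1}, which already produces $k$-cuts of bounded weight, with $k = dm$ where $d$ bounds the degree and $m$ is the bag-size of a fixed tree decomposition. So I fix such a $k$ once and for all; the claim is that this same $k$ witnesses accessibility. Let $\alp = (\dots, v_{-1}, v_0, v_1, \dots)$ be a bi-infinite simple path with $\cC(\alp) \neq \es$, and let $C_0 \in \cC(\alp)$ be some cut (a priori of large weight) with $\abs{\alp\cap C_0} = \infty = \abs{\alp\cap\Comp{C_0}}$. I want to exhibit a $k$-cut in $\cC(\alp)$.

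First I would split $\alp$ at the vertex $v_0$ into two one-sided infinite simple subpaths $\gam^+ = (v_0, v_1, v_2,\dots)$ and $\gam^- = (v_0, v_{-1}, v_{-2},\dots)$. Since $C_0$ and $\Comp{C_0}$ both meet $\alp$ infinitely often and each is connected, one of the two "ends'' of $\alp$ eventually stays in $C_0$ and the other eventually stays in $\Comp{C_0}$ (a simple path crosses the finite edge-boundary $\del C_0$ only finitely often). Relabelling if necessary, say $\gam^+$ is eventually contained in $\Comp{C_0}$ and $\gam^-$ is eventually contained in $C_0$. Now apply \prref{lem:sep_von_1} to the one-sided infinite path $\gam^+$, to the vertex $v_0$, and to a value $n$ large enough that the ball $B_n$ around $v_0$ contains every vertex of $\alp$ that ever lies in the "wrong'' part — more precisely, large enough that $\gam^+ \sm B_n \sse \Comp{C_0}$ and $\gam^- \sm B_n \sse C_0$; such $n$ exists because each of these exceptional sets is finite. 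The lemma yields a $k$-cut $D$ with $v_0 \in D$, $d(v_0, \Comp D) \geq n$, and $\abs{\Comp D \cap \gam^+} = \infty$.

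It remains to check $D \in \cC(\alp)$, i.e. that $\abs{\alp\cap D} = \infty$ as well. Since $d(v_0,\Comp D)\geq n$, the whole ball $B_{n-1}$ around $v_0$ is contained in $D$; in particular $v_0 \in D$ and, by the choice of $n$, the entire "tail'' $\gam^- \sm B_n$ of $\gam^-$ lies in $C_0$, while $\gam^+ \sm B_n \sse \Comp{C_0}$. Because $\abs{\Comp D\cap\gam^+} = \infty$ and $\gam^+$ beyond $B_n$ lies in $\Comp{C_0}$, infinitely many vertices of $\Comp{C_0}$ lie in $\Comp D$; since $\Comp{C_0}$ is connected and meets $\Comp D$, and $D$ already contains $B_{n-1}\ni v_0$ together with the $C_0$-tail of $\gam^-$... here I need the complementary bound on the other end. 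The clean way is to run \prref{lem:sep_von_1} symmetrically: apply it also to $\gam^-$ with $v_0$ and the same $n$ to get a $k$-cut $D'$ with $\abs{\Comp{D'}\cap\gam^-}=\infty$; then observe that $\alp$ being bi-infinite and simple forces $D\cap D'$ (or one of $D, \Comp{D'}$) to be the desired cut separating the two tails. The main obstacle is exactly this bookkeeping: \prref{lem:sep_von_1} as stated only controls \emph{one} side ($\Comp D$ meets $\gam$ infinitely often while $D$ merely contains $v_0$ and a large ball), so I have to combine the two one-sided applications and argue that the resulting cut genuinely has both $\abs{\alp\cap C} = \infty$ and $\abs{\alp\cap\Comp C} = \infty$; the orientation argument above (each tail of $\alp$ eventually commits to one side of $C_0$, and $n$ is chosen past that commitment) is what makes it go through. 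Everything else is immediate from the cited lemma.
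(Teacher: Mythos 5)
Your overall strategy---deduce accessibility from \prref{lem:sep_von_1} with the fixed constant $k=dm$---is exactly the paper's, but the proof has a genuine gap at the step you yourself flag, and the repair you sketch does not close it. \prref{lem:sep_von_1} guarantees $\abs{\Comp{D}\cap\gam^+}=\infty$ but gives no lower bound on $\abs{D\cap\alp}$; with your choice of $n$ (covering only those vertices of $\alp$ that lie on the ``wrong'' side of $C_0$) nothing prevents $\Comp{D}$ from also swallowing the entire tail of $\gam^-$, in which case $D\cap\alp$ is finite and $D\notin\cC(\alp)$. Running the lemma a second time on $\gam^-$ to get $D'$ and passing to $D\cap D'$ does not help: the intersection of two cuts need not be connected, so it need not be a cut at all, and even setting that aside, $(D\cap D')\cap\alp$ could still be finite for the same reason---nothing forces either tail of $\alp$ to revisit $D\cap D'$ infinitely often.

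The missing idea is to calibrate $n$ against the \emph{boundary of the witnessing cut} rather than against the path: fix $v_0\in\beta C_0$ and take $n>\max\set{d(v_0,w)}{w\in\beta C_0}$, which exists because $\del C_0$ is finite. Then $d(v_0,\Comp{D})\geq n$ forces $\Comp{D}\cap\beta C_0=\es$, and since $\Comp{D}$ is connected it lies entirely inside $C_0$ or entirely inside $\Comp{C_0}$; equivalently $\Comp{C_0}\sse D$ or $C_0\sse D$. In either case $\abs{D\cap\alp}=\infty$ because $C_0\in\cC(\alp)$, while $\abs{\Comp{D}\cap\alp}=\infty$ comes from the lemma, so this single cut $D$ already lies in $\cC(\alp)$ and no second application of the lemma or combination of cuts is needed. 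This is precisely how the paper's proof concludes; your ``each tail commits to one side of $C_0$'' observation is correct but is not the right quantity to bury inside the ball.
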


\begin{proof}
Let $\alpha$ be a bi-infinite simple path such that $ \cC(\alpha)\neq\es$
and let $C\in\cC(\alpha)$. We fix a vertex $v_0 \in \beta C$ and we let 
$n = \max\set{d(v_0,w)}{ w \in \beta C}$. Let $k \in \N$ be according to 
\prref{lem:sep_von_1}. It follows that there is a $k$-cut $D$ with $\abs{\alpha\cap \Comp{D}}= \infty $, $v_0\in D$, and $d(v_0, \Comp{D})\geq n$. Because of the choice of $n$, we also have $\beta C \sse D$ what means that either $C\sse D $ or $\Comp C \sse D$. In either case $D$ splits $\alpha$ into two infinite pieces.
\end{proof}

\begin{lemma}\label{lem:VDzweis_geod}
Let $\Gamma$ be a connected, locally finite, and infinite graph such that
$\Aut(\Gamma)\bs \Gamma$ is finite. Then there is a bi-infinite geodesic.
 \end{lemma}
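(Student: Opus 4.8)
The plan is to build a bi-infinite geodesic as a limit of finite geodesic segments whose length tends to infinity, using the hypothesis that $\Aut(\Gamma)\bs\Gamma$ is finite in order to keep these segments ``centred'' at a single vertex so that a compactness argument (König's Lemma) can be applied.

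First I would note that $\Gamma$ has infinite diameter. Indeed, since $\Gamma$ is locally finite, every ball $B_n(v)=\set{w\in V(\Gamma)}{d(v,w)\le n}$ is finite, and since $\Gamma$ is infinite and connected, no ball equals $V(\Gamma)$; hence for every $n$ there is a vertex at distance $>n$ from $v$. Consequently, for each $n$ there is a pair of vertices at distance $2n$, and a shortest path between them is a geodesic segment $\tau_n$ of length $2n$. Let $m_n$ be the midpoint vertex of $\tau_n$, i.e.\ the vertex at distance $n$ from both endpoints.

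The key step is the recentring. Since $\Aut(\Gamma)\bs\Gamma$ is finite, the vertices $m_n$ lie in only finitely many $\Aut(\Gamma)$-orbits, so by the pigeonhole principle infinitely many of them lie in one orbit; passing to that subsequence and applying suitable $g_n\in\Aut(\Gamma)$, I may assume every $\sigma_n:=g_n(\tau_n)$ is a geodesic segment of length $2n$ with a common midpoint vertex $u:=g_n(m_n)$. Write $\sigma_n=(x^n_{-n},\dots,x^n_0=u,\dots,x^n_n)$, so $d(x^n_j,x^n_k)=|j-k|$ for all $j,k\in\{-n,\dots,n\}$. Now apply König's Lemma (\prref{lem:konig}) to the tree whose level-$N$ nodes are the geodesic segments of length $2N$ centred at $u$ that occur as the central length-$2N$ subsegment of $\sigma_n$ for infinitely many $n$ of the chosen subsequence, with a level-$N$ node joined to a level-$(N+1)$ node when the former is the central subsegment of the latter, rooted at the trivial segment $(u)$. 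Local finiteness of $\Gamma$ makes this tree finitely branching, and iterated pigeonhole makes it infinite, so it has an infinite branch $(u)=\rho_0\subset\rho_1\subset\rho_2\subset\cdots$. The union $\bigcup_N\rho_N$ is a bi-infinite path $(\dots,x_{-1},x_0=u,x_1,\dots)$, and it is geodesic: for any $j,k$, taking $N\ge\max(|j|,|k|)$ and any of the infinitely many $\sigma_n$ realizing $\rho_N$, we get $d(x_j,x_k)=d(x^n_j,x^n_k)=|j-k|$; in particular the path is simple. Thus $\Gamma$ contains a bi-infinite geodesic.

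I expect the recentring step to be the main point: without the finiteness of $\Aut(\Gamma)\bs\Gamma$ the midpoints $m_n$ could escape to infinity and no subsequence of the $\tau_n$ would converge to anything, whereas the finite-quotient hypothesis lets one move them back onto a single vertex, after which local finiteness and König's Lemma finish the argument routinely.
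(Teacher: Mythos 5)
Your proof is correct and follows essentially the same route as the paper: both use the finiteness of $\Aut(\Gamma)\bs\Gamma$ to recentre arbitrarily long geodesic segments at a single vertex and then extract a bi-infinite geodesic via K\"onig's Lemma applied to the resulting locally finite tree of centred segments. Your write-up just spells out the pigeonhole and limit steps that the paper leaves implicit.
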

\begin{proof}
Consider the infinite collection of all geodesics of odd length. 
Since $\Aut(\GG)\bs \GG$ is finite, there exists some fixed vertex $v$ and an infinite collection of geodesics of odd length having $v$ as their middle vertex. These geodesics form a tree. 
The result follows by K\"onigs Lemma.
\end{proof}
Note that we cannot remove any of the requirements in \prref{lem:VDzweis_geod}. 
In particular, we cannot remove that $\Aut(\Gamma)\bs \Gamma$ is finite. For example consider the graph $\Gamma$ with $V(\Gamma) = \Z$ and $E(\Gamma)=\set{\smallset{n,n\pm 1}, \smallset{n,-n}}{n\in \Z}$.
This graph is connected, locally finite, and infinite. It has a bi-infinite simple path, but there is no bi-infinite geodesic.

\begin{proposition}\label{prop:two_ends}
Let $\Gamma$ be connected, locally finite, and infinite such that 
$\Aut(\Gamma)\bs \Gamma$ is finite and let $\GG$ have finite treewidth. Then $\GG$ has more than one end.
 \end{proposition}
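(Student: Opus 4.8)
I would argue by contradiction: assume $\Gamma$ has at most one end. Since $\Gamma$ is connected, locally finite and infinite it automatically has at least one end (removing any finite set leaves an infinite component), so under our assumption every finite $S$ has the property that $\Gamma-S$ has exactly one infinite component. Also, since $\Aut(\Gamma)\bs\Gamma$ is finite there are only finitely many vertex-orbits, hence only finitely many vertex degrees occur and $\Gamma$ has uniformly bounded degree; in particular \prref{lem:sep_von_1} applies. Fix any one-sided infinite simple path $\gamma$ (for instance a half of the bi-infinite geodesic from \prref{lem:VDzweis_geod}, or an infinite geodesic ray in a spanning tree via \prref{lem:konig}), a base vertex $v_0$, and the constant $k$ of \prref{lem:sep_von_1}.

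The first key step is to show that for every $n$ the $k$-cut $D=D_n$ produced by \prref{lem:sep_von_1} (for this $\gamma$, $v_0$, and radius $n$) is \emph{finite}. Indeed $\Comp{D}$ is connected and, containing a tail of $\gamma$, infinite. If $D$ were also infinite, then since $\beta D$ is finite (at most $2k$ vertices) while $D$ and $\Comp D$ are each connected, both $D\sm\beta D$ and $\Comp D\sm\beta D$ would be infinite unions of only finitely many components of $\Gamma-\beta D$, so each would contain an infinite component; this gives two infinite components of $\Gamma-\beta D$, contradicting one-endedness. Hence $D_n$ is finite. On the other hand $d(v_0,\Comp{D_n})\ge n$ forces $B_{n-1}(v_0)\sse D_n$, so $\abs{D_n}\to\infty$ as $n\to\infty$.

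The second key step turns the bounded weight of the $D_n$ into a contradiction. Each $D_n$ has $\delta D_n\neq\es$ (both sides are nonempty and $\Gamma$ is connected), so pick $u_n\in\beta D_n$. As there are only finitely many vertex-orbits, infinitely many of the $u_n$ lie in one common orbit; fix a representative $w$ of that orbit and automorphisms $g_n$ with $g_n(u_n)=w$. Then each $g_n(D_n)$ is again a $k$-cut (automorphisms preserve connectedness and the weight), now with $w\in\beta(g_n(D_n))$. By \prref{lem:endl_k_cuts} there are only finitely many $k$-cuts whose vertex-boundary meets the finite set $\{w\}$, so by the pigeonhole principle $g_n(D_n)$ equals a single cut $C$ for infinitely many $n$; but then $\abs{D_n}=\abs{C}$ for these $n$, contradicting $\abs{D_n}\to\infty$. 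Therefore $\Gamma$ has more than one end.

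I expect the only genuinely subtle point to be the first step — that each cut $D_n$ must be finite — which is exactly where the one-end assumption is used; the rest is bookkeeping combining \prref{lem:sep_von_1}, \prref{lem:endl_k_cuts} and the finiteness of the orbit set. It is worth noting that finite treewidth enters the argument solely through \prref{lem:sep_von_1}: it is what supplies the bounded-weight cuts $D_n$ in the first place, and without it (e.g.\ for the infinite grid of \prref{ex:nixZZ}) the argument correctly fails, since then no such bounded cut separating $v_0$ from infinity exists.
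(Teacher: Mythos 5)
Your proof is correct, but it takes a genuinely different route from the paper's. Both arguments begin the same way: uniformly bounded degree from the finiteness of $\Aut(\GG)\bs\GG$, then \prref{lem:sep_von_1} to produce $k$-cuts $D_n$ with $v_0\in D_n$, $d(v_0,\Comp{D_n})\geq n$ and $\abs{\Comp{D_n}\cap\gamma}=\infty$. From there the paper uses \prref{lem:endl_k_cuts} together with the finiteness of the orbit set to get a \emph{uniform bound} $m$ on the diameter of $\beta C$ over all $k$-cuts, takes a \emph{bi-infinite geodesic} $\alpha$ through $v_0$ via \prref{lem:VDzweis_geod}, and argues metrically: under one-endedness a cut $C$ with $d(v_0,\Comp C)>m$ and $\abs{\alpha\cap\Comp C}=\infty$ must also satisfy $\abs{\alpha\cap C}<\infty$, so both tails of $\alpha$ exit $C$ at boundary vertices $v_{-i},v_j$ with $i,j>m$, and $d(v_{-i},v_j)=i+j>2m$ contradicts the diameter bound. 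You instead first show that one-endedness forces each $D_n$ to have a finite side (your justification is sound: $\beta D_n$ is finite, no component of $\GG-\beta D_n$ meets both $D_n$ and $\Comp{D_n}$, and there are only finitely many such components, so two infinite sides would yield two infinite components of $\GG-\beta D_n$); since $D_n$ contains the ball of radius $n-1$, $\abs{D_n}\to\infty$, and a pigeonhole on boundary vertices in a common orbit combined with \prref{lem:endl_k_cuts} gives the contradiction. What each buys: your route needs only a one-sided infinite path and so avoids \prref{lem:VDzweis_geod} entirely (a lemma whose hypotheses are delicate, as the paper's counterexample after it shows), at the cost of the extra finiteness step; the paper's route is more geometric and recycles the uniform diameter bound on vertex-boundaries of $k$-cuts, an ingredient it also exploits elsewhere. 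Both proofs are complete.
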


\begin{proof}
The graph $\Gamma$ has uniformly bounded degree because it is locally finite and $\Aut(\Gamma)\bs \Gamma$ is finite. By \prref{lem:sep_von_1}, there is some $k$ such that for every $n\in \N$, $v_0 \in V(\Gamma)$ and every one-sided infinite simple path $\alpha$ there is a $k$-cut $C$ with $v_0\in C$, $d(v_0,\Comp{C})\geq n$, and $\abs{\Comp{C}\cap \alpha} = \infty$.

Since $\Aut(\Gamma)\bs \Gamma$ is finite, it follows from \prref{lem:endl_k_cuts} that
there are only finitely many orbits of $k$-cuts under the action of $\Aut(\Gamma)$. Therefore, there is some $m\in \N$ such that $\max\set{d(u,v)}{ u,v \in \beta C}\leq m$ for all $k$-cuts $C$.

Assume that $\Gamma$ has only one end. By \prref{lem:VDzweis_geod}, there is some bi-infinite geodesic $\alpha = \dots, v_{-2},v_{-1},v_0,v_1,v_2\dots$. Let $C$ be a $k$-cut with $d(v_0, \Comp{C})> m$ such that $v_0 \in C$ and $\abs{\alpha\cap \Comp{C}}=\infty$. Then $\abs{\alpha\cap C}<\infty$, for otherwise $\cC(\alpha)\neq \es$.

Hence, there are $i,j>m$ with $v_{-i}, v_j \in \beta C\cap \Comp{C}$. But this implies $d(v_{-i},v_j)=d(v_{-i},v_0)+d(v_0,v_j)> 2m$ in contradiction to $d(u,v)\leq m$ for all $u,v \in \beta C$.
Hence, $\GG$ has more than one end.
 \end{proof}

\subsection{Optimally Nested Cuts}
As we have seen, the graphs we are interested in are accessible. Therefore, for the rest of this section
let $\GG= (V(\GG),E(\GG))$ be a connected, locally finite, and accessible graph.
In the following, we only want to deal with cuts having minimal weight among those cuts splitting some bi-infinite simple path. Therefore, for a bi-infinite simple path $\alp$ we define:
\begin{align*}
\cC_{\min}(\alp) &= \set{C \in \cC(\alp)}{\text{$\abs{\del C}$ is minimal in }\cC(\alp)},\\
\cC_{\min} &= \bigcup\set{\cC_{\min}(\alp)}{\alp \text{ is a bi-infinite simple path} }.
\end{align*}
That means we have $\cC_{\min} = \es$ if and only if $\GG$ has at most one end.
The set of minimal cuts may contain cuts of very different weight. 
Indeed, we might have $C,D \in \cC(\alp) \cap \cC_{\min}$
with $C \in \cC_{\min}(\alp)$, but $D \notin \cC_{\min}(\alp)$. 
In this case, there must be another bi-infinite simple path $\bet$ 
with $D \in \cC(\alp) \cap \cC_{\min}(\bet)$ and $\abs{\del C} < \abs{\del D}$. 

\begin{example}\label{ex:different_deltas}
Let $\GG$ be the subgraph of the infinite grid
$\Z \times \Z$ which is induced by the pairs $(i,j)$ satisfying $j\in \{0,1\}$ or $i=0$ and $j\geq 0$. 
Let $\alp$ be the bi-infinite simple path with $i=0$ or $j=1$ and $i\geq 0 $ and let $\bet$ be the bi-infinite simple path defined by $j=0$. 
Then there are cuts $C,D \in \cC(\alp) \cap \cC_{\min}$ with $\abs{\del C} = 1$ and $\abs{\del D} = 2$, see \prref{fig:different_deltas}.

\begin{figure}[ht]
\begin{center}
\begin{tikzpicture}[scale = 0.7]
\def\width{5};
\def\height{4};

\draw (-\width -0.5,1) -- (\width +0.5,1) ;
\draw (-\width -0.5,0) -- (\width +0.5,0) ;
\draw (0,0) -- (0,\height + 0.5) ;
\node () at (-\width -1.1, 0.5) {\footnotesize{$\cdots$}};
\node () at (+\width +1.1, 0.5) {\footnotesize{$\cdots$}};
\node () at (0, \height + 1.3) {\footnotesize{$\vdots$}};
\node (oo) at (0,0){};
\node [below=2pt] {$(0,0)$};
\foreach \x in {1,...,\width}
{
	\draw (\x,1) -- (\x,0);
	\draw (-\x,1) -- (-\x,0);
}
\foreach \y in {2,...,\height}
{
	\draw (-0.075,\y) -- (0.075,\y);
}

\node () at (1.5, 2.0) {$\delta D$};
\draw[dashed] (1.5,1.5) -- (1.5,-0.5) ;
\node () at (-1.2, 2.5) {$\delta C$};
\draw[dashed] (-0.7,2.5) -- (0.7,2.5) ;

\draw[dotted,line width=1.4pt] (-\width -0.8,0) -- (\width +0.8,0) ;

\draw[dotted,line width=1.4pt] (0,1) -- (0,\height + 0.8) ;
\draw[dotted,line width=1.4pt] ( 0,1) -- (\width +0.8,1) ;
\node () at (\width +0.8, 1.4) {$\alpha$};
\node () at (\width +0.8, -0.4) {$\beta$};

\end{tikzpicture}
\end{center}
\caption[]{
The subgraph of the grid $\Z \times \Z$ induced by the pairs $(i,j)$ satisfying $j\in \{0,1\}$ or $i=0$ and $j\geq 0$. Here we have $D \in \cC(\alp) \cap \cC_{\min}$ but $D \notin \cC_{\min}(\alp)$.}\label{fig:different_deltas}
\end{figure}
\end{example}

Two cuts $C$ and $D$ are called \emph{nested} if one of the four inclusions
$C\sse D$, $C\sse \Comp{D}$, $\Comp{C}\sse D$, or $\Comp{C}\sse\Comp{D}$ holds.

\begin{figure}[ht]
\begin{center}
\begin{tikzpicture}
\draw (0,2.3) -- (0,-2.3){};
\draw (2.5,0) -- (-2.5,0){};

\node (a) at (-1.2, 1.2) {$C\cap D$};
\node (a) at (1.2, 1.2) {$\Comp{C}\cap D$};
\node (a) at (-1.2, -1.2) {$C\cap \Comp{D}$};
\node (a) at (1.2, -1.2) {$\Comp{C}\cap \Comp{D}$};

\node[anchor=base] (a) at (-0.25, 1.95) {\footnotesize{$C$}};
\node[anchor=base] (a) at (0.25, 1.95) {\footnotesize{$\Comp{C}$}};
\node (a) at (-2.2, 0.25) {\footnotesize{$D$}};
\node (a) at (-2.2, -0.25) {\footnotesize{$\Comp{D}$}};
\end{tikzpicture}
\end{center}
\caption[]{The corners of $C$ and $D$. Nested cuts have one empty corner.}\label{fig:corners1}
\end{figure}

The set $\oneset{C \cap D, C\cap \Comp{D}, \Comp{C}\cap D, \Comp{C}\cap \Comp{D}}$ is called the set of \emph{corners} of $C$ and $D$, see \prref{fig:corners1}. Two corners $E, E'$ of $C$ and $D$ are called \emph{opposite} if either $\smallset{E, E'} =\smallset{C \cap D, \, \Comp{C}\cap \Comp{D}} $ or $\smallset{E, E'} = \smallset{\Comp{C}\cap D, \, C\cap \Comp{D} }$. 
Two different corners are called \emph{adjacent} if they are not opposite. 
Note that two cuts $C,D$ are nested if and only if one of the four corners of $C$ and $D$ is empty.

\begin{lemma}\label{lem:fred}
Let $k\in \N$ and $C$ be a cut. 
There are only finitely many $k$-cuts which are not nested with $C$.
\end{lemma}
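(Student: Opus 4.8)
The statement to prove is: for $k \in \N$ and a fixed cut $C$, there are only finitely many $k$-cuts which are not nested with $C$. The plan is to reduce this to \prref{lem:endl_k_cuts}, which already says that there are only finitely many $k$-cuts $D$ with $\beta D \cap S \neq \es$ for any fixed finite set $S$. So it suffices to exhibit a finite set $S$ such that every $k$-cut $D$ that is \emph{not} nested with $C$ satisfies $\beta D \cap S \neq \es$. The natural candidate is $S = \beta C$, which is finite since $\del C$ is finite and $\GG$ is locally finite.

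**The key step.**

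So the heart of the argument is the claim: if $D$ is a cut with $\beta D \cap \beta C = \es$, then $D$ is nested with $C$. To see this, suppose $\beta D \cap \beta C = \es$. Recall $D$ is nested with $C$ iff one of the four corners $C \cap D$, $C \cap \Comp D$, $\Comp C \cap D$, $\Comp C \cap \Comp D$ is empty (as noted just before \prref{lem:fred}). Assume for contradiction all four corners are non-empty. Pick $x \in C \cap D$ and $y \in C \cap \Comp D$. Since $C$ is connected, there is a path from $x$ to $y$ inside $C$; this path starts in $D$ and ends in $\Comp D$, hence it must cross an edge of $\del D$, so one of its vertices lies in $\beta D$. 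That vertex lies in $C$, so it lies in $\beta D \cap C$. Running the same argument with $\Comp C$ (using that $\Comp C$ is connected and $\Comp C \cap D$, $\Comp C \cap \Comp D$ are both non-empty) produces a vertex in $\beta D \cap \Comp C$. But we want a vertex in $\beta D \cap \beta C$; a vertex of $\beta D$ sitting in $C$ need not be in $\beta C$. The fix: repeat the argument symmetrically in the other cut. Pick $u \in C \cap D$ and $v \in \Comp C \cap D$; since $D$ is connected, a path inside $D$ joining them crosses $\del C$, giving a vertex in $\beta C \cap D$. Likewise a path inside $\Comp D$ gives a vertex in $\beta C \cap \Comp D$. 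Now I have one vertex of $\beta C$ in $D$ and another in $\Comp D$; the $\beta C$-vertices form no path by themselves, so I instead observe: take a path inside $C$ from the $\beta D$-vertex found above to a vertex of $\del D$; more cleanly, connect a point of $\beta C \cap D$ to a point of $\beta C \cap \Comp D$ by a path inside $\GG(\beta C)$ — but $\GG(\beta C)$ need not be connected. The cleanest route is: take a path $\gamma$ in $\Gamma$ from a vertex $p \in \beta C \cap D$ to a vertex $q \in \beta C \cap \Comp D$ that runs inside $C$ except possibly at endpoints — actually $p,q \in \beta C$ have neighbours in $\Comp C$, but we can route inside $C$ since $C$ is connected; this path crosses $\del D$, yielding a vertex $w \in \beta D$ on it, and $w \in C \cup \{p,q\}$. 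If $w \in \{p,q\}$ then $w \in \beta C \cap \beta D$, contradiction. Otherwise $w \in \beta D \cap C$, and one then argues similarly on the $\Comp C$ side and glues — this gluing is the delicate bookkeeping.

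**The main obstacle.**

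The main obstacle is precisely this gluing: producing a single vertex lying in $\beta C \cap \beta D$ from the four non-empty corners. A slicker way, which I would actually adopt, avoids it: if $\beta D \cap \beta C = \es$, then $\beta D \sse C$ or $\beta D \sse \Comp C$, because $\beta D$ is a connected-enough object — more precisely, $\del D$ is a finite edge set and $\GG(V(\GG)\sm \beta D)$ has the property that $D \sm \beta D$ and $\Comp D \sm \beta D$ lie in different components; one shows that since $C$ is connected and avoids $\beta D$ (as $\beta D \cap \beta C = \es$ forces... no, it forces only $\beta D$ and $\beta C$ disjoint, not $C \cap \beta D = \es$). Hmm. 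The genuinely clean statement is: $C$ connected and $C \cap \del D = \es$ on edges would force $C$ inside one side of $D$. So the right finite set is not $\beta C$ but rather the set of all endpoints of edges in $\del C$ together with — actually $\beta C$ — and the key lemma to invoke is \prref{prop:basictreecut}-style path-crossing. I expect the final clean proof to say: let $S = \beta C$; if $D$ is a $k$-cut with $\beta D \cap S = \es$, then since $C$ is connected and any path from $C\cap D$ to $C\cap \Comp D$ inside $C$ must meet $\beta D \sse V$, and such a meeting vertex lies in $C$; analogously on $\Comp C$; then a short argument (a path inside $\GG$ from the first meeting vertex to the second, chosen to stay near $\beta C$) forces a vertex of $\beta D$ into $\beta C$, contradiction — hence some corner is empty and $D$ is nested with $C$. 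Then \prref{lem:endl_k_cuts} with this $S$ finishes. I would budget most of the writing effort on making the "forces a vertex of $\beta D$ into $\beta C$" step airtight, likely by choosing the connecting path to go $C\cap D \to \beta C \to \Comp C \to \beta C \to C \cap \Comp D$ and tracking where it crosses $\del D$.
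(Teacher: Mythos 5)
Your overall strategy---reduce to \prref{lem:endl_k_cuts} by exhibiting a finite set $S$ that the boundary of every non-nested $k$-cut must meet---is exactly the paper's strategy. But your choice $S=\beta C$ does not work: the key claim you need, namely that $\beta D\cap\beta C=\es$ forces $D$ to be nested with $C$, is \emph{false}, and the ``gluing'' step you flag as the main obstacle and promise to make airtight cannot be completed. Here is a counterexample. Let $\GG$ be a $12$-cycle $v_1,\dots,v_{12}$ with one infinite ray attached at each of $v_1,v_4,v_7,v_{10}$ (rays $A,X,B,Y$ respectively); this graph is connected and locally finite. Put $C=A\cup X\cup\{v_{12},v_1,v_2,v_3,v_4,v_5\}$ and $D=A\cup Y\cup\{v_9,v_{10},v_{11},v_{12},v_1,v_2\}$. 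Both are cuts: each of $C,\Comp{C},D,\Comp{D}$ is connected and $\delta C=\{v_5v_6,\,v_{11}v_{12}\}$, $\delta D=\{v_2v_3,\,v_8v_9\}$ are finite. All four corners are non-empty (they contain $A$, $X$, $Y$, $B$ respectively), so $C$ and $D$ are not nested; yet $\beta C=\{v_5,v_6,v_{11},v_{12}\}$ and $\beta D=\{v_2,v_3,v_8,v_9\}$ are disjoint. So with $S=\beta C$ this non-nested $2$-cut $D$ escapes \prref{lem:endl_k_cuts} entirely.

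The repair is small but essential, and it is what the paper does: take $S$ to be a finite \emph{connected} subgraph of $\GG$ containing all of $\beta C$ (this exists since $\GG$ is connected and $\beta C$ is finite and non-empty). If $\beta D\cap S=\es$, then the connected set $S$ cannot contain two adjacent vertices on opposite sides of $D$, so $S$---and hence $\beta C$---lies entirely in $D$ or entirely in $\Comp{D}$; say $\beta C\sse D$. Then $\Comp{D}$ is connected and disjoint from $\beta C$, so it cannot cross $\delta C$ and must lie entirely in $C$ or in $\Comp{C}$, which is precisely nestedness. Your argument never uses any connected enlargement of $\beta C$, and as the example shows, no amount of path-routing will conjure a vertex of $\beta C\cap\beta D$ out of the four non-empty corners.
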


\begin{proof}
Let $S$ be a finite 
connected subgraph of $\GG$ containing all vertices of $\bet C$. 
The number of $k$-cuts $D$ with $\bet D \cap S \neq \es$
is finite by \prref{lem:endl_k_cuts}. For all other cuts we 
may assume (by symmetry) that $\bet C \sse D$. However, this implies $C\sse D$ or $\Comp C\sse D$.
\end{proof}

Since we assume that $\GG$ is accessible, there is some constant $k$ such that for all 
bi-infinite simple paths $\alp$ with $ \cC(\alp)\neq \es $ there exists some cut $ C \in \cC(\alp) $ 
with $\abs{\delta C} \leq k$. We fix this $k$ for the rest of this section. By \prref{lem:fred}, this allows us to define a natural number $m(C)$ for every cut $C$:
$$m(C) = \abs{\set{D }{\text{$C$ and $D$ are not nested and $D$ is a $k$-cut}}}.$$
Furthermore, we use the following notation, where $\alp$ denotes a bi-infinite simple path:
\begin{align*}
m_{\alp}&= \min \set{m(C)} {C\in \cC_{\min}(\alp)},\\
\Copt(\alp)&= \set{C\in \cC_{\min}(\alp)} {m(C)=m_{\alp}},\\
\Copt&= \bigcup\set{\Copt(\alp)}{\alp \text{ is a bi-infinite simple path} }.
\end{align*}

\begin{definition}\label{def:c_opt}
A cut $C\in \Copt$ is called an \emph{optimally nested cut}.
For simplicity, an optimally nested cut is also called \emph{optimal cut}.
\end{definition}

Since every ``cuttable'' bi-infinite simple path
can be ``cut'' into two infinite parts at least by one optimal cut, we can forget all other cuts and just focus on 
optimal cuts. 
The next result shows that the optimal cuts in fact behave very well. 
\begin{proposition}\label{prop:opt_nested}
Let $C,D \in \Copt$. Then $C$ and $D$ are nested.
\end{proposition}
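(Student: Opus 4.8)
The plan is to argue by contradiction. Assume $C\in\Copt(\alpha)$ and $D\in\Copt(\beta)$ are not nested; then all four corners $C\cap D$, $C\cap\Comp D$, $\Comp C\cap D$, $\Comp C\cap\Comp D$ are non-empty. Since complementation maps cuts to cuts, maps $\cC(\beta)$ onto itself, preserves weight, and satisfies $m(\Comp D)=m(D)$ (hence $\Comp D\in\Copt(\beta)$), I may replace $D$ by $\Comp D$ and $C$ by $\Comp C$ as convenient, in order to normalise which corner a given tail lands in. The toolkit I would assemble is: (a) \emph{submodularity} of the edge-boundary, $\abs{\del(A\cap B)}+\abs{\del(A\cup B)}\le\abs{\del A}+\abs{\del B}$ and $\abs{\del(A\sm B)}+\abs{\del(B\sm A)}\le\abs{\del A}+\abs{\del B}$ for all vertex sets $A,B$, each obtained by classifying every edge by the location of its two endpoints; (b) a connected component $E$ of a corner is again a cut, because $\del E\sse\del C\cup\del D$ is finite and, as every other component of a corner is adjacent to $\Comp C$ or to $\Comp D$, the set $\Comp E$ is connected, and moreover $\abs{\del E}\le\abs{\del(\text{corner})}$; (c) since $\del D$ is finite the path $\alpha$ meets it only finitely often, so each of the two tails of $\alpha$ (the part inside $C$ and the part inside $\Comp C$) eventually lies on one fixed side of $D$, whence the component of the corner carrying such a tail lies in $\cC(\alpha)$ (its complement contains the other tail) and so, by minimality of $C$ in $\cC(\alpha)$, has weight at least $\abs{\del C}$; symmetrically for $\beta$ and $D$; (d) an \emph{uncrossing} lemma: as all corners of $C,D$ are non-empty, a $k$-cut $F$ nested with both $C$ and $D$ has $F$ or $\Comp F$ contained in a single corner, hence $F$ is nested with every corner; dually, a $k$-cut nested with $C$ is nested with $C\cap D$ or with $\Comp C\cap\Comp D$, and likewise for $D$.

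Granting that some opposite pair of corners, say $E_1=C\cap D$ and $E_2=\Comp C\cap\Comp D$, are connected cuts lying in $\cC_{\min}(\alpha)$ — so both of weight exactly $\abs{\del C}=\abs{\del D}$ — the endgame is a counting argument. Write $N(X)$ for the set of $k$-cuts not nested with $X$, which is finite by \prref{lem:fred}, so $m(X)=\abs{N(X)}$. Tool (d) gives $N(E_1)\cup N(E_2)\sse N(C)\cup N(D)$ and $N(E_1)\cap N(E_2)\sse N(C)\cap N(D)$, hence $m(E_1)+m(E_2)\le m(C)+m(D)$. But $D\in N(C)$ while $D\notin N(E_1)$ and $D\notin N(E_2)$, because $E_1\sse D$ and $E_2\sse\Comp D$; so $N(E_1)\cup N(E_2)$ is a proper subset of $N(C)\cup N(D)$ and therefore $m(E_1)+m(E_2)\le m(C)+m(D)-1$. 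Since $E_1,E_2\in\cC_{\min}(\alpha)$ we have $m(E_i)\ge m_\alpha=m(C)$, so $2m_\alpha\le m(C)+m(D)-1$, i.e.\ $m_\alpha< m_\beta$. Running the mirror argument with the opposite pair of corners carrying the two tails of $\beta$ (note every corner is nested with $C$ and with $D$, so the same ``$-1$'' gain occurs) gives $m_\beta<m_\alpha$, a contradiction.

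The step I expect to be the main obstacle is exactly the one ``granted'' above: producing an opposite pair of corners that are genuine cuts lying in $\cC_{\min}(\alpha)$ — and, for the mirror argument, an analogous pair in $\cC_{\min}(\beta)$. This needs a case distinction on how the two tails of $\alpha$ distribute over the four corners. If they land in an opposite pair, tool (c) supplies two corner-components in $\cC(\alpha)$ of weight $\ge\abs{\del C}$ whose corners form a complementary pair, so submodularity (a) forces these weights down to $\abs{\del C}$, forces the corners to be connected, and forces $\abs{\del C}=\abs{\del D}$ — but here one genuinely needs the twin fact on the $\beta$-side, so the two distributions must be compatible. If a pair of tails lands in an \emph{adjacent} pair of corners (both tails on one side of $D$), submodularity no longer bounds the relevant sum of boundary weights directly, and one must first establish $\abs{\del C}=\abs{\del D}$ by a separate argument and then reduce to the opposite configuration, using that a corner carrying a tail contains a one-sided infinite path which can be completed to a bi-infinite path witnessing the corner as an element of $\cC_{\min}$. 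Once the corner cases are settled, the uncrossing lemma (d) and the set inclusions are routine, relying only on the non-emptiness of all four corners and the fact that a $k$-cut contained in one corner is automatically nested with the other three.
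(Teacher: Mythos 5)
Your toolkit and your endgame match the paper's proof almost exactly: the two submodularity inequalities for $\del$, the observation that the component of a corner carrying a tail is a cut whose complement is connected because it contains $\Comp{C}\cup\Comp{D}$, the two uncrossing claims giving $m(E)+m(E')\le m(C)+m(D)$, and the strict drop coming from a $k$-cut ($D$ for you, $C$ in the paper) that is counted in $N(C)\cup N(D)$ but nested with both opposite corners. Your bookkeeping differs slightly~-- you charge both opposite corners to $\alpha$ and then run a mirror argument for $\beta$, whereas the paper charges one corner to $\alpha$ and the opposite one to $\beta$ and contradicts $m(E)\ge m_\alpha=m(C)$, $m(E')\ge m_\beta=m(D)$ in one stroke~-- but in the configuration where your version applies it is correct.

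The genuine gap is exactly the step you flag as ``granted''. Your main line needs both tails of $\alpha$ to land in opposite corners \emph{and} both tails of $\beta$ to land in opposite corners: only then do you get $\abs{\del C}=\abs{\del D}$ (from $D\in\cC(\alpha)$ and $C\in\cC(\beta)$ and minimality on both sides), and only with that equality does the squeeze $2\abs{\del C}\le\abs{\del E_1}+\abs{\del E_2}\le\abs{\del C}+\abs{\del D}$ pin both corner weights to $\abs{\del C}$ and place $E_1,E_2$ in $\cC_{\min}(\alpha)$. When a pair of tails lands in adjacent corners this collapses, and neither of your proposed repairs works: there is no ``separate argument'' for $\abs{\del C}=\abs{\del D}$ in that configuration (inside the contradiction hypothesis nothing forces it, and the paper's own argument never needs it); and completing a tail to a fresh bi-infinite path $\gamma$ at best certifies a corner as a member of $\cC_{\min}(\gamma)$ for a \emph{new} path, yielding $m(\text{corner})\ge m_\gamma$, a bound that never interacts with $m_\alpha$ or $m_\beta$, so your counting does not close. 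The paper's resolution is an asymmetric corner selection that covers all configurations: either both cuts split both paths (then one may take $\alpha=\beta$ and any opposite pair works), or some corner $K$ meets both $\alpha$ and $\beta$ only finitely, and one takes $E,E'$ to be the two corners adjacent to $K$, with $E$ splitting $\alpha$ and $E'$ splitting $\beta$. The chain $\abs{\del C}+\abs{\del D}\le\abs{\del F}+\abs{\del F'}\le\abs{\del E}+\abs{\del E'}\le\abs{\del C}+\abs{\del D}$ then forces $E\in\cC_{\min}(\alpha)$ and $E'\in\cC_{\min}(\beta)$ with no weight equality and no mirror argument. You should restructure your corner selection along these lines; the rest of your argument then goes through.
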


\begin{proof} 
Let $C\in \Copt(\alp)$ and $D \in \Copt(\bet)$ for some bi-infinite simple paths $\alp$ and $\bet$.  By contradiction, let us assume that $C$ and $D$ are not nested. By symmetry, we assume that $m_\beta \leq m_\alpha$ henceforth\footnote{This argument is missing in the corresponding proof in \cite{DiekertW17crm} and in the previous arXiv version of the present paper. However, the setting $m_\beta \leq m_\alpha$ is important for the proof.}. 
Thus, if $D \in \cC_{\min}(\alpha)$, then  $D \in \Copt(\alpha)$, too. 
In that case we replace $\beta$ by $\alpha$. Hence, we assume 
$$\alpha = \beta \iff D \in \cC_{\min}(\alpha).$$
The aim is to construct cuts $E,E'$ with $E\in \cC_{\min}(\alp)$ and $E'\in \cC_{\min}(\bet)$ such that $m(E) + m(E') < m(C) + m(D)$, which is a contradiction to $C\in \Copt(\alp)$ and $D \in \Copt(\bet)$.

As a first step we show that there are two opposite corners $E$ and $E'$ of $C$ and $D$ such that $\abs{\alp \cap E} = \abs{\beta \cap E'}= \infty$. We distinguish two cases: $D \in \cC_{\min}(\alp)$ and $D \notin \cC_{\min}(\alp)$.
First, let $D \in \cC_{\min}(\alp)$. 
Then, by our assumption, we have $\alp= \bet$. In particular, there are opposite corners $E$ and $E'$ such that $\abs{\alp \cap E} = \abs{\bet \cap E'}= \infty$, see \prref{fig:corners4}.

In the other case we have $D \notin \cC_{\min}(\alp)$,
and therefore $\alp \neq \bet$. We claim that there must be one corner $K$ of $C$ and $D$ such that $\abs{\alp \cap K} < \infty$ and $\abs{\bet \cap K} < \infty$ as depicted in \prref{fig:corners3}. Indeed, if there is no such corner $K$, then infinite parts of $\alp$ and $\bet$ are in opposite corners respectively, see \prref{fig:corners2}. 
 In particular, both $\alpha$ and $\beta$ are split by $C$ as well as by $D$ into two infinite pieces. This implies $\abs{\delta C}= \abs{\delta D}$, and hence $D \in \cC_{\min}(\alp)$. Thus, the corner $K$ exists and we define
 $E$ and $E'$ to be the adjacent corners of $K$. Without loss of generality, $E$ splits $\alpha$ into two infinite pieces and $E'$ splits $\beta$ into two infinite pieces.

 \begin{figure}[ht]
\begin{center}
\begin{tikzpicture}
\draw (0,2.3) -- (0,-2.3){};
\draw (2.5,0) -- (-2.5,0){};
\draw[snake=snake, segment length = 9mm] (1.6, -2.) --(-1.6, 1.6) ;
\node() at (-0.95,1.6){$\alpha = \beta$} ;

\node[anchor=base] (a) at (-0.25, 1.95) {\footnotesize{$C$}};
\node[anchor=base] (a) at (0.25, 1.95) {\footnotesize{$\Comp{C}$}};
\node (a) at (-2.2, 0.25) {\footnotesize{$D$}};
\node (a) at (-2.2, -0.25) {\footnotesize{$\Comp{D}$}};
\end{tikzpicture}
\end{center}
\caption[]{We have $\alp = \bet$ and $\abs {C\cap D \cap \alp}= \abs {\Comp C\cap \Comp D \cap \bet} = \infty$.}\label{fig:corners4}
\end{figure}
 
\begin{figure}[ht]
\begin{center}
\begin{tikzpicture}
\draw (0,2.3) -- (0,-2.3){};
\draw (2.5,0) -- (-2.5,0){};
\draw[snake=snake, segment length = 9mm] (1.6, -2.) --(-1.6, 1.6) ;
\node() at (-1.35,1.6){$\alpha$} ;
\draw[snake=snake, segment length = 9mm] (1.5, 1.6) --(-1.,- 2);
\node() at (1.75,1.6) {$\beta$};

\node[anchor=base] (a) at (-0.25, 1.95) {\footnotesize{$C$}};
\node[anchor=base] (a) at (0.25, 1.95) {\footnotesize{$\Comp{C}$}};
\node (a) at (-2.2, 0.25) {\footnotesize{$D$}};
\node (a) at (-2.2, -0.25) {\footnotesize{$\Comp{D}$}};
\end{tikzpicture}
\end{center}
\caption[]{For all four corners $K$ we have $\max\{\abs {K \cap \alp}, \abs {K \cap \bet}\} = \infty$.}\label{fig:corners2}
\end{figure}

\begin{figure}[ht]
\begin{center}
\begin{tikzpicture}
\draw (0,2.3) -- (0,-2.3){};
\draw (2.5,0) -- (-2.5,0){};

\draw[snake=snake, segment length = 9mm] (1., -2) --(-1.5, 1.5) ;
\node() at (-1.94,1.0){$\alpha$ or $\beta$} ;
\draw[snake=snake, segment length = 9mm] (1.7, 1.55) --(1.7,- 2);
\node() at (1.85,0.9) {$\beta$};
\draw[snake=snake, segment length = 9mm] (-1.7, 1.7) --(1.55, 1.7);
\node() at (0.8,1.8) {$\alpha$};

\node (a) at (-1.2, -1.2) {$K$};

\node[anchor=base] (a) at (-0.25, 1.95) {\footnotesize{$C$}};
\node[anchor=base] (a) at (0.25, 1.95) {\footnotesize{$\Comp{C}$}};
\node (a) at (-2.2, 0.25) {\footnotesize{$D$}};
\node (a) at (-2.2, -0.25) {\footnotesize{$\Comp{D}$}};
\end{tikzpicture}
\end{center}
\caption[]{For one corner $K$ we have 
$\max\{\abs {K \cap \alp}, \abs {K \cap \bet}\} <\infty$.}
\label{fig:corners3}
\end{figure}

In both cases, $E$ and $E'$ are defined such that 
$\abs{\alp \cap E} = \abs{\bet \cap E'}= \infty$.
 By interchanging, if necessary, $C$ with $\Comp C$ and 
 $D$ with $\Comp D$, we may assume that 
 $E= C \cap D$ and $E' = \Comp C \cap \Comp D$, too. 
 
 Thus, in all cases we are in the following situation: 
 $C$ and $D$ are not nested, $C\in \cC_{\min}(\alp)$, 
 $D\in \cC_{\min}(\bet)$, 
 $E= C \cap D$, $E' = \Comp C \cap \Comp D$, and $\abs{\alp \cap E} =\abs{\alp \cap \Comp E} = \abs{\bet \cap E'} = \abs{\bet \cap \Comp E'}= \infty$. Possibly $\alp =\bet$, 
but it is not yet clear that $E$ and $E'$ are cuts. 

The graph $\GG(E)$ contains an infinite connected component $F \sse E$ 
such that $\abs{\alp \cap F} = \infty$. 
Since $\Comp C \cup \Comp D \sse \Comp F$, it is easy to see that 
$\Comp F$ is connected and $\abs{\alp \cap \Comp F} = \infty$.
 Hence, $F$ is a cut splitting $\alp$ into two infinite pieces. 
 In a symmetric way we find a cut $F' \sse E'$ such that $\abs{\bet \cap F'} = \abs{\bet \cap \Comp F'} = \infty$.
The next step of the proof is to show that $F = E \in \cC_{\min}(\alp)$ and $F' = E'\in \cC_{\min}(\bet)$.

We have $\delta E\cup \delta E' \sse \delta C\cup \delta D$ and $\delta E\cap \delta E' \sse \delta C\cap \delta D$ by the definition of $E$ and $E'$. This yields
\begin{align}\abs{\del E} + \abs{\del E'} \leq \abs{\del C} + \abs{\del D}\label{eq:submodular}
 \end{align}
since every edge which is counted once resp.\ twice on the left-hand side is counted at least once resp.\ twice on the right-hand side.
 Because of the minimality of $\abs{\del C}$ and $\abs{\del D}$ we have $\abs{\del C}\leq \abs{\del F}$ and $\abs{\del D} \leq \abs{\del F'}$. Since $F$ is a connected component of $E$, we have $\delta F\sse \delta E$ and likewise $\delta F' \sse \delta E'$. With \prref{eq:submodular} we obtain $\abs{\del C}= \abs{\del F}= \abs{\del E}$ and $\abs{\del D} = \abs{\del F'} = \abs{\del E'}$. This implies $\del F = \del E$ and $\del F' = \del E'$, and hence $F= E \in \cC_{\min}(\alp)$ and $F'= E'\in \cC_{\min}(\bet)$.

The final step in the proof is the following assertion:
\begin{equation}\label{eq:frodo}
m(E)+m(E')<m(C)+m(D).
\end{equation}
 
 Once we have established \prref{eq:frodo} we are done since \prref{eq:frodo} implies $m(E) < m(C)$ or $m(E') < m(D)$.

In order to see \prref{eq:frodo}, we show two claims: 
\begin{enumerate}
\item If a cut $F$ is nested with $C$ \emph{or} nested with $D$, then $F$ is nested with $E$ \emph{or} nested with $E'$:\label{eq_cap_nested}

By symmetry let $F$ be nested with $C$. 
If 
$F\sse C$ (resp.{} $\Comp F\sse C$), then $F\sse \Comp {E'}$ (resp.{} $\Comp F \sse \Comp {E'}$). 
If 
$C\sse F$ (resp.{} $C \sse \Comp F $), then $E \sse F$ (resp.{} $E \sse \Comp {F}$).

\item If a cut $F$ is nested with both $C$ \emph{and} $D$, then $F$ is nested with both $E$ \emph{and} $E'$:\label{eq_cup_nested}

By symmetry in $F,\Comp{F}$ we may assume $C \sse F$ or $\Comp C \sse F$.
Using the symmetry in $E,E'$
 we may assume that $C\sse F$. Hence, we have $E\sse F$; and it remains to show
 that $E'$ and $F$ are nested. 
 If $D \sse \Comp F$ or $\Comp D \sse \Comp F$, then it follows that $C \cap D = \es$ resp.\ $C \cap \Comp D = \es$.
 Both is impossible because $C$ and $D$ are not nested. 
 For $D \sse F$ we obtain $\Comp{E'} = C \cup D \sse F$ what implies that $E'$ and $F$ are nested. 
 Finally, let $\Comp D \sse F$, then $E' \sse F$. Again $E'$ and $F$ are nested.
\end{enumerate}
\renewcommand{\labelenumi}{\arabic{enumi}.}

As in \prref{eq:submodular}, claims \ref{eq_cap_nested} and \ref{eq_cup_nested} together yield
$ m(E)+m(E')\leq m(C)+m(D).$
Now, $C$ is nested with both corners $E$ and $E'$. Hence, $C$ is not counted on the left-hand side 
of the inequality. However, $C$ is counted on the right-hand side because $C$ is not nested with $D$. That means the inequality in \prref{eq:frodo} is strict. Hence, we have shown the result of the proposition. 
\end{proof}

Analogous results to \prref{prop:opt_nested} are Theorem 1.1 in \cite{Dunwoody82} or Theorem 3.3 in \cite{kroen10}. 
In contrast to these results, \prref{prop:opt_nested} allows that $\Copt$ may contain cuts of different weights. 
We have to deal with cuts of different weights because we wish to get a ``complete'' decomposition of virtually free groups like 
$(\Z \times \Z/2\Z)*\Z/2\Z$ without applying the procedure several times. As in the graph in \prref{ex:different_deltas}, in the Cayley graph of this group cuts with weight $1$ and $2$ are necessary to split all bi-infinite paths into two infinite pieces. 

\subsection{The Structure Tree}\label{vdsec:tree_set} 
The notion of \emph{structure tree} is due to Dunwoody \cite{Dunwoody79}.
Since $\GG$ is assumed to be accessible, $\Copt$ is defined and there is some $k \in \N$ such that every cut in $\Copt$ is a $k$-cut. 

\begin{lemma}\label{lem:tree_set}
Let $C,D\in\Copt$. Then the set $\set{E \in \Copt}{C\sse E\sse D}$ is finite. 
\end{lemma}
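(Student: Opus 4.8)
The plan is to reduce the statement to \prref{lem:endl_k_cuts}, which guarantees that only finitely many $k$-cuts have their vertex-boundary meeting a prescribed finite set. Recall that, since $\GG$ is accessible, there is a fixed $k\in\N$ such that every cut in $\Copt$ is a $k$-cut; so it suffices to produce one finite set $S\sse V(\GG)$ with the property that $\beta E\cap S\neq\es$ for every $E\in\Copt$ satisfying $C\sse E\sse D$.

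First I would dispose of the trivial case: if no $E\in\Copt$ satisfies $C\sse E\sse D$, the set in question is empty and there is nothing to prove. Otherwise $C\sse D$, and since $C$ and $\Comp D$ are both non-empty (they are the two sides of cuts), I can fix a vertex $c\in C$ and a vertex $w\in\Comp D$. Because $\GG$ is connected there is a finite path $\gamma$ in $\GG$ from $c$ to $w$; set $S=V(\gamma)$, a finite vertex set that depends only on $C$ and $D$, not on $E$.

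The key step is the following observation. Let $E\in\Copt$ with $C\sse E\sse D$. Then $c\in C\sse E$, while $w\in\Comp D\sse\Comp E$ (the inclusion $\Comp D\sse\Comp E$ being the complement of $E\sse D$). Hence the path $\gamma$ begins at a vertex of $E$ and ends at a vertex of $\Comp E$, so it must traverse some edge of $\delta E$; in particular $\gamma$ passes through a vertex of $\beta E$, i.e.\ $\beta E\cap S\neq\es$. Since every such $E$ is a $k$-cut, \prref{lem:endl_k_cuts} now yields that there are only finitely many of them, which is exactly the claim.

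I do not expect a genuine obstacle here: the whole content is that $C$ pins down a vertex $c$ lying inside every admissible $E$ and $D$ pins down a vertex $w$ lying outside every admissible $E$, which forces each boundary $\beta E$ to intersect the fixed finite "corridor" $S$ between $c$ and $w$; the finiteness of $\Copt$-cuts of bounded weight meeting a finite set then finishes the argument. (One could alternatively invoke \prref{prop:opt_nested} to note that the $E$'s are totally ordered by inclusion, but this linear-order structure is not actually needed for the finiteness statement.)
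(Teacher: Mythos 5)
Your argument is correct and is essentially identical to the paper's proof: fix $u\in C$ and $v\in\Comp D$, join them by a path, observe that every intermediate cut $E$ must separate $u$ from $v$ and hence its boundary meets the path, and conclude by \prref{lem:endl_k_cuts} together with accessibility (which bounds the weight of all cuts in $\Copt$ by a fixed $k$). No gaps.
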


\begin{proof}
Choose two vertices $u\in C$ and $v \in \Comp{D}$, and a path 
$\gam$ in $\GG$ connecting them. Every cut $E$ with $C\sse E\sse D$ must separate $u$ and $v$ and thus contain a vertex of of $\gam$. With \prref{lem:endl_k_cuts} and the accessibility of $\Gamma$ it follows that there are only finitely many such cuts.
\end{proof}

The set $\Copt$
is partially ordered by $\sse$. \prref{lem:tree_set} states that the partial order is discrete; hence, it is induced by 
its so-called \emph{Hasse diagram}. 
For a general partial order $(X,<)$ there is an arc in the Hasse diagram from $x$ to $y$ if and only if $x< y$ and there is no $z$ with $x<z<y$.
%

If there is an arc from $\Comp C$ to $D$, then there is also an arc from $\Comp D$ to $ C$. In such a situation we put $C$ and $D$ 
in one class: 
\begin{definition}\label{def:cuts_equiv}
For $C$, $D \in \Copt$ we define the relation $C\sim D$ by the following condition: 
\begin{equation*}
\text{Either } C=D \text{ or both } \Comp{C} \subsetneqq D \text{ and } \forall\; E \in \Copt: \,\Comp{C} \subsetneqq E \subseteq D \implies D =E.
\end{equation*}
\end{definition}
The intuition behind this definition is as follows: Consider $\Copt$ as the
edge set of some graph. Call edges $C$ and $D$
to be adjacent if $C \sim D$. This makes sense due the following property. 

\begin{lemma}\label{lem:frida}
The relation $\sim$ is an equivalence relation.
\end{lemma}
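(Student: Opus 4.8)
The plan is to verify the three defining properties of an equivalence relation for $\sim$: reflexivity, symmetry, and transitivity. Reflexivity is immediate, since $C = C$ is the first alternative in \prref{dfn:cuts_equiv}. For symmetry, suppose $C \sim D$ with $C \neq D$, so $\Comp{C} \subsetneqq D$ and $D$ is minimal in $\Copt$ strictly above $\Comp{C}$. I would first observe that by \prref{prop:opt_nested} all cuts in $\Copt$ are pairwise nested, and that the map $E \mapsto \Comp E$ is an order-reversing bijection of $\Copt$; applying it to the chain $\Comp C \subsetneqq E \subseteq D$ turns it into $\Comp D \subseteq \Comp E \subsetneqq C$. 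From $\Comp C \subsetneqq D$ we get $\Comp D \subsetneqq C$, and minimality of $D$ above $\Comp C$ translates directly into maximality of $\Comp D$ below $C$, i.e.\ for all $E \in \Copt$ with $\Comp D \subseteq E \subsetneqq C$ we have $E = \Comp D$. Renaming $\Comp E$ as the running variable, this is exactly the condition $D \sim C$ (with the roles of the symbols $C, D$ in the definition played by $D, C$). So $\sim$ is symmetric.

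The main work is transitivity: assume $C \sim D$ and $D \sim E$ with $C, D, E$ pairwise distinct; I must show $C \sim E$. Unwinding, $\Comp C \subsetneqq D$ with $D$ minimal above $\Comp C$, and $\Comp D \subsetneqq E$ with $E$ minimal above $\Comp D$. The first step is to locate the geometric picture: from $\Comp C \subsetneqq D$ we have $\Comp D \subsetneqq C$, and combining with $\Comp D \subsetneqq E$ and the pairwise-nestedness of $\Copt$ (\prref{prop:opt_nested}), I would argue that $C$ and $E$ must satisfy $\Comp C \subsetneqq E$. Indeed the four nesting alternatives for $C$ and $E$ can be ruled out one by one: $C \subseteq E$ would force (using $\Comp D \subsetneqq C$) a containment contradicting minimality of $D$ above $\Comp C$ together with $E \neq D$; $E \subseteq C$ leads similarly to $E$ lying strictly between $\Comp C$ and $D$ unless $E = D$; and $\Comp C \subseteq \Comp E$, i.e.\ $E \subseteq C$, is the same case. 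This leaves $\Comp C \subsetneqq E$. Then I must check minimality: if $F \in \Copt$ with $\Comp C \subsetneqq F \subseteq E$, I want $F = E$. Here \prref{lem:tree_set} is the key tool — it guarantees only finitely many cuts lie between $\Comp C$ and $E$, so I can pick such an $F$ minimal over $\Comp C$; then $F \sim C$ would hold, but also $D \sim C$ holds, and I would show a cut cannot be $\sim$-equivalent to $C$ in ``two different ways'' (the successor of $\Comp C$ in the Hasse diagram, restricted to the relevant branch, is unique), forcing $F$ to lie on the chain through $D$ and hence, by $D \sim E$ and minimality of $E$ above $\Comp D$, to equal $E$.

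The hard part will be the transitivity argument, specifically the careful bookkeeping of which of the four nesting relations between pairs of cuts can occur and deriving the contradiction in each case; everything rests on \prref{prop:opt_nested} (all optimal cuts are nested) and \prref{lem:tree_set} (discreteness of the order), so the proof is essentially a finite combinatorial case analysis on a discrete poset closed under the order-reversing involution $E \mapsto \Comp E$, with no new geometric input needed. I would present it by first recording the two lemmas as the only ingredients, then handling reflexivity and symmetry in a sentence each, and devoting the bulk to the transitivity case split.
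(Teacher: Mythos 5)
Your overall strategy coincides with the paper's: reflexivity and symmetry are immediate, and transitivity is a case analysis over the nesting alternatives for $C$ and $E$ provided by \prref{prop:opt_nested} (the paper records $\es\neq\Comp{D}\sse C\cap E$, lists the four inclusions, and leaves the check to the reader). Two points in your transitivity argument, however, do not go through as written. First, your enumeration of the four alternatives is wrong: nestedness of $C$ and $E$ means one of $C\sse E$, $E\sse C$, $E\sse\Comp{C}$, $\Comp{C}\sse E$, whereas you list $E\sse C$ twice (once directly and once as $\Comp{C}\sse\Comp{E}$, which is the same inclusion) and omit the disjointness case $E\sse\Comp{C}$. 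That case is easy to kill -- you already have $\Comp{D}\subsetneqq C$ and $\Comp{D}\subsetneqq E$, so $\es\neq\Comp{D}\sse C\cap E$ -- but it must appear. (Also, the contradiction in the case $C\sse E$ comes from the minimality of $E$ above $\Comp{D}$, since $\Comp{D}\subsetneqq C\sse E$ forces $C=E$; not from the minimality of $D$ above $\Comp{C}$ as you indicate.)

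The more serious gap is your verification of the minimality clause for $C\sim E$. It rests on the assertion that a cut "cannot be $\sim$-equivalent to $C$ in two different ways", i.e.\ that the successor of $\Comp{C}$ in the Hasse diagram is unique. Taken literally this is false -- the whole point of the construction is that $[C]$ is a vertex of the structure tree with many incident edges, so $\Comp{C}$ typically has many distinct successors -- and the qualifier "restricted to the relevant branch" is exactly the statement that needs proof; no argument for it is offered, and the detour through \prref{lem:tree_set} does not supply one. The direct way to finish (making \prref{lem:tree_set} unnecessary here) is: given $F\in\Copt$ with $\Comp{C}\subsetneqq F\sse E$, nest $F$ against $D$. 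If $F\sse\Comp{D}$, then $\Comp{C}\sse D\cap\Comp{D}=\es$, impossible since $C$ is a cut. If $F\sse D$, then $F=D$ by minimality of $D$ above $\Comp{C}$, and then $V(\GG)=D\cup\Comp{D}\sse E$, so $\Comp{E}=\es$, impossible. If $D\sse F$, then $V(\GG)=F\cup\Comp{F}\sse E$, again impossible. Hence $\Comp{D}\subsetneqq F\sse E$, and minimality of $E$ above $\Comp{D}$ gives $F=E$. With these repairs your plan becomes a complete proof along the paper's intended lines.
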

\begin{proof}
Reflexivity and symmetry are immediate.
Let $C\sim D\neq C$ and $D\sim E \neq D$.
This implies $\es \neq \Comp D \sse C\cap E$. 
We have to show that $C\sim E$. The cuts $C$ and $E$ are nested due to 
\prref{prop:opt_nested}. Hence, we have one of the following four
inclusions $C\sse E$, $E\sse C$, $E\sse \Comp{C}$, and $\Comp{C} \sse E$. In order to prove transitivity one has to check all these possibilities. This is straightforward and we leave it to the reader.
\end{proof}

\begin{definition}[Structure Tree]
Let $T(\Copt)$ denote the following graph:
\begin{align*}
V(T(\Copt))&=\set{[C]}{C \in \Copt},\\
E(T(\Copt))&= \Copt.
\end{align*}
The incidence maps are defined by $s(C)=[C]$ and $t(C)=[\Comp{C}]$. The involution $\Comp{C}$ is defined by the complementation $\Comp{C} = V(\GG)\sm C$; hence, we do not need to change notation.
\end{definition}
The directed edges are in canonical bijection with the pairs $([C],[\Comp{C}])$. Indeed, let $C\sim D$ and $\Comp{C}\sim \Comp{D}$. It follows $C=D$ because otherwise $C\ssnq \Comp{D}\ssnq C$.
Thus, $T(\Copt)$ is an undirected simple graph.

\begin{proposition}
[\cite{Dunwoody79}]\label{prop:tree}
The graph $T(\Copt)$ is a tree.
\end{proposition}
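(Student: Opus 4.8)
Here is how I would prove that $T(\Copt)$ is a tree. The excerpt has already shown that $T(\Copt)$ is a simple undirected graph, so the plan is to establish two things: it is connected, and it contains no cycle. Both will rest entirely on the structural facts proved above — any two cuts in $\Copt$ are nested (\prref{prop:opt_nested}), the interval $\set{E\in\Copt}{C\sse E\sse D}$ is finite (\prref{lem:tree_set}), and $\sim$ is an equivalence relation (\prref{lem:frida}) — together with the obvious fact that $\Copt$ is closed under complementation (the defining conditions for $\cC(\alp)$, $\cC_{\min}(\alp)$ and the number $m(C)$ are all symmetric in $C$ and $\Comp C$).

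For connectedness: since every vertex of $T(\Copt)$ has the form $[C]$ with $C\in\Copt$, and $[C]$ is joined to $[\Comp C]$ by the edge $C$, it suffices to connect $[C]$ and $[D]$ for arbitrary $C,D\in\Copt$. By \prref{prop:opt_nested} one of the four inclusions $C\sse D$, $C\sse\Comp D$, $\Comp C\sse D$, $\Comp C\sse\Comp D$ holds; replacing $C$ by $\Comp C$ and/or $D$ by $\Comp D$ — each replacement only passing to an adjacent vertex — I may assume $C\sse D$, and if $C\ssnq D$ I would then analyze the interval $\mathcal I=\set{E\in\Copt}{C\sse E\sse D}$. First observe $\mathcal I$ is \emph{totally} ordered by inclusion: two of its elements are nested by \prref{prop:opt_nested}, and the ``crossing'' possibilities $E\sse\Comp{E'}$ and $\Comp E\sse E'$ are impossible because $C\sse E\cap E'$ and $\Comp D\sse\Comp E\cap\Comp{E'}$ are non-empty. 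By \prref{lem:tree_set} $\mathcal I$ is finite, so it is a finite chain $C=E_0\ssnq E_1\ssnq\dots\ssnq E_n=D$. Since $\mathcal I$ contains \emph{every} cut of $\Copt$ between $C$ and $D$, no cut of $\Copt$ lies strictly between consecutive $E_i$ and $E_{i+1}$; feeding this into \prref{dfn:cuts_equiv} gives $\Comp{E_i}\sim E_{i+1}$, i.e.\ $[\Comp{E_i}]=[E_{i+1}]$. Hence the edges $E_0,E_1,\dots,E_{n-1}$ yield a walk from $[E_0]=[C]$ through $[\Comp{E_0}]=[E_1]$, then $[\Comp{E_1}]=[E_2]$, and so on, ending at $[\Comp{E_{n-1}}]=[E_n]=[D]$; so $[C]$ and $[D]$ lie in the same component.

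For acyclicity I would argue by contradiction. A cycle in $T(\Copt)$ consists of $n\geq 3$ distinct vertices and $n$ distinct edges; orienting the edges consistently along the cycle yields cuts $D_0,\dots,D_{n-1}\in\Copt$ with $s(D_i)=[D_i]$ and $t(D_i)=[\Comp{D_i}]=[D_{i+1}]$ (indices mod $n$). Consecutive cycle edges are distinct, so $D_{i+1}\neq\Comp{D_i}$; then $[\Comp{D_i}]=[D_{i+1}]$ together with \prref{dfn:cuts_equiv} (and \prref{lem:frida}) forces the second alternative of the definition of $\sim$, namely $D_i=\Comp{\Comp{D_i}}\ssnq D_{i+1}$. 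Running around the cycle gives $D_0\ssnq D_1\ssnq\dots\ssnq D_{n-1}\ssnq D_0$, which is absurd. Together with connectedness and simplicity this proves $T(\Copt)$ is a tree.

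I expect no real obstacle: the genuinely hard input, the nestedness of optimal cuts, is already supplied by \prref{prop:opt_nested}. The only point requiring care is the bookkeeping in the connectedness step — showing $\mathcal I$ is a chain, and that passing from one element of the maximal chain $E_0\ssnq\dots\ssnq E_n$ to the next exactly matches the relation $\sim$ of \prref{dfn:cuts_equiv} (which is where the finiteness in \prref{lem:tree_set} is used). Everything after that is formal.
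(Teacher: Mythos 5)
Your proof is correct and takes essentially the same route as the paper: acyclicity follows from the strictly increasing chain $D_0\ssnq D_1\ssnq\cdots$ that the definition of $\sim$ forces along any closed walk, and connectedness from a finite maximal chain in the interval between two nested cuts, using \prref{prop:opt_nested} and \prref{lem:tree_set}. Your extra observation that the interval $\set{E\in\Copt}{C\sse E\sse D}$ is totally ordered is a harmless refinement of the paper's ``not refinable sequence''.
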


\begin{proof}
Let $\gamma$ be a simple path in $T(\Copt)$ of length at least two.
Then $\gamma$ corresponds to a sequence of cuts
\[C_0,\, \Comp{C}_0 \sim C_1, \dots, \Comp{C}_{n-2} \sim C_{n-1},\,\Comp{C}_{n-1}= C_n.\]
with $[C_{i-1}] \neq [C_{i+1}]$. It follows that $\Comp{C}_{i-1} \neq C_i$ for $1\leq i < n$ since otherwise we would have $C_{i-1}= \Comp{C}_i \sim C_{i+1}$.
Hence, we obtain a sequence
\[C_0\subsetneqq C_1\subsetneqq C_2\ssnq\, \cdots \, \subsetneqq C_{n-1}.\]
Therefore, we have $C_0 \neq \Comp{C}_{n-1}$ and $\Comp{C}_0 \not\sse\Comp{C}_{n-1}$. In particular, $C_0 \not\sim \Comp{C}_{n-1}=C_n$ and the original path is not a cycle. Hence, $T(\Copt)$ has no cycles.

It remains to show that $T(\Copt)$ is connected.
Let $[C],[D] \in V(T(\Copt))$. Since $C$ and $D$ are nested and there are edges connecting $[C]$ and $[ \Comp{C}]$ resp.~$[D]$ and $[ \Comp{D}]$, we may assume $C\sse D$. 
By \prref{lem:tree_set}, there are only finitely many cuts $E \in \Copt$, with $C\sse E\sse D$. Now, let $C_0, C_1,\dots, C_n$ be a not refinable sequence of cuts in $\Copt$ such that
\[C=C_0\subsetneqq C_1\subsetneqq C_2 \ssnq\, \cdots \, \subsetneqq C_{n-1} \subsetneqq C_n=D.\]
Then we obtain a path from $C$ to $D$:
\[C=C_0,\, \Comp{C}_0 \sim C_1,\, \Comp{C}_1 \sim C_2,\dots, \Comp{C}_{n-1} \sim C_n=D.\]
Hence, $T(\Copt)$ is connected and therefore a tree.
\end{proof}

\begin{remark}\label{rem:dunwoody}
 According to Dunwoody \cite{Dunwoody79} a \emph{tree set} is 
a set of pairwise nested cuts which is closed under complementation and 
with the property that for all $C,D\in\cC$ the set $\set{E \in \cC}{C\sse E\sse D}$ is finite. Using this terminology, \prref{prop:opt_nested} and \prref{lem:tree_set} show that
$\Copt$ is a tree set. Once this is established \prref{prop:tree} becomes a general fact due to Dunwoody \cite[Thm.~2.1]{Dunwoody79}.
\end{remark}

\subsection{Actions on the Structure Tree}\label{sec:blocks}
Now, in addition to the setting in the previous sections we want to introduce a group action on the graph.
In the following, $\GG$ denotes a connected, locally finite, and accessible graph such that the group of automorphisms $\Aut(\GG)$ acts with finitely many orbits on $\GG$. 
For example, if $\GG$ is the Cayley graph of a group $G$ with respect to some finite generating set, then $\GG$ is connected, locally finite, and the action is with finitely many orbits since it is transitive on the set of vertices.  
The action on $\GG$ induces an action of $\Aut(\GG)$ on $\Copt$ and on the structure tree $T(\Copt)$. 

\begin{lemma}\label{lem:VDendl_orbits_k_cuts}
Let $\Aut(\GG)\bs \GG$ be finite and 
$k\in \N$. Then the canonical action of $\Aut(\GG)$ on the set of $k$-cuts has 
finitely many orbits, only. In particular $\Aut(\GG)$ acts on 
$\Copt$ and on the tree $T(\Copt)$ with finitely many orbits.
\end{lemma}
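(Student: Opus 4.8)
The plan is to reduce the statement about $k$-cuts to the already-established finiteness result \prref{lem:endl_k_cuts}, and then to observe that all the remaining claims about $\Copt$ and $T(\Copt)$ follow formally from equivariance.

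\textbf{Step 1: Finitely many orbits of $k$-cuts.} Since $\Aut(\GG)\bs \GG$ is finite, pick a finite set $S\sse V(\GG)$ of vertices meeting every vertex-orbit. First I would check that every $k$-cut $C$ can be translated by some $g\in\Aut(\GG)$ so that $gC$ has a vertex of $\beta(gC)$ in $S$: indeed $\beta C$ is non-empty (as $\delta C$ is non-empty and finite, since $C$ and $\Comp C$ are non-empty and $\GG$ is connected), so choose any $v\in\beta C$, choose $g$ with $gv\in S$, and note $\beta(gC)=g\,\beta C\ni gv$, while $\delta(gC)=g\,\delta C$ has the same cardinality $\leq k$. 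Hence every orbit of $k$-cuts contains a representative $C'$ with $\beta C'\cap S\neq\es$. By \prref{lem:endl_k_cuts} there are only finitely many such $C'$, so there are only finitely many orbits.

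\textbf{Step 2: Action on $\Copt$ and $T(\Copt)$.} Here I would first note that $\Copt$ is $\Aut(\GG)$-invariant: the definition of $\cC(\alp)$, $\cC_{\min}(\alp)$, $m(C)$, $m_\alp$ and $\Copt(\alp)$ is given purely in terms of the graph structure (bi-infinite simple paths, cuts, nestedness, cardinalities of edge-boundaries), all of which are preserved by graph automorphisms; in particular $g\cdot\cC(\alp)=\cC(g\alp)$ and $m(gC)=m(C)$, so $g\cdot\Copt(\alp)=\Copt(g\alp)$ and thus $g\cdot\Copt=\Copt$. Since $\GG$ is accessible, there is a fixed $k$ with $\Copt$ contained in the set of $k$-cuts (this is recorded just before \prref{lem:tree_set}). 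Therefore the action of $\Aut(\GG)$ on $\Copt$ has finitely many orbits by Step 1, being the restriction of an action with finitely many orbits to an invariant subset. Finally, $T(\Copt)$ has edge set $\Copt$ and vertex set the set of $\sim$-classes of $\Copt$; the relation $\sim$ is defined in graph-theoretic terms (\prref{dfn:cuts_equiv}), hence is $\Aut(\GG)$-invariant, so $\Aut(\GG)$ acts on $T(\Copt)$ and the quotient of the edge set is finite; each vertex $[C]$ is the source of the edge $C$, so there are at most as many vertex-orbits as edge-orbits, and the action on $T(\Copt)$ also has finitely many orbits.

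\textbf{Main obstacle.} The substantive content is entirely in Step 1, and within it the only delicate point is the reduction to \prref{lem:endl_k_cuts}: one must argue that $\beta C\neq\es$ for a cut $C$ and that translating to put a point of $\beta C$ into the finite transversal $S$ is legitimate. Both are routine given that $\GG$ is connected and the action on vertices has finitely many orbits; Step 2 is purely formal bookkeeping about invariance of all the constructions under $\Aut(\GG)$, so I do not expect any real difficulty there.
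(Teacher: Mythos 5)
Your proposal is correct and follows essentially the same route as the paper: reduce to \prref{lem:endl_k_cuts} by translating any $k$-cut so that its vertex-boundary meets a finite transversal of the vertex-orbits, then invoke accessibility to bound the weights of cuts in $\Copt$ and use that $\Copt$ is the edge set of $T(\Copt)$. The only difference is that you spell out the routine verifications (non-emptiness of $\beta C$, $\Aut(\GG)$-invariance of $\Copt$ and of $\sim$) that the paper leaves implicit.
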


\begin{proof} Let $\Aut(\GG)\bs V(\Gamma)$ be represented by some finite vertex set $U\sse V(\GG)$.
\prref{lem:endl_k_cuts} states that there are only finitely many $k$-cuts $C$ with $U\cap \beta C \neq \es$. Since every cut is in the same orbit as some cut $C$ with $U\cap \beta C \neq \es$, the group $\Aut(\GG)$ acts on the set of $k$-cuts with 
finitely many orbits.

Since $\Gamma$ is accessible, there is a $k$ such that for all cuts $C \in \Copt$ holds $\abs{\delta C} \leq k$. 
For the last statement recall that $\Copt$ is the edge set of $T(\Copt)$. Thus, 
 the action of $\Aut(\GG)$ on $T(\Copt)$ has only finitely many orbits, too. 
\end{proof}

As in \prref{sec:treewidth}, for $S\sse V(\Gamma)$ and $\ell\in\N$ we denote the $\ell$-th neighborhood of $S$ with $N^\ell (S)$. 
For a cut $C$ we can choose $\ell$ large enough such that $N^\ell (C)\cap \Comp{C}$ is connected. Indeed, all points in $\beta C \cap \Comp{C}$ can be connected 
 inside $\Comp{C}$, hence for some $\ell$ large enough these points can be connected within $N^\ell (C)\cap \Comp{C}$. This $\ell$ suffices to make $N^\ell (C)\cap \Comp{C}$
 connected. 
By \prref{lem:VDendl_orbits_k_cuts}, there are only finitely many orbits of optimal cuts. 
 Thus, we can choose some 
 $\lam\in\N\setminus \{0\}$ which makes $N^\lam (C)\cap \Comp{C}$ connected for all $C\in \Copt$.
 We fix this $\lam $ for the rest of this section. 
 
In order to derive some more information about the vertex stabilizers $G_{[C]}= \set{g \in G}{gC\sim C}$ of vertices of the tree $T(\Copt)$, we assign to each vertex of $T(\Copt)$ a so-called \emph{block}. The definition is according to \cite{ThomassenW93}. In \prref{lem:endl_orbits_Bc} we show that the blocks are somehow ``small''. 
\begin{definition}[Block]\label{def:B{[C]}}
Let $\Aut(\GG)\bs \GG$ be finite and $\Copt$ be the set of optimal cuts.
 Let $\lam\geq 1$ be defined as above such that $N^\lam (C)\cap \Comp{C}$ is connected for all $C\in \Copt$. The \emph{block} assigned to $[C] \in V (T(\Copt))$ is defined by
\[B{[C]}= \bigcap_{D\sim C} N^\lam (D).\]
\end{definition}

\begin{lemma}\label{lem:VconBc}
We have $$B{[C]} = \bigcap_{D \sim C } D \; \cup \; 
\bigcup_{D \sim C }N^\lam (D) \cap \Comp{D}.$$
In particular, blocks are nonempty.
\end{lemma}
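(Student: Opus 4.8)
The plan is to start from the elementary decomposition $N^\lam(D) = D \cup (N^\lam(D) \cap \Comp{D})$, which holds because $D \sse N^\lam(D)$. Then $B{[C]} = \bigcap_{D \sim C} N^\lam(D)$ is an intersection of such unions, and the task is to collapse it using the rigid structure of the equivalence class $[C]$. The facts I will lean on are: by \prref{prop:opt_nested}, any two cuts in $\Copt$ are nested; by \prref{dfn:cuts_equiv}, every $D \sim C$ with $D \neq C$ satisfies $\Comp{C} \ssnq D$ (equivalently $\Comp{D} \ssnq C$); and by \prref{lem:frida} the relation $\sim$ is symmetric.

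The heart of the argument is the claim that for $D_1, D_2 \sim C$ with $D_1 \neq D_2$ one has $N^\lam(D_1) \cap \Comp{D_1} \sse D_2$, and a fortiori $N^\lam(D_1) \cap \Comp{D_1} \sse N^\lam(D_2)$. I would prove this by cases. If $D_1 = C$, then $D_2 \neq C$ forces $\Comp{C} \ssnq D_2$, so $N^\lam(C) \cap \Comp{C} \sse \Comp{C} \sse D_2$; the case $D_2 = C$ is symmetric, using $\Comp{D_1} \ssnq C$. If $D_1 \neq C \neq D_2$, the cuts $D_1, D_2$ are nested, so one of $D_1 \sse D_2$, $D_2 \sse D_1$, $D_1 \cap D_2 = \es$, $D_1 \cup D_2 = V(\GG)$ holds; the first two are impossible, since together with $\Comp{C} \ssnq D_i$ the defining condition of $\sim$ would force $D_1 = D_2$, and $D_1 \cap D_2 = \es$ is impossible since it would give $\Comp{C} \sse D_1 \cap D_2 = \es$, contradicting that $C$ is a cut. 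Hence $D_1 \cup D_2 = V(\GG)$, i.e.\ $\Comp{D_1} \sse D_2$, and once more $N^\lam(D_1) \cap \Comp{D_1} \sse \Comp{D_1} \sse D_2$. I expect this case analysis — in particular ruling out $D_1 \cap D_2 = \es$ — to be the only step needing genuine thought; everything else is bookkeeping.

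Granting the claim, the two inclusions are immediate. For the inclusion $B{[C]} \sse \bigcap_{D\sim C} D \cup \bigcup_{D\sim C} N^\lam(D)\cap\Comp{D}$: if $v \in B{[C]}$ lies in $D$ for all $D \sim C$ it lies in $\bigcap_{D \sim C} D$, and otherwise $v \notin D_0$ for some $D_0 \sim C$, so from $v \in N^\lam(D_0)$ we get $v \in N^\lam(D_0) \cap \Comp{D_0}$. For the reverse inclusion: any point of $\bigcap_{D \sim C} D$ lies in every $D \sse N^\lam(D)$, and a point $v \in N^\lam(D_0) \cap \Comp{D_0}$ with $D_0 \sim C$ lies in $N^\lam(D_0)$ and, by the claim, in $D \sse N^\lam(D)$ for every other $D \sim C$, hence $v \in B{[C]}$. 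Finally, taking $D = C$ in the union shows $B{[C]} \supseteq N^\lam(C) \cap \Comp{C} \supseteq N(C) \cap \Comp{C}$ since $\lam \geq 1$; as $C$ and $\Comp{C}$ are non-empty and $\GG$ is connected there is an edge joining them, which yields a vertex of $N(C) \cap \Comp{C}$, so $B{[C]} \neq \es$.
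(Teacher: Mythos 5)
Your proof is correct and rests on the same key fact as the paper's: for distinct $D_1,D_2\sim C$ one has $\Comp{D_1}\ssnq D_2$, whence $N^\lam(D_1)\cap\Comp{D_1}\sse\Comp{D_1}\sse D_2\sse N^\lam(D_2)$. The only difference is that you re-derive this inclusion through the nestedness case analysis and the minimality clause, whereas it follows in one line from $D_1\sim D_2$ (transitivity, \prref{lem:frida}) together with \prref{dfn:cuts_equiv} — which is exactly what the paper does, stated for $D_1=C$ and extended to general $D_1$ by symmetry of the class $[C]$.
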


\begin{proof}
The inclusion from left to right is obvious. 
Hence, it is enough to show that we have $N^\lam (C) \cap \Comp{C}\sse B{[C]}$.
Since $N^\lam (C) \cap \Comp{C}\sse N^\lam (C)$, we only need to consider 
 $D\sim C$, $D\neq C$ and have to show that $N^\lam (C) \cap \Comp{C}\sse N^\lam (D)$.
 This follows from
 \[N^\lam (C) \cap \Comp{C} \sse \Comp{C} \ssnq D \sse N^\lam (D).\]
\end{proof}

\begin{example}
\prref{fig:schnitte_in_cayleygraph} shows a part of the Cayley graph of the free product $\Z/2\Z\,*\,\Z/3\Z = \genr{a,b}{a^2=1=b^3}$. The minimal cuts in this graph are the cuts with weight one. The optimal cuts are exactly the minimal cuts. The three cuts depicted with dashed lines belong to the same equivalence class and the bold vertices form the respective block. Here, we can choose $\lam=1$ for the definition of the blocks.
\begin{figure}[ht]
\begin{center}
\begin{tikzpicture}[scale=.65, rotate=20]
\tikzstyle{every node}=[inner sep=0pt]

\node[circle, fill,inner sep=1.4pt] (1) at (0, 0) {};
\node[circle, fill,inner sep=1.4pt] (a) at (2, 0) {};
\node[circle, fill,inner sep=1.4pt] (aa) at (1,1.732050807568877294 ) {};

\node (aaba) at (2, 5.464101615137754588) {};
\node (aabaa) at (0,5.464101615137754588) {};
\draw[dotted] (aaba)-- ++(30:1);
\draw[dotted] (aabaa)-- ++(150:1);
\path (aaba) -- ++(30:2) node (aabac){};
\path (aabaa) -- ++(150:2) node (aabaac){};

\node (abaa) at (5.732050807568877294, -1) {};
\node (aba) at (4.732050807568877294, -2.732050807568877294) {};
\draw[dotted] (aba)-- ++(270:1);
\draw[dotted] (abaa)-- ++(30:1);
\path (aba) -- ++(270:2) node (abac){};
\path (abaa) -- ++(30:2) node (abaac){};

\node (ba) at (-3.732050807568877294, -1) {};
\node (baa) at (-2.732050807568877294, -2.732050807568877294) {};
\draw[dotted] (ba)-- ++(150:1);
\draw[dotted] (baa)-- ++(270:1);
\path (ba) -- ++(150:2) node (bac){};
\path (baa) -- ++(270:2) node (baac){};

\draw (aa)-- ++(90:2) node[circle, fill,inner sep=1.4pt] {} -- ++(60:2) -- ++(180:2) -- ++(300:2){};
\draw (a)-- ++(330:2) node[circle, fill,inner sep=1.4pt] {} -- ++(300:2)-- ++(60:2) -- ++(180:2) {};
\draw (1)-- ++(210:2) node[circle, fill,inner sep=1.4pt] {} -- ++(180:2)-- ++(300:2)-- ++(60:2) {};

\path (baac) -- ++(45:4) node (baacd){};
\draw [style=dashed] (baacd) -- ++(190:0.4) node (baacde){};
\path (bac) -- ++(15:4) node (bacd){};
\draw [style=dashed] (bacd) -- ++(230:0.4) node (bacde){};
\draw [style=dashed] (bacde) -- (baacde){};

\path (abaac) -- ++(165:4) node (abaacd){};
\draw [style=dashed] (abaacd) -- ++(310:0.4) node (abaacde){};
\path (abac) -- ++(135:4) node (abacd){};
\draw [style=dashed] (abacd) -- ++(350:0.4) node (abacde){};
\draw [style=dashed] (abacde) -- (abaacde){};

\path (aabaac) -- ++(285:4) node (aabaacd){};
\draw [style=dashed] (aabaacd) -- ++(70:0.4) node (aabaacde){};
\path (aabac) -- ++(255:4) node (aabacd){};
\draw [style=dashed] (aabacd) -- ++(110:0.4) node (aabacde){};
\draw [style=dashed] (aabacde) -- (aabaacde){};

\draw (1) -- (a){};
\draw (1) -- (aa){};
\draw (aa) -- (a){};

\end{tikzpicture}
\end{center}
\caption[]{Block of six vertices in the Cayley graph of $\Z/2\Z*\Z/3\Z$}\label{fig:schnitte_in_cayleygraph}
\end{figure}
\end{example}

\begin{lemma}\label{lem:VDconBc}The following assertions hold.
\begin{enumerate}
\item\label{conBci} For every $C \in \Copt$ the block $B{[C]}$ is connected.
\item\label{conBcii} There is a number $\ell \in \N$ such that for all $C \in \Copt$ 
and all $S \sse B{[C]}$ we have: 
Whenever two vertices $u,v \in B{[C]}- N^\ell (S)$ can be connected by some path in $\GG- N^\ell (S)$, then they can be connected by some path in $B{[C]}- S$.
\end{enumerate}
\end{lemma}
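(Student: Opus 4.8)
The plan is to describe $\GG$ explicitly near the class $[C]$ and then to reroute paths into the block. The combinatorial heart is the claim that for \emph{distinct} $D,D'\in\Copt$ with $D\sim C$ and $D'\sim C$ one has $\Comp D\sse D'$. When $D=C$ or $D'=C$ this is immediate from \prref{dfn:cuts_equiv}; in the remaining case $D,D',C$ are pairwise distinct, $D$ and $D'$ are nested by \prref{prop:opt_nested}, the corner $D\cap D'$ is nonempty since it contains $\Comp C$, and the corners $D\cap\Comp{D'}$ and $\Comp D\cap D'$ cannot be empty either, as that would interpose a cut strictly between $\Comp C$ and $D'$, resp.\ between $\Comp C$ and $D$, contradicting the ``no cut strictly between'' clause of $\sim$; hence the remaining corner $\Comp D\cap\Comp{D'}$ is empty. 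Writing $A_D=N^\lam(D)\cap\Comp D$ (which is connected by the choice of $\lam$, since $D\in\Copt$) and $\mathrm{Ext}_D=\Comp{N^\lam(D)}$, which is contained in $\Comp D$ because $D\sse N^\lam(D)$, the claim gives the following picture: $V(\GG)\sm B{[C]}$ is the disjoint union of the sets $\mathrm{Ext}_D$ over $D\sim C$; any two adjacent vertices lying outside $B{[C]}$ belong to the same $\mathrm{Ext}_D$; and any vertex of $B{[C]}$ adjacent to $\mathrm{Ext}_D$ already lies in $A_D$. Finally, since $\GG$ has uniformly bounded degree and every optimal cut is a $k$-cut, $A_D$ lies in a bounded neighbourhood of $\beta D\cap\Comp D$, so $\abs{A_D}\le M$ for a constant $M$ depending only on $k$, on $\lam$, and on the maximal degree, uniformly over $C$ and over $D\sim C$.

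For part~\ref{conBci}, take $x,y\in B{[C]}$ and a path $P$ from $x$ to $y$ in $\GG$. Cut $P$ at the moments it leaves and re-enters $B{[C]}$. By the previous paragraph each maximal ``outside'' subpath of $P$ lies in a single $\mathrm{Ext}_D$, and the two vertices of $P$ flanking it lie in $A_D$. Replacing each such subpath by a path inside the connected set $A_D\sse B{[C]}$ between the same two endpoints transforms $P$ into a walk lying entirely in $B{[C]}$; hence $B{[C]}$ is connected.

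For part~\ref{conBcii}, I would take $\ell:=M$ and repeat the rerouting. Suppose $u,v\in B{[C]}-N^\ell(S)$ are joined by a path $P$ in $\GG-N^\ell(S)$. Reroute $P$ as above; each inserted detour lies in some $A_D$ and joins two of its vertices that already lie on $P$, hence are at distance $>\ell$ from $S$; since the detour has length at most $\abs{A_D}-1\le\ell-1$, every vertex on it is within distance $\ell-1$ of one of these endpoints, and therefore is not in $S$. The resulting walk lies in $B{[C]}-S$ and joins $u$ to $v$, which proves the assertion.

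I expect the first paragraph to be the only real obstacle: proving $\Comp D\sse D'$ for distinct $D,D'\sim C$ and deducing from it that the exterior components $\mathrm{Ext}_D$ are mutually disjoint and attach to $B{[C]}$ only through the thin collars $A_D$. This is exactly the point where the properties of optimally nested cuts (\prref{prop:opt_nested}) and of the adjacency relation $\sim$ (\prref{dfn:cuts_equiv}) are used. Once that picture is in place, both parts are routine path surgery, the only quantitative ingredient being the uniform bound $M$ on $\abs{A_D}$, which rests on accessibility (optimal cuts are $k$-cuts) together with uniformly bounded degree (local finiteness and finitely many $\Aut(\GG)$-orbits).
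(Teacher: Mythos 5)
Your proof is correct, and its engine is the same as the paper's: reroute every excursion of the path outside $B{[C]}$ through the connected collar $N^\lam(D)\cap\Comp{D}\sse B{[C]}$, and use a uniform smallness bound on these collars to guarantee that the detours stay clear of $S$. The differences lie in the preparation. The paper gets by without your structural claim ($\Comp{D}\sse D'$ for distinct $D,D'\sim C$, hence the disjointness of the exterior pieces $\Comp{N^\lam(D)}$): it processes one excursion at a time --- the first vertex $v_m$ of the path not in $B{[C]}$ lies outside $N^\lam(D)$ for some single $D\sim C$, the path is rerouted through $N^\lam(D)\cap\Comp{D}$ up to the first return $v_n$ to $N^\lam(D)$, and an induction on the remaining length finishes the job --- so it never needs to know that a maximal excursion outside $B{[C]}$ sits inside a single exterior piece. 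Your decomposition is correct (the argument via the nonempty corners $D\cap D'\supseteq\Comp{C}$, $D\cap\Comp{D'}$, $\Comp{D}\cap D'$ and the minimality clause of $\sim$ is exactly right) and gives a cleaner global picture of $B{[C]}$, but it is extra machinery for this lemma. The other difference is the constant: the paper takes $\ell$ to be the maximum diameter of the sets $N^\lam(D)\cap\Comp{D}$, which exists because there are finitely many $\Aut(\GG)$-orbits of optimal cuts (\prref{lem:VDendl_orbits_k_cuts}), whereas you bound the cardinality $M$ of these sets directly from accessibility (every optimal cut is a $k$-cut) together with the uniformly bounded degree; this is slightly more self-contained and the two constants do the same job in the final distance estimate.
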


\begin{proof}
Note that \ref{conBci}.~is a special case of \ref{conBcii}.~by choosing 
$S = \es$. Let $\ell = \max\set{d(u,v)}{D\in \Copt, \, u,v \in \Comp D\cap N^\lam (D)}$. Thus, $\ell$ is a uniform bound on the diameters of the sets $N^\lam( D) \cap \Comp{D}$ for $D\in \Copt$. It exists because there are only finitely many orbits of optimal cuts.

Now, let $u, v \in B{[C]}-N^\ell (S)$ be two vertices
which are connected by some path $\gam$ in $\GG- N^\ell (S)$. 
We are going to transform the path $\gam$ into some path $\gam'$
with all vertices in $B{[C]}- S$. If $\gam$ is entirely in $B{[C]}$ we are done. Hence, we may assume that there exist
a first vertex $v_m$ of $\gamma$ which does not lie in $B{[C]}$.
Thus, for some $D \sim C$ we have $v_m \notin N^\lam (D)$. Since
$\lam \geq 1$, we have $v_{m-1} \in N^\lam (D) \cap \Comp{D}$. 
For some $n>m$ we find a vertex 
$v_{n}$ which is the first vertex after $v_m$ lying in $N^\lam (D)$ again.
As $v_{n}$ is the first one, we have $v_{n} \in N^\lam( D) \cap \Comp{D}$, too. 
 Since $N^\lam (D) \cap \Comp{D}$ is connected, we can choose a path from $v_{m-1}$ to $v_{n}$ inside $N^\lam (D) \cap \Comp{D}$. This is a path inside $B{[C]}$ by \prref{lem:VconBc}. Note that this path does not use $v_m$ anymore. Moreover, the new segment cannot meet any point in $S$ because otherwise $v_n \in N^\ell (S)$. The path from $v_n \in B{[C]}- N^\ell (S)$ to $v$ is shorter than $\gam$. Hence, by induction, $v_n$ is connected to $v$ in $B{[C]}- S$; and we can transform $\gam$ as desired. 
 \end{proof}

Let $C \in \Copt$ and $ g \in \Aut(\GG)$ be such that $gC \sim C$. Since $gB{[C]} 
= \bigcap \set{N^\lam (gD)}{gD\sim gC}= \bigcap \set{N^\lam (gD)}{gD\sim C}$, we see that the stabilizer $G_{[C]}$ of some vertex $[C]$ of $T(\Copt)$ acts on $B[C]$. Moreover, we can prove the following lemma.

\begin{lemma}\label{lem:endl_orbits_Bc}
Let $\GG$ be a connected, locally finite, and accessible graph such that a group $G$ acts on $\Gamma $ with finitely many orbits. Let $C\in \Copt$. 
Then the stabilizer $G_{[C]}= \set{g \in G}{gC\sim C}$ of the vertex 
$[C]= \set{D}{C \sim D} \in V(T(\Copt))$
acts with finitely many orbits on the block $B{[C]}$.
\end{lemma}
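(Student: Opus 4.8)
The plan is to show that the block $B[C]$ is covered by finitely many $G_{[C]}$-orbits by exploiting two facts already available: that $G$ acts on $\GG$ with finitely many orbits, and that $B[C]$ is a connected subgraph of the locally finite graph $\GG$ which is, in a suitable sense, "coarsely" contained in a neighbourhood of $C$ and its $\sim$-partners. First I would observe that it suffices to bound the number of $G_{[C]}$-orbits on the vertex set $V(B[C])$, since that is what the statement asks. Recall from \prref{lem:VconBc} that
\[
B[C] = \bigcap_{D\sim C} D \;\cup\; \bigcup_{D\sim C} N^\lam(D)\cap \Comp D,
\]
so every vertex of $B[C]$ lies either in the "interior core" $\bigcap_{D\sim C}D$ or in one of the thin "collar" pieces $N^\lam(D)\cap\Comp D$ with $D\sim C$.

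The collar pieces are easy to handle: there are only finitely many $G$-orbits of optimal cuts by \prref{lem:VDendl_orbits_k_cuts}, each collar $N^\lam(D)\cap\Comp D$ is finite (it is contained in the $\lam$-neighbourhood of the finite set $\beta D\cap\Comp D$ in a locally finite graph), and $\lam$ is chosen uniformly, so the collars have uniformly bounded size. Since $G_{[C]}$ permutes the set $\{D : D\sim C\}$, and the number of $G$-orbits of cuts is finite while each $D$ contributes only boundedly many collar vertices, the union of all collar pieces meets only finitely many $G_{[C]}$-orbits — here one uses that two cuts $D,D'\sim C$ in the same $G$-orbit, say $D' = gD$ with $g\in G$, automatically satisfy $gC = g(\Comp{\Comp C})\sim\cdots$; more carefully, one checks that if $gD \sim C$ then $g$ need not stabilize $[C]$, so instead I would argue directly: the collar vertices of $B[C]$ all lie within bounded distance of $\beta C$ (using \prref{lem:VDconBc}\ref{conBci}, $B[C]$ is connected, and its diameter "transverse" to $C$ is bounded), hence within a fixed finite ball around any chosen vertex of $\beta C$; finitely many $G$-orbits meet a fixed finite vertex set, and the $G_{[C]}$-orbits refining them are still finite in number because $G_{[C]}$ has finite index behaviour is not needed — rather one uses that a single $G$-orbit intersected with the $G_{[C]}$-invariant set $B[C]$ breaks into finitely many $G_{[C]}$-orbits precisely when the relevant "transporter" sets are controlled.

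The genuinely delicate part is the core $\bigcap_{D\sim C}D$, which may be infinite. The key idea is that the core, although possibly infinite, is "trapped" between finitely many cut-boundaries in the structure tree, and the structure-tree picture forces it to be a single orbit-finite region. Concretely, I would use \prref{lem:tree_set}: for $D, D'\sim C$ the set of optimal cuts between $\Comp D$ and $D'$ is finite, so $[C]$ has finite degree in $T(\Copt)$, i.e.\ $\{D:D\sim C\}$ breaks into finitely many pairs $(D,\Comp D)$; combined with the finitely-many-orbits statement for $T(\Copt)$ from \prref{lem:VDendl_orbits_k_cuts}. Then, for a vertex $v$ in the core, every optimal cut $D$ with $v\in\Comp D$ must have $D\not\sim C$ lying "beyond" $[C]$ in the tree, and a counting/compactness argument (in the spirit of \prref{lem:VDconBc}\ref{conBcii}, connecting paths inside $B[C]$) shows that $v$ is within bounded distance of the finite boundary $\bigcup_{D\sim C}\beta D$. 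That reduces the core to a bounded neighbourhood of a finite set, and then finitely-many-$G$-orbits-refined-to-$G_{[C]}$-orbits finishes it as before. I expect the main obstacle to be exactly this last reduction — showing the core is a bounded neighbourhood of $\bigcup_{D\sim C}\beta D$ — because it requires combining the accessibility bound, the finiteness of the tree-degree at $[C]$, and the connectivity lemma \prref{lem:VDconBc}; if a direct distance bound is awkward, the fallback is to invoke \prref{lem:VDconBc}\ref{conBcii} with $S$ the finite set $\bigcup_{D\sim C}\beta D$ to see that $B[C]\setminus N^\ell(S)$ has its components controlled, forcing $B[C]$ itself to be a bounded neighbourhood of $S$ up to finitely many extra orbits.
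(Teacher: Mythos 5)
There is a genuine gap, and it occurs at the two places you yourself flag as delicate. First, you dismiss the wrong fact: if $D\sim C$ and $gD\sim C$, then $g$ \emph{does} stabilize $[C]$, since $g[C]=g[D]=[gD]=[C]$. This observation, combined with the finiteness of $G\bs\Copt$ (\prref{lem:VDendl_orbits_k_cuts}), is exactly what shows that $G_{[C]}$ acts with finitely many orbits on $[C]$ and hence on $\bigcup_{D\sim C}\beta D$; that union (not $\beta C$, and not a finite set --- at this stage nothing excludes $[C]$ being infinite) is the correct target for the reduction. Having discarded this, you fall back on the claim that all collar vertices lie within bounded distance of $\beta C$, which is unjustified and in general would force $[C]$ to be finite by \prref{lem:endl_k_cuts}, something not yet known. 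The correct statement is only that each collar vertex of $N^\lam(D)\cap\Comp D$ lies within distance $\lam$ of $\beta D$ for \emph{that} $D\sim C$, and one then needs the finitely-many-orbits statement for $[C]$ to conclude.

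Second, for the core $\bigcap_{D\sim C}D$ you have no actual argument: the ``counting/compactness argument'' is unspecified, and \prref{lem:VDconBc} is a connectivity statement that does not yield a distance bound. The missing idea is the following. Since $G\bs\GG$ is finite, choose a finite $U\sse B[C]$ with $B[C]\sse G\cdot U$ and a constant $m$ with $d(u,\beta C)\le m$ for all $u\in U$; then every $v\in B[C]$ satisfies $d(v,\beta E)\le m$ for some translate $E=gC\in\Copt$. Now for $v$ in the core one uses that $C$ and $E$ are \emph{nested}: either $E\sse\Comp C$, in which case $\beta E\sse\Comp C\cup\beta C$ and $v\in C$ forces $v\in\beta C$; or $\Comp E\ssnq C$, in which case there is $D\sim C$ with $\Comp E\sse\Comp D\ssnq C$ and then $v\in D\cap\beta\Comp E\sse\beta D$. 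This converts proximity to an arbitrary translate $\beta(gC)$ into proximity to $\bigcup_{D\sim C}\beta D$, and local finiteness plus the first point finishes the proof. Without this nestedness step the reduction of the core does not go through.
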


\begin{proof}
By \prref{lem:VDendl_orbits_k_cuts}, $G$ acts with finitely many orbits on the set $\Copt$. For $D \sim gD \sim C$ we have $g \in G_{[C]}$, and hence 
$G_{[C]}$ acts with finitely many orbits on $[C]$. This implies that
$G_{[C]}$ acts with finitely many orbits on the union $\bigcup\set{\beta D}{D \sim C}$.

We are going to show that there is some $m\in \N$ such that for every $v \in B{[C]}$ there is a cut $D\in [C]$ with $d(v,\beta D)\leq m$, \ie  that every point in $B[C]$ is close to some $\beta D$, $D\sim C$. This implies the result since $\Gamma$ is locally finite.

Let $v \in B{[C]}$. If $v \in N^\lam(D)\cap \Comp{D}$ for some $D \sim C$, then we have $d(v,\beta D) \leq \lam$ (recall that $\lam$ is a fixed constant). Thus, it remains to consider the case $v \in D$ for all $D \sim C$.

Let $U$ be a finite subset of $B{[C]}$ such that $B{[C]} \sse G\cdot U$.
There is a constant $m \geq \lam$ such that $d(u, \beta C) \leq m$ for $u\in U$. We conclude that for the node
$v \in B{[C]}$ there is some $g\in G$ and $E = gC$ such that $d(v,\beta E ) \leq m$.
Thus, we actually may assume $v \in \beta E$ and we have to show that this implies $v \in \bigcup\limits_{D\sim C} \beta D$.

Because $C$ and $E$ are nested, we can assume (after replacing $E$ with $\Comp{E}$ if necessary) that $C\sse \Comp{E}$ or $\Comp{E}\ssnq C$. If $C\sse \Comp{E}$ (thus $E\sse \Comp{C}$), then we have $\beta E\sse \beta \Comp{C} \cup \Comp{C}$. But $v \in C$, hence
$v \in \beta \Comp{C} = \beta C$. 
On the other hand, if $\Comp{E} \subsetneqq C$, then there is an optimal cut $D\sim C$ such that $\Comp{E} \sse \Comp{D} \subsetneqq C$. It follows that $v \in D \cap \beta \Comp{E} \sse D \cap ( \beta \Comp{D} \cup \Comp{D}) \sse \beta D$.
\end{proof}

The next result states a key property of blocks which finally implies that the stabilizers of the blocks are finite.
\begin{proposition}\label{prop:enden_in_Bc}
For $C\in \Copt$ the block $B{[C]}$ has at most one end. 
\end{proposition}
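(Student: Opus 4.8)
The plan is to argue by contradiction: suppose $B[C]$ has more than one end. By definition of "more than one end", there is a finite set $S\subseteq V(\GG)$ — and after enlarging $S$ I may take $S\subseteq B[C]$, since finite sets can be pushed inside the block using connectivity — such that $B[C]-S$ has (at least) two infinite connected components $A_1,A_2$. Pick one-sided infinite simple paths $\gamma_1\subseteq A_1$ and $\gamma_2\subseteq A_2$ and glue them through $S$ into a bi-infinite simple path $\alpha$ with $\abs{\alpha\cap A_1}=\abs{\alpha\cap A_2}=\infty$. The idea is then to apply \prref{lem:sep_von_1} (via the fact that $\GG$ has finite treewidth and uniformly bounded degree — $\GG$ is locally finite with $\Aut(\GG)\bs\GG$ finite, hence uniformly bounded degree, and finite treewidth will be available in the setting where this proposition is used) to produce a $k$-cut $D$ that separates the two tails of $\alpha$ while staying far from $S$; optimality (\prref{prop:opt_nested}, \prref{lem:tree_set}) should then let me replace $D$ by an optimal cut that is still not nested with every $E\sim C$, contradicting that $B[C]$ is an intersection of neighborhoods of the class $[C]$.

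**Key steps.** First I would set up the gluing: choose $v_0\in S$, and by \prref{lem:sep_von_1} applied to $\gamma_1$ (or rather to $\alpha$, using that $\cC(\alpha)\neq\es$) obtain a $k$-cut $D$ with $v_0\in D$, $d(v_0,\Comp D)\geq n$ for $n$ chosen larger than $\max\set{d(v_0,w)}{w\in\beta C}$ and larger than $\max\set{d(u,v)}{u,v\in\beta E, E\sim C}$ (finite by \prref{lem:VDendl_orbits_k_cuts}), and $\abs{\Comp D\cap\alpha}=\infty$. Since $D$ is far from $S$, it splits $\alpha$ into two infinite pieces, so $D\in\cC(\alpha)$; passing to $\cC_{\min}(\alpha)$ and then (via the $m(\cdot)$ minimization) to $\Copt(\alpha)$, I get an optimal cut $D_0$ with $\abs{\alpha\cap D_0}=\abs{\alpha\cap\Comp{D_0}}=\infty$ and $d(v_0,\beta D_0)$ still large. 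Now the crucial geometric point: because $d(v_0,\beta D_0)>\max\set{d(v_0,w)}{w\in\beta C}$ we get $\beta C\subseteq D_0$ (or $\subseteq\Comp{D_0}$), i.e. $C$ and $D_0$ are nested; more strongly, because $D_0$ cuts $\alpha\subseteq B[C]$ into two infinite halves, $D_0$ must separate vertices of $B[C]$ lying in $\bigcap_{E\sim C}E$ from others, and one derives that $D_0$ is nested with every $E\sim C$ and sits "strictly inside" $B[C]$, so $D_0\sim C$ or $D_0$ refines the block — contradicting \prref{lem:VconBc} (every point of $B[C]$ is close to some $\beta E$, $E\sim C$) because a tail of $\alpha$ inside $D_0$ would be deep inside all of $\bigcap_{E\sim C}E$, hence bounded away from all the boundaries $\beta E$, yet still in $B[C]$.

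**Main obstacle.** The delicate part will be the last step: turning "$\alpha$ is split into two infinite pieces by an optimal cut $D_0$ that lives inside $B[C]$" into an actual contradiction. I expect one needs to show that $D_0$ (or one of $D_0,\Comp{D_0}$) is \emph{nested with and distinct from} every $E\sim C$, which would force $D_0$ itself to belong to $\Copt$ and lie strictly between consecutive cuts of the class — but $[C]$ is an equivalence class for $\sim$, not a chain, so the right statement is rather that $D_0$ separates $B[C]$, whereas \prref{lem:VDconBc}(ii) says $B[C]$ is "uniformly locally connected" and \prref{lem:endl_orbits_Bc} says every vertex of $B[C]$ is within bounded distance of some $\beta E$. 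Combining these: a bi-infinite geodesic-like path $\alpha$ with both tails in $B[C]$, both tails far from $S$, cannot have its two ends separated inside $B[C]$ by a bounded-weight cut without that cut's boundary meeting $B[C]$ — and any such boundary vertex is close to some $\beta E$, $E\sim C$, forcing $D_0$ to be nested with $E$ in a way that contradicts $D_0$ being far from all $\beta E$. Making this chain of implications precise, and checking that "far from $S$" plus "$n$ large" genuinely forces nestedness with all of $[C]$ simultaneously (not just with a single representative $C$), is where the real work lies; the rest is bookkeeping with the already-established submodularity and accessibility machinery.
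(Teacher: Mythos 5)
Your setup (argue by contradiction, take a finite $S\subseteq B{[C]}$ and a bi-infinite simple path $\alpha$ with one tail in each of two infinite components of $B{[C]}-S$) is the same as the paper's, but both steps you flag as delicate are exactly where your argument breaks, and the tools you reach for are the wrong ones. To establish $\cC(\alpha)\neq\emptyset$ you cannot use \prref{lem:sep_von_1}: that lemma assumes finite treewidth, which is not a hypothesis of this proposition (only accessibility and finitely many $\Aut(\GG)$-orbits are in force in this subsection), and even where it applies it only produces a cut $D$ with $v_0\in D$ and $\abs{\Comp{D}\cap\gamma}=\infty$ for a one-sided path $\gamma$ --- it does not prevent \emph{both} tails of $\alpha$ from eventually lying in $\Comp{D}$, so it does not put $D$ into $\cC(\alpha)$. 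The correct tool is \prref{lem:VDconBc}~(ii): since the two tails of $\alpha$ lie in different components of $B{[C]}-S$, they lie in different components of $\GG-N^\ell(S)$, and that is precisely the statement $\cC(\alpha)\neq\emptyset$; accessibility then yields an optimal cut $E\in\Copt(\alpha)$.

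Your endgame also does not close. The intended contradiction rests on the claim that a tail of $\alpha$ lying in $\bigcap_{D\sim C}D$ is bounded away from all boundaries $\beta D$; this is false, since a vertex can belong to every $D\sim C$ and still lie at distance $0$ from some $\beta D$ (indeed the proof of \prref{lem:endl_orbits_Bc} shows that \emph{every} vertex of $B{[C]}$, including those in $\bigcap_{D\sim C}D$, is close to some $\beta D$). The observation you are missing is much simpler: $\alpha\subseteq B{[C]}\subseteq N^\lam(D)$ and $N^\lam(D)\cap\Comp{D}$ is finite, hence $\abs{\alpha\cap\Comp{D}}<\infty$ for every $D\sim C$. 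Since $E$ and $C$ are nested by \prref{prop:opt_nested}, and $\abs{\alpha\cap\Comp{C}}<\infty$ while both $E$ and $\Comp{E}$ meet $\alpha$ infinitely often, the only possible nesting is $E\subsetneqq C$; by \prref{lem:tree_set} and the definition of $\sim$ there is some $D\sim C$ with $E\subseteq\Comp{D}\subsetneqq C$, whence $\abs{\alpha\cap E}\leq\abs{\alpha\cap\Comp{D}}<\infty$, the desired contradiction. No distance estimates, no treewidth, and no minimization of $m(\cdot)$ beyond membership in $\Copt$ are needed.
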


\begin{proof}
Assume by contradiction that $B{[C]}$ has more than one end. By \prref{lem:VDconBc} $B{[C]}$ is connected, hence there is a bi-infinite 
simple path $\alp$ and a finite subset $S\sse B{[C]}$ such that 
two different connected components of $B{[C]} -S$ contain 
infinitely many vertices of $\alp$. However, for all $D \sim C$ we have $\alp\sse B{[C]} \sse N^\lam (D)$. Since $N^\lam (D) \cap \Comp{D}$ is finite, this implies that for
$D \sim C$ almost all nodes of $\alp$ are in $D$,
and hence $\abs{\alp \cap \Comp{D}} < \infty$. 

By \prref{lem:VDconBc},
there are two different connected components of $\GG- N^\ell (S)$ each
containing infinitely many vertices of $\alp$. Thus, the set $\cC(\alp)$ is not empty and there is an optimal cut $E \in \Copt(\alp)$.
This means $\abs{\alp \cap E} = \infty = \abs{\alp \cap \Comp{E}}$.
The cuts $C$ and $E$ are nested. We cannot have 
$E \sse \Comp{C}$ or $\Comp{E} \sse \Comp{C}$ because $\abs{\alpha \cap \Comp{C}}<\infty$.
Hence, by symmetry $E \subsetneqq C$. 
By \prref{lem:tree_set}, there is some $D\in[C]$ such that $E \sse \Comp{D}\subsetneqq C$. But we just have seen that almost all nodes of $\alp$ belong to $D$. Thus, $\abs{\alp \cap E} < \infty$. This is a contradiction. 
\end{proof}

Now we have all the tools to state and prove the main theorem of this section.
\begin{theorem}\label{thm:new_alpha}
Let $\Gamma$ be a connected, locally finite graph of finite treewidth. 
Let a group $G$ act on $\Gamma$ such that $G\bs \Gamma$ is finite and each 
node stabilizer $G_v$ is finite. 
Then $G$ acts on the tree $T(\Copt)$ such that all vertex and edge 
stabilizers are finite and $G\bs T(\Copt)$ is finite.
\end{theorem}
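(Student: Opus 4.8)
The plan is to combine the structure-tree machinery developed in this section: first establish that $\Copt$ is non-empty (so that $T(\Copt)$ is a genuinely interesting tree), then verify the three required finiteness assertions one by one using the lemmas already proved. I begin by noting that $\GG$ is automatically of uniformly bounded degree: it is locally finite and $G\bs\GG$ is finite (since $G_v$ acts on a locally finite graph with finitely many orbits), so only finitely many isomorphism types of vertex-neighborhoods occur. If $\GG$ is finite there is nothing to prove, so assume $\GG$ is infinite; then by \prref{prop:two_ends} (applied with $\Aut(\GG)$ replaced by $G$, or simply noting $G\bs\GG$ finite forces $\Aut(\GG)\bs\GG$ finite) the graph $\GG$ has more than one end, hence there is a bi-infinite simple path $\alp$ with $\cC(\alp)\neq\es$, so $\cC_{\min}$ and therefore $\Copt$ are non-empty. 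By \prref{prop:fritz}, $\GG$ is accessible, so $\Copt$ consists of $k$-cuts for a fixed $k$, and by \prref{prop:opt_nested} and \prref{lem:tree_set} the set $\Copt$ is a tree set; by \prref{prop:tree}, $T(\Copt)$ is a tree on which $G$ acts (the action on $\GG$ carries cuts to optimal cuts, preserving weights, nesting and the relation $\sim$, hence descends to $T(\Copt)$).

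Next I handle the edge and orbit counts, which are the easy parts. Since $G\bs\GG$ is finite and $\Copt$ consists of $k$-cuts for fixed $k$, \prref{lem:VDendl_orbits_k_cuts} (with $\Aut(\GG)$ replaced by $G$) shows $G$ acts with finitely many orbits on $\Copt = E(T(\Copt))$, hence also with finitely many orbits on $V(T(\Copt))$; this gives $G\bs T(\Copt)$ finite. For an edge $C\in\Copt$, its stabilizer $G_C=\set{g\in G}{gC=C}$ fixes the finite set $\del C$ of edges of $\GG$ and hence fixes the finite vertex set $\beta C$; choosing any $v\in\beta C$, $G_C$ embeds into the finite group $G_v$ via restriction composed with... more simply, $G_C$ permutes the finite set $\beta C$ and the kernel of this permutation action lies in $\bigcap_{v\in\beta C}G_v$, which is finite, so $G_C$ is finite. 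Thus all edge stabilizers of $T(\Copt)$ are finite, and since the action is without inversion (an edge $C$ and its reverse $\Comp C$ are never equal as subsets of $V(\GG)$), this is exactly what is needed.

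The main obstacle is the vertex stabilizers $G_{[C]}=\set{g\in G}{gC\sim C}$; this is where the block technology pays off. By \prref{lem:endl_orbits_Bc}, $G_{[C]}$ acts on the block $B[C]$ with finitely many orbits, and by \prref{prop:enden_in_Bc} the block $B[C]$ has at most one end. A group acting on a connected, locally finite graph with finitely many orbits and at most one end must be finite — this is essentially the contrapositive of \prref{prop:two_ends}: if $G_{[C]}$ were infinite, then since it acts with finitely many orbits on the (connected, locally finite) graph $B[C]$, the quotient $G_{[C]}\bs B[C]$ would be finite while $B[C]$ is infinite, forcing $B[C]$ to have more than one end by \prref{prop:two_ends} (note $B[C]$ has finite treewidth since it is a subgraph of $\GG$, by \prref{lem:subgraph}), contradicting \prref{prop:enden_in_Bc}. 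Hence $B[C]$ is finite whenever $G_{[C]}$ is infinite — but more directly: if $B[C]$ is finite then $G_{[C]}$ acts on a finite set; composing with the restriction to an orbit, the kernel is contained in an intersection of finitely many vertex stabilizers $G_v$, so $G_{[C]}$ is finite in that case too; and if $B[C]$ is infinite then the argument above shows $G_{[C]}$ infinite is impossible. Either way $G_{[C]}$ is finite. I should double-check one subtlety: that $G_{[C]}$ genuinely leaves $B[C]$ invariant, which was noted just before \prref{lem:endl_orbits_Bc} from $gB[C]=\bigcap_{gD\sim C}N^\lam(gD)$; and that $\lam$ can be chosen uniformly over all of $\Copt$, which was arranged using \prref{lem:VDendl_orbits_k_cuts}. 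Assembling the three points — finitely many orbits on $T(\Copt)$, finite edge stabilizers, finite vertex stabilizers — completes the proof.
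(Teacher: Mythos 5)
Your argument is correct and follows essentially the same route as the paper: the vertex stabilizers $G_{[C]}$ are handled by showing the blocks $B[C]$ are finite, combining \prref{lem:subgraph}, \prref{lem:endl_orbits_Bc}, \prref{prop:two_ends} and \prref{prop:enden_in_Bc}, after which finiteness of $G_{[C]}$ follows from the finiteness of the node stabilizers $G_v$, and the orbit count comes from \prref{lem:VDendl_orbits_k_cuts}. The extra checks you include (non-emptiness of $\Copt$, the trivial case, and the direct argument for edge stabilizers via $\beta C$, which the paper instead gets for free since $G_C\sse G_{[C]}$) are sound but do not change the substance of the proof.
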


\begin{proof}
The blocks $B{[C]}$ have finite treewidth by \prref{lem:subgraph}.
By \prref{lem:endl_orbits_Bc}, $G_{[C]}$ acts with finitely 
many orbits on $B[C]$. Hence, we can apply \prref{prop:two_ends} what implies that the blocks are finite or have more than one end. The latter case is excluded by \prref{prop:enden_in_Bc}, which states that the blocks have at most one end. Hence, the blocks are finite. 

Now, let $g \in G_{[C]}$, then we have $g(B{[C]}) = B{[C]}$. Since $B{[C]}$ is finite and all vertex stabilizers $G_v$ of vertices of $\GG$ are finite, also $G_{[C]}$ is finite.
Therefore, the action has finite node and edge stabilizers.

By \prref{lem:VDendl_orbits_k_cuts}, $G$ acts with finitely many orbits on $T(\Copt)$.
\end{proof}

\begin{corollary}\label{cor:fin_tree_decomp}
 Let $\Gamma$ be a connected, locally finite graph of finite treewidth. 
Let a group $G$ act on $\Gamma$ such that $G\bs \Gamma$ is finite and the
node stabilizers $G_v$ are finite. 
 Then the tree $T(\Copt)$ with bags $B{[C]}$ forms a tree decomposition of $\GG$ with finite bag-size and $G$ acts on $T(\Copt)$.
\end{corollary}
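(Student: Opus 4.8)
The plan is to put together the machinery of this section; almost all the work is already done. If $\GG$ is finite the statement is trivial (take the one-node tree with bag $V(\GG)$), so assume $\GG$ is infinite. By \prref{prop:two_ends} the graph then has more than one end, so $\Copt\neq\es$ and $T(\Copt)$ is a genuine nonempty tree. By \prref{thm:new_alpha} the group $G$ acts on $T(\Copt)$ with finitely many orbits and with finite vertex and edge stabilizers, and the proof of that theorem in fact shows that every block $B{[C]}$ is \emph{finite}. So it remains to verify that $T(\Copt)$ together with $[C]\mapsto B{[C]}$ is a tree decomposition of $\GG$, that its bag-size is finite, and that the $G$-action on $T(\Copt)$ respects the bags.

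For the bag-size and the compatibility of the action I would use the equivariance $gB{[C]}=B{[gC]}$ for $g\in G$, which is implicit in the computation preceding \prref{lem:endl_orbits_Bc} (the relation $\sim$ is defined through $G$-equivariant inclusions). It shows at once that blocks lying in the same $G$-orbit of tree vertices have equal cardinality; since $G$ acts with finitely many orbits on $V(T(\Copt))$ by \prref{lem:VDendl_orbits_k_cuts}, there are only finitely many distinct block cardinalities, each finite, hence $\bags(T(\Copt))<\infty$. The same identity says $g$ carries the bag over $[C]$ to the bag over $[gC]$.

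It then remains to check (T1)--(T3). The key tool is the explicit shape $B{[C]}=\bigcap_{D\sim C}D\;\cup\;\bigcup_{D\sim C}\bigl(N^\lam(D)\cap\Comp D\bigr)$ from \prref{lem:VconBc}, together with the fact that $\Copt$ is a \emph{tree set}: its members are pairwise nested (\prref{prop:opt_nested}) and for $C\sse D$ in $\Copt$ the interval $\set{E\in\Copt}{C\sse E\sse D}$ is finite (\prref{lem:tree_set}). Fix $v\in V(\GG)$. Each $C\in\Copt$ has $v$ in exactly one of $C,\Comp C$; orienting every edge of $T(\Copt)$ towards the side containing $v$ gives an orientation which, by nestedness, is monotone along every path of $T(\Copt)$. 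Discreteness of the tree set together with accessibility (all cuts in $\Copt$ are $k$-cuts for a fixed $k$, so by \prref{lem:endl_k_cuts} their vertex-boundaries cannot all accumulate near $v$) forces this orientation to have a \emph{sink}, i.e.\ a vertex $[C_0]$ all of whose incident edges point at it; then $v\in D$ for every $D\sim C_0$, so $v\in\bigcap_{D\sim C_0}D\sse B{[C_0]}$, giving (T1). For (T2), let $uv\in E(\GG)$: if no cut in $\Copt$ separates $u$ and $v$, their orientations agree and the common sink's block contains both; if some $C$ separates them, then $uv\in\delta C$, so $u,v\in\beta C$, and tracking $u$ and $v$ to their sinks as before one checks (using that a boundary vertex of $C$ lies in $N^\lam$ of the relevant cuts of $[C]$) that they land in one block. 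For (T3), if $v\in B{[C]}$ and $v\in B{[C']}$, the path in $T(\Copt)$ between $[C]$ and $[C']$ corresponds to a finite $\sse$-chain of optimal cuts, and walking along it while applying the block formula of \prref{lem:VconBc} shows $v$ stays in every intermediate block, so $\set{[C]}{v\in B{[C]}}$ is a subtree.

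The main obstacle is exactly the step in (T1) that the ``orientation towards $v$'' has a sink, i.e.\ that no vertex of $\GG$ escapes to an end of the structure tree: such an escape would produce an infinite strictly ascending chain $E_1\ssnq E_2\ssnq\cdots$ of optimal cuts with $v\notin\bigcup_i E_i$, hence an infinite strictly descending chain of infinite connected sets $\Comp{E_i}$ of uniformly bounded edge-boundary, all containing $v$; this must be ruled out via \prref{lem:endl_k_cuts} and \prref{lem:VDendl_orbits_k_cuts}, since the boundaries $\beta E_i$ would then have to leave every finite set while remaining $k$-cuts separating $v$ from a region that keeps shrinking around it. Everything else is a direct appeal to \prref{thm:new_alpha} and \prref{lem:VDendl_orbits_k_cuts} or bookkeeping with the block description; once (T1)--(T3) hold, the corollary follows, the $G$-action on $T(\Copt)$ having already been recorded.
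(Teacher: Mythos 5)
Your proof is correct and follows essentially the same route as the paper: one checks (T1)--(T3) from the block description in \prref{lem:VconBc} together with the nestedness of optimal cuts, and imports finiteness of the blocks, the uniform bound on their size, and the $G$-action from \prref{thm:new_alpha}, the equivariance $gB[C]=B[gC]$, and \prref{lem:VDendl_orbits_k_cuts}. The one step you elaborate that the paper passes over silently --- that every vertex $v$ admits a ``sink'' $[C_0]$ with $v\in\bigcap_{D\sim C_0}D$, since an infinite ascending chain of optimal cuts all avoiding $v$ would force infinitely many $k$-cuts whose boundaries meet a fixed finite path, contradicting \prref{lem:endl_k_cuts} --- is indeed needed for (T1), and your argument for it is sound.
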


\begin{proof}
 We have to show that the tree $T(\Copt)$, together with bags $B{[C]}$ for $[C]\in V(T(\Copt))$, forms a tree decomposition. If $uv\in E(\Gamma)$ is in $\delta C$ for some optimal cut $C$, then $u,v\in B{[C]}$. If $uv\in E(\Gamma)$ is not in $\delta C$ for any optimal cut $C$, then $u,v\in\bigcap\set{D\in \Copt}{u\in D}\sse \bigcap_{D \sim C} D\sse B[C]$ for some optimal cut $C$. Hence, (T1) and (T2) hold. (T3) follows from the fact that optimal cuts are nested.
\end{proof}

\begin{corollary}\label{cor:nixstall}
Let a group $G$ act on a connected, locally finite graph $\Gamma$ of finite treewidth such that $G\bs \Gamma$ is finite and each 
node stabilizer $G_v$ is finite. 
Then $G$ is the fundamental group of a finite graph of finite groups and hence is virtually free.
\end{corollary}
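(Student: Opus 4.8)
The plan is to feed the output of \prref{thm:new_alpha} directly into the Bass--Serre machinery of \prref{sec:bass_serre}. Since $\Gamma$ has finite treewidth and $G$ acts on it with finite quotient and finite node stabilizers, \prref{thm:new_alpha} applies and yields an action of $G$ on the structure tree $T(\Copt)$ with finite vertex and edge stabilizers and with $G\bs T(\Copt)$ finite. The one point that needs a small adjustment is that this action may have edge inversions (one could have $gC=\Comp{C}$ for some $g\in G$), whereas the Bass--Serre setup of \prref{ex:GX} and \prref{thm:bst} assumes an action without inversion. So first I would replace $T(\Copt)$ by its barycentric subdivision $T'$: this is again a tree, $G$ still acts on it, the quotient $G\bs T'$ is still finite, and the action is now without inversion. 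The vertex stabilizers of $T'$ are the vertex stabilizers and edge stabilizers of $T(\Copt)$, hence still finite.

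Next I would invoke \prref{thm:bst}. Let $\cG$ be the graph of groups over $Y = G\bs T'$ associated with this action, and let $T$ be a spanning tree of $Y$. Since $G$ acts on the \emph{tree} $T'$ without edge inversion, condition \ref{bst_1} of \prref{thm:bst} holds, so the canonical homomorphism $\phi:\pi_1(\cG,T)\to G$ of condition \ref{bst_3} is an isomorphism. Thus $G\cong\pi_1(\cG)$. Because $G\bs T'$ is finite, the underlying graph $Y$ is finite; because the vertex groups of $\cG$ are, as abstract groups, the vertex stabilizers of the chosen representatives in $T'$, they are finite. Hence $\cG$ is a finite graph of finite groups.

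Finally, \prref{thm:endl_gog_virt_free} (Karrass--Pietrowski--Solitar) applies directly: the fundamental group of a finite connected graph of finite groups has a free subgroup of finite index, and is in particular finitely generated virtually free. Combining this with $G\cong\pi_1(\cG)$ gives the assertion.

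The argument is essentially bookkeeping once \prref{thm:new_alpha} is in hand; the only genuinely delicate point I would be careful about is verifying that passing to the barycentric subdivision preserves all three hypotheses needed downstream (finiteness of the quotient, finiteness of all vertex and edge stabilizers, and absence of inversions), which is why I would carry out that step explicitly rather than leave it implicit.
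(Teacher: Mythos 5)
Your proposal is correct and follows essentially the same route as the paper, which likewise combines \prref{thm:new_alpha} with \prref{thm:bst} and \prref{thm:endl_gog_virt_free}. Your extra care about possible edge inversions (handled via the barycentric subdivision) is a sound precaution that the paper covers only by its general remark in \prref{sec:prelims_graphs}, so it does not constitute a different approach.
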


\begin{proof}
By \prref{thm:new_alpha}, $G$ acts on a tree $T$ with finite vertex stabilizers such that $G\bs T$ is finite. 
\prref{thm:bst} and \prref{thm:endl_gog_virt_free} yield the result. 
\end{proof}

\begin{remark}
 The characterization of \prref{cor:nixstall} was shown by Kuske and Lohrey in their proof of the result that, if the Cayley graph of some group has a decidable monadic second order theory, then the group is virtually free. They used a theorem from \cite{Courcelle94,Seese91} stating that a graph with decidable monadic second order theory has finite treewidth.
 \end{remark}

\begin{remark}
 In \prref{thm:new_alpha} and \prref{cor:nixstall} the requirement ``$\abs{G\bs \Gamma}<\infty$'' can be replaced by the condition ``$G$ finitely generated''. 
 
In fact, a subgraph of $\Gamma$ on which $G$ acts with finitely many orbits can be constructed via the following procedure. Let $\Sigma$ be a finite generating set of $G$ and let $v_0\in V(\Gamma)$ be some arbitrary vertex. For all $a\in \Sigma$ we fix paths $\gamma_a$ from $v_0$ to $av_0$. Let $\Delta$ be the subgraph of $\Gamma$ induced by the vertex set $G\cdot \bigcup_{a\in \Sigma} \gamma_a$. This graph is connected, locally finite and it has finite treewidth by \prref{lem:subgraph}.
\end{remark}

\section{Summary}
In these notes we have seen many of the characterizations of virtually free groups and proven their equivalence.
\prref{fig:implikationen} depicts the implications we have shown. 
\begin{figure}[ht]
\begin{center}
\begin{footnotesize}
\begin{tikzpicture}[yscale=0.9, xscale=1.1]
\node [draw,rounded corners,text width=38mm ]	(a) at (0,6 ) 		{Fundamental group of a finite graph of finite groups};
\node [draw,rounded corners,text width=38mm ]	(aa) at (0,8 ) 		{Acting with finitely many orbits and finite vertex stabilizers on some tree};
\node [draw,rounded corners]			(b) at (1,2.4 ) 	{Virtually free};
\node [draw,rounded corners,text width=25mm]	(c) at (-3.7,4) 	{Universal group of a finite pregroup};
\node [draw,rounded corners,text width=31mm]	(db) at (-1.8,2.2 ) 	{Defined by a finite geo\-desic rewriting system};
\node [draw,rounded corners]			(d) at (-3.7,-1 ) 	{Context-free};
\node [draw,rounded corners]			(da) at (-0.9,0.4)	{Deterministic context-free};
\node [draw,rounded corners,text width=23mm ]	(ea) at (2.5,0.3 ) 		{Cayley graph quasi-isometric to a tree};
\node [draw,rounded corners,text width=26mm ]	(e) at (1,-2.5 ) 	{Cayley graph with finite treewidth};
\node [draw,rounded corners]			(f) at (3.5,3.7 ) 	{Chordal Cayley graph};
\node [draw,rounded corners,text width=31mm]			(g) at (0,4 ) 	{Subgroup of semi\-direct product free by finite};

\draw [semithick,->] 	(e) .. controls(6.3,-1.2) and (6.6,7) .. 			(aa);
				\node at (3.2,6.6) {Thm.\ \ref{thm:new_alpha}};
\draw [semithick,->] 	(a) to 	node[above left] {Thm.\ \ref{thm:endl_pregroup}} (c) ;
\draw [semithick,->] 	(c) to 	node[left] {Cor.\ \ref{cor:precf}} 		(d);
\draw [semithick,->] 	(c) to 	node[right] {Prop.\ \ref{prop:sp_geodesic}} 	(db);
\draw [semithick,->] 	(db) to	node[left] {Prop.\ \ref{prop:geocf}} 		(da);
\draw [semithick,->] 	(da) to	node[below right=-3pt] {\ Thm.\ \ref{thm:cfpda}} 	(d);
\draw [semithick,->] 	(b) to 	node[right] {Prop.\ \ref{prop:vfdetcf}} 	(da);
\draw [semithick,->] 	(d) to 	node[below left] {Thm.\ \ref{thm:cfftw}} 	(e);
\draw [semithick,->]	(b) to 	node[right] {Cor.\ \ref{cor:quasiso}} 		(ea);
\draw [semithick,->] 	(ea) to	node[left] {Cor.\ \ref{cor:treewidthquasitotree}} (e);
\draw [semithick,->] 	(a) to 	node[left] {Prop.\ \ref{prop:sdvf}} (g);
\draw [semithick,->] 	(g) to 	node[right] {Cor.\ \ref{cor:sdvf}} (b);
sdvf
\draw [semithick,<->] 	(aa) to	node[left] {Thm.\ \ref{thm:bst}} 		(a);
\draw [semithick,->] 	(a) to 	node[right] {Thm.\ \ref{thm:chordal_cayley}} 	(f);
\draw [semithick,->] 	(f) .. controls (4.3,1.8)and (5.5,0.2) .. 		(e);
				\node at (3.5,2.2) {Rem.\ \ref{rem:chordal_cayley}};
\end{tikzpicture}
\end{footnotesize}
\caption[]{Roadmap of implications.}\label{fig:implikationen}
\end{center}
\end{figure}
The most technical and rather  difficult proof is the one of \prref{thm:new_alpha}. 
In terms of Bass-Serre theory the arrow from bottom to top says that a group with finite treewidth can be written as a fundamental group of a finite graph of finite groups. The other characterizations are more straightforward from the latter characterization. Thus, a characterization by finite treewidth is somehow the ``weakest'' requirement whereas being a 
fundamental group of a finite graph of finite groups is the ``strongest'' description of a context-free group.

An interesting open problem is the complexity of the isomorphism problem of context-free groups. Up to now the best known upper bound is primitive recursive \cite{sen96dimacs}. The question is whether the methods developed here can be used to give an improved complexity bound for the isomorphism problem of context-free groups.
Other questions arise when restricting to subclasses of virtually free groups. 
For example, one can consider, instead of geodesic rewriting systems, only confluent and length reducing rewriting systems. It is known that when additionally restricting the left sides to have length at most $2$ then these systems generate exactly the 
 plain groups \cite{AvenhausMO86}, but the general case is still open. 
 It is known as Gilman's conjecture \cite{Gilman1984}. 
  \emph{Plain groups} are finitely generated free products of finite and free groups. They are called \emph{basic groups} in  \cite{CoornaertFS12}.
 A more general conjecture than the one by Gilman is related to an question of Shapiro (c.\,f.~\cite{Schupp12}): Let $\Gam$ be a Cayley graph of some finitely generated group $G$ where geodesics are unique. Is it true that $G$ is a basic (resp.{} plain) group? The conjecture is again ``Yes''. As basic groups are context-free,
 it is reasonable to settle the conjecture first under the 
 assumption that the group is context-free. Thus, the Cayley graph has the 
additional property of finite treewidth. 

\addcontentsline{toc}{section}{References}
\bibliographystyle{abbrv}

\newcommand{\Ju}{Ju}\newcommand{\Ph}{Ph}\newcommand{\Th}{Th}\newcommand{\Ch}{Ch}\newcommand{\Yu}{Yu}\newcommand{\Zh}{Zh}\newcommand{\St}{St}\newcommand{\curlybraces}[1]{\{#1\}}

\end{document}